\documentclass[11pt, a4paper, USenglish,abstract=true]{scrartcl}
\usepackage{tcolorbox}
\usepackage{enumitem}
\usepackage[utf8]{inputenc}
\usepackage{dsfont}
\usepackage{amssymb,amsthm,amsmath}
\usepackage{amsfonts}
\usepackage{comment} 
\usepackage[usestackEOL]{stackengine} 
\let\oldmathchoice\mathchoice
\usepackage{mathtools} 
\usepackage{mathrsfs} 
\usepackage{scalerel}
\usepackage{breqn}
\let\newmathchoice\mathchoice
\usepackage{microtype}
\usepackage{graphicx}
\usepackage{float}
\usepackage[USenglish]{babel}
\usepackage[T1]{fontenc}
\usepackage{lmodern}
\usepackage{pdfpages}
\usepackage{caption}
\usepackage[titletoc]{appendix} 
\usepackage{subcaption}
\usepackage{physics} 
\usepackage{setspace} 
\usepackage{tikz}
\usepackage{pgfplots} 
\usepackage[textsize=tiny]{todonotes}
\usepackage{bm} 
\usepackage{marginnote}

\usepackage{titling} 
\usepackage[final]{showlabels} 
\usepackage{cancel}
\usetikzlibrary{patterns}
\usetikzlibrary{cd, babel}
\usepackage{csquotes}
\usepackage[%
  style=numeric, 
  sortcites,
  backend=biber,
  giveninits=true, 
  isbn= false,
  url=false,
  doi = false,
  eprint=false,
  date=year,
  noerroretextools,
  maxcitenames=99, 
    maxbibnames=99   
  ]{biblatex}

\usepackage[colorlinks=true,linkcolor=blue,citecolor=blue,hypertexnames=false]{hyperref}

\usepackage{cleveref}
\usepackage{zref-clever}
\zcsetup{nameinlink=false, cap, noabbrev}

\theoremstyle{plain}

\newtheorem{prop}{Proposition}[section]
\AddToHook{env/prop/begin}{%
   \zcsetup{countertype={prop=prop}}}
\zcRefTypeSetup{prop}{Name-sg=Proposition} 
\newtheorem*{prop*}{Proposition}
\newtheorem{lemma}[prop]{Lemma}
\AddToHook{env/lemma/begin}{%
   \zcsetup{countertype={prop=lemma}}}
\zcRefTypeSetup{lemma}{Name-sg=Lemma} 

\newtheorem{thm}[prop]{Theorem}
\AddToHook{env/thm/begin}{%
   \zcsetup{countertype={prop=thm}}}
\zcRefTypeSetup{thm}{Name-sg=Theorem}

\newtheorem{cor}[prop]{Corollary}
\AddToHook{env/cor/begin}{%
   \zcsetup{countertype={prop=cor}}}
\zcRefTypeSetup{cor}{Name-sg=Corollary}

\theoremstyle{definition}

\newtheorem{defi}[prop]{Definition}
\AddToHook{env/defi/begin}{%
   \zcsetup{countertype={prop=defi}}}
\zcRefTypeSetup{defi}{Name-sg=Definition}

\newtheorem{remark}[prop]{Remark}
\AddToHook{env/remark/begin}{%
   \zcsetup{countertype={prop=remark}}}
\zcRefTypeSetup{remark}{Name-sg=Remark} 

\theoremstyle{remark}

\newcommand{\nocontentsline}[3]{}
\newcommand{\tocless}[2]{\bgroup\let\addcontentsline=\nocontentsline#1{#2}\egroup}

\newcommand{\tc}[2]{\textcolor{#1}{#2}}
\let\temp\phi
\let\phi\varphi
\let\varphi\temp
\newcommand*\dif{\,\mathrm{d}}

\DeclareMathOperator{\cqq}{\coloneqq}
\DeclareMathOperator{\qqc}{\eqqcolon}
\DeclareMathOperator{\wto}{\rightharpoonup}

\newcommand{\vertiii}[1]{{\left\vert\kern-0.25ex\left\vert\kern-0.25ex\left\vert #1 
    \right\vert\kern-0.25ex\right\vert\kern-0.25ex\right\vert}}

\newcommand\N{\mathbb{N}}

\newcommand\R{\mathbb{R}}
\newcommand\Z{\mathbb{Z}}
\newcommand\C{\mathbb{C}}

\newcommand{\mc}[1]{\mathcal #1}

\newcommand{\mb}[1]{\mathbb #1}

\DeclareMathOperator{\PSL}{PSL}

\newcommand{\parpar}[2]{\partial_{#1} #2}
\newcommand{\frpar}[2]{\frac{\partial #1}{\partial #2}}
\newcommand{\eps}{\varepsilon}

\newcommand{\limk}{\lim _{k\to \infty}}
\DeclareMathOperator{\im}{im}

\DeclareMathOperator{\Div}{div}
\DeclareMathOperator{\loc}{loc}

\def\dashint{\let\mathchoice\oldmathchoice\,\ThisStyle{\ensurestackMath{%
            \stackinset{c}{.2\LMpt}{c}{.5\LMpt}{\SavedStyle-}{%
            \SavedStyle\phantom{\int}}}%
        \setbox0=\hbox{$\SavedStyle\int\,$}\kern-\wd0}\int%
        \let\mathchoice\newmathchoice}

	\newcommand{\mres}{\mathbin{\vrule height 1.6ex depth 0pt width
0.13ex\vrule height 0.13ex depth 0pt width 1.3ex}}

\DeclareMathOperator{\diam}{diam}
\DeclareMathOperator{\genus}{genus}
\newcommand*{\vol}{\mathrm{vol}}

\newcommand{\hodge}{{\star}}

\DeclareMathOperator\arsinh{arsinh}

\DeclareMathOperator\spt{spt}

\DeclareMathOperator\injrad{injrad}
\DeclareMathOperator\Log{Log}

\DeclareMathOperator\inv{inv}
\newcommand{\tilvec}[1]{\tilde{\vec{#1}}}

\newcommand{\extp}{\mathchoice{{\textstyle\bigwedge}}%
    {{\bigwedge}}%
    {{\textstyle\wedge}}%
    {{\scriptstyle\wedge}}}
\usepackage{geometry}
\usepackage{xfrac}
\setkomafont{disposition}{\normalcolor\bfseries}
 \DeclareNameAlias{default}{family-given}
 \DefineBibliographyStrings{english}{%
 page             = {},
 pages            = {},
 }  
 \DeclareFieldFormat[article]{citetitle}{#1}
 \DeclareFieldFormat[article]{title}{#1} 
\numberwithin{equation}{section}

\usepackage{bigdelim}

\definecolor{myblue}{HTML}{18B7CF}
\definecolor{mygreen}{HTML}{56A039}
\definecolor{myred}{HTML}{852B2B}

\makeatletter
\def\namedlabel#1#2{\begingroup
    #2%
    \def\@currentlabel{#2}%
    \phantomsection\label{#1}\endgroup
}

\usepackage[normalem]{ulem} 
\let\etoolboxforlistloop\forlistloop 
\usepackage{autonum}
\let\forlistloop\etoolboxforlistloop 

\author{\textsc{Christian Scharrer}\thanks{Institute for Applied Mathematics, University of Bonn, Endenicher Allee 60, 53115 Bonn, Germany. \texttt{scharrer@iam.uni-bonn.de}}\and \textsc{Manuel Schlierf}\thanks{Mathematics department, Salzburg University, Hellbrunner Str.\ 34, 5020 Salzburg, Austria. \texttt{math@manuelschlierf.info}}\and \textsc{Alexander West}\thanks{Institute for Applied Mathematics, University of Bonn, Endenicher Allee 60, 53115 Bonn, Germany. \texttt{west@iam.uni-bonn.de}}}
\date{\today}
\title{Bubble classification of immersions at the boundary of the moduli space with \texorpdfstring{$8\pi$}{8pi} Willmore energy}

\begin{document}
\maketitle
\begin{abstract}
We study the asymptotic bubbling behavior of sequences of weak genus-$p$ immersions with diverging conformal classes and limiting Willmore energy of $8\pi$. After applying suitable Möbius transformations, in a strong $W^{2,2}_{\loc}$-limit, we obtain two round spheres at the largest scale and $p+1$ catenoids at the smallest scales. Moreover, we apply this classification to sequences of isoperimetrically, conformally and normalized-total-mean-curvature constrained Willmore minimizers when the constraints approach the boundary of the domain where minimizers exist, respectively.
\end{abstract}

\section{Introduction}

Given a smooth immersion $\vec \Phi\colon\Sigma\to\R^n$ of a closed, oriented, and connected surface $\Sigma$ with genus $p$, we define its Willmore energy as well as the Dirichlet energy as
\begin{equation}
	\mc W(\vec \Phi)\cqq \int_{\Sigma} |\vec H|^2 \dif{\mu},\quad \mc E (\vec \Phi) \cqq \int_{\Sigma} |\vec {\mb I}|_{g}^2 \dif{\mu}.\label{Willmore energy and Dirichlet energy}
\end{equation} 
Here, $g= g_{ij}\, d x^i \otimes d x^j \cqq \vec \Phi^* g_{\R^n} \in \Gamma(T^* \Sigma \otimes T^*\Sigma)$ denotes the pullback metric of the standard inner product $g_{\R^n}$ in $\R^n$ via $\vec \Phi$, and $g^{-1} = g^{ij} \frpar{}{x^i}\otimes \frpar{}{x^j} \in \Gamma(T\Sigma \otimes T \Sigma)$ its inverse. The \emph{Riemannian measure} and \emph{mean curvature} induced by $\vec\Phi$ are $\dif{\mu} \cqq \sqrt{\det g} \,d x^1 \wedge d x^2$ and
\begin{equation}
	\vec H \cqq \frac{1}{2} \tr_g(\vec {\mb I}) = \frac{1}{2} g^{ij}\vec {\mb I} _{ij}, \label{scalar mean curvature}
\end{equation}
where $\vec {\mb I} \in \Gamma(T^* \Sigma \otimes T^*\Sigma\otimes \R^n)$ with
$\vec {\mb I}_{ij}\cqq (\partial_{x^i}\partial _{x^j} \vec \Phi)^\perp$ is the \emph{second fundamental form}. The orthogonal projection ${}^\perp$ is given by ${\vec v}^\perp \cqq  {\vec v} - g^{ij} \langle {\vec v}, \partial _{x^i}\vec \Phi\rangle \partial _{x^j} \vec \Phi $. If $n=3$, we also define the \emph{Gauss map} as well as the \emph{scalar mean curvature} as
\begin{equation}
	\vec n  \cqq \frac{\parpar{x^1}{\vec \Phi}\times \parpar{x^2}{\vec \Phi}}{|\parpar{x^1}{\vec \Phi}\times \parpar{x^2}{\vec \Phi}|},\quad H\cqq \langle \vec H, \vec n\rangle\label{Gauss map}
\end{equation}
in any positive chart $x$. As a consequence of the Gauss--Bonnet theorem, these energies are related through
\[\mc E(\vec \Phi) = 4\mc W(\vec \Phi) - 4\pi \chi(\Sigma) = 4\mc W(\vec \Phi) - 8\pi(1-p),\]
where $\chi(\Sigma) = 2(1-p)$ is the Euler-characteristic of $\Sigma$. We denote the set of smooth immersions of a genus-$p$ surface into Euclidean $n$-space with $\mc S_p^n$. As already observed by Blaschke \cite{BlaschkeThomsen}, both functionals $\mc W$ and $\mc E$ are invariant under conformal transformations of the ambient space:
\begin{equation}\label{eq:intro:conf_invariance}
	\mc W(\Xi\circ\vec\Phi)=\mc W(\vec \Phi)\quad \text{for all Möbius transformations $\Xi$ of $\R^n$ such that $\Xi \circ \vec \Phi \in \mc S_p^n$}.
\end{equation}

A classical problem in geometric analysis addresses the question whether the functional $\mc W$ admits a smooth minimizer. Equivalently, one studies whether the infimum 
\begin{equation}\label{eq:intro:WP}
	\beta_p^n\cqq \inf_{\vec \Phi\in\mc S_p^n}\mc W(\vec \Phi)
\end{equation} 
is attained. Regarding the spherical case, Willmore \cite{Willmore,Willmore1970} discovered that the energy now bearing his name satisfies $\mc W(\vec \Phi)\ge 4\pi$ on $\mc S_p^n$ with equality if and only if $p=0$ and $\vec\Phi$ is totally umbilic. Willmore's inequality was further refined by Li--Yau \cite{LiYau} who showed that
\begin{equation}\label{eq:intro:LY}
	\mc W(\vec \Phi)\ge 4\pi  \cdot \# (\vec\Phi^{-1}(y))\quad\text{for all $y\in\R^n$},
\end{equation}
where $\#$ denotes the counting measure. In particular, if $\mc W(\vec\Phi)<8\pi$, then $\vec\Phi$ is an embedding. 

In a pioneering work, Simon \cite{Simon} proved the existence of a $\mc W$-minimal torus in $\R^n$, solving the Willmore problem \eqref{eq:intro:WP} for $p=1$. While compactness for the direct method in the calculus of variations can be achieved in some weaker spaces, a main difficulty of the minimization problem \eqref{eq:intro:WP} is to prove that the genus of a minimizing sequence is preserved in the limit. In other words, after passing to a subsequence, the conformal classes induced by a minimizing sequence have to remain within a compact subset of the moduli space. Proven by Simon \cite{Simon}, this can be guaranteed through the condition 
\begin{equation}\label{eq:intro:Douglas_condition}
	\beta_p^n<\min\{8\pi,\omega_p^n\},
\end{equation}
where 
\begin{equation}\label{eq:intro:Douglas_condition:omega}
	\omega_p^n\cqq \inf \left \{4\pi + \sum_{i=1}^n (\beta^n_{p_i} - 4\pi), \, \sum_{i=1}^n p_i = p \text{ and } 0<p_i<p\right \}. \label{omega 3 p}
\end{equation}
The number on the right of \eqref{eq:intro:Douglas_condition} gives a sharp lower bound for the Willmore functional on the space of nodal surfaces that arise as Deligne--Mumford limits from sequences of genus-$p$ surfaces with degenerating conformal classes \cite{KuwertLi,RiviereDegeneratingImmersions}. Thus, denoting the conformal class induced by $\vec\Phi\in \mc S^n_p$ with $c(\vec\Phi)$ and letting $\delta>0$,
\begin{equation}\label{eq:intro:compact_moduli}
	 \bigl \{ c(\vec\Phi),\,\vec\Phi\in\mc S_p^n \text{ and }\mc W(\vec \Phi)<\min\{8\pi,\omega_p^n\} - \delta  \bigr\}
\end{equation}
is a bounded subset of the moduli space.

By resolving the \emph{Willmore conjecture}, Marques--Neves \cite{MarquesNeves} proved
\begin{equation}
	\beta_1^3=2\pi^2,\qquad \beta_p^3>2\pi^2\quad\text{for $p\ge2$}.
\end{equation}
Consequently, given that $2\pi^2>6\pi$, one infers  
\begin{equation}\label{eq:intro:MND}
	\omega_p^3>8\pi.
\end{equation}
Thus, for $n=3$, \eqref{eq:intro:Douglas_condition} becomes 
\begin{equation}\label{eq:intro:Douglas_condition:codim1}
	\beta_p^3<8\pi.
\end{equation}
Notice that unlike \eqref{eq:intro:Douglas_condition:omega}, the right hand side of \eqref{eq:intro:Douglas_condition:codim1} does not depend on $p$.

Recalling the Möbius invariance \eqref{eq:intro:conf_invariance}, it was believed that for any sequence $\vec\Phi_k$ in $\mc S_p^n$ with  
\begin{equation}\label{eq:intro:8pi-energy-limit}
	\lim_{k\to\infty}\mc W(\vec \Phi_k)= 8\pi
\end{equation}
and degenerating induced conformal classes, there exists a sequence of Möbius transformations $\Xi_k$ of the ambient space such that, after passing to a subsequence, the varifold limit of $\Xi_k\circ\vec\Phi_k$ is given by the union of two intersecting spheres. This was observed e.g.\ in numerical experiments for the stereographic projections of tori with constant mean curvature in $\mathbb S^3$ \cite{KilianSchmidtSchmitt}. Blowing up at suitable intersection points of the two spheres, one should see $p+1$ catenoids, cf.\ \zcref{Caption 1}. The fact that the nodal surface consists of only spherical components is very intuitive considering that in codimension one, the Douglas-type condition \eqref{eq:intro:MND} simplifies in a way that it does not depend on the genus anymore. The purpose of the present article is to provide a proof of this conjecture. Notice here that in view of \eqref{eq:intro:LY} and \eqref{eq:intro:compact_moduli}, the property \eqref{eq:intro:8pi-energy-limit} naturally arises in the study of embeddings with degenerating conformal classes.

Previously, to the best of the authors' knowledge, no results in that direction have been known. A major difficulty arises from conformal invariance. Without the usage of suitable Möbius transformations, one cannot prevent that a sequence satisfying \eqref{eq:intro:8pi-energy-limit} converges as varifolds to the unit sphere, e.g.\ see Type 2 of \zcref{fig:types}. From a purely ambient view, all information will be lost in the limit, contracted into a single concentration point. We will rely on a local parametric approach, which by a recent breakthrough of Bernard--Rivière \cite{BernardRiviere} and Laurain--Rivière \cite{LaurainRiviereEnergyQuantization} allows to identify blow-up limits near concentration points. Thereby, all the geometric information contracting to a point in the limit can be recovered through blow-ups of different scales $s_k$. Instead of $\mc W$-critical immersions, we will more generally consider weak immersions as developed for the variational framework of the Willmore problem by Kuwert--Li \cite{KuwertLi} and Rivière \cite{RiviereCrelle}. Fixing a reference metric $g_0$ on $\Sigma$, the space of \emph{weak immersions} from $\Sigma$ into Euclidean three space is defined as
\begin{equation}
	\mc E_\Sigma\cqq \{\vec\Phi\in W^{2,2}(\Sigma;\R^3),\,\text{$cg_0\le\vec\Phi ^* g_{\R^3}\le c^{-1}g_0$ for some $c=c(\vec\Phi)>0$}\}.
\end{equation}
Similarly, we define the space $\mathcal F_\Sigma$ of \emph{conformal, branched} weak immersions with \emph{$L^2$-bounded second fundamental form} (with respect to the reference metric $g_0$) to consist of all maps $\vec\Phi\colon\Sigma\to\R^3$ for which there exist finitely many singular points $a_1,\ldots,a_N\in\Sigma$ later distinguished as either \emph{ends} or \emph{branch points}, and $\alpha \in L^\infty_{\loc}(\Sigma\setminus\{a_1,\ldots,a_N\})$ with 
\begin{equation}
	\vec\Phi\in W^{2,2}_{\mathrm{loc}}(\Sigma\setminus\{a_1,\ldots,a_N\};\R^3),\quad \vec n \in W^{1,2}(\Sigma;\R^3),\quad\vec\Phi^*g_{\R^3}=e^{2\alpha}g_0.
\end{equation}
Notice that by \cite[Theorem 5.4]{RiviereLectureNotes}, it holds
\begin{equation}
    \|d\alpha \|_{L^{2,\infty}_{g_0}(\Sigma) } \leq C\label{L2 infinity estimate for fixed immersion}
\end{equation}
for a constant $C$ depending on $\mc E(\vec \Phi)$ and $g_0$ and where the Lorentz space $L^{2,\infty}$ is defined in \eqref{Lorentz space}.

A key observation to prove our main goal \zcref{thm:Asymptotic convergence} is the fact that under condition \eqref{eq:intro:8pi-energy-limit}, any non-injective limiting immersion attains equality in \eqref{eq:intro:LY}. Such immersions are characterized by the following theorem proven in Appendix \hyperref[sec:Appendix Equality in the Li--Yau inequality]{C}.
\begin{thm}\label{thm:equality-in-Li-Yau}
	Suppose $x_0\in \R^n$ and $\mu$ is an integral $2$-varifold in $\R^n$ with generalized mean curvature $\vec H\in L^2(\mu;\R^n)$ as defined in \eqref{divergence theorem} and $\Theta^2(\mu, \infty) = 0$. Then, 
	\begin{equation}
		\Theta^2(\mu,x_0) 
		=\frac{1}{4\pi}\int_{\R^n}|\vec H|^2\,\mathrm d\mu
	\end{equation}
	if and only if $\nu\cqq {I_{x_0}}_\#\mu$ (for $I_{x_0}(x)\cqq \frac{x-x_0}{|x-x_0|^2}$) is a stationary integral $2$-varifold in $\R^n$. Moreover, $\Theta^2(\mu,x_0)=\Theta^2(\nu,\infty)$. 
\end{thm}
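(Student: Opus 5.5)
We follow Simon's monotonicity argument and track the remainder term exactly. For an integral $2$-varifold $\mu$ with $\vec H\in L^2(\mu)$, testing the first variation formula (see \eqref{divergence theorem}) with the radial field $X(x)=\bigl(|x-x_0|_\sigma^{-2}-\rho^{-2}\bigr)_+(x-x_0)$, suitably cut off at $\sigma<\rho$, gives Simon's monotonicity identity
\begin{equation}
\frac{\mu(B_\sigma(x_0))}{\pi\sigma^2}+\frac{1}{\pi}\int_{B_\rho\setminus B_\sigma}\Bigl|\frac{\vec H}{2}+\frac{(x-x_0)^\perp}{|x-x_0|^2}\Bigr|^2\,\mathrm d\mu
=\frac{\mu(B_\rho(x_0))}{\pi\rho^2}+\frac{1}{16\pi}\int_{B_\rho\setminus B_\sigma}|\vec H|^2\,\mathrm d\mu+R_\sigma-R_\rho,
\end{equation}
where $R_r=\frac{1}{2\pi r^2}\int_{B_r}\langle \vec H,x-x_0\rangle\,\mathrm d\mu$. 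Here $(x-x_0)^\perp$ is the normal part with respect to the approximate tangent plane, and the constants are to be double-checked against the paper's normalization $\vec H=\frac12\tr\vec{\mb I}$. We let $\sigma\to0$ and $\rho\to\infty$. Then $R_\sigma\to0$ and $R_\rho\to0$ by Cauchy--Schwarz together with the area bound $\mu(B_r)\le Cr^2$ (itself a consequence of monotonicity and $\Theta^2(\mu,\infty)=0$, which forces $\mu(B_\rho)/\rho^2\to0$ and finite total mass in this context). This yields the global identity
\begin{equation}
\Theta^2(\mu,x_0)+\frac1\pi\int_{\R^n}\Bigl|\frac{\vec H}{2}+\frac{(x-x_0)^\perp}{|x-x_0|^2}\Bigr|^2\mathrm d\mu=\frac{1}{4\pi}\int_{\R^n}|\vec H|^2\,\mathrm d\mu,
\end{equation}
up to normalization. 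Hence equality in the statement holds if and only if $\vec H=-2\frac{(x-x_0)^\perp}{|x-x_0|^2}$ holds $\mu$-a.e.

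The second step translates this pointwise condition into stationarity of $\nu={I_{x_0}}_\#\mu$. Without loss of generality $x_0=0$ and $I=I_0$, which is a smooth conformal involution of $\R^n\setminus\{0\}$, so $\nu$ is again an integral varifold on $\R^n\setminus\{0\}$. We compute its first variation by pulling back a test field $Y\in C^1_c(\R^n\setminus\{0\};\R^n)$ to $X=(dI)^{-1}Y\circ I$. The conformal factor $|x|^{-2}$ and its derivative produce exactly the classical transformation law of mean curvature under inversion,
\begin{equation}
\vec H_\nu(I(x))=|x|^2\vec H_\mu(x)+2(x)^\perp\quad\text{(transported by } dI\text{)}.
\end{equation}
We verify this at the varifold level by a direct computation of $\operatorname{div}_{dI(P)}Y$ in terms of $\operatorname{div}_P X$, using the weak definition only, since $\mu$ need not be smooth. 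Consequently $\vec H_\nu=0$ on $\R^n\setminus\{0\}$ if and only if the condition from the first step holds. The converse direction follows from the same formula, together with the observation that stationarity of $\nu$ gives $\vec H_\mu\in L^2$ with the pointwise identity.

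The remaining work concerns the points $0$ and $\infty$. Since $\Theta^2(\mu,\infty)=0$, the point $\infty$ carries no mass of $\mu$, so $\nu$ has finite density at $0$. Moreover, near $0$ the varifold $\nu$ has locally bounded mass ratios, because $\nu(B_r)$ corresponds to $\mu(\R^n\setminus B_{1/r})\to0$. A point of vanishing $2$-capacity is then removable for stationarity: we use cutoffs $\phi_\eps$ with $\int|\nabla\phi_\eps|\,\mathrm d\nu\to0$, which holds by mass ratio bounds with logarithmic cutoffs, or simply $\nu(B_\eps)/\eps\to0$. In the converse direction, stationarity on all of $\R^n$ implies it on $\R^n\setminus\{0\}$. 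Finally, $\Theta^2(\nu,\infty)=\lim_{R\to\infty}\nu(B_R)/(\pi R^2)=\lim_{r\to0}\mu(\{|x|\ge r\}\cap\ldots)$. More precisely, $\nu(B_R\setminus B_{1/R})=\mu(B_R\setminus B_{1/R})$ is not directly the density, so we instead compare $\nu(B_R)$ with the monotone quantity: for stationary $\nu$ the density at infinity equals $\lim_{r\to0}$ of the inverted density ratio. This follows from the inversion invariance of the monotonicity remainder, or more simply from the identity in the first step, which shows that $\Theta^2(\nu,\infty)=\frac{1}{4\pi}\int|\vec H_\mu|^2\,\mathrm d\mu=\Theta^2(\mu,x_0)$ via the monotonicity formula for $\nu$ applied at $0$.

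I expect the main obstacle to be the rigorous varifold-level inversion formula for the first variation, together with the removability at the origin. The monotonicity bookkeeping is standard following Simon and Kuwert--Schätzle.
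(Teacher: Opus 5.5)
Your proof of the equivalence follows the paper. The global monotonicity identity \eqref{monotonicity identity} reduces equality to $\vec H=-2\frac{(x-x_0)^\perp}{|x-x_0|^2}$ $\mu$-a.e. The transformation law of the first variation under $I_{x_0}$ (the paper's \eqref{divergence for inversion}--\eqref{divergence theorem for inversion}) turns this into stationarity of $\nu$ away from $0$. Vanishing density of $\nu$ at $0$ then removes the point; the paper cites Kuwert--Sch\"atzle there, and your cutoff argument is the standard proof of that fact.

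You differ genuinely in proving $\Theta^2(\mu,x_0)=\Theta^2(\nu,\infty)$. The paper takes a tangent cone $C$ of $\mu$ at $x_0$ via Allard's compactness, uses $I_{0\#}C=C$ to recognize $C$ as the blow-down of $\nu$ off the origin, and sandwiches with the monotone ratio $\nu(B_r)/(\pi r^2)$. Your ``more simply'' route is shorter and needs no tangent cones. The monotonicity identity for the stationary $\nu$ at $0$ gives $\Theta^2(\nu,\infty)-\Theta^2(\nu,0)=\frac1\pi\int|y^\perp|^2|y|^{-4}\,\mathrm d\nu(y)$. Since $T_{I_0(x)}\nu$ is the image of $T_x\mu$ under the reflection $\mathrm{Id}-2\frac{x\otimes x}{|x|^2}$, one has $|y^\perp|=|x^\perp|/|x|^2$ for $y=I_0(x)$, and the Jacobian is $|x|^{-4}$. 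Hence the right-hand side equals $\frac1\pi\int|x^\perp|^2|x|^{-4}\,\mathrm d\mu$, which is $\frac1{4\pi}\mathcal W(\mu)$ by the pointwise identity, and this is $\Theta^2(\mu,x_0)$ in the equality case. The paper's argument gives slightly more, since it identifies the blow-down cone of $\nu$ with a tangent cone of $\mu$; yours is more explicit. Both only treat the stationary case.

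When you write this up, state the change of variables explicitly and remove the surrounding detours. The equation $\nu(B_R\setminus B_{1/R})=\mu(B_R\setminus B_{1/R})$ is false, because the push-forward carries the weight $|x|^{-4}$.

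You also need $\Theta^2(\nu,0)=0$, not merely finite density, in the monotonicity identity at $0$. This follows from $\nu(B_r)=\int_{\{|x|>1/r\}}|x|^{-4}\,\mathrm d\mu=o(r^2)$, a layer-cake consequence of $\mu(B_R)=o(R^2)$. So your unproved finite-total-mass claim is not needed.

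The same layer-cake computation gives $\frac{\nu(B_R)}{\pi R^2}=\frac4\pi\int_1^\infty\tau^{-3}\frac{\mu(B_{\tau/R})}{(\tau/R)^2}\,\mathrm d\tau-\frac{R^2\mu(B_{1/R})}{\pi}$. By dominated convergence, using the area-ratio bound, this tends to $2\Theta^2(\mu,0)-\Theta^2(\mu,0)$. So the density identity in fact holds without any stationarity.
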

Under the condition \eqref{eq:intro:8pi-energy-limit}, this allows to exclude true branch points in \zcref{prop: no density 2}. Consequently, all minimal surfaces that arise as blow-ups from our sequence $\vec\Phi_k$ with degenerating conformal classes have to be planes or catenoids, see \zcref{lem:one catenoid}. While crucial for our analysis in order to prove \zcref{thm:Asymptotic convergence}, \zcref{thm:equality-in-Li-Yau} is already interesting on its own. For instance, multi $m$-bubbles that arise as the inversion of integral stationary cones attain equality in \eqref{Li Yau inequality}, cf.\ \cite[Lemma 3.3]{RuppScharrerGlobalVarifoldRegularity}. Moreover, in view of \cite[Theorem 5.2 (2)]{Allard}, \zcref{thm:equality-in-Li-Yau} gives an alternative proof of the fact that any integral 2-varifold with Willmore energy of $4\pi$ and density 0 at infinity has to be a round sphere, e.g.\ see \cite[Proposition~5.1]{NovagaPozzetta}.

\subsection{Analysis at the boundary of the moduli space}

Denote the set of complex structures on $\Sigma$ with $\mc C$ and recall that each $\vec\Phi\in\mathcal E_\Sigma$ induces a member of $\mc C$, cf.\ 
\cite[Corollary~IV.5]{LanMartinoRiviereSurvey}. The family $\mc D$ of smooth diffeomorphisms on $\Sigma$ acts on $\mc C$ via 
\begin{equation}
    f^* c \cqq \{(f^{-1}(U_i),\, \omega^{-1}_i \circ f)\}\quad\text{for $f\in \mc D$ and $c = \{(U_i, \omega^{-1}_i)\} \in \mc C$}.
\end{equation}
We define the \emph{moduli space}
\begin{equation}
	\mc M_p \cqq \mc C/\mc D.\label{definition moduli space}
\end{equation}
Elements of $\mc M_p$ are called \emph{conformal classes}. For $p=0$, this is a single point by the uniformization theorem. For $p\geq 1$, each complex structure induces a unique conformal (with respect to the complex structure) so-called \emph{Poincar\'e metric} $h$ of constant sectional curvature equal to $0$ if $p=1$ and $-1$ if $p\geq 2$ such that $\vol_h(\Sigma)=1$ if $p=1$. The set of Poincar\'e metrics is denoted by $\mc M_{\text{Poin}}$. It follows that $\mc M_p$ can be identified with $\mc M_{\text{Poin}}/\mc D$ as sets, where $\mc D$ acts on $\mc M_{\text{Poin}}$ by $(h,f) \mapsto f^*h$. Moreover, $\mc M_p$ can be equipped with a topology, see \cite{TrombaTeichmullerTheory,Jost,Gromov}. For a characterization of degenerating conformal classes, see Sections \ref{subsec:Mumford-Deligne compactness} and \ref{subsec:diverging conformal classes torus}. 

We can now state a first (simplified) version of our main result, cf.\ \zcref{Caption 1} below for an illustration and \zcref{thm:Asymptotic convergence} for the extended version.

\begin{thm}
	\label{thm:Main theorem simplified}
	Suppose $p$ is a positive integer, $\Sigma$ is a closed, connected, oriented surface of genus $p$, $\vec \Phi_k\colon\Sigma\to \R^3$ is a sequence of weak immersions whose induced conformal classes are not contained in any compact subset of the moduli space, and
	\begin{equation}
		\lim_{k\to \infty}\mc W(\vec \Phi_k) = 8\pi. \label{energy threshold}
	\end{equation}
	Then, after passing to a subsequence, there exist Möbius transformations $\Xi_k$ of $\R^3$, $(s^i_k, y^i_k) \in (0,\infty) \times \R^3$ for $i\in \{1,\ldots, p+3\}$, as well as smooth maps $\varphi^i_k\colon\C\to\Sigma$ if $i\in\{1,2\}$ and $\varphi^i_k:\C\setminus\{0\} \to \Sigma$ if $i\in \{3,\ldots, p+3\}$ such that for each $k\in\N$,  $\im \varphi^1_k,\ldots,\im \varphi^i_k$ are pairwise disjoint and
	\begin{equation}
		(s^i_k)^{-1} (\Xi_k \circ \vec \Phi_k \circ \varphi^i_k - y^i_k) \to \vec \Psi^i \quad 
		\begin{array}{ll}
			\text{in $W^{2,2}_{\loc}(\C;\R^3)$} &\text{if $i \in \{1,2\}$},\\
			\text{in $W^{2,2}_{\loc}(\C \setminus \{0\};\R^3)$} &\text{if $i \in \{3,\ldots, p+3\}$}.
		\end{array}\label{convergence in main theorem simplified}
	\end{equation}
	Hereby, $\vec \Psi^i$ immerses a round sphere for $i\in \{1,2\}$ and a catenoid for $i \in \{3,\ldots, p+3\}$. Additionally, we have $(s^1_k, y^1_k) = (s^2_k,y^2_k)=(1,0)$ and $\limk s^i_k = 0$ for $i\in \{3,\ldots, p+3\}$. The varifolds induced by $\Xi_k\circ\vec\Phi_k$ converge to two intersecting round spheres.
\end{thm}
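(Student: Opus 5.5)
We follow the bubble-tree strategy of Deligne--Mumford compactness combined with the energy quantization of Bernard--Rivière and Laurain--Rivière. After reparametrizing by conformal diffeomorphisms, each $\vec\Phi_k$ becomes a conformal weak immersion with respect to the Poincaré metric $h_k$ of $c(\vec\Phi_k)$. Since the classes leave every compact subset of $\mc M_p$, Deligne--Mumford compactness (Section~\ref{subsec:Mumford-Deligne compactness}, resp.\ Section~\ref{subsec:diverging conformal classes torus} for $p=1$) provides $1\le m$ collars whose lengths of the core geodesics tend to zero. The complement of these collars converges to a nodal surface $\Sigma_\infty$ with components $\Sigma^1,\dots,\Sigma^\ell$ of genera $p_j$ and total genus $\sum_j p_j = p - (\text{number of non-separating pinched curves}) $ in the appropriate count. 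On each thick part, after suitable rescaling and translation (which defines $s^i_k,y^i_k$), the parametrizations converge weakly in $W^{2,2}_{\loc}$ away from finitely many concentration points to conformal branched immersions in $\mc F_{\Sigma^j}$. At each concentration point and in each neck region we perform the usual iterated blow-up. The energy quantization of Laurain--Rivière guarantees that no Willmore energy is lost in the necks, so $8\pi$ splits exactly into the energies of the limit components and bubbles, where compact bubbles are counted modulo inversions.

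The counting step is the core. Every nonconstant compact component (spherical or not) carries energy at least $4\pi$. Because $\omega_p^3>8\pi$ by \eqref{eq:intro:MND}, no component of positive genus can occur: such a component would have energy at least $\beta^3_{p_j}\ge 2\pi^2>6\pi$, and adding another component of at least $4\pi$ would exceed $8\pi$. Moreover, if only one nontrivial component appeared, the degeneration would have to be absorbed somewhere, and a single surviving sphere would not account for the pinched genus. So, after choosing the Möbius transformations $\Xi_k$ suitably (we rescale at the largest concentrating scale and invert at a point of density to prevent the Type~2 collapse to a single sphere), we expect exactly two spherical components of energy $4\pi$ each. Each is totally umbilic by Willmore's equality case, hence a round sphere. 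This gives $\vec\Psi^1,\vec\Psi^2$ at scale $(1,0)$. The connectedness of $\Sigma$ forces these two spheres to be joined through the nodes, so the varifold limit consists of two round spheres meeting at the node images. Since the limit energy is exactly $8\pi$, \zcref{thm:equality-in-Li-Yau} applies at any image point of density $2$: inverting there yields a stationary integral varifold, namely two planes, and hence the two spheres intersect.

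The remaining step, which we expect to be the main obstacle, is identifying the necks and small-scale blow-ups. At the intersection points, blowing up $\Xi_k\circ\vec\Phi_k$ on neck annuli at the scales $s^i_k\to0$ should give complete minimal surfaces with total curvature controlled by the energy budget, because all $8\pi$ is already used by the spheres and the bubbles are minimal. Via \zcref{prop: no density 2} and \zcref{lem:one catenoid}, which exclude true branch points using the Li--Yau equality \zcref{thm:equality-in-Li-Yau}, each such blow-up is a plane or a catenoid. A plane neck would contribute no topology. The genus bookkeeping, since gluing two spheres along $q$ necks produces genus $q-1$, then forces exactly $p+1$ catenoidal necks, which are parametrized on $\C\setminus\{0\}$ by conformal annuli. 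The delicate part is proving the strong $W^{2,2}_{\loc}$ convergence and the disjointness of the images of $\varphi^i_k$. For the former we would use the $\eps$-regularity together with the $L^{2,\infty}$ control \eqref{L2 infinity estimate for fixed immersion} of the conformal factor and the no-neck-energy property. The latter should follow from choosing separated domains in the bubble tree.
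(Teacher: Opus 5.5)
Your outline has genuine gaps, beginning with the analytic input. Energy quantization in necks (Bernard--Rivière, Laurain--Rivière) and $\eps$-regularity are available for (constrained) Willmore surfaces. They are not available for arbitrary weak immersions, which is exactly the setting of this theorem. The paper uses only the $L^{2,\infty}$-bounds on the conformal factors and lower semicontinuity, $\liminf_{k\to\infty}\mc W(\vec\Phi_k)\ge\sum_{v}\mc W(\vec\Psi^v)$.

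In particular, your route to strong $W^{2,2}_{\loc}$-convergence does not work. The paper obtains it only at the very end. Once two round spheres account for all of the $8\pi$, no bubble can lose Willmore energy on any coordinate disk, since otherwise $\liminf_{k\to\infty}\mc W(\vec\Phi_k)>8\pi$. Weak convergence, convergence of norms and uniform Harnack bounds then give $e^{-\lambda_k}\Delta\vec\Psi^v_k\to e^{-\lambda}\Delta\vec\Psi^v$ strongly in $L^2$, and elliptic estimates conclude.

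Your counting step is also incorrect. The limit components generally have ends, so they need not carry $4\pi$, let alone $\beta^3_{p_j}$. A positive-genus component with one planar end only gives $\mc W\ge\beta^3_{p_j}-4\pi$. A component with two ends could a priori be a (branched) minimal surface with zero energy. In the paper, $\omega^3_p>8\pi$ is used only to exclude that the dual graph of the nodal surface is a tree. The resulting cycle in the bubble graph contains a bubble whose scale is smaller than both its neighbours, hence a bubble with two ends. That bubble is a catenoid by the Li--Yau equality case after inversion, the exclusion of true branch points, and Schoen's uniqueness theorem. Genus zero of all thick parts then follows from classifying every bubble as a plane, round sphere, catenoid or inverted catenoid, not from energy bounds for closed surfaces.

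The heart of the theorem is asserted (``we expect'') rather than proved: the choice of $\Xi_k$, the existence of exactly two spheres at scale $(1,0)$, their intersection, and the count $p+1$. Your intersection argument is circular, because it presupposes a point of density $2$.

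The missing idea is the neck mechanism. The almost-logarithmic behavior of the conformal factor on each neck attaches a branch point of order $m$ on the larger-scale bubble to an end of order $-m$ on the smaller-scale one. This comes with $s^{v_1}_k/s^{v_2}_k\to\infty$, neck images that are small on the scale $s^{v_1}_k$, and aligned positions $\vec\Psi^{v_1}(q_1(e))=\lim_{k\to\infty}(s^{v_1}_k)^{-1}(y^{v_2}_k-y^{v_1}_k)$.

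Using this, the paper proceeds as follows:
\begin{enumerate}
\item It inverts at points converging, on the scale of a thick part, to a regular image point of its bubble. This turns that bubble into a plane whose end sits at a concentration point.
\item A bubble ascent from that end produces the first sphere.
\item A descent from a branch point in $Q$ down to a catenoid, followed by an ascent from the catenoid's other end, reaches a second root and a second sphere. Repetitions are excluded by injectivity and the energy budget.
\item Counting ends over $V'$ gives $\#C'=\#E'-\#V'+2=p+1$.
\item The identity $\lim_{k\to\infty}\mc E(\vec\Phi_k)=8\pi(p+3)$ rules out every further bubble.
\item The double-tree isomorphism forces the two spheres to meet, and two further inversions place both at scale $(1,0)$.
\end{enumerate}
Your heuristic of gluing two spheres along $q$ necks presupposes exactly this double-tree structure.
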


Notice that with \eqref{energy threshold}, we only impose a limiting energy condition rather than the more frequently used strict condition $\mc W(\vec\Phi_k)<8\pi$ that by \eqref{eq:intro:LY} guarantees embeddedness. In particular, we allow for sequences with $\mc W(\vec\Phi_k)>8\pi$. Rotating the family of $\lambda$-figure-eights in \cite[Section~6]{muellerspener2020} yields an example of a sequence of non-embedded axi-symmetric tori whose conformal classes degenerate and whose Willmore energies approach $8\pi$ from above. An explicit example for $p=1$ with Willmore energy strictly below $8\pi$ is given by the family of 2-lobed Delaunay tori, cf. \cite{HellerPedit}. Finally, we point out that the specific phenomenon of diverging conformal classes is known to be the driver behind the formation of singularities for the Willmore flow of axi-symmetric tori, see Theorem~1.2 and Proposition~4.2 of \cite{DallAcquaMuellerSchaetzleSpener2024}.

\subsection{Full bubble classification}
In this section, we will state our main result \zcref{thm:Main theorem simplified} in full detail. To keep things manageable, rigorous definitions will be given later in \zcref{sec:Compactness theorems in diverging conformal classes}. For a simplified version, we refer to \zcref{thm:Main theorem simplified}, which already captures the general idea. 

The bubbling analysis is based on decomposing the surface $\Sigma$ into \emph{thick} and \emph{thin parts} resulting from the degenerating Poincar\' e metrics via Deligne--Mumford compactification, see \zcref{subsec:Mumford-Deligne compactness}. 
The set of all thick parts is denoted with $V_{\mathrm{thick}}$. The thin parts are topological cylinders. These will be subdivided into slabs of energy accumulation and connecting neck regions. The set of all slabs with energy accumulation is $V_{\mathrm{thin}}$. Together, they make the vertices of a graph $V'=V_{\mathrm{thick}}\cup V_{\mathrm{thin}}$. Restricted to each vertex $v\in V'$, Green function estimates (\eqref{bounded conformal factors in nodal region 1} and \eqref{L2 weak estimate on neck regions}) from \cite{LaurainRiviereGreenFunction} give suitable control of the conformal factors in a carefully chosen atlas outside of a set  $R^v$ of finitely many ends and branch points. After suitable scaling by a factor $(s_k^v)^{-1}$, weak $W^{2,2}$-compactness thereby yields a branched limit $\vec \Psi^v$ of $\vec\Phi_k|_v$, see \zcref{thm:thick part convergence} and \zcref{prop:Thin part bubble neck decomposition}. 
Around each end/branch point $r\in R^v$, we apply the bubble-neck decomposition of \cite{BernardRiviere}, resulting in a set $V_{\mathrm{conc}}^{v,r}$ of points in $v$, near which no further energy concentrates. Naturally, e.g.\ by Hèlein's moving frame method (cf. \cite[Lemma 5.1.4]{Helein}, \cite[Theorem 5.5]{RiviereLectureNotes}), the conformal factors near any member of $V_{\mathrm{conc}}^{v,r}$ are suitably controlled, leading once again to weak $W^{2,2}$-compactness, see \eqref{concentration point convergence}. Lastly, we define the set of concentration points
\begin{equation}
	V_{\mathrm{conc}}\cqq \bigcup_{v\in V'}\bigcup_{r\in R^v}V_{\mathrm{conc}}^{v,r}.
\end{equation}

Our main result states that after composing $\vec \Phi_k$ with suitable Möbius transformations $\Xi_k$, the resulting graph 
\begin{equation}
	V=V'\cup V_{\mathrm{conc}}= V_{\mathrm{thick}}\cup V_{\mathrm{thin}}\cup V_{\mathrm{conc}}
\end{equation}
can be given the structure of a double tree, see \zcref{Caption 2} for an illustration, where each depicted edge points in the direction of the bubble which is immersed on a smaller scale in the sense of \eqref{scale equation in main theorem}.

\begin{thm}
	\label{thm:Asymptotic convergence}
	Suppose $p$ is a positive integer, $\Sigma$ is a closed, connected, oriented surface of genus $p$, $\vec \Phi_k\colon\Sigma\to \R^3$ is a sequence of weak immersions whose induced conformal classes are not contained in any compact subset of the moduli space, and
	\begin{equation}
		\lim_{k\to \infty}\mc W(\vec \Phi_k) = 8\pi. \label{energy threshold2}
	\end{equation}
	Then, after passing to a subsequence, there exist Möbius transformations $\Xi_k$ of $\R^3$ such that the oriented graph $G'=(V',E')$ associated to $\Xi_k\circ\vec\Phi_k$ as defined in Definitions \ref{def:Graph structure} and \ref{def:Graph structure for p equals 1} is given by $(T+T)/\sim$, where $T=(V_T,E_T)$ is a rooted tree with edges pointing away from the root and $p+1$ leaves given by the set $V_{\mathrm{thin}}$,
	\begin{equation}
		T+T=\{(v,i),\, v\in V_T,\,i\in\{1,2\}\}
	\end{equation}
	is the disjoint union of two copies of $T$, and $(c,1)\sim(c,2)$ identifies the leaves $c\in V_{\mathrm{thin}}\subset V_T$. There are exactly two concentration points $V_{\mathrm{conc}}=\{S_1,S_2\}$ and, denoting the root of $T$ with $\omega$, the graph $G=(V,E)$ is obtained from $G'$ by adding the two edges $(S_1,(w,1))$ and $(S_2,(\omega,2))$. For each vertex $v\in V$, there exist an immersion $\vec\Psi^v\in \mc F_{\mathbb S^2}$ of the Riemann sphere $\mathbb S^2=\hat{\mathbb C}$, finitely many points $Q^v \cup R^v\subset \hat \C$, sequences of scales $s_k^v>0$, points $y_k^v\in\R^3$, as well as charts $\varphi_k^v\in C^\infty(\mathbb C\setminus (Q^v\ \cup R^v),\Sigma)$ such that for each $k\in\N$, set of images $\{\operatorname{im}\varphi_k^v,\,v\in V\}$ is disjoint and   
	\begin{equation}
		(s_k^v)^{-1} (\Xi_k \circ \vec \Phi_k \circ \varphi^v_k - y^v_k) \to \vec \Psi^v
			\quad\text{in $W^{2,2}_{\loc}(\C \setminus (Q^v\ \cup R^v);\R^3)$}.\label{convergence in main theorem}
	\end{equation}
	Moreover, $\vec\Psi^v$ immerses a flat plane for $v\in V_{\mathrm{thick}}$, a catenoid for $v\in V_{\mathrm{thin}}$, and a round sphere for $v\in V_{\mathrm{conc}}$. The scales satisfy
	\begin{align}
		(s^{(v,1)}_k, y^{(v,1)}_k) &= (s^{(v,2)}_k, y^{(v,2)}_k) \qquad \quad \text{for $v\in V_T$},  \label{same scale and position for two bubbles}\\
		(s^{S_1}_k, y^{S_1}_k) &= (s^{S_2}_k, y^{S_2}_k)=(1,0)\label{same scale and position for two spheres}
	\end{align}
	and, if $e = (v,w)  \in E$ is an edge from $v$ to $w$, then
	\begin{equation}
		\limk \frac{s^{v}_k}{s^{w}_k} = \infty.\label{scale equation in main theorem}
	\end{equation}
	Finally, the varifolds induced by $\Xi_k\circ\vec\Phi_k$ converge to two intersecting round spheres.
\end{thm}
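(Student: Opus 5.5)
The proof proceeds in four stages: compactness without Möbius renormalisation, an energy count that pins down the bubble tree, the choice of the $\Xi_k$, and the identification of the bubbles.

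\emph{Compactness.} Apply Deligne--Mumford compactness to the Poincaré metrics of $c(\vec\Phi_k)$ (Sections \ref{subsec:Mumford-Deligne compactness} and \ref{subsec:diverging conformal classes torus}). This decomposes $\Sigma$ into thick parts and collars. On the collars, use the Green function estimates of \cite{LaurainRiviereGreenFunction} to split each collar into slabs carrying energy and neck regions. On each vertex $v\in V'$, rescale by $(s^v_k)^{-1}$ and extract weak $W^{2,2}_{\loc}$ limits $\vec\Psi^v\in\mc F_{\mathbb S^2}$ away from finitely many ends and branch points $R^v$, using \zcref{thm:thick part convergence} and \zcref{prop:Thin part bubble neck decomposition}. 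At each $r\in R^v$, apply the bubble-neck decomposition of \cite{BernardRiviere}, which produces the concentration bubbles $V_{\mathrm{conc}}$. The energy quantization of \cite{LaurainRiviereEnergyQuantization} (no Willmore energy is lost in the necks) then gives
\[
8\pi=\lim_{k\to\infty}\mc W(\vec\Phi_k)=\sum_{v\in V}\mc W(\vec\Psi^v)-(\text{correction }4\pi\cdot\#\text{ends}),
\]
where each $\vec\Psi^v$ is counted after inversion at its ends, so that it becomes a closed branched surface.

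\emph{Energy count.} A vertex at which the compactified limit is non-minimal contributes at least $4\pi$. A minimal vertex (planar ends) contributes nothing to $\mc W$ but forces compact bubbles at larger scales. The key input is \zcref{thm:equality-in-Li-Yau}. Any limit passing through a point of multiplicity $2$ at total energy $8\pi$ attains equality in \eqref{eq:intro:LY}, so its inversion at that point is stationary. Together with \zcref{prop: no density 2}, this excludes true branch points and bounds the density at infinity of every minimal bubble by $2$. Since genus is carried by the degenerate collars, the minimal bubbles therefore have total curvature $-4\pi$ with two embedded ends, i.e.\ are catenoids or planes (\zcref{lem:one catenoid}).

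\emph{Choice of $\Xi_k$ and structure of the graph.} Choose the Möbius transformations $\Xi_k$ so that the macroscopic varifold limit is not a single point; concretely, normalise the diameter and invert at a point away from the concentration. Then the largest scale carries nonzero energy. The count $8\pi=4\pi+4\pi$ allows at most two non-minimal compact bubbles, and each is a $W^{2,2}$ limit of energy $4\pi$, hence a round sphere. These are $S_1,S_2$, at scale $(1,0)$, which gives \eqref{same scale and position for two spheres}.

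Every other vertex must be minimal. Vertices with one end are planes, placed at thick parts. Vertices with two ends are catenoids, placed at the slabs of $V_{\mathrm{thin}}$. The graph must have first Betti number equal to $p$, because genus-$p$ degeneration occurs only through cycles of necks in the energy budget. With two sphere roots, this forces the double tree $(T+T)/\sim$ whose $p+1$ leaves are catenoids; each catenoid connects the two sheets, i.e.\ the two copies $(v,1)$ and $(v,2)$.

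\emph{Scales and main obstacle.} The scale relation \eqref{scale equation in main theorem} follows from the bubble-neck ordering. The equality \eqref{same scale and position for two bubbles} holds because a plane bubble on sheet 1 is paired, through the catenoid below it, with one on sheet 2 at the same scale and position. The main obstacle is the global combinatorics: showing that each catenoid joins exactly the two sheets and that no additional plane or sphere vertices appear. For this I would use Li--Yau at the intersection points together with the genus formula for the nodal surface.
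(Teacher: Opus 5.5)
There is a genuine gap. The step that carries the theorem is producing exactly two round spheres through a suitable choice of $\Xi_k$, and your proposal does not establish it.

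\textbf{The two spheres and the choice of $\Xi_k$.} The energy identity you take from \cite{LaurainRiviereEnergyQuantization} is not available here. It rests on $\varepsilon$-regularity for (constrained) Willmore surfaces, whereas $\vec\Phi_k$ are arbitrary weak immersions and energy may be lost in the necks. One only has $\liminf_k\mc W(\vec\Phi_k)\ge\sum_{v}\mc W(\vec\Psi^v)$, so the matching lower bound must be earned by actually exhibiting the spheres.

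Even granting an identity, the count ``$8\pi=4\pi+4\pi$'' presupposes the answer. A single inverted catenoid of type \ref{IC2}, or an \ref{IC1} bubble together with one sphere, satisfies every constraint you list. These configurations genuinely arise from the same sequence under other Möbius normalisations (Types 1 and 2 of \zcref{prop:Choice of inversions}), and their macroscopic limit is non-degenerate. So normalising the diameter and inverting ``away from the concentration'' does not exclude them.

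The paper's mechanism is specific:
\begin{enumerate}
\item It inverts at points $p_k$ converging, on the scale of a thick-part bubble $(\omega,1)$, to a regular point of $\vec\Psi^{(\omega,1)}$. This turns $\vec\Psi^{(\omega,1)}$ into a plane whose only end lies at a point of $R^{(\omega,1)}$, so every node in $Q^{(\omega,1)}$ is a branch point.
\item A bubble ascent from that end yields a closed bubble $S_1$.
\item A bubble descent through a node down to a catenoid, followed by an ascent out of its other end, yields $(\omega,2)$ and $S_2$. Revisiting a bubble is excluded using \eqref{yk scale for bubble descent}, \eqref{yk scale for bubble ascent}, the energy budget, and injectivity of the plane $\vec\Psi^{(\omega,1)}$.
\item Further inversions $I_{Q_0}\circ I_{P_k}$ are then needed to place both spheres in $V_{\mathrm{conc}}$ at the common scale $(1,0)$, since a priori their scales are not comparable.
\end{enumerate}
For $p=1$ there are no thick parts, so such a plane must first be created by inverting near the middle of a neck. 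You also never upgrade weak to strong $W^{2,2}_{\loc}$ convergence. The paper obtains this only a posteriori, from the two spheres exhausting the Willmore energy.

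\textbf{Other steps asserted without proof.} Several other claims need ingredients you do not mention.
\begin{itemize}
\item \emph{Genus on the collars.} ``Genus is carried by the degenerate collars'' is exactly where the Willmore conjecture enters. If $\mc M=\mc N+1$, then $G'$ is a tree and all genus sits on thick parts. This is excluded only because $\omega^3_p>8\pi$, see \eqref{eq:intro:MND}, which uses $\beta^3_1=2\pi^2$: for instance, two genus-one thick parts joined by one collar cost at least $4\pi+2(\beta^3_1-4\pi)$.
\item \emph{Classifying the bubbles.} Only after this does $G'$ contain a cycle, whose smallest-scale bubble has two ends and is a catenoid. Inverting that catenoid to $8\pi$ is what forces every other bubble to be a plane, a catenoid, or an inversion of one, and forces thick parts to have genus $0$.
\item \emph{Catenoids on thin parts.} That catenoids sit on thin rather than thick parts needs the $12\pi$ argument of \zcref{lemma:no catenoids}.
\item \emph{Edge orientation.} The orientation in \eqref{scale equation in main theorem} does not follow from the nesting of domains. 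For example, $S_1$ is nested in the domain of $(\omega,1)$ yet lives on a larger image scale. The orientation comes from the sign of $m(e)$ via \zcref{lem: ends and branch points}. That lemma in turn needs the almost-logarithmic control of the conformal factor in necks without $\varepsilon$-regularity, \zcref{lem:almost pointwise control conformal factor}.
\item \emph{The combinatorics you defer.} These are the core of the proof. The count $\#C=\#E'-\#V'+2=p+1$ comes from matching each edge of $G'$ with exactly one end. Further bubbles are excluded because $\lim_k\mc E(\vec\Phi_k)=8\pi(p+3)$ is exhausted by two spheres and $p+1$ catenoids. The pairing $(v,1)\leftrightarrow(v,2)$ with equal scales and positions comes from comparing leaf sets $D_v$ in the two descent trees, evaluating \eqref{yk scale for bubble descent} at two distinct leaves, and using injectivity of the plane bubbles. ``Li--Yau at intersection points'' does not supply any of this.
\end{itemize}
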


\begin{remark}
	If $\vec \Phi_k$ is a sequence of possibly constrained Willmore surfaces with uniformly bounded Lagrange multipliers, that is, $\vec\Phi_k\in \mc S_p^3$ solves
	\begin{equation}
		\Delta_{g_k}H_k+2H_k(H_k^2-K_k)-2\alpha_k H_k - \beta_k-\gamma_kK_k=0
	\end{equation}
	for some sequences of real numbers $\alpha_k,\beta_k,\gamma_k$ with
	\begin{equation}
		\limsup_{k\to\infty}(|\alpha_k|+|\beta_k|+|\gamma_k|)<\infty,
	\end{equation}
	then, by the $\varepsilon$-regularity \cite{BernardWheelerWheeler}, see also \cite[Theorem 1.1]{ScharrerWestEnergyQuantization}, the sequence \eqref{convergence in main theorem} converges in $C^l_{\loc}(\C\setminus (Q^v \cup R^v);\R^3)$ for each $l\in\N$.
\end{remark}

Note that the ambient space is the Euclidean three space. This particular codimension one setting was used twice in the proof of \zcref{thm:Asymptotic convergence}. Firstly, with \eqref{eq:intro:MND}, we applied the resolution of the Willmore conjecture \cite{MarquesNeves} which is unknown in higher codimensions. Secondly, we used the classification result \cite{SchoenUniqueness} that complete, embedded, minimal surfaces in $\R^3$ with two ends are catenoids or union of planes. This result fails in higher codimensions, considering the fact that holomorphic curves in $\C^2$ such as
\[\{(z,1/z), \, z\in \C \setminus \{0\}\}\subset \C^2 \cong \R^4\]
are minimal surfaces in $\R^4$.

The structure of the graph $G$ in \zcref{thm:Asymptotic convergence} results from the particular choice of Möbius transformations $\Xi_k$. The full classification of possible graphs independent of the chosen Möbius transformation is given in \zcref{sec:Inversions}.

Finally, we point out the following interesting open problem. Suppose $\tilde{\Sigma}$ is a nodal surface as described in \zcref{subsec:Mumford-Deligne compactness}. Define
\begin{equation}
	\beta(\tilde{\Sigma}) \cqq \inf \liminf _{k\to \infty} \mc W(\vec \Phi_k), \label{new minimization problem for fixed nodal surface}
\end{equation}
where the infimum is taken over all sequences $\vec \Phi_k$ in $\mc E_{\Sigma}$ that have $\tilde{\Sigma}$ as their nodal surface. Our main result implies that if $\tilde{\Sigma}$ does not have the specific double tree structure as described in \zcref{thm:Asymptotic convergence}, then $\beta(\tilde{\Sigma}) > 8\pi$. Is it possible to determine the value $\beta(\tilde\Sigma)$ and to do a similar bubbling analysis?

\begin{figure}
	\begin{minipage}[c]{.48\textwidth}  
		\centering  
		\input{catenoids_scale.tex}  
	\end{minipage}%
	\hfill 
	\begin{minipage}[c]{0.48\textwidth}  
		\centering  
		\input{tree_structure.tex}
	\end{minipage}
	\par
	\begin{minipage}[t]{.48\textwidth}
		\vspace{-2.7em}
		\caption{After composing with Möbius transformations, the immersions $\vec \Phi_k$ resemble two spheres connected by $p+1$ catenoids. The positions and scales of the catenoids are determined by the structure of the nodal surface.} 
		\label{Caption 1}  
	\end{minipage}%
	\hfill
	\begin{minipage}[t]{.48 \textwidth}  
		\caption{The bubble graph structure $G=(V,E)$ from \zcref{def:Graph structure} in the setting of \zcref{thm:Asymptotic convergence}. Depicted are the bubbles $\vec \Psi^v$ for $v\in V$ together with the edges $E$. } 
		\label{Caption 2}  
	\end{minipage}  
\end{figure}

\subsection{Applications to sequences of constrained Willmore minimizers} \label{subsec:Applications to minimizers}
For $\vec\Phi\in\mathcal E_\Sigma$, we define the \emph{area $\mc A$, algebraic volume $\mc V$, isoperimetric ratio $\mathcal I$, total mean curvature $\mc M$, and normalized total mean curvature $\mc T$} by
\begin{gather}
	\mc A(\vec \Phi) \cqq \int _{\Sigma} 1 \dif{\mu}, \quad \mc V(\vec \Phi) \cqq -\frac{1}{3} \int _{\Sigma} \langle \vec n, \vec \Phi\rangle \dif{ \mu},\quad \mc I(\vec \Phi) \cqq \frac{\mc A(\vec \Phi)}{\mc V(\vec \Phi)^{2/3}}, \\
	\mc M(\vec \Phi) \cqq \int _{\Sigma} H \dif{ \mu}, \quad \mc T(\vec\Phi)\cqq \frac{\mc M(\vec\Phi)}{\sqrt{\mc A(\vec\Phi)}}.\label{are, volume and total mean curvature}
\end{gather}
Given a sequence $\vec \Phi_k$ in $\mc E_\Sigma$ with uniformly bounded Willmore energy, we define the set of macroscopic bubbles $V_{\textrm{macro}}$ to consist of all $v\in V$, where the scale $s_k^v$ defined in \zcref{def:Graph structure} satisfies $\lim_{k\to\infty}s_k^v>0$. As a consequence of our bubble analysis, we recover the following continuity properties from \cite[Theorem~1]{ChenLi}.
\begin{thm}\label{thm:bubble convergence for extrinsic quantities}
	Suppose $\mc F \in \{\mc A, \mc V, \mc M\}$, $p$ is a positive integer, $\Sigma$ is a closed, connected, oriented surface of genus $p$, and $\vec\Phi_k\in\mc E_{\Sigma}$ satisfies
	\begin{equation}
		\sup_{k\in \N} \bigg [\mc W(\vec \Phi_k) + \mc A(\vec \Phi_k) \bigg ]< \infty.\label{bounded Willmore aaaaand area}
	\end{equation}
	Then, after passing to a subsequence,
	\begin{equation}
		\limk \mc F(\vec \Phi_k) = \sum_{v\in V_{\mathrm{macro}}} \mc F(\vec \Psi^v). \label{bubble convergence of area, volume, and total mean curvature}
	\end{equation}
\end{thm}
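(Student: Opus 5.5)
We decompose $\Sigma$ into the pieces of the bubble-neck decomposition that underlies \zcref{thm:Asymptotic convergence}: thick parts, thin parts with their energy slabs and necks, and small discs around the concentration points. Here we use only the general graph structure of \zcref{sec:Compactness theorems in diverging conformal classes}, which holds under the bounded-energy assumption \eqref{bounded Willmore aaaaand area}, and not the $8\pi$ classification. Each $\mc F\in\{\mc A,\mc V,\mc M\}$ is an integral over $\Sigma$ of a local density:
\[
\dif\mu,\qquad -\tfrac13\langle\vec n,\vec\Phi\rangle\dif\mu,\qquad H\dif\mu .
\]
We therefore split $\mc F(\vec\Phi_k)$ into contributions from the images $\operatorname{im}\varphi_k^v$ of the charts, $v\in V$, and a remainder coming from neck regions and small neighbourhoods of $Q^v\cup R^v$. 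No Möbius transformations are applied, since $\mc A,\mc V,\mc M$ are not Möbius invariant; we keep the original sequence and use translations only. The volume is translation invariant because $\int\vec n\dif\mu=0$ on closed surfaces, so translations are harmless.

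\textbf{Step 1: bubble contributions.} On compact sets $K\subset\C\setminus(Q^v\cup R^v)$, the convergence \eqref{convergence in main theorem} is a strong $W^{2,2}_{\loc}$ convergence with conformal factors bounded above and below. The densities then rescale as follows: area scales like $(s_k^v)^2$, $\langle\vec n,\vec\Phi-y\rangle\dif\mu$ like $(s_k^v)^3$ after recentering, and $H\dif\mu$ like $s_k^v$. Hence the contribution over $\varphi_k^v(K)$ converges to $\mc F(\vec\Psi^v|_K)$ if $\lim_k s_k^v>0$, and to $0$ otherwise. The translation term $y_k^v$ in the volume is an issue only for pieces that are not closed; it is absorbed into the remainder estimates below. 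Letting $K$ exhaust $\C\setminus(Q^v\cup R^v)$ then gives $\mc F(\vec\Psi^v)$, using $\mc W(\vec\Psi^v)<\infty$ and $\mc A(\vec\Psi^v)<\infty$ for macroscopic $v$. The latter follows from Fatou and \eqref{bounded Willmore aaaaand area}; planar ends or catenoidal ends would have infinite area and hence cannot occur at positive scale.

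\textbf{Step 2: remainder estimates.} This step is the main obstacle. We must show that the total contribution from necks and small discs is $o(1)$ as $k\to\infty$ followed by a shrinking parameter $\delta\to0$. For $\mc M$ we use Cauchy--Schwarz on each region $U$:
\[
\Bigl|\int_U H\dif\mu\Bigr|\le \mc W(\vec\Phi_k|_U)^{1/2}\,\mc A(\vec\Phi_k|_U)^{1/2}.
\]
This reduces $\mc M$ to $\mc A$, since $\mc W$ is uniformly bounded. For $\mc V$, we bound $|\langle\vec n,\vec\Phi-y\rangle|\le \diam(\vec\Phi_k(U))$ after subtracting a suitable point $y$. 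The resulting boundary correction involves $\int_U\vec n\dif\mu$, which is bounded by the area of $U$. Combined with Simon's diameter bound $\diam\le C\sqrt{\mc A\,\mc W}$, this again reduces $\mc V$ to area control.

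It thus remains to prove no area loss: the area of the necks and small discs tends to zero in the double limit. On necks, the $L^{2,\infty}$-control \eqref{L2 weak estimate on neck regions} of the conformal factor, together with the energy quantization of \cite{BernardRiviere,LaurainRiviereEnergyQuantization}, shows that the immersion is nearly conformal to a union of annuli whose image is squeezed between two scales. We would first try to bound the neck area by $C(\text{outer scale})^2$ times the ratio estimate $\eta(\delta)\to0$ coming from the decay of $e^{\alpha}$ across the neck. For pieces adjacent to non-macroscopic bubbles the scale itself tends to $0$. For necks adjacent to macroscopic bubbles, we use the monotonicity formula of Simon (varifold area bound $\mu(B_r(x))\le Cr^2\mc W$). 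The image of a small disc around $q\in Q^v\cup R^v$, together with everything bubbling off inside it, lies in a ball of radius $o_\delta(1)\cdot s_k^v$, up to the part escaping to infinity. The escaping parts are again controlled by the diameter bound, since their total area is bounded.

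Summing the bubble contributions and the remainders and letting $k\to\infty$ and then $\delta\to0$ yields \eqref{bubble convergence of area, volume, and total mean curvature}.
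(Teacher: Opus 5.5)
Your architecture matches the paper's. The bubble domains converge, $\mc M$ is reduced to area by Cauchy--Schwarz, and $\mc V$ is reduced to area through the $L^\infty$ normalization \eqref{Simon diameter estimate}, so everything rests on the area of the necks vanishing. That decisive step, however, is only gestured at, and the part of your Step~2 that goes beyond a sketch is wrong.

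You claim that a small parameter disc around $q\in Q^v\cup R^v$, \emph{together with everything bubbling off inside it}, lies in a ball of radius $o_\delta(1)s^v_k$ ``up to the part escaping to infinity''. This fails for two reasons:
\begin{itemize}
\item After \eqref{Simon diameter estimate}, nothing escapes to infinity.
\item Scales are not monotone along a concentration tree. Take a fixed torus joined to a unit sphere by a shrinking catenoidal neck. In the parametrization of the macroscopic torus bubble $v$, the sphere belongs to $V^{v,r}_{\mathrm{conc}}$ and sits inside an arbitrarily small parameter disc around $r$. Yet it lies at distance $O(1)$ from $\vec\Psi^v(r)$ and carries area bounded below. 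Its contribution must be counted in the sum over $V_{\mathrm{macro}}$, and ``their total area is bounded'' gives no smallness.
\end{itemize}
In addition, energy quantization controls $\nabla\vec n$ on necks, not area, so it is not the relevant input.

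What closes the gap, and what the paper does, is the following. The complement of the bubble domains $B(v,\alpha)$ is just the finitely many necks $\Omega^{\alpha,\beta}_k(e)$, $e\in E$, so there is no separate small-disc term. For each edge, \zcref{lem: ends and branch points} gives \eqref{diameter estimate 1} and \eqref{diameter estimate 2}: the neck area is at most $C(s^w_k)^2\alpha^{2(|m(e)|-1/3)}$. Here $w$ is the larger-scale endpoint of $e$, and $C$ is independent of $k$ and $\alpha$.

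This decay rests on $m(e)\in\Z\setminus\{0\}$ from \zcref{lem:almost pointwise control conformal factor}. Across the neck, $|x|e^{\lambda}$ behaves like $|x|^{m\pm 1/3}$ up to bounded factors, so the area is geometrically concentrated at the end next to the larger bubble. Moreover, every scale is bounded: $\mc A(\vec\Psi^v_k|_K)\le (s^v_k)^{-2}\mc A(\vec\Phi_k)$, and the left side tends to $\mc A(\vec\Psi^v|_K)>0$. Hence every neck has vanishing area as $\alpha\to0$, uniformly in $k$. No case split into macroscopic and non-macroscopic neighbours is needed; in any case the relevant scale is that of the larger endpoint, not ``the'' adjacent bubble.

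Your monotonicity argument is valid but redundant. Placing the neck in a small ball already uses the diameter half of \eqref{diameter estimate 1}, and its area half finishes the job directly.

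A minor point: in Step~1 you invoke the strong convergence \eqref{convergence in main theorem}, which requires the $8\pi$ hypothesis. Here only weak $W^{2,2}_{\loc}$ convergence is available, but it suffices. With conformal factors bounded on compacts, $\nabla\vec\Phi$ and $\vec n$ converge strongly, and $\int H\dif\mu=\frac12\int\langle\Delta\vec\Phi,\vec n\rangle\dif x$ is a weak--strong pairing.
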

In a series of works \cite{Schygulla,KMR,MondinoScharrerInequality,ScharrerDelaunay, KusnerMcGrath}, it was shown that for all $\sigma \in (\sqrt[3]{36\pi}, \infty)$ there exists a smooth embedded minimizer $\vec \Phi^{\mc I}_{p, \sigma}$ of $\mc W$ among all $\vec\Phi\in \mc S_p^3$ with $\mc I(\vec\Phi)=\sigma$. By the invariance of $\mc W$ and $\mc I$ under dilations, we may assume $\mc A(\vec \Phi^{\mc I}_{p, \sigma})=1$.

\begin{cor}\label{cor:Bubbles in space iso problem}
	Suppose $p$ is a positive integer, $\sigma_k$ is a sequence of real numbers, and $\lim_{k\to\infty}\sigma_k=\infty$. Then, $\vec \Phi_{p,\sigma_k}^{\mathcal I}$ satisfies the hypotheses of \zcref{thm:Asymptotic convergence} and for the induced graph $G=(V,E)$ of \zcref{thm:Asymptotic convergence} there holds $V_{\mathrm{macro}} = \{S_1, S_2\}$, where $\vec \Psi^{S_1}, \vec \Psi^{S_2}$ immerse round spheres of opposite orientations whose images coincide.
\end{cor}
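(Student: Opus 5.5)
We need to prove two things: that the hypotheses of \zcref{thm:Asymptotic convergence} hold for $\vec\Phi_k\cqq\vec\Phi^{\mc I}_{p,\sigma_k}$, and that the macroscopic bubbles are exactly two coinciding spheres of opposite orientation.

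\textbf{Step 1: the Willmore energies tend to $8\pi$.} For the upper bound we use a comparison surface. Take two concentric round spheres of nearly equal radii and join them by $p+1$ small catenoidal necks, as in the constructions of \cite{KMR,ScharrerDelaunay}. This gives surfaces of genus $p$ with Willmore energy $8\pi+o(1)$. Since the enclosed volume is a thin shell, the volume tends to $0$ at fixed area, so $\mc I\to\infty$. By continuity of $\mc I$ along such a family, every large $\sigma$ is attained. Hence
\[
\limsup_{k\to\infty}\mc W(\vec\Phi_k)\le 8\pi.
\]
For the lower bound we use $\mc A=1$ and $\mc V(\vec\Phi_k)=\sigma_k^{-3/2}\to0$. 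By Simon's diameter bound, after translation the images lie in a fixed ball. If $\liminf_k\mc W(\vec\Phi_k)<8\pi$, then by \eqref{eq:intro:compact_moduli} the conformal classes would stay in a compact subset of the moduli space. The compactness of \cite{KuwertLi,RiviereCrelle} would then give a weak $W^{2,2}$ limit that is an embedded branched genus-$p$ surface. That surface has positive area and, as an embedding, positive enclosed volume. This contradicts $\mc V\to0$ via the continuity of \zcref{thm:bubble convergence for extrinsic quantities}.

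\textbf{Step 2: the conformal classes degenerate.} By the same argument, if the conformal classes stayed in a compact set, the limit would be a genuine (possibly branched) genus-$p$ surface. At energy at most $8\pi$, and with the Li--Yau bound \eqref{eq:intro:LY}, its volume must be nonzero, again a contradiction. I expect this to be the most delicate point. The worry is that a limit could be a doubly covered surface with zero volume. I would rule this out with \zcref{thm:equality-in-Li-Yau}: density $2$ everywhere together with $\mc W=8\pi$ forces stationarity after inversion, and under $8\pi$ energy that leaves only doubled spheres. A doubled sphere is genus $0$, which is incompatible with a nondegenerate genus-$p$ limit. Having established Steps 1 and 2, \zcref{thm:Asymptotic convergence} applies.

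\textbf{Step 3: identifying the macroscopic bubbles.} Here we need to control the Möbius transformations $\Xi_k$. The area normalization makes the original sequence bounded, and I would argue that $\Xi_k$ may be taken to converge to a similarity. Concretely, the varifold limit of $\vec\Phi_k$ has mass $1$ and Willmore energy at most $8\pi$. By \zcref{thm:equality-in-Li-Yau} and Step 2, the non-compactness forces a density-$2$ point, so this varifold limit is itself two spheres and no inversion is needed. The catenoid scales tend to $0$, and the thick planar parts have scales much smaller than the spheres by \eqref{scale equation in main theorem}. Therefore $V_{\mathrm{macro}}=\{S_1,S_2\}$.

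Finally, the limiting area is $\mc A(\vec\Psi^{S_1})+\mc A(\vec\Psi^{S_2})=1>0$, while the limiting volume satisfies
\[
\mc V(\vec\Psi^{S_1})+\mc V(\vec\Psi^{S_2})=\lim_{k\to\infty}\mc V(\vec\Phi_k)=0.
\]
Write $r_1,r_2$ for the radii and $\pm$ for the orientation signs, so that $\pm r_1^3\pm r_2^3=0$. Because the two spheres intersect, this forces opposite orientations and $r_1=r_2$. Two intersecting round spheres of equal radius must then be shown to coincide. For this I would use minimality: if the spheres merely intersected without coinciding, a neighbouring construction (a nested shell) would give strictly lower energy at the same $\sigma_k$. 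Alternatively, one can use that the limit is a density-$2$ varifold whose volume $\tfrac{4\pi}{3}(r_1^3-r_2^3)$ with a common centre vanishes, which pins down the centres as equal. This is the step I expect to need the most care.
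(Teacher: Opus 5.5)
Your proposal has two genuine gaps, one in Step 2 and one in Step 3.

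Step 1 is essentially fine, though it can be shortened. The minimizers already satisfy $\mc W<8\pi$. Once the conformal classes are known to diverge, \eqref{eq:intro:compact_moduli} together with $\omega^3_p>8\pi$ gives $\lim_k\mc W=8\pi$.

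\textbf{Step 2.} Your argument assumes that with bounded conformal classes the macroscopic limit is a genus-$p$ surface. That need not hold: the scale of the thick part $\sigma_1$ may tend to $0$ while all the area sits in bubbles of $V_{\mathrm{conc}}$. For example, two oppositely oriented round spheres of equal radius in $V_{\mathrm{conc}}$ have zero volume and energy $8\pi$, and the genus can hide in a vanishing-scale $\vec\Psi^{\sigma_1}$. So ``a doubled sphere is genus $0$'' gives no contradiction: a bounded conformal class constrains the parametrized limit on $\sigma_1$, not the macroscopic varifold. Two further points are off. Zero volume does not give density $2$ everywhere. Li--Yau equality does not leave only doubled spheres, since \ref{IC2} is possible.

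The paper's \zcref{lem:diverging conformal classes isoperimetric ratio} handles this with a case split.
\begin{enumerate}
\item If there is a single macroscopic bubble, it has no ends (bounded area) and zero volume, hence is not embedded, hence attains Li--Yau equality. It is then either \ref{IC2}, excluded by $\mc V=0$, or a degree-two branched cover, excluded as in \zcref{prop: no density 2}.
\item If there are two macroscopic bubbles, both are spheres in $V_{\mathrm{conc}}$. Then $\vec\Psi^{\sigma_1}$ is a complete minimal surface of positive genus with two ends. This is impossible by the argument of \zcref{lem:one catenoid}: Schoen's theorem if embedded, Riemann--Hurwitz plus \zcref{prop: no density 2} otherwise.
\end{enumerate}

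\textbf{Step 3.} It is false that degeneration forces a density-two point in the varifold limit of the area-normalized sequence. By \zcref{prop:Choice of inversions} (Types 1 and 2), such sequences can converge to a single \ref{IC2} or a single round sphere. So you cannot conclude that $\Xi_k$ is asymptotically a similarity. The paper uses exactly this classification: $V_{\mathrm{macro}}$ of $\vec\Phi_k$ is either one bubble of type \ref{S} or \ref{IC2}, or two spheres.

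The scalar volume also cannot finish the proof. The volume of a round sphere is $\pm\frac{4\pi}{3}r^3$ regardless of its centre. Hence $\mc V(\vec\Psi^{S_1})+\mc V(\vec\Psi^{S_2})=0$ yields equal radii and opposite orientations, but never equal centres. Your ``common centre'' remark is circular, and the minimality comparison is unsupported.

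The missing idea is to pass to the limit in currents.
\begin{itemize}
\item Since the $\vec\Phi_k$ are embedded, $c(V^{\mathrm{o}}_{\vec\Phi_k})=-\partial c(\Omega_k)$.
\item BV-compactness and $|\Omega_k|=\mc V(\vec\Phi_k)\to0$ give $\chi_{\Omega_k}\to 0$ in $L^1$.
\item Combined with the oriented varifold convergence \eqref{varifold convergence result}, this yields $\sum_{v\in V_{\mathrm{macro}}}c(V^{\mathrm{o}}_{\vec\Psi^v})=0$.
\end{itemize}
This identity excludes a single sphere or \ref{IC2}. For two spheres it forces $\partial[\![B_{r}(x_1)]\!]=\partial[\![B_{r}(x_2)]\!]$, so the images coincide with opposite orientations.
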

Kuwert--Li \cite{KuwertAsymptoticsSmallIso} proved a similar statement for the genus zero case.

In \cite{MasterThesis}, the first and third author showed that for all $\tau\in(0,\sqrt{8\pi})\setminus\{\sqrt{4\pi}\}$ there exists a smooth embedded minimizer $\vec \Phi^{\mc T}_{p, \tau}$ of $\mc W$ among all $\vec\Phi\in\mc E_\Sigma$ with $\mc T(\vec\Phi)=\tau$. By the invariance of $\mc W$ and $\mc T$ under dilations, we may assume $\mc A(\vec \Phi^{\mc T}_{p, \tau})=1$. 

\begin{cor}\label{cor:bubbles in space T problem}
	Suppose $p$ is a positive integer, $\tau_k$ is a sequence in $(0,\sqrt{8\pi})\setminus \{\sqrt{4\pi}\}$, and $\lim_{k\to\infty}\tau_k=\sqrt{8\pi}$. Then, $\vec \Phi_{p,\tau_k}^{\mathcal T}$ satisfies the hypotheses of \zcref{thm:Asymptotic convergence} and for the induced graph $G=(V,E)$ of \zcref{thm:Asymptotic convergence} there holds $V_{\mathrm{macro}} = \{S_1, S_2\}$, where $\vec \Psi^{S_1}, \vec \Psi^{S_2}$ immerse round spheres of equal radii and orientations whose images intersect in a single point. 
\end{cor}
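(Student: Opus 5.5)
We have to prove Corollary \ref{cor:bubbles in space T problem}. The proof has three steps: verify the hypotheses of \zcref{thm:Asymptotic convergence}, pass to the limit in the constraint via \zcref{thm:bubble convergence for extrinsic quantities}, and identify the macroscopic bubbles through the equality case of Minkowski's inequality for spheres.

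\emph{Hypotheses.} Write $\vec\Phi_k\cqq\vec \Phi^{\mc T}_{p,\tau_k}$, normalized so that $\mc A(\vec\Phi_k)=1$. By Cauchy--Schwarz,
\begin{equation}
	\mc M(\vec\Phi_k)^2\le \mc A(\vec\Phi_k)\,\mc W(\vec\Phi_k),
\end{equation}
which gives $\mc W(\vec\Phi_k)\ge \tau_k^2\to 8\pi$. For the upper bound, I would build competitors in the class $\mathcal E_\Sigma$ with $\mc T=\tau$ close to $\sqrt{8\pi}$ and $\mc W\le 8\pi+o(1)$. The candidates are two equal round spheres of the same orientation touching at one point, joined by $p+1$ small catenoidal necks (or a gluing of $p$ handles onto such a configuration); for these, $\mc T\approx 2\cdot 4\pi/\sqrt{2\cdot 4\pi}=\sqrt{8\pi}$. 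A continuity argument in the neck size then adjusts $\mc T$ to exactly $\tau_k$. Alternatively, the upper bound $\limsup\mc W\le 8\pi$ may already appear in \cite{MasterThesis}, and I would cite it if available. Hence $\mc W(\vec\Phi_k)\to 8\pi$.

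\emph{Degeneration of conformal classes.} Suppose the conformal classes stayed in a compact set. Then, by the standard compactness theory with $\mc W\to8\pi<\omega^3_p$ and the area normalization, a subsequence converges, after reparametrization and modulo possible bubbling at points, to a genus-$p$ weak immersion $\vec\Phi$ with $\mc T=\sqrt{8\pi}$ and $\mc W\le8\pi$. Equality in Cauchy--Schwarz then forces $H$ to be constant with $\mc W=8\pi$. But there is no compact genus-$p$ surface with $\mc W=8\pi$ and constant $H$ attaining these values: by Minkowski-type and Alexandrov arguments a CMC embedded-type limit would be a sphere. Bubbling in this case requires extra care, since energy could split into a genus-$p$ limit plus spheres, and $\beta^3_p>4\pi$ rules this out. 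I expect this to be the delicate part, and I would handle it with \zcref{thm:bubble convergence for extrinsic quantities}: applied to the limiting configuration, it forces every component to have proportional constant $H$, and hence to be a sphere by Willmore's equality case. Since the genus-$p$ component cannot be a sphere, the classes must degenerate.

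\emph{Identification of the bubbles.} Now apply \zcref{thm:Asymptotic convergence}. Since $\mc A=1$, every bubble except those at unit scale, namely $S_1$ and $S_2$, has vanishing scale: the planes and catenoids sit at scales tending to zero relative to the spheres. This gives $V_{\mathrm{macro}}=\{S_1,S_2\}$, provided the sphere scale $1$ corresponds to a macroscopic scale in the area normalization. To ensure this, I would choose $\Xi_k$ as homotheties-plus-translations wherever possible; if $\Xi_k$ involves a genuine inversion, the area normalization could fail. I would argue that otherwise the macroscopic limit would be a single sphere, giving $\mc T\to\sqrt{4\pi}\neq\sqrt{8\pi}$. \zcref{thm:bubble convergence for extrinsic quantities} then yields
\begin{equation}
	1=\mc A_1+\mc A_2,\qquad \sqrt{8\pi}=\mc M_1+\mc M_2,
\end{equation}
where $\mc M_i=\pm\sqrt{4\pi\mc A_i}$. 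Maximizing $\sqrt{\mc A_1}+\sqrt{\mc A_2}$ under $\mc A_1+\mc A_2=1$ gives at most $\sqrt2$, with equality only when both signs are positive and $\mc A_1=\mc A_2=\tfrac12$. This yields equal radii and equal orientations. Finally, the varifold limit consists of two intersecting spheres; two distinct equal spheres meet in a circle unless they are tangent. A non-tangent intersection would, by \zcref{thm:Asymptotic convergence}, be the configuration whose necks are compatible with opposite orientations, like in Corollary \ref{cor:Bubbles in space iso problem}. So with equal orientations, the two spheres touch at a single point.
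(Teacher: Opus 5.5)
Your derivation of $\mc A_1=\mc A_2=\tfrac12$ with both signs positive is correct; it is the paper's signed-radius computation in other variables. The lower bound $\mc W\ge\tau_k^2$ via Cauchy--Schwarz is also what the paper uses.

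\textbf{The final step has a genuine gap.} \zcref{thm:Asymptotic convergence} only says that the varifold limit is two intersecting round spheres. It gives no information about tangency of the sheets at the catenoidal necks, or about how orientations must match there. Nothing in the weak-immersion framework provides a no-neck property for the Gauss map that would supply this. So your claim that ``a non-tangent intersection would be the configuration with opposite orientations'' is unsupported. Your argument also never excludes two \emph{coincident} spheres of the same orientation, which satisfy $r_1=r_2>0$ and give $\mc T=\sqrt{8\pi}$ just as well.

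The missing idea is embeddedness. The existence theory for $\vec\Phi^{\mc T}_{p,\tau}$ gives $\mc W(\vec\Phi_k)<8\pi$ for every $k$. Your competitor sketch only aims at $\limsup_k\mc W\le 8\pi$, which would not suffice here.
\begin{itemize}
\item By Li--Yau, each $\vec\Phi_k$ is then an embedding bounding a domain $\Omega_k$, and its oriented varifold induces the current $-\partial c(\Omega_k)$.
\item Passing to the limit with \eqref{varifold convergence result} and $BV$-compactness gives \eqref{macroscopic limit still boundary of set of finite perimeter}: $c(V^{\mathrm o}_{\vec\Psi^{S_1}})+c(V^{\mathrm o}_{\vec\Psi^{S_2}})=-\partial c(\Omega)$ for a set of finite perimeter $\Omega$.
\item For equal orientations, the constancy theorem forces $\chi_{B_1}+\chi_{B_2}=\chi_\Omega$ a.e.\ for the two enclosed balls. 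Hence $|B_1\cap B_2|=0$.
\item Since the intersection is nonempty, only external tangency remains. This rules out both the circle and the coincident configuration.
\end{itemize}

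\textbf{The degeneration step is incomplete.} You tacitly assume the genus-$p$ piece is macroscopic. In a bounded conformal class, the macroscopic bubbles could be two round spheres in $V_{\mathrm{conc}}$ while the thick part $\sigma_1$ lives on a vanishing scale. Then ``the genus-$p$ component cannot be a sphere'' yields nothing. The paper closes this case as follows. $\vec\Psi^{\sigma_1}$ then has $\mc W=0$, positive genus and at most two planar ends. An embedded such surface would be a plane or catenoid (Schoen), hence of genus $0$. A non-embedded one would carry a branch point of order $2$ by Riemann--Hurwitz, contradicting \zcref{prop: no density 2}.

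In the one-macroscopic-bubble case, constant $H$ with $\mc W=8\pi$ is not a Willmore equality case. You need Alexandrov's theorem when the bubble is embedded and unbranched, and \zcref{thm:equality-in-Li-Yau} otherwise.

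\textbf{The identification step skips the inversion classification.} The $\Xi_k$ may contain genuine inversions, and $\mc T$ is not M\"obius invariant. Your assertion ``otherwise the macroscopic limit is a single sphere'' is exactly what \zcref{prop:Choice of inversions} (and its $p=1$ analogue) provides, applied to the area-normalized sequence. That classification lists three options: one sphere, one inverted catenoid of type $(IC_2)$, or two spheres. You omitted the $(IC_2)$ case. It is excluded because an inverted catenoid does not have constant mean curvature, so Cauchy--Schwarz gives $\mc T<\sqrt{8\pi}$ strictly.
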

A similar result can also be proved in the genus $0$ case by the same methods. As the spherical case does not fit well into the setting of this article, we will omit it here.

\subsection{Structure of the article}
\begin{enumerate}[label = $\bullet$]
	\item In \zcref{sec:Compactness theorems in diverging conformal classes}, we recall compactness theorems as well as $L^{2,\infty}$-bounds for the conformal factors of sequences of immersions with degenerating conformal classes. 
	Subsequently, we define the underlying graph structure $G=(V,E)$ of bubbles forming along the sequence.
	\item In \zcref{sec:Scales and positions}, we show that branch points of order $m$ are attached to ends of order $-m$. This leads to one of our central techniques: the notions of bubble descent and bubble ascent.
	
	\item In \zcref{sec:Proof main theorem}, we prove \zcref{thm:Asymptotic convergence}. A key observation is that due to \zcref{thm:equality-in-Li-Yau}, no true branch points exist, see \zcref{prop: no density 2}.
	\item In \zcref{sec:Inversions}, we classify the way in which Möbius transformations affect the bubble graph from \zcref{thm:Asymptotic convergence}.
	\item In \zcref{sec:Applications to minimizers of several problems}, we study several examples of constrained Willmore minimizers.
\end{enumerate}

\section{Compactness theorems in diverging conformal classes} \label{sec:Compactness theorems in diverging conformal classes}

We denote by $\mb D$ the two-dimensional, open unit disk. $B_r(x)$ denotes the open ball in $\R^n$ around $x\in \R^n$ with radius $r$ and if $x=0$, we simply write $B_r$.

Suppose $(X,\mu)$ is some measure space. We define the Lorentz space $L^{p,\infty}(X)$ for $p\in (1,\infty)$ as the space of all measurable functions $f$ such that the quasi-norm
\begin{equation}
    |f|_{L^{p,\infty}(X)} \cqq \left (\sup_{t>0}  t^p \mu(\{x,\,|f(x)|>t\})\right )^{\frac{1}{p}}\label{Lorentz space}
\end{equation}
is finite. $|\cdot|_{L^{p,\infty}(X)}$ is equivalent to a norm $\|\cdot \|_{L^{p,\infty}(X)}$ on $L^{p,\infty}(X)$ with respect to which $L^{p,\infty}(X)$ becomes a Banach space, see \cite[Exercise 1.4.3, 1.1.12]{Grafakos}.
\subsection{Almost logarithmic behavior of the conformal factor in neck regions}\label{subsec:Almost log bounds}
The following lemma describes the behavior of the conformal factor in neck regions\footnote{In \cite{LaurainRiviereEnergyQuantization}, neck regions are degenerating annuli with no energy concentration on dyadic annuli such that the immersion is extended \emph{throughout the interior disk of the annulus} and their nonextendable counterpart is called collar region. This distinction is important for the study of residues. However, since residues do not play a role in this work, we will simply refer to both as neck regions.}. Equation \eqref{weakened version of pointwise gradient estimate} is a weaker statement than the conclusion in \cite[Lemma V.3]{BernardRiviere}. However, we do not assume smallness of $\|\nabla \vec n\|_{L^{2,\infty}}$ in the neck region. This assumption would be fulfilled whenever some type of $\eps$-regularity, see \cite{RiviereAnalysisAspects,BernardWheelerWheeler}, is satisfied, for example, when working with constrained or unconstrained Willmore surfaces. In this case, the $\eps$-regularity is first applied locally in the neck region to get uniform bounds of the form $|\nabla \vec n(x)| \leq \eps/|x|$, which then implies the $L^{2,\infty}$-estimate. In our case however, we work with general weak immersions and cannot apply any $\eps$-regularity.
 
 \begin{lemma}[Almost logarithmic behavior of the conformal factor in neck regions] \label{lem:almost pointwise control conformal factor}
  Let $\Lambda>0$ and $\delta>0$. There exist $\eps=\eps(\Lambda,\delta)>0$, $\alpha_0 = \alpha_0(\Lambda, \delta) \in (0,1)$, and $Q = Q(\Lambda,\delta)>1$ depending only on $\Lambda$ and $\delta$ with the following property. Suppose $R>r>0$, $R/r > Q$, and $\vec \Phi\in \mc E_{B_R \setminus B_r}$ is a weak, conformal immersion with conformal factor $\lambda$ such that
 \[  \|\nabla \lambda\|_{L^{2,\infty}(B_{R}\setminus B_{r})}\leq \Lambda\]
  and
 \[\int _{B_{2s}\setminus B_{s}} |\nabla \vec n|^2 \dif x < \eps\quad \text{for all $s \in (r, R/2)$}.\]
 Then, there exist $m\in \Z\setminus \left \{0\right \}$, $ A \in \R$, and a constant $C(\Lambda,\delta)$ depending only on $\Lambda$ and $\delta$ such that
\begin{equation}
|\lambda(x) - \lambda(y)- (m-1) \log(|x|/|y|) | \leq   \delta |\log(|x|/|y|)| + C(\Lambda, \delta)\label{weakened version of pointwise gradient estimate}
\end{equation}
for all $x,y \in B_{\alpha_0 R }\setminus B_{\alpha_0^{-1}r}$. Additionally, $|m|\leq C(\Lambda, \delta)$.
 \end{lemma}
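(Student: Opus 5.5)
We split $\lambda$ into a harmonic part on the annulus and a part controlled by the Liouville equation. In conformal coordinates, $-\Delta\lambda = K e^{2\lambda}$, and by Hélein/Rivière the right-hand side has Jacobian structure:
\[
K e^{2\lambda}\,dx = \langle \partial_{x_1}\vec e_1,\partial_{x_2}\vec e_2\rangle - \langle \partial_{x_2}\vec e_1,\partial_{x_1}\vec e_2\rangle
\]
for a suitable moving frame $(\vec e_1,\vec e_2)$ with $|\nabla \vec e_i|\lesssim|\nabla\vec n|$.

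\textbf{Step 1: dyadic decomposition and the frame.} Work on dyadic annuli $A_j=B_{2^{j+1}r}\setminus B_{2^j r}$. On each annulus (or on overlapping pairs) the smallness $\int_{A_j}|\nabla\vec n|^2<\eps$ lets us pick a Coulomb frame with $\|\nabla \vec e\|_{L^2}\lesssim\sqrt\eps$. We cannot expect $\int_{B_R\setminus B_r}|\nabla \vec n|^2$ to be small globally; only each dyadic piece is small.

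\textbf{Step 2: solve locally by Wente.} On each $A_j$, solve $-\Delta\mu_j = K e^{2\lambda}$ with zero boundary data. Wente's inequality gives $\|\mu_j\|_{L^\infty}+\|\nabla\mu_j\|_{L^2}\lesssim \eps$. Then $h_j\cqq\lambda-\mu_j$ is harmonic on $A_j$.

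\textbf{Step 3: the global harmonic comparison.} This is the main obstacle. We need one harmonic function on the whole annulus, not one per piece. I would try the following.

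\emph{Constructing $\mu$.} Use a partition of unity $\chi_j$ subordinate to the double annuli and set $\mu=\sum_j \Delta^{-1}(\chi_j K e^{2\lambda})$, solved on $\R^2$ after extension. Each piece has $L^\infty$ norm and gradient $L^{2,\infty}$ norm $O(\sqrt\eps)$ near its support. The catch is that each piece also decays only like $c_j\log|x|$ away from its support, with $c_j=\frac1{2\pi}\int\chi_j Ke^{2\lambda}$. So it is better to put these logarithmic tails into the harmonic part.

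\emph{Expanding the harmonic part.} Write $\lambda=h+\mu$, where $h$ is harmonic on the annulus, and expand
\[
h=\beta\log|x|+\sum_{n\neq0}(a_nz^n+\bar a_n\bar z^n)+\text{const}.
\]
The flux $\beta(\rho)=\frac1{2\pi}\int_{\partial B_\rho}\partial_\nu\lambda$ varies with $\rho$ by the total curvature enclosed between the circles. By Gauss--Bonnet on the annulus, this equals the variation of the geodesic curvature integral, which relates to the turning number of the tangent along circles. Since $\vec n$ oscillates little on each circle (by smallness on dyadic annuli), the turning number is an integer $m$, giving $\beta(\rho)\approx m-1$ up to $\delta$.

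\emph{Controlling the non-radial modes.} The $L^{2,\infty}$ bound on $\nabla\lambda$ restricts the modes $a_n$: growing and decaying modes are negligible away from the boundaries. This is where $\alpha_0$ enters, because the interior region $B_{\alpha_0R}\setminus B_{\alpha_0^{-1}r}$ makes $\sum_n |a_n|\rho^{|n|}$ bounded by $C(\Lambda)\alpha_0^{\,|n|}$.

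\textbf{Step 4: the integer $m$.} Integrality comes from the degree of the tangent map $\partial_{x_1}\vec\Phi/|\cdot|$ on the circle $\partial B_\rho$, relative to a frame. Near-constancy of $\vec n$ on dyadic annuli, via $\eps$-smallness and Poincaré/Wente, makes this degree well defined. It equals $1+\rho\,\partial_\rho$ of the average of $\lambda$ up to $O(\sqrt\eps)$.

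\textbf{Step 5: the drift and the constants.} Summing these per-annulus errors over the number of dyadic annuli between $x$ and $y$ gives an error of $\delta\,|\log(|x|/|y|)|$, once $\eps$ is chosen small relative to $\delta$. A bounded additive error $C(\Lambda,\delta)$ comes from the oscillation on single annuli. The $L^{2,\infty}$ bound on $\nabla\lambda$ then forces $|m-1|\lesssim\Lambda$, so $|m|\le C(\Lambda,\delta)$. Finally, $m\neq0$ follows because $m=0$ would make the tangent degree zero on a circle of a conformal immersion, which is impossible: the degree of $\partial_z\vec\Phi$ winding is at least one. $Q$ is chosen so that the interior region is nonempty and several dyadic annuli fit inside it.
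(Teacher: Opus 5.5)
\paragraph{The step giving $m\neq0$ is wrong.} Your route differs from the paper's, which argues by contradiction. The paper blows up at intermediate scales and uses the Bernard--Rivi\`ere Harnack estimate to extract a flat conformal limit on $\C\setminus\{0\}$ with $|\nabla\lambda|\le C/|x|$. It identifies this limit as $\frac{A}{m}z^m$ and then freezes $m$ across dyadic scales by an intermediate value argument. Your skeleton can be made to work: Wente for the curvature part, decay of non-radial harmonic modes away from the ends, and quantization of the flux $\beta(\rho)=\rho\,\underline\lambda'(\rho)$ through the turning number of image circles.

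But there is no topological lower bound on the tangent degree of image circles of a conformal immersion. The lemma itself must allow $m\le -1$: for $\vec\Phi(z)=1/z$ the turning number is $-1$, and these are exactly the ends used later in the paper. Turning number $0$ also occurs. Take $c=\tfrac{i}{2}j_{0,1}$, where $j_{0,1}$ is the first zero of $J_0$. Then $f'(z)=z^{-1}e^{c(z+1/z)}$ has $z^{-1}$-coefficient $I_0(ij_{0,1})=J_0(j_{0,1})=0$. So $f$ is a single-valued holomorphic immersion of $\C\setminus\{0\}$ into a plane whose image circles all have turning number $0$. On any fixed annulus it satisfies both hypotheses; only on long annuli does $\|\nabla\lambda\|_{L^{2,\infty}}$ blow up.

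What excludes $m=0$ is analytic, and this is the missing idea. You need single-valuedness, $\int_{\partial B_\rho}\partial_\theta\vec\Phi\, d\theta=0$, combined with the interior structure. In the interior region, $R/r>Q$ and the $L^{2,\infty}$ bound kill the non-radial modes, so $\lambda$ and $\rho\,\partial_\rho\lambda$ are nearly constant on good circles. The tangent angle is then nearly $m\theta+\mathrm{const}$. For $m=0$ this makes $\partial_\theta\vec\Phi$ close to a fixed nonzero vector along the circle, so its period cannot vanish. In the paper this is the remark that $m=0$ would give $A\log z$ in the limit.

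\paragraph{Step 3 must be restructured.} A harmonic $h$ on the annulus has a constant logarithmic coefficient, so the drifting flux you describe has to sit in $\mu$. But you only have local control of $\nabla\mu$. The accumulated monopole terms, of size $\frac{1}{2\pi|x|}\bigl|\int_{B_{|x|}\setminus B_r}Ke^{2\lambda}\bigr|$, and the sum of the local Wente pieces are not a priori bounded in $L^{2,\infty}$ uniformly in $R/r$, since no global bound on $\int|\nabla\vec n|^2$ is assumed. So the Laurent coefficients of $h=\lambda-\mu$ are not controlled by $\|\nabla\lambda\|_{L^{2,\infty}}\le\Lambda$ as you claim. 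Moving the tails into $h$ instead destroys harmonicity.

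The consistent version separates the circle average:
\begin{itemize}
\item Treat $\lambda-\underline\lambda$ by Wente plus mode decay, using only the local mean-value bound $|\nabla(h-\underline h)(x)|\le C/|x|$ obtained on balls $B_{|x|/2}(x)$.
\item Treat $\underline\lambda$ by flux quantization, restricted to good circles chosen by Fubini. On these, $\operatorname{osc}\vec n\lesssim\sqrt\eps$ and $\int_{\partial B_\rho}|\nabla\lambda|\lesssim\Lambda$. The second bound is what controls $\int|k_g|\,ds$, so the error is $C(1+\Lambda)\sqrt\eps$ rather than $O(\sqrt\eps)$.
\item Show that the integer is the same on all good circles. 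Between good circles in adjacent dyadic annuli the flux changes by at most $\frac{1}{4\pi}\int|\nabla\vec n|^2\lesssim\eps$, so the nearest integer cannot jump. As written, your proposal produces one integer per circle and Step 5 silently assumes they agree.
\end{itemize}

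With these repairs your approach is more quantitative than the paper's compactness argument. It even gives $|\lambda-\underline\lambda|\le C(\Lambda)(\alpha_0+\sqrt\eps)$ in the interior. The cost is redoing the Harnack estimate and the rigidity of the blow-up limit by hand.
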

 The proof is moved to Appendix \hyperref[sec:Proof of Lemma]{A}.

 

\subsection{Mumford--Deligne compactness for higher genus} \label{subsec:Mumford-Deligne compactness}
Let us first consider the higher genus case, i.e., $p\geq 2$. The case $p=1$ will be a corollary of the more general analysis for $p\geq 2$. A reference for the statements of this section is \cite{Gromov, Imayoshi}, particularly \cite[Proposition 5.1]{Gromov}. We also follow the notation from \cite{LaurainRiviereGreenFunction}. Suppose that $(\Sigma, c_k)$ is a Riemann surface of genus $p\geq 2$. From the uniformization theorem, we know that $\Sigma$ admits a hyperbolic structure $h_k$, i.e., a (unique) metric $h_k$ satisfying $K_{h_k}\equiv -1$. 

By \cite[Lemma 4.1]{Gromov}, there are at most $3p-3$ simple closed geodesics $\gamma^i_{k}$ in $(\Sigma, h_k)$ of length $\ell(\gamma^i_k, h_k)$ less than $2\arsinh(1)$. Each of these geodesics is contained in precisely one connected component of ${\left \{x\in (\Sigma, h_k), \, \injrad(\Sigma, h_k, x) < \arsinh(1)\right \}}$. Such a connected component is called a \emph{thin part}. Here, $\injrad(\Sigma, h_k, x)$ is the injectivity radius of $(\Sigma, h_k)$ around $x$\footnote{For a Riemannian manifold $(\Sigma,h)$, the injectivity radius around $x\in \Sigma$ is defined to be 
 \[\injrad(\Sigma,h,x) \cqq \sup \left \{r>0,\, \exp_x \vert _{B(0,r)}\text{ is a diffeomorphism}\right \}, \] where $\exp$ denotes the exponential map.}. More precisely, any such component $U$ contains exactly one geodesic $\gamma^i_k$ of length $\ell(\gamma^i_k, h_k)<2\arsinh(1)$ and $U$ is isometric to
 \[\left \{z\in \mb H,\,  d_{\mb H}(z, e^{\ell(\gamma^i_k, h_k)} z) < 2\arsinh(1)\right \}/\langle z\mapsto e^{\ell(\gamma^i_k, h_k)}z\rangle.\]
Here, $\mb H$ is the upper half plane and $d_{\mb H}$ is the hyperbolic distance in $\mb H$.

A sequence $(\Sigma, c_k)$ diverges to the boundary of the moduli space if and only if there is at least one simple closed geodesic $\gamma^i_k$ such that
\[\ell(\gamma^i_k, h_k) \to 0\quad\text{as $k\to \infty$}.\]
After taking subsequences, let $\mc N$ be the number of simple closed geodesics whose lengths go to 0 as $k\to \infty$. We define $\tilde{\Sigma}$ to be the topological surface obtained by removing the geodesics $\gamma^i_k$ for $i\in \left \{1,\ldots, \mc N\right \}$ from $\Sigma$ and gluing two points $q^{i}_1$, $q^{i}_2$ to each of the newly obtained boundaries of $\Sigma \setminus \bigcup _{i=1}^{\mc N}\gamma^i_k$. We denote by $Q\cqq \left \{q^1_1,\ldots, q^{\mc N}_1, q^1_2,\ldots , q^{\mc N}_2\right \}$ the added points and by $\sigma^j\subset \tilde{\Sigma}$, $j\in \left \{1,\ldots, \mc M\right \}$ the connected components of $\tilde{\Sigma}$ and call the $\sigma^j$ the \emph{thick parts}. We denote by
\begin{equation}
V_{\mathrm{thick}} \cqq \{\sigma^j, \, j\in \{1,\ldots, \mc M\}\}\label{thick parts sigma j}
\end{equation}
the set of thick parts. As a consequence of Euler's formula, it holds
\begin{equation}
    p = \mc N+1 - \mc M + \sum_{j=1}^{\mc M} \genus(\sigma^j) \label{sum of genera},
\end{equation}
so in particular $\mc M \leq \mc N+1$. Furthermore, it follows from the Gauss--Bonnet theorem that 
\begin{equation}
  2\genus(\sigma^j) +   \#(Q \cap \sigma^j) = 2 - \chi(\sigma^j \setminus Q) \geq 3. \label{number of Q in v}
\end{equation}
 Furthermore, there are diffeomorphisms $\psi_k\colon\tilde{\Sigma} \setminus Q \to \Sigma \setminus \bigcup _{i=1}^{\mc N} \gamma^i_k$ such that $\tilde{h}_k \cqq \psi_k^* h_k$ converges in $C^\infty_{\loc}(\tilde{\Sigma} \setminus Q)$ to some complete, hyperbolic metric $\tilde{h}$. $\psi_k^{-1}$ extend to continuous maps from $(\Sigma, h_k)$ to $\tilde{\Sigma}/\sim$, where $q^i_1\sim q^i_2$ for $i\in\left \{1,\ldots, \mc N\right \}$ are identified, such that $\psi_k(\gamma^i_k) = \left \{q^i_1,q^i_2\right \}$. Around each point from $Q$, there is a punctured neighborhood in $(\tilde \Sigma, \tilde{h})$ isometric to a standard cusp $\left \{z\in \mb H, \, \Im z > \frac{1}{2}\right \}/\langle z\mapsto z+1\rangle $. In particular, the induced complex structure of $\tilde{h}$ extends uniquely to $\tilde{\Sigma}$. We denote the resulting compact Riemann surface by $(\bar{\Sigma}, \bar{c})$. We also equip $\bar{\Sigma}$ with a metric $\bar{h}$ of constant curvature on each connected component (so in particular, $\bar{h}$ need not be hyperbolic). $\tilde{\Sigma}$ is called the \emph{nodal surface} of the sequence $(\Sigma, c_k)$. 

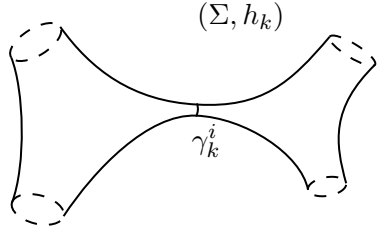
\begin{figure}[H] 
\centering 
\tikzset{every picture/.style={line width=0.75pt}} 

\begin{tikzpicture}[x=0.75pt,y=0.75pt,yscale=-1,xscale=1]

\draw    (118.76,137.26) .. controls (133.85,158.54) and (154.72,174.83) .. (187.32,176.36) ;
\draw    (94.25,152.56) .. controls (100.75,168.72) and (101.76,206.91) .. (94.64,225.76) ;
\draw  [dash pattern={on 4.5pt off 4.5pt}] (94.25,152.56) .. controls (92.27,149.39) and (96.15,143.39) .. (102.92,139.17) .. controls (109.68,134.94) and (116.78,134.09) .. (118.76,137.26) .. controls (120.74,140.44) and (116.86,146.43) .. (110.09,150.66) .. controls (103.32,154.88) and (96.23,155.73) .. (94.25,152.56) -- cycle ;
\draw  [dash pattern={on 4.5pt off 4.5pt}] (94.64,225.76) .. controls (95.64,221.92) and (102.23,220.31) .. (109.35,222.18) .. controls (116.48,224.04) and (121.44,228.66) .. (120.44,232.5) .. controls (119.43,236.34) and (112.84,237.94) .. (105.72,236.08) .. controls (98.59,234.22) and (93.63,229.6) .. (94.64,225.76) -- cycle ;
\draw    (120.44,232.5) .. controls (135.88,208.95) and (162.71,180.92) .. (186.64,181.94) ;
\draw    (187.32,176.36) .. controls (219.3,178.19) and (233.69,165.66) .. (253.25,142.24) ;
\draw    (186.64,181.94) .. controls (211.08,182.96) and (238.99,201.33) .. (242.45,219.23) ;
\draw  [dash pattern={on 4.5pt off 4.5pt}] (242.45,219.23) .. controls (242.02,216.68) and (245.92,213.88) .. (251.17,212.98) .. controls (256.43,212.08) and (261.04,213.42) .. (261.48,215.97) .. controls (261.91,218.53) and (258.01,221.32) .. (252.76,222.22) .. controls (247.5,223.12) and (242.89,221.78) .. (242.45,219.23) -- cycle ;
\draw  [dash pattern={on 4.5pt off 4.5pt}] (253.25,142.24) .. controls (254.27,140.6) and (260.16,142.42) .. (266.42,146.3) .. controls (272.68,150.19) and (276.92,154.67) .. (275.9,156.31) .. controls (274.88,157.95) and (268.99,156.13) .. (262.73,152.24) .. controls (256.48,148.35) and (252.23,143.88) .. (253.25,142.24) -- cycle ;
\draw    (261.48,215.97) .. controls (255.84,195.4) and (253.8,178.09) .. (275.9,156.31) ;
\draw [color={rgb, 255:red, 0; green, 0; blue, 0 }  ,draw opacity=1 ]   (186.59,182.4) .. controls (187.51,180.31) and (187.84,178.31) .. (187.01,175.81) ;

\draw (185.6,123.64) node [anchor=north west][inner sep=0.75pt]    {$( \Sigma , h_{k})$};
\draw (182.88,184.84) node [anchor=north west][inner sep=0.75pt]    {$\gamma^i_k $};

\end{tikzpicture}
\caption{In a neighborhood of a short simple closed geodesic $\gamma^i_k$, $(\Sigma, h_k)$ looks like a long thin cylinder.}
\end{figure}
\subsection{Analysis on the thick parts}\label{subsec:Analysis thick parts}
Laurain and Rivi\`ere \cite{LaurainRiviereGreenFunction} estimated the Green's function on surfaces with diverging conformal class and proved weak $L^2$-estimates for the gradient on a carefully chosen, finite atlas. They used this to prove a compactness result of immersions with $L^2$-bounded second fundamental form in this setting. We recall this statement\footnote{We slightly change the statement, namely we do not apply Möbius transformations to bound the image of the surface. We thus obtain surfaces possibly containing ends. The proof remains unchanged.}.
\begin{thm}[See {\cite[Theorem 0.3]{LaurainRiviereGreenFunction}}] \label{thm:thick part convergence}
Let $\Lambda > 0$. Suppose that $\Sigma$ is a closed surface of genus $p\geq 2$ and let $\vec \Phi_k\in \mc E_\Sigma$ be a sequence of weak, conformal immersions (conformal with respect to constant curvature metrics $h_k$ in the same conformal class as $\vec \Phi_k^* g_{\R^3}$) into $\R^3$ with
\begin{equation}
    \sup_{k\in \N} \mc E (\vec \Phi_k) < \Lambda. \label{bounded energies for phi k}
\end{equation}
Then, after passing to a subsequence, for any connected component $\sigma^j $ of the nodal surface $\tilde{\Sigma}$ associated to the sequence $(\Sigma, h_k)$, there exist a sequence $s^j_k \in \R_{>0}$, a finite set $R^j  \subset \sigma^j\setminus Q$, an arbitrary point $z^j \in \sigma^j \setminus (Q \cup R^j)$ and $y^j_k \cqq \vec \Phi_k \circ \psi_k(z^j)$ such that 
\begin{equation}
\vec \Psi^j_k \cqq (s^j_k)^{-1}  (\vec \Phi_k \circ \psi_k - y^j_k) \wto \vec \Psi^j \quad \text{in }W^{2,2}_{\loc}(\sigma^j \setminus (R^j\cup Q), \tilde{h};\R^3).\label{limit immersion on nodal part}
\end{equation}
Here, $Q$ is the set of punctures from Section \ref{subsec:Mumford-Deligne compactness}, $\vec \Psi^j$ is a conformal, branched weak immersion on $(\sigma^j,\overline{h})$ with branch points/ends around $Q\cup R^j$ and $\psi_k$ is the diffeomorphism satisfying $\psi_k^*h_k=\tilde{h}_k \to \tilde{h}$ in $C_{\loc}^\infty(\tilde{\Sigma}\setminus Q)$. 
\end{thm}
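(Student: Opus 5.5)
The plan is to follow the proof of \cite[Theorem 0.3]{LaurainRiviereGreenFunction}. Fix a thick part $\sigma^j$ and use the diffeomorphisms $\psi_k$ of \zcref{subsec:Mumford-Deligne compactness} to pull $\vec \Phi_k$ back to $\tilde\Sigma\setminus Q$. There $\tilde h_k\to\tilde h$ smoothly on compact sets. Write $(\vec\Phi_k\circ\psi_k)^*g_{\R^3}=e^{2\lambda_k}\tilde h_k$.

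\emph{Step 1: conformal factors.} Take a finite atlas of $\sigma^j$, built from $\tilde h$-geodesic disks in the compact core together with standard cusp charts around each $q\in Q\cap\sigma^j$. On this atlas, use the Green function estimates of \cite{LaurainRiviereGreenFunction} to obtain uniform bounds
\[
\|d\lambda_k\|_{L^{2,\infty}}\le C(\Lambda,p).
\]
The constant must not depend on $k$, even though the conformal class degenerates; this is the analogue of \eqref{L2 infinity estimate for fixed immersion}. The bound comes from the Liouville equation $-\Delta_{\tilde h_k}\lambda_k=K_ke^{2\lambda_k}-K_{\tilde h_k}$ combined with $\int|K_k|e^{2\lambda_k}\le C\Lambda$. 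The Green function of $\tilde h_k$ stays uniformly controlled on the thick part.

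\emph{Step 2: concentration points.} Define $R^j$ as the set of points of $\sigma^j\setminus Q$ where, after passing to a subsequence, the measures $|\nabla\vec n_k|^2\,dx$ carry at least $\varepsilon_0$ of mass in every neighborhood. The energy bound \eqref{bounded energies for phi k} makes $R^j$ finite.

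\emph{Step 3: local control away from $R^j\cup Q$.} Away from $R^j\cup Q$, Hélein's moving frame method gives, on small balls, a decomposition $\lambda_k=\mu_k+\nu_k$ with $\nu_k$ harmonic and $\|\mu_k\|_{L^\infty}+\|\nabla\mu_k\|_{L^2}$ bounded. The $L^{2,\infty}$-bound of Step 1 controls the oscillation of $\nu_k$. Hence $\lambda_k-\lambda_k(z^j)$ is locally bounded on $\sigma^j\setminus(R^j\cup Q)$, by a connectedness/chaining argument starting from the base point $z^j$.

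\emph{Step 4: normalization and limit.} Set $s_k^j\cqq e^{\lambda_k(z^j)}$ (in a suitable averaged sense) and $y^j_k=\vec\Phi_k\circ\psi_k(z^j)$. Then $\vec\Psi^j_k$ has conformal factor locally bounded above and below, and
\[
\Delta\vec\Psi^j_k=2e^{2(\lambda_k-\log s_k^j)}\vec H_k
\]
is bounded in $L^2_{\loc}$. Together with $\vec\Psi^j_k(z^j)=0$ this gives $W^{2,2}_{\loc}$ bounds, and we extract a weak limit $\vec\Psi^j$. Pointwise a.e.\ convergence of the conformal factors keeps the limit conformal and immersed with $\nabla\vec n\in L^2$. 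The $L^{2,\infty}$ gradient bound on $\lambda$ near each point of $R^j\cup Q$ shows that these are isolated branch points or ends, with the cusp charts treated as punctured disks of $(\bar\Sigma,\bar h)$. So $\vec\Psi^j\in\mc F_{\sigma^j}$.

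The footnote indicates that without a Möbius normalization the limit may have ends instead of being bounded. Nothing else in the argument changes: we never need the image to stay bounded, only the local normalization at $z^j$.

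The main obstacle is Step 1. The uniform $L^{2,\infty}$ estimate must hold up to the cusp neighborhoods while the collars degenerate, and this rests on the delicate Green function asymptotics of \cite{LaurainRiviereGreenFunction}. I would import those estimates directly rather than reprove them.
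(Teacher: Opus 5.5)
Your proposal is correct and takes the same route as the paper. The paper does not reprove the statement; it imports \cite[Theorem 0.3]{LaurainRiviereGreenFunction}, whose proof is what you sketch (uniform $L^{2,\infty}$ bounds on the conformal factors from the Green-function estimates on a well-chosen atlas, local control away from the concentration points, normalization at $z^j$, and weak $W^{2,2}_{\loc}$ compactness), and it observes, as you do, that dropping the Möbius normalization only permits ends in the limit.

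The one step you state loosely is the behavior at $Q$. Since $\tilde h_k\to\tilde h$ only away from $Q$, uniform $L^{2,\infty}$ control near $q\in Q$ is available in the flat collar chart $\chi_k$ (cf.\ \eqref{L2 weak estimate on neck regions}) rather than in a $\tilde h_k$-chart, and this is how the paper identifies the branch points and ends at $Q$ in \zcref{rem:Behavior of Psi v around Q v}.
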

Furthermore, by \cite[(25)]{LaurainRiviereGreenFunction}, the conformal factor $u_k$ given by
\[(\vec \Phi_k \circ \psi_k) ^* g_{\R^3} = e^{2u_k} \tilde{h}_k\]
satisfies
\begin{equation}
\sup _{k} \|d u_k\|_{L_{\tilde{h}_k}^{2,\infty}(K)}< C_K \quad\text{for all $K\Subset \sigma^j \setminus Q$}\label{bounded conformal factors in nodal region 1}
\end{equation}
and
\begin{equation}
\sup _{k} \|u_k - \log s^j_k\|_{L^{\infty}(K)}<  C_K \quad\text{for all $K\Subset \sigma^j \setminus (R^j \cup Q)$}.\label{bounded conformal factors in nodal region 2}
\end{equation}
Here, $C_K<\infty$ depends on $K$ and the sequence of immersions $(\vec \Phi_k)_k$\footnote{$C_K$ needs to depend $(\vec \Phi_k)_k$ as can be seen by choosing a sequence of immersions where curvature concentrates on a tiny, but positive scale, which is not detected by the points $R^j$.}.
\begin{remark}\label{rem:Behavior around points R v}
Choosing local, conformal coordinates around the points in $(Q \cap \sigma^j) \cup R^j$, we deduce from \zcref{lem: Muller Sverak variant} and \eqref{L2 infinity estimate for fixed immersion} that $\vec \Psi^j$ has either branch points or ends there. Notice however that the metrics $\tilde{h}$ and $\overline{h}$ are not equivalent around the points in $Q$. We will study the behavior around these points in more detail in \zcref{rem:Behavior of Psi v around Q v}.

\end{remark}
\subsection{Analysis on the thin parts}
\label{subsec:ThinPart}
From Section \ref{subsec:Mumford-Deligne compactness}, we know that each thin part in $(\Sigma, h_k)$ contains precisely one short geodesic $\gamma^i_k$ for $i\in \left \{1,\ldots, \mc N\right \}$ of length $l_k \cqq \ell(\gamma^i_k, h_k)$ such that $l_k\to 0$ as $k\to \infty$. The thin part is isometric to
\[\left \{z \in \mb H, \, d_{\mb H}(z, e^{l_k}z) < 2\arsinh(1)\right \}/\langle z\mapsto e^{l_k} z\rangle. \]
We denote $\phi_k \cqq \arcsin(\sinh({l_k}/2))$. Using the explicit description of the distance $d_{\mb H}$ on $\mb H$ from \cite[Lemma 4.7]{Gromov}, this is the set 
\[\left \{z = r e^{i \phi} \in \mb H,\, r \in [1,e^{l_k}], \phi \in \left (\phi_k , \pi - \phi_k\right )\right \}/\langle z\mapsto e^{l_k}z\rangle.\]
Via the map $\frac{2\pi}{{l_k}} \Log( e^{-i\phi_k} z)$, this is isometric to the cylinder
\begin{equation}
P_{k} \cqq \left [0, \frac{2\pi}{{l_k}}\left (\pi - 2\phi_k\right )\right ] \times \mb S^1,\label{long thin cylinder}
\end{equation}
equipped with the metric 
\begin{equation}
g_k \cqq \left (\frac{{l_k}}{2\pi \sin\left (\frac{{l_k}t}{2\pi} + \phi_k\right )}\right )^2 (dt^2 + d\theta^2).\label{metric ds2 on the thin cylinder}
\end{equation}
Based on these isometries, we will always view $(P_k, g_k)$ as subsets of $(\Sigma, h_k)$. So the thin parts are conformally equivalent to long cylinders $ [0, L_k]\times \mb S^1$ for $\frac{2\pi}{l_k}(\pi -2\phi_k) = L_k \to \infty$ as $k\to \infty$. Finally, we denote $A_{{k}} = \mb D\setminus B_{e^{-{L_k}}}$ equipped with the usual euclidean metric and with $\chi_k\colon A_{{k}} \to P_{k}$ the conformal diffeomorphism
\begin{equation}
\chi_k(re^{i\theta}) \cqq\left (\ln(r) + L_k, \theta\right ). \label{conformal diffeo from Ak to Pk}
\end{equation} 
Suppose $\vec \Phi_k \in \mc E_{\Sigma}$ is a sequence of weak, conformal immersions with $ \sup_{k\in \N} \mc E(\vec \Phi_k) < \Lambda$. In \cite[Theorem~0.2]{LaurainRiviereGreenFunction}, it was shown that if $(\vec \Phi_k \circ \chi_k) ^* g_{\R^3} = e^{2 \lambda_k} g_{\R^2}$ on $A_{{k}}$, then it holds
\begin{equation}
 \sup _{k} \|\nabla \lambda_k\|_{L^{2,\infty}(A_{{k}})} < C(\Lambda).\label{L2 weak estimate on neck regions}
\end{equation}
We need to identify the bubbles and necks in this neck region. Following the full bubble-neck decomposition, originally done in \cite[Proposition III.1]{BernardRiviere}, see also \cite[Proposition A.1]{ScharrerWestEnergyQuantization} and  \cite[Lemma~3.1]{LaurainRiviereEnergyQuantization}, the following simplified version holds.
\begin{prop}[\cite{BernardRiviere,LaurainRiviereEnergyQuantization, ScharrerWestEnergyQuantization}]
\label{prop:Thin part bubble neck decomposition}Let $\Lambda>0$ and $0<\eps< \frac{8\pi}{3}$. Suppose $L_k \to \infty$ and that $\vec \Phi_k\colon A_k = \mb D\setminus B_{e^{-L_k}}\to \R^3$ is a sequence of weak, conformal immersions in $\mc E_{A_k}$ satisfying 
\begin{equation}
\sup_{k\in \N} \bigg [\|\nabla \vec n_k\|_{L^2(A_k)}+\|\nabla \lambda_k\|_{L^{2,\infty}(A_k)}\bigg] < \Lambda,\label{assumptions for bubble neck decomposition}
\end{equation}
where $\lambda_k$ denotes the conformal factor of $\vec \Phi_k$. After passing to a subsequence, the following hold: There exists $\alpha_0>0$ such that
\begin{equation}
\lim_{k\to \infty} \sup \left \{ r \in (0, \alpha_0),\, \int _{B_{\alpha_0}\setminus B_r \cup B_{\frac{e^{-L_k}}{r}}\setminus B_{\frac{e^{-L_k}}{\alpha_0}}} |\nabla \vec n_k|^2 \dif x \geq \eps\right \}=0.\label{alpha 0 choice for thin part}
\end{equation}
There exist $N\in \N_0$ and radii $\alpha_0 = a^0_k > b^0_k > a^1_k > b^1_k > \ldots > a^N_k > b^N_k = \alpha_0^{-1} e^{-L_k} > e^{-L_k} = a^{N+1}_k$ such that
\begin{equation}
\lim_{k\to \infty}\frac{a^l_k}{b^l_k} = \infty \quad \text{for $l\in \left \{0, \ldots, N\right \}$},\quad \lim _{k\to \infty} \frac{b^l_k}{a^{l+1}_k} < \infty \quad\text{for $l\in \left \{0, \ldots, N-1\right \}$.}\label{ratios of the ai and bi}
\end{equation}
Furthermore,
\begin{equation}
\int _{B_{2r}\setminus B_{r}} |\nabla \vec n_k|^2 \dif x < \eps \quad\text{for all $l\in \left \{0,\ldots, N\right \}$ and $r \in (b^l_k,  a^l_k/2)$}\label{no concentration in neck regions}
\end{equation}
and
\begin{equation}
\int _{B_{b^l_k}\setminus B_{a^{l+1}_k}} |\nabla \vec n_k|^2 \dif x > \eps\quad\text{for all $l \in \left \{0,\ldots, N-1\right \}$.} \label{at least eps energy in each bubble}
\end{equation}
Furthermore, for $l\in \{0,\ldots, N-1\}$, there are $s^l_k \in \R_{>0}$, a finite set $R^l \subset \C \setminus \left \{0\right \}$, an arbitrary point $z^l \in \C \setminus (\left \{0\right \} \cup R^l)$ and $y^l_k \cqq \vec \Phi_k(a^{l+1}_k z^l)$ such that
\begin{equation}
\vec \Psi^{l}_k  \cqq (s^l_k)^{-1} \left (\vec \Phi_k\left ( a^{l+1}_k \cdot \right ) - y^l_k\right ) \wto \vec \Psi^l \quad\text{in $W^{2,2}_{\loc}(\C \setminus (\left \{0\right \} \cup R^l);\R^3)$}.\label{bubble convergence in neck regions}
\end{equation}
Here, $\vec \Psi^l$ are conformal, branched weak immersions on $\hat \C$ with branch points/ends at $\left \{0, \infty\right \}\cup R^l$. If $\lambda_k$ denotes the conformal factor of $\vec \Phi_k$ in $A_k$, then
\begin{equation}
\sup_{k\in \N} \| - \log s^l_k + \log a^{l+1}_k + \lambda_k (a ^{l+1}_k \cdot ) \|_{L^\infty(K)} < C_K \quad \text{for all $K \Subset \C \setminus (\{0\} \cup R^l)$}. \label{conformal factor estimate thin part}
\end{equation}
Here, $C_K < \infty$ depends on $K$ and the sequence of immersions $(\vec \Phi_k)_k$.
\end{prop}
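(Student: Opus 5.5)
The plan is to run the standard bubble--neck induction of \cite{BernardRiviere} on the degenerating annulus $A_k$, working with the logarithmic variable $t=\log|x|\in(-L_k,0)$. The annulus has length $L_k\to\infty$.

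First we choose $\alpha_0$ so that \eqref{alpha 0 choice for thin part} holds. For a fixed small $r$, the energy near the two boundary circles, $\int_{B_{\alpha_0}\setminus B_r}+\int_{B_{e^{-L_k}/r}\setminus B_{e^{-L_k}/\alpha_0}}|\nabla\vec n_k|^2$, defines two sequences of Radon measures in the variables $x$ and $e^{L_k}x$. Their total mass is bounded by $\Lambda^2$. After passing to a subsequence, each converges weakly. Each limit measure has at most finitely many atoms, and near $0$ the limit mass of $B_\rho\setminus\{0\}$ tends to zero as $\rho\to0$. We therefore pick $\alpha_0$ small enough that the limit mass of each punctured ball $B_{\alpha_0}\setminus\{0\}$ is below $\eps/2$. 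Weak convergence then forces the supremum in \eqref{alpha 0 choice for thin part} to tend to $0$.

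Next we define the radii. On the remaining region $B_{\alpha_0}\setminus B_{\alpha_0^{-1}e^{-L_k}}$ we look at the dyadic annuli $B_{2r}\setminus B_r$ carrying energy at least $\eps$. Call a sequence of radii $r_k$ with this property a concentration scale. Two concentration scales are identified when $r_k/r'_k$ stays bounded. Each class of concentration scales carries at least $\eps$ of energy on disjoint annuli, so there are at most $\Lambda^2/\eps$ classes, after a diagonal subsequence. We order the classes and set $a^{l+1}_k$ and $b^l_k$ to be the inner and outer radii of the smallest union of dyadic annuli comparable to the $l$-th class. This gives \eqref{ratios of the ai and bi}, \eqref{no concentration in neck regions} and \eqref{at least eps energy in each bubble}. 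If two consecutive classes were only a bounded ratio apart, we merge them; this keeps $b^l_k/a^{l+1}_k$ bounded. The gaps between distinct classes must diverge, since otherwise the two classes would coincide.

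Then we obtain the bubbles. On each bubble region we rescale by $a^{l+1}_k$, obtaining immersions $\vec\Phi_k(a^{l+1}_k\,\cdot)$ on domains exhausting $\C\setminus\{0\}$. Their $\|\nabla\lambda\|_{L^{2,\infty}}$ stays bounded by $\Lambda$, by scale invariance. Let $R^l$ be the finitely many points of $\C\setminus\{0\}$ where the rescaled energy measure has an atom of size at least $\eps$. Away from $R^l$ and $0$, we apply Hélein's moving frame on small discs with energy below $\eps<8\pi/3$. Together with the $L^{2,\infty}$ bound on $\nabla\lambda_k$, this yields oscillation bounds $\|\lambda_k-\bar\lambda_k\|_{L^\infty(K)}\le C_K$. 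We take $s^l_k\cqq a^{l+1}_ke^{\lambda_k(a^{l+1}_kz^l)}$, which gives \eqref{conformal factor estimate thin part}. The conformal-factor bound together with $\nabla\vec n_k\in L^2$ gives uniform $W^{2,2}_{\loc}$ bounds for $\vec\Psi^l_k$, and hence \eqref{bubble convergence in neck regions} follows by weak compactness. The limit $\vec\Psi^l$ has $\vec n\in W^{1,2}$ across the punctures. \zcref{lem: Muller Sverak variant} and \eqref{L2 infinity estimate for fixed immersion} then identify the punctures $\{0,\infty\}\cup R^l$ as branch points or ends.

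The main obstacle is the conformal-factor control on the bubble regions. We have no $\eps$-regularity and no Möbius normalization, so we must combine the $L^{2,\infty}$ bound \eqref{assumptions for bubble neck decomposition} with local Hélein estimates carefully. In particular, we must show that the local means $\bar\lambda_k$ on different discs of a compact set differ by bounded amounts. We would do this by chaining overlapping discs and using the oscillation bound in $L^{2,\infty}$ on connected compact subsets of $\C\setminus(\{0\}\cup R^l)$.
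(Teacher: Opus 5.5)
Your proposal is correct and follows the same route as the paper, which does not prove this proposition itself but cites the standard bubble--neck decomposition of \cite{BernardRiviere} (see also \cite[Proposition A.1]{ScharrerWestEnergyQuantization}): you fix $\alpha_0$ via weak limits of the energy measures at both ends, group the $\eps$-concentration scales into boundedly many clusters with divergent gaps, and control the conformal factor on each bubble region by Hélein's moving frame on discs of energy below $\eps<8\pi/3$, combined with the scale-invariant $L^{2,\infty}$-bound and chaining over connected compact sets. Only cosmetic slips need fixing: the inner end is handled most cleanly via the conformal inversion $x\mapsto e^{-L_k}/x$, which maps $A_k$ to itself, rather than "near $0$" in the variable $e^{L_k}x$; and a finite limit measure can have countably many atoms, although only the finitely many of mass $\ge\eps$ matter for your argument.
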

The regions $B_{a^l_k}\setminus B_{b^l_k}$ are called \emph{neck regions}. For each thin part isometric to $P_k = P^i_k$ and conformally equivalent to $A^i_k$ from \eqref{long thin cylinder} around a degenerating geodesic $\gamma^i_k$ with length $\ell(\gamma^i_k, h_k) \to 0$, we can consider the conformal reparametrization $\vec \Phi_k \circ \chi^i_k$ on $A^i_k$, whose conformal parameter $\lambda^i_k$ satisfies
\begin{equation}
\sup_i \sup_{k\in \N} \|\nabla \lambda^i_k\|_{L^{2,\infty}(A^i_k)} \qqc  \Lambda' < C(\Lambda) . \label{Lambda prime choice}
\end{equation} 
Let $\eps = \eps(\Lambda', 1/3)$ and $\alpha_0 = \alpha_0(\Lambda', 1/3)$ be the constants from \zcref{lem:almost pointwise control conformal factor}. We may choose $\alpha_0$ even smaller such that \eqref{alpha 0 choice for thin part} holds for all thin parts of which there are only finitely many. We apply \zcref{prop:Thin part bubble neck decomposition} with $\eps$ to find $N_i\in \N_0$ bubbles inside the thin part $A^i_k$. Without loss of generality, we may also assume that $\frac{b^l_k}{a^{l+1}_k} < \alpha_0^{-1}$ for all $l \in \{0,\ldots, N_i-1\}$ by \eqref{ratios of the ai and bi}. We let
\[V_{\mathrm{thin}} \cqq \left \{(i,l), \, i\in \left \{1,\ldots, \mc N\right \}, \, l \in \left \{0,\ldots, N_i-1\right \}\right \}\]
be the set of bubbles in the thin parts and define
\begin{equation}
V' \cqq V_{\mathrm{thick}} \cup V_{\mathrm{thin}},\label{thick and thin parts}
\end{equation}
where $V_{\mathrm{thick}}$ was defined in \eqref{thick parts sigma j}.

\begin{remark}[Behavior of $\vec \Psi^j$ around $Q^j$]\label{rem:Behavior of Psi v around Q v}
In \Cref{thm:thick part convergence}, the limiting immersions $\vec \Psi^j$ for $ \sigma^j\in V_{\mathrm{thick}}$ were considered. Using the parametrization from the thin part, we will study the behavior of $\vec \Psi^j$ around $q$.
%
%
%
 Without loss of generality, there is $i\in \left \{1,\ldots, \mc N\right \}$ with $q = q^i_1$\footnote{If needed, we reorder $q^i_1$ and $q^i_2$. This corresponds to using the parametrization $\hat{\chi^i_k}(re^{i\theta}) = \chi^i_k(e^{-L_k}(re^{i\theta})^{-1}) = (-\ln(r), -\theta)$ instead of \eqref{conformal diffeo from Ak to Pk}.}, so we can work with the map $\chi_k = \chi^i_k$ from \eqref{conformal diffeo from Ak to Pk}. We need to compare the two different parametrizations $\psi_k\colon\tilde{\Sigma}\to \Sigma$ from \zcref{subsec:Mumford-Deligne compactness} and $\chi_k$.
Fix $\alpha \in (0,1)$ and let $U_k^{\alpha} \cqq \psi_k^{-1} \circ \chi_k(B_{\alpha}\setminus B_{\alpha/2}) \subset \sigma^j$. There is $K$ depending on $\alpha$ such that
\begin{equation}
\bigcup _{k\geq K} U_k^{\alpha} \Subset \sigma^j\setminus Q. \label{Uk remains compact}
\end{equation}
We can choose $\alpha$ sufficiently small such that $U_k^{\alpha}$ remains away from the points in $R^j$.
It holds
\[((s^j_k)^{-1}\vec \Phi_k \circ \chi_k)^* g_{\R^3} = (\psi_k^{-1} \circ \chi_k)^* (e^{2u_k - 2\log s^j_k} \tilde{h}_k),\]
where $u_k$ was the conformal factor given by $(\vec \Phi_k \circ \psi_k)^* g_{\R^3} = e^{2u_k} \tilde{h}_k$ as in \zcref{thm:thick part convergence}. \eqref{Uk remains compact} and \eqref{bounded conformal factors in nodal region 2} imply that $((s^j_k)^{-1}\vec \Phi_k \circ \chi_k)^* g_{\R^3}$ is uniformly equivalent to the metric
\[(\psi_k^{-1} \circ \chi_k)^* \tilde{h}_k = \chi_k^* h_k\]
on $B_{\alpha}\setminus B_{\alpha/2}$. By the explicit description of $\chi_k$, see \eqref{metric ds2 on the thin cylinder} and \eqref{conformal diffeo from Ak to Pk}, it is clear that $\chi_k^* h_k$ is uniformly equivalent to $g_{\R^2}$ on $B_{\alpha}\setminus B_{\alpha/2}$. Owing to \eqref{L2 weak estimate on neck regions}, we can apply the same compactness argument as in \zcref{prop:Thin part bubble neck decomposition} to see that after passing to a subsequence, it holds
\begin{equation}
(s^j_k)^{-1} (\vec \Phi_k \circ \chi _k - y^j_k) \wto \tilde{\vec \Psi}^j \text{ in $W^{2,2}_{\loc}(B_{\alpha} \setminus \left \{0\right \};\R^3)$,} \label{weak convergence around point in Q}
\end{equation}
where $\tilde{\vec \Psi}^j$ is some weak, conformal immersion on $ B_{\alpha}\setminus \left \{0\right \}$ whose image coincides with the image of $\vec \Psi^j$ in a neighborhood of $q$ in $v=\sigma^j$. Thus, $\vec \Psi^j$ has an end/branch point of order $m$ at $q$, if and only if $\tilde{\vec \Psi}^j$ has an end/branch point of order $m$ at $0$. If $\tilde{\vec \Psi}^j$ has a branch point around $0$, it holds $\vec \Psi^j(q) = \tilde{\vec \Psi}^j(0)$ (which exists since $\tilde{\vec \Psi}^j$ is continuous at $0$).
\end{remark}
\subsection{Analysis around concentration points}\label{subsec:Analysis concentration points}
In the following, we will construct a tree which describes the relation of the bubbles that accumulate around the concentration points $R^j$ and $R^l$ from the previous two compactness theorems. If $v = \sigma^j \in V_{\mathrm{thick}}$, let $r\in R^j\subset \sigma^j$. The smooth convergence of the metrics $\tilde{h}_k \to \tilde h$ allows us to choose conformal, converging coordinates $\omega^r_k\colon \mb D\to (v, \tilde{h}_k)$ with $\omega^r_k(0)=r$. If $v = (i,l)\in V_{\mathrm{thin}}$, we choose $r\in R^l$ and use the parametrization $\chi^i_k$ from \zcref{subsec:ThinPart}. The full bubble-neck decomposition\footnote{While these results are formulated for bounded conformal classes, they are local in nature and all that we need is the $L^{2,\infty}$-bound for the gradient of the conformal factor in a neighborhood around the concentration points, which comes from \eqref{bounded conformal factors in nodal region 1} and \eqref{L2 weak estimate on neck regions}.} as in \cite[Proposition III.1]{BernardRiviere}, \cite[Proposition A.1]{ScharrerWestEnergyQuantization} with the choice of $\eps$ as given in \zcref{lem:almost pointwise control conformal factor} for $\delta = 1/3$ allows us to split the domain around $r$ into bubbles and necks: More precisely, we find $N_{v,r}\in \N$, pairs $(x^m_k, \rho^m_k) \in \mb D \times (0,\infty)$, $m\in \{1,\ldots, N_{v,r}\}$, such that $\limk (x^m_k, \rho^m_k) = (0,0)$, finite sets $R^m \subset \C$, $s^m_k \in \R_{>0}$, arbitrary points $z^m \in \mb D \setminus R^m$, and $y^m_k \cqq \vec \Theta^m_k(z^m)$, where 
\begin{equation}
\vec \Theta^m_k(z) \cqq \begin{cases}
\vec \Phi_k \circ \psi_k \circ \omega^r_k (x^m_k + \rho^m_k z), & v \in V_{\mathrm{thick}},\\
\vec \Phi_k \circ \chi^i_k ( a^{l+1}_k (r +x^m_k +  \rho^m_k z)), & v = (i,l) \in V_{\mathrm{thin}},
\end{cases}\label{Theta k parametrization in neck region}
\end{equation}
 such that
\begin{equation}
\vec \Psi^m_k \cqq (s^m_k)^{-1}(\vec \Theta^m_k-y^m_k) \wto \vec \Psi^m \quad\text{in $W^{2,2}_{\loc}(\C \setminus R^m;\R^3)$.}\label{concentration point convergence}
\end{equation}
Indeed, the bubbles are constructed in such a way that no concentration of energy happens away from the points $R^m$ in the sense of \cite[(A.17)]{ScharrerWestEnergyQuantization}, which implies \eqref{concentration point convergence} via the same compactness procedure as before.

The set $ \{B_{2\rho^m_k}(x^m_k), \, m \in \{1,\ldots, N_{v,r}\}\}$ has a directed graph structure given by the inclusion relation, such that \cite[(A.8)]{ScharrerWestEnergyQuantization} yields the set of children of a vertex. This graph structure is independent of $k$ after taking a subsequence and can be identified with a directed graph $T^{v,r}$. We denote the set of vertices of $T^{v,r}$ by
\begin{equation}
V^{v,r}_{\text{conc}} \cqq \{(v,r, m), \, m \in \{1,\ldots, N_{v,r}\}\}\label{concentration bubbles}
\end{equation}
and each vertex corresponds to the datum from \eqref{concentration point convergence}. \cite[(A.5)]{ScharrerWestEnergyQuantization} shows that $T^{v,r}$ may be viewed as a directed \emph{tree} with edges pointing away from the root (the set of children of a bubble is given by \cite[(A.8)]{ScharrerWestEnergyQuantization}). After relabelling, the root of $T^{v,r}$ is $(v,r,1)$. If $e=((v,r,m_1), (v,r,m_2))$ is an edge (i.e., $(v,r,m_1)$ is the parent of $(v,r,m_2)$), then it holds
\[\limk \frac{\rho^{m_1}_k}{\rho^{m_2}_k} = \infty\]
and $x^{m_2}_k \in B_{2\rho^{m_1}_k}(x^{m_1}_k)$ for all $k$ sufficiently large. After possibly decreasing $\alpha_0$, $\vec \Theta^{m_2}_k$ restricted to the corresponding neck region 
\begin{equation}
\Omega_k^{\alpha,\beta}(e) \cqq B_{\alpha  \rho^{m_1}_k(\rho^{m_2}_k)^{-1}}(0)\setminus B_{\beta^{-1}}(0),\quad 0<\alpha,\beta \leq \alpha_0,\label{neck region around concentration points}
\end{equation}
satisfies the assumptions of \zcref{lem:almost pointwise control conformal factor} by \cite[(A.14)]{ScharrerWestEnergyQuantization}. After passing to a subsequence, $R^{m_1}$ denotes the points around which the bubbles concentrate, i.e.,
\begin{equation}
(\rho^{m_1}_k)^{-1}(x^{m_2}_k - x^{m_1}_k) \to r^{m_2} \in R^{m_1} \subset \C.\label{xm2 converges to a point in R m2}
\end{equation}
This defines a bijection\footnote{This follows from the way the bubbles are constructed, see \cite{BernardRiviere,ScharrerWestEnergyQuantization}.} between $R^{m_1}$ and the children of $(v,r,m_1)$. To the edge $e$, we associate the pair
\begin{equation}
(q_1(e), q_2(e)) \cqq (r^{m_2}, \infty).\label{q1 q2 for edge in concentration tree}
\end{equation} 
\subsection{Diverging conformal classes for tori}\label{subsec:diverging conformal classes torus}
Suppose $p=1$, i.e., $\Sigma = \mb T^2$. Given a complex structure $c$ on $\mb T^2$, let $h$ be its corresponding Poincar\' e metric of unit volume. By the uniformization theorem, $(\mb T^2, h)$ is isometric to the Riemann surface
\[\C\bigg/\left (\frac{1}{\sqrt{\Im \omega}}\Z+\frac{\omega}{\sqrt{\Im \omega}} \Z\right ),\]
for some unique $\omega = \omega(c) $ lying in the fundamental domain
\begin{equation}
 \omega \in \left \{z \in \mb H, \, -\frac{1}{2} < \Re z < 0,\, |z|>1 \right \}  \cup  \left \{z \in \mb H, \, 0\leq\Re z \leq \frac{1}{2}, \, |z|\geq 1\right \} \label{fundamental domain}
\end{equation}
of $\mb H/\PSL_2(\Z)$. Also useful is the isometry of $(\mb T^2, h)$ to the cylinder 
\[ C_{l} \cqq \frac{1}{\sqrt{2\pi l}}(\mb S^1 \times [0,l]),\]
where $l =2\pi \Im \omega$ and the boundary components are identified via
\[\frac{1}{\sqrt{2\pi l }}(\theta,0) \sim \frac{1}{\sqrt{2\pi l }}(\theta + 2\pi \Re \omega,l).  \]
This cylinder admits the conformal chart 
\begin{equation}
\chi\colon \mb D\setminus B_{e^{-l}}  \to C_{l}, \quad (\theta, r)\mapsto \left (\frac{\cos(\theta)}{\sqrt{2\pi l}}, \frac{\sin(\theta)}{\sqrt{2\pi l}}, -\frac{\log(r)}{\sqrt{2\pi l}}\right ).\label{psi for torus case}
\end{equation}
A sequence of conformal structures $c_k$ corresponding to elements $\omega_k$ in the fundamental domain diverges to the boundary of the moduli space if and only if $\Im \omega_k\to \infty$.

Let $\Lambda > 0$. Suppose that $\vec \Phi_k\in \mc E_\Sigma$ be a sequence of weak immersions into $\R^3$ with
\begin{equation}
    \sup_{k\in \N} \mc E (\vec \Phi_k) < \Lambda
\end{equation}
and suppose that the conformal classes $c_k$ associated to $\vec \Phi_k$ diverge to the boundary of the moduli space. \cite{LaurainRiviereGreenFunction} show that the conformal factors $u_k$ of the map $\vec \Phi_k \circ \chi_k$ defined by $(\vec \Phi_k \circ \chi_k)^* g_{\R^3} = e^{2u_k} g_{\R^2}$ satisfy
\begin{equation}
\sup _{k\in \N}\|\nabla u_k\|_{L^{2,\infty}(\mb D\setminus B_{e^{-l_k}} )} <C(\Lambda).\label{L2 weak estimate for conformal factor in torus case}
\end{equation}
\subsection{The graph structure} \label{subsec:Graph structure}
Let us briefly summarize the analysis so far. Up to now, we have only used the assumption \eqref{bounded energies for phi k}, not \eqref{energy threshold}. With \zcref{thm:thick part convergence}, we have found weak limits of the immersions on the thick parts. \zcref{prop:Thin part bubble neck decomposition} allowed us to find bubbles on the thin part. Finally, in both of these cases, further concentration may occur around finitely many points $r$, which are tracked in \zcref{subsec:Analysis concentration points}. The neck regions between different bubbles are constructed in such a way that the conclusion of \zcref{lem:almost pointwise control conformal factor} holds for $\delta = 1/3$. The following definition organizes the previous analysis. We define a graph structure whose vertices are bubbles of our sequence such that edges describe adjacent bubbles. To each bubble, we associate the data from the previous compactness theorems, while to each edge, we associate the points around which the bubbles are attached to each other. Recall from \eqref{thick and thin parts} that $V' = V_{\mathrm{thick}}\cup  V_{\mathrm{thin}}$.

\begin{defi}[Bubble graph structure for $p\geq 2$] \label{def:Graph structure}
Let $0<\Lambda<\infty$. Suppose that $\Sigma$ is a genus-$p$ surface, $p\geq 2$, and $\vec \Phi_k \in \mc E_{\Sigma}$ is a sequence of weak immersions such that
\[ \sup_{k\in \N}\mc E(\vec \Phi_k) < \Lambda.\]
The set $V'$ is defined in \zcref{subsec:ThinPart}. We give $V'$ a graph structure\footnote{More precisely, the structure of a directed multigraph permitting loops, i.e., a set of vertices $V$ with edges $e\colon E \to  V\times V$. It will follow from \eqref{quotient of scales for adjacent bubbles} and \eqref{quotient of scales for adjacent bubbles 2} that loops never actually occur.} $G' = (V', E')$ by defining the set of vertices $e' = e'(i,l) \in  V' \times V'$
\begin{equation}
e(i,l) \cqq \begin{cases}
(v_1, (i,0)),& l=0\quad\text{and}\quad N_i \geq 1,\\
((i,l-1), (i,l)),&l \in \left \{1,\ldots, N_i-1\right \},\\
((i,N_i-1), w_2),&l=N_i\quad\text{and}\quad N_i \geq 1,\\
(v_1, v_2),&N_i=0,
\end{cases}\label{edges on the thin parts}
\end{equation}
for $i\in \left \{1,\ldots, \mc N\right \}$, $l\in \left \{0,\ldots, N_i\right \}$ and $v_1,\, v_2 \in V_{\mathrm{thick}}$ such that $q^i_1 \in v_1, \, q^i_2 \in v_2$. Here, $\mc N$ is the number of thin parts from \zcref{subsec:Mumford-Deligne compactness} and $N_i$ is the number of bubbles in each thin part from \zcref{prop:Thin part bubble neck decomposition}. We define

\begin{equation}\label{all the bubbles}
V^v_{\text{conc}} \cqq \bigcup_{r \in R^v}  V^{v,r}_{\text{conc}},\quad V_{\text{conc}} \cqq \bigcup_{v \in V'} V^{v}_{\text{conc}}, \quad V \cqq V' \cup V_{\text{conc}}. 
\end{equation}

Here, $R^{v}$ is as defined below. $V$ is given a graph structure $G=(V,E)$ by attaching for each $v \in V'$ and $r \in R^{v}$ the tree $T^{v,r}$ to $G'$ by adding the edge $(v, (v, r,1))$. Furthermore, to each $v \in V$, the data corresponding to \eqref{limit immersion on nodal part} if $v\in V_{\mathrm{thick}}$, \eqref{bubble convergence in neck regions} if $v\in V_{\mathrm{thin}}$, and \eqref{concentration point convergence} if $v\in V_{\text{conc}}$ is associated. The corresponding quantities are denoted by $s^v_k,\, y^v_k,\, R^v$, $\vec \Psi^v_k$, and $ \vec \Psi^v$. We also define
\begin{equation}
Q^v \cqq \begin{cases}
Q \cap v, &  v\in V_{\mathrm{thick}},\\
\{0,\infty\}, &  v \in V_{\mathrm{thin}},\\
\emptyset, &  v \in V_{\text{conc}}.
\end{cases}\label{Qv for bubbles v}
\end{equation}
To each edge $e \in E$, we asssociate two points $(q_1(e), q_2(e))$. If $e$ is an edge in $T^{v,r}$, they are defined via \eqref{q1 q2 for edge in concentration tree}. If $e = e(i,l) \in E'$, we set
\begin{equation}
(q_1(e), q_2(e)) \cqq \begin{cases}
(q^i_1, \infty),& l=0\quad\text{and}\quad N_i \geq 1,\\
(0, \infty),&l \in \left \{1,\ldots, N_i-1\right \},\\
(0, q^i_2),& l=N_i\quad\text{and}\quad N_i \geq 1,\\
(q^i_1, q^i_2),& N_i = 0.
\end{cases}\label{end points q1 and q2}
\end{equation}
If $e = (v, (v, r,1))$ for some $v \in V'$, $r \in R^{v}$, we set
\begin{equation}
(q_1(e), q_2(e)) \cqq (r, \infty).\label{q1 q2 connecting edge from tree to rest}
\end{equation}
Finally, to each edge $e$ and $0<\alpha, \beta \leq \alpha_0$ corresponds a neck region
\begin{equation}
\Omega_k^{\alpha,\beta}(e) \cqq \begin{cases}
\eqref{neck region around concentration points},&\text{if $e = ((v,r,m_1),(v,r,m_2))$,}\\
B_{\alpha (\rho^{1}_k)^{-1}}(0) \setminus B_{\beta^{-1}}(0),& \text{if $e=(v, (v,r,1))$,}\\
B_{\alpha a^{i,l}_k (a^{i,l+1}_k)^{-1}}(0)\setminus B_{\beta^{-1}}(0),&  \text{if $e = e(i,l), \, l \in \{0,\ldots, N_i\}$},
\end{cases}\label{neck region Omega k (e)}
\end{equation}
where $\alpha_0$ is sufficiently small. The corresponding reparametrizations of $\vec \Phi_k$ associated to $\Omega^{\alpha_0,\alpha_0}_k(e)$ are given by
\begin{equation}
\vec \Theta_k(e) \cqq \begin{cases}
\vec \Theta^{m_2}_k,& \text{if $e = ((v,r,m_1),(v,r,m_2))$}, \\
\vec \Theta^{1}_k,& \text{if $e = (v, (v,r,1))$}, \\
\vec \Phi_k \circ \chi^i_k(a^{l+1}_k \cdot), & \text{if $e = e(i,l), \, l \in \{0,\ldots, N_i\}$},
\end{cases} \label{Theta k (e) parametrization}
\end{equation}
where $\vec \Theta^{m}_k$ was defined in \eqref{Theta k parametrization in neck region}. $\vec \Theta_k(e)$ restricted to $\Omega_k^{\alpha_0, \alpha_0}(e)$ satisfies the assumptions of \zcref{lem:almost pointwise control conformal factor} for $\delta = 1/3$ and $\Lambda$ given by \eqref{bounded conformal factors in nodal region 1} and \eqref{Lambda prime choice}. Hence, we may associate to each edge the integer $m = m(e) \in \Z \setminus \{0\}$ from \zcref{lem:almost pointwise control conformal factor}. Sometimes, we will identify $v$ with its limiting immersion $\vec \Psi^v$.
\end{defi}

The definition for tori is similar. We may cut the torus in such a way that \eqref{alpha 0 choice for thin part} holds for $\vec \Phi_k\circ \chi_k$, where $\chi_k$ is given by \eqref{psi for torus case}, see also \cite[Section 3.2.1]{LaurainRiviereEnergyQuantization}. This and \eqref{L2 weak estimate for conformal factor in torus case} allows us to apply \zcref{prop:Thin part bubble neck decomposition} with the choice of $\eps$ given by \zcref{lem:almost pointwise control conformal factor} to split the torus into bubble regions and neck regions.

\begin{defi}[Graph structure for $p=1$] \label{def:Graph structure for p equals 1}
We define $V' \cqq \left \{0,\ldots, N-1\right \}$, $E' \cqq \{(0,1),\ldots, (N-2,N-1), (N-1,0)\}$ and $G' = (V', E')$, where $N$ is the number of bubbles coming from \zcref{prop:Thin part bubble neck decomposition}. The edge $(N-1,0)$ corresponds to the neck region in which we placed the cut for the chart. As done in \zcref{subsec:Analysis concentration points}, to each $v \in V'$ and $r \in R^{v}$, we can associate the tree $T^{v,r}$ with vertices $V^{v,r}_{\text{conc}}$ and define $V_{\text{conc}}$ and $V$ as in \eqref{all the bubbles}. $G$ is defined by attaching the trees $T^{v,r}$ to $G'=(V',E')$ for all $v\in V', \, r \in R^{v}$. The remaining data is defined analogously as in \zcref{def:Graph structure} (treating the bubbles $V'$ as $V_{\mathrm{thin}}$).
\end{defi}

Notice that the lower semicontinuity of the Dirichlet energy and the Willmore energy on each of the bubbles implies
\begin{equation+}
\liminf _{k\to \infty}\mc W(\vec \Phi_k) \geq \sum_{v \in V} \mc W(\vec \Psi^v),\quad \liminf _{k\to \infty}\mc E(\vec \Phi_k) \geq \sum_{v \in V} \mc E(\vec \Psi^v).\label{lower semicontinuity bubble graph energy}
\end{equation+}
\section{Scales and positions of the bubbles} 
\label{sec:Scales and positions}
\subsection{Adjacent bubbles}\label{subsec:Adjacent bubbles}
The methods in this section are general and allow to compare different bubbles in the graph structure defined in Definitions \ref{def:Graph structure} and \ref{def:Graph structure for p equals 1}.

\begin{lemma} \label{lem: ends and branch points}
Suppose $v_1, \, v_2 \in V$ are connected by $e = (v_1, v_2) \in E$ and let $m=m(e)$ be as in Definitions \ref{def:Graph structure} or \ref{def:Graph structure for p equals 1}. Then, if $m\geq 1$, $\vec \Psi^{v_1}$ has a branch point of order $m$ at $q_1(e)$ while $\vec \Psi^{v_2}$ has an end of order $-m$ at $q_2(e)$ in the sense of \zcref{lem: Muller Sverak variant}. Furthermore, it holds
\begin{equation}
\limk \frac{s^{v_1}_k}{s^{v_2}_k} = \infty.\label{quotient of scales for adjacent bubbles}
\end{equation}
If $\Omega^{\alpha,\beta}_k(e)$ is as in \eqref{neck region Omega k (e)} with corresponding reparametrization $\vec \Theta_k(e)$ from \eqref{Theta k (e) parametrization}, then it holds for $0<\alpha < \alpha_0$
\begin{equation}
\diam\left ( \vec \Theta_k(e) \vert _{\Omega_k^{\alpha, \alpha_0}(e)}\right ) \leq C s^{v_1}_k \alpha^{m-1/3}, \quad \mc A\left ( \vec \Theta_k(e) \vert _{\Omega_k^{\alpha, \alpha_0}(e)}\right ) \leq C (s^{v_1}_k)^2 \alpha^{2(m-1/3)},\label{diameter estimate 1}
\end{equation}
where $C$ does not depend on $k$ or $\alpha$. It holds
\begin{equation}
\vec \Psi^{v_1}(q_1(e)) = \limk (s^{v_1}_k)^{-1}(y^{v_2}_k - y^{v_1}_k).\label{yk alignment 1}
\end{equation}
If $m \leq -1$, then $\vec \Psi^{v_1}$ has an end of order $m$ at $q_1(e)$, while $\vec \Psi^{v_2}$ has a branch point of order $-m$ at $q_2(e)$ and it holds
\begin{equation}
\limk \frac{s^{v_1}_k}{s^{v_2}_k} = 0,\label{quotient of scales for adjacent bubbles 2}
\end{equation}
\begin{equation}
\diam\left ( \vec \Theta_k(e) \vert _{\Omega_k^{\alpha_0, \alpha}	(e)}\right ) \leq C s^{v_2}_k \alpha^{-m - 1/3}, \quad \mc A\left ( \vec \Theta_k(e) \vert _{\Omega_k^{\alpha_0, \alpha}(e)}\right ) \leq C (s^{v_2}_k)^2 \alpha^{2(-m-1/3)},\label{diameter estimate 2}
\end{equation}
and
\begin{equation}
\vec \Psi^{v_2}(q_2(e)) = \limk (s^{v_2}_k)^{-1}(y^{v_1}_k - y^{v_2}_k). \label{yk alignment 2}
\end{equation}
\end{lemma}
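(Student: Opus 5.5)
The whole argument runs through the almost-logarithmic control of the conformal factor on the neck $\Omega^{\alpha_0,\alpha_0}_k(e)$. We work with the reparametrization $\vec\Theta_k=\vec\Theta_k(e)$ on the neck. Its outer boundary $|z|\approx\alpha_0\rho_k$, where $\rho_k$ is the ratio of the bubble radii (e.g.\ $\rho^{m_1}_k/\rho^{m_2}_k$ or $a^l_k/a^{l+1}_k$), sits in a fixed annulus around $q_1(e)$ in the coordinates of $v_1$. Its inner boundary $|z|\approx\alpha_0^{-1}$ sits in a fixed annulus around $q_2(e)=\infty$ in the coordinates of $v_2$.

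By \zcref{lem:almost pointwise control conformal factor} with $\delta=1/3$, the conformal factor satisfies
\[
\lambda_k(x)-\lambda_k(y)=(m-1)\log(|x|/|y|)+O\bigl(\tfrac13|\log(|x|/|y|)|\bigr)+O(1).
\]
We anchor $\lambda_k$ at both boundary annuli using \eqref{bounded conformal factors in nodal region 2}, \eqref{conformal factor estimate thin part} and the analogous bounds for concentration bubbles, after rescaling coordinates. Near the inner boundary this gives $e^{\lambda_k}\approx s^{v_2}_k$. Near the outer boundary, in the rescaled variable $w=z/\rho_k$, it gives $e^{\lambda_k}\rho_k\approx s^{v_1}_k$. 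Comparing the two along the neck yields
\[
s^{v_1}_k/s^{v_2}_k\approx \rho_k^{\,m+O(1/3)}.
\]
Since $\rho_k\to\infty$ and $m\in\Z\setminus\{0\}$, this ratio tends to $\infty$ if $m\ge1$ and to $0$ if $m\le-1$. That proves \eqref{quotient of scales for adjacent bubbles} and \eqref{quotient of scales for adjacent bubbles 2}.

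For the diameter and area bounds in the case $m\ge1$, we integrate $|\nabla\vec\Theta_k|=\sqrt2\,e^{\lambda_k}$ along radial and circular paths. In the $w$-variable on $\{\alpha_0^{-1}\rho_k^{-1}<|w|<\alpha\}$ we have
\[
e^{\lambda}\rho_k\lesssim s^{v_1}_k\,(|w|/\alpha_0)^{m-1-1/3}.
\]
The exponent satisfies $m-1-1/3>-1$, so $|w|^{m-4/3}$ is integrable at $0$. Hence any two points are joined by a path of length at most $C s^{v_1}_k\alpha^{m-1/3}$, which gives \eqref{diameter estimate 1}. Integrating $e^{2\lambda}$ over the annulus gives $C(s^{v_1}_k)^2\alpha^{2m-2/3}$, the area bound. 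The case $m\le-1$ is symmetric: we use the $z$-variable anchored at the inner annulus, where the exponent $-m-1-1/3$ in the variable $1/|z|$ again gives integrability, yielding \eqref{diameter estimate 2}.

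For the alignment \eqref{yk alignment 1}, consider $(s^{v_1}_k)^{-1}(\vec\Theta_k-y^{v_1}_k)$ on the neck. The diameter estimate and \eqref{quotient of scales for adjacent bubbles} show that the image of the whole small disc $\{|w|<\alpha\}$, minus the bubble $v_2$ (which has vanishing relative size), lies in a ball of radius $C\alpha^{m-1/3}$ around the image of the circle $|w|=\alpha$. That circle converges to $\vec\Psi^{v_1}(\partial B_\alpha(q_1))$. The point $y^{v_2}_k$ is the image of a point in the $v_2$ region. Since $m-1/3>0$, the diameter of the $v_2$ region is $O(s^{v_2}_k)=o(s^{v_1}_k)$, so $(s^{v_1}_k)^{-1}(y^{v_2}_k-y^{v_1}_k)$ lies within $C\alpha^{m-1/3}+o(1)$ of $\vec\Psi^{v_1}(\partial B_\alpha(q_1))$. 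Letting $\alpha\to0$, continuity of $\vec\Psi^{v_1}$ at $q_1$ identifies the limit.

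The same diameter estimate shows that $\vec\Psi^{v_1}$ stays bounded near $q_1(e)$. Passing the conformal-factor bound to the limit, $e^{\lambda_{\Psi}}\sim|w|^{m-1\pm1/3}$, so by \zcref{lem: Muller Sverak variant} $q_1(e)$ is a branch point of order exactly $m$. In the $v_2$ coordinate $\zeta=1/z$ near $\infty$, the factor behaves like $|\zeta|^{-m-1}$, which is an end of order $-m$.

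The main obstacle is making this anchoring rigorous for all three edge types. Constants must be uniform on fixed annuli adjacent to the neck, and the chart-change comparison from \zcref{rem:Behavior of Psi v around Q v} is needed for thick–thin edges. The identification of the integer orders also has to survive the $\pm1/3$ error, which works because the orders are integers.
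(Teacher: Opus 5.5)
Your proposal is correct and follows essentially the same route as the paper's proof. You anchor the conformal factor at both ends of the neck using the bubble conformal-factor bounds together with the almost-logarithmic estimate with $\delta=1/3$, which gives $s^{v_1}_k/s^{v_2}_k\approx\rho_k^{m\pm1/3}$; you integrate $e^{\lambda}$ for the diameter and area bounds; you combine the diameter bound with the scale ratio and continuity at $q_1(e)$ for the alignment; and you identify the orders by comparing with the Müller--\v{S}ver\'ak behaviour using integrality of $m$. The one minor difference is that the paper passes the neck estimate to the limit through circle averages of the conformal factor, whereas you pass it pointwise (a.e.).
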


\begin{proof}
We may assume that $e$ is of the form $e(i,l)$ from \eqref{edges on the thin parts}. The case when $e$ is of the form $(v, (v,r,1))$ or $((v,r,m_1), (v,r,m_2))$ is analogous (the only change is that we use additionally the definition of $q_1(e)$ as in \eqref{xm2 converges to a point in R m2}, \eqref{q1 q2 for edge in concentration tree}, and \eqref{q1 q2 connecting edge from tree to rest}). Suppose first that the edge $e(i,l)$ connects the two bubbles $v_1 = (i,l-1)$ and $v_2 = (i,l)$ in $V_{\mathrm{thin}}$, i.e., $l \in \left \{1,\ldots, N_i-1\right \}$. The conformal factor of $\vec \Psi^l_k = (s^l_k)^{-1} (\vec \Theta_k(e(i,l)) - y^l_k)$ is given by
\begin{equation}
\lambda_{\vec \Psi^l_k}(z) = - \log(s^l_k) +  \lambda_{\vec \Theta_k(e(i,l))}( z),\label{conformal factor Psi j k}
\end{equation}
where $\lambda_{\vec \Theta_k(e(i,l))}$ is the conformal factor of $\vec \Theta_k(e(i,l))$. By \eqref{conformal factor estimate thin part} and \eqref{conformal factor Psi j k}, the scales $s^l_k$ in \eqref{bubble convergence in neck regions} satisfy for $\alpha>0$
\begin{equation}
\|\lambda_{\vec \Psi^l_k}\|_{L^\infty(B_{\alpha^{-1} } \setminus \bigcup_{q \in \left \{0\right \} \cup R^l} B_{\alpha}(q))} \leq C_\alpha. \label{lambda Psi l k bounded}
\end{equation}
As in \zcref{def:Graph structure} and \zcref{lem:almost pointwise control conformal factor}, there is $m = m(e(i,l)) \in \Z\setminus \left \{0\right \}$ such that
\begin{equation}
| \lambda_{\vec \Theta_k(e(i,l))}(x) -  \lambda_{\vec \Theta_k(e(i,l))}(y) - (m-1) \log(|x|/|y|)| \leq \frac{1}{3} |\log(|x|/|y|)| + C (\Lambda)\label{recall lambda k estimate}
\end{equation}
for all $x,y  \in  \Omega_k^{\alpha_0, \alpha_0}(e(i,l))$. Using \eqref{conformal factor Psi j k} and the fact that $\vec\Theta_k(e(i,l-1))(z_2)=\vec\Theta_k(e(i,l))(a_k^l (a_k^{l+1})^{-1}z_2)$, we observe
\begin{align}
        |\lambda_{\Psi^l_k}(z_1)-\lambda_{\Psi_k^{l-1}}(z_2)|
        &= \Bigg| \log \left (\frac{s^{l-1}_k}{s^{l}_k}\right ) - \log\left(\frac{a_k^l}{a_k^{l+1}}\right) + (m-1)\log\left(\frac{a_k^{l+1}}{a_k^{l}}\frac{|z_1|}{|z_2|}\right)\\
        &\qquad + \lambda_{\vec\Theta_k(e(i,l))}(z_1)-\lambda_{\vec\Theta_k(e(i,l))}(a_k^l(a_k^{l+1})^{-1}z_2)-(m-1)\log\left(\frac{a_k^{l+1}}{a_k^{l}}\frac{|z_1|}{|z_2|}\right) \Bigg|.
\end{align}
 Suitably choosing $z_1,z_2$ and using \eqref{conformal factor Psi j k}, \eqref{lambda Psi l k bounded}, \eqref{ratios of the ai and bi}, and \eqref{recall lambda k estimate}, we deduce
\begin{equation}
\left | \log \left (\frac{s^{l-1}_k}{s^{l}_k}\right ) - m \log\left ( \frac{a^{l}_k}{a^{l+1}_k}\right ) \right | \leq \frac{1}{3} \log\left ( \frac{a^{l}_k}{a^{l+1}_k}\right )  + C(\Lambda).\label{sl and sl+1 scale}
\end{equation}
\eqref{sl and sl+1 scale} directly implies \eqref{quotient of scales for adjacent bubbles} and \eqref{quotient of scales for adjacent bubbles 2} since $\frac{a^{l}_k}{a^{l+1}_k} \to \infty$ as $k\to \infty$ by \eqref{ratios of the ai and bi}. Using the notation $\underline \lambda(r) = \dashint_{\partial B_r} \lambda(x) \dif l(x)$, it holds as a consequence of \eqref{ratios of the ai and bi} and \eqref{bubble convergence in neck regions} that
\[\underline \lambda_{\vec \Psi^l_k}(r) \to \underline\lambda_{\vec \Psi^l}(r)\quad\text{pointwise for all $r>\alpha_0^{-1}$}.\]
In particular, owing to \eqref{recall lambda k estimate}, \eqref{conformal factor Psi j k}, $\underline \lambda_{\vec \Psi^l}$ also satisfies \eqref{recall lambda k estimate} on $\C \setminus B_{\alpha_0^{-1}}$. Since $\lambda_{\vec \Psi^l}$ satisfies \eqref{Muller Sverak estimate}, see also \zcref{rem:order one end is embedded}, we deduce that the end (respectively branching) order of $\vec \Psi^l$ at $q_2(e) = \infty$ equals $-m(e)$. A similar argumentation for $\vec \Psi^{l-1}_k$ yields that the branching (respectively end) order of $\vec \Psi^{l-1}$ at $q_1(e) = 0$ equals $m(e)$. \eqref{diameter estimate 1} and \eqref{diameter estimate 2} are consequences of \eqref{lambda Psi l k bounded} and \eqref{recall lambda k estimate}. Indeed, suppose for simplicity $m\leq -1$. Choosing $z_0 \in \partial B_{\alpha_0^{-1}} \setminus R^l$, we deduce from \eqref{conformal factor Psi j k}, \eqref{lambda Psi l k bounded}, and \eqref{recall lambda k estimate} for sufficiently small $\alpha>0$
\begin{align}
    \mathcal A\left ( \vec \Theta_k(e) \vert _{\Omega_k^{\alpha_0, \alpha}(e)}\right ) &= \int_{\Omega_k^{\alpha_0, \alpha}(e)} e^{2\lambda_{\Theta_k(e)}(z)}\dif d z \\
    &\leq \int _{\alpha^{-1}} ^{\alpha_0 \frac{a^{i,l}_k}{a^{i,l+1}_k}} 2\pi r e^{2 \lambda_{\vec \Theta_k(e)}(z_0)} 
    \left (\alpha_0 r\right )^{2(m-1 + 1/3)} \dif d r\\
    &\leq C\int_{\alpha^{-1}}^\infty (s^{v_2}_k)^2  r^{2(m + 1/3) -1} \dif d r \\
    &\leq C (s^{v_2}_k)^2 \alpha ^{-2(m+1/3)}.
    \end{align}
The diameter estimate is handled analogously. The case $m\geq 1$ is also similar, choosing $z_{0,k} \cqq \frac{a^{i,l}_k}{a^{i,l+1}_k} z_0$ for $z_0 \in \partial B_{\alpha_0 } \setminus R^{l-1}$ instead.

From \eqref{lambda Psi l k bounded}, it follows for any $\alpha>0$
\[\diam\bigg (\vec \Phi_k \circ \chi^i_k\bigg ( B_{\alpha^{-1} b^l_k}\setminus \bigcup_{q \in \left \{0\right \} \cup R^l} B_{\alpha a^{l+1}_k}(q)\bigg )\bigg ) \leq C s^l_k. \]
Assume $m\geq 1$. We choose $\alpha_1 < \alpha_0$ sufficiently small such that $z^l \in B_{\alpha_1^{-1}} \setminus \bigcup _{q \in \left \{0\right \} \cup R^l} B_{\alpha_1}(q)$, where $z^l$ was chosen in \zcref{prop:Thin part bubble neck decomposition}. Together with \eqref{diameter estimate 1}, we have for any $z\neq 0$
\begin{align}
&\quad \, (s^{l-1}_k)^{-1}|\vec \Phi_k \circ \chi^i_k(a^{l}_k z) - \underbrace{\vec \Phi_k \circ \chi^i_k(a^{l+1}_k z^{l})}_{=y^l_k}| \notag\\
&\leq (s^{l-1}_k)^{-1}\diam \left (\vec \Phi_k \circ \chi^i_k \left (B_{|z| a^l_k}\setminus B_{\alpha_1^{-1}a^{l+1}_k}\right )\right ) \notag\\
&\quad\, + (s^{l-1}_k)^{-1}\diam \bigg (\vec \Phi_k \circ \chi_k \bigg (B_{\alpha_1^{-1}b^l_k}\setminus \bigcup _{q\in \left \{0\right \}\cup R^l}B_{\alpha_1 a^{l+1}_k} (q)\bigg )\bigg )\notag\\
&\leq C   |z|^{m-1/3} + C_{\alpha_1} \frac{s^l_k}{s^{l-1}_k} \label{diameter estimate applied}.
\end{align}
In particular, \eqref{diameter estimate applied}, $m\geq 1$, and \eqref{quotient of scales for adjacent bubbles} imply
\begin{equation}
\lim_{z\to 0} \limk (s^{l-1}_k)^{-1}|\vec \Phi_k \circ \chi^i_k(a^{l}_k z) - y^l_k| = 0.\label{uniform convergence around branch point in bubble}
\end{equation}
As $\vec \Psi^{l-1}$ has a branch point at $q_1(e)=0$, it is in particular continuous through $0$ and it holds
\begin{align}
\vec \Psi^{l-1}(q_1(e)) &= \lim_{z\to 0} \vec \Psi^{l-1}(z) = \lim_{z\to 0}\limk  (s^{l-1}_k)^{-1}\left ( \vec \Phi_k \circ \chi^i_k (a^{l}_k z) - y^{l-1}_k\right )\\
&= \limk (s^{l-1}_k)^{-1}(y^l_k - y^{l-1}_k),
\end{align}
which is \eqref{yk alignment 1}. The last equality comes from \eqref{uniform convergence around branch point in bubble}.

The case $m\leq -1$ is analogous. In the case that $e$ connects one or two bubbles from the thick part, it suffices to look at the behavior of $\vec \Psi^{v_1}$ around $q_1(e)$. We work with the parametrization $\tilde{\vec \Psi}^v$ as in \eqref{weak convergence around point in Q}. Since $\vec \Psi^v(q_1(e)) = \tilde{\vec \Psi}^v(0)$ and by \zcref{rem:Behavior of Psi v around Q v}, we have that $\|\lambda_k - \log s^{v_1}_k \|_{L^\infty(B_{\alpha_0}\setminus B_{\alpha})} \leq C_{\alpha}$, we can finish the proof as before.
\end{proof}

\subsection{Inversions} \label{subsec:Inversions}
For $p\in \R^3$, we define the inversion
\[I_p\colon\R^3 \setminus \{p\} \to \R^3 \setminus \{0\}, \quad I_p(z)\cqq \frac{z-p}{|z-p|^2}.\]
For $\nu \in \mb S^2$, we also define the reflection $J_{\nu}$
\[J_{\nu}(x) \cqq x - 2 \nu \langle \nu,x \rangle.  \]
If $p_k \not \in \vec \Phi_k(\Sigma)$, the inverted immersion $\hat{\vec{\Phi}}_k \cqq I_{p_k} \circ \vec \Phi_k$ is still a weak, conformal immersion with respect to the metric $h_k$. In particular, the conformal structures and thus also the corresponding nodal surface remain unchanged. It holds that
\begin{equation}
\hat{\vec{\Phi}}_k =  (s^v_k)^{-1} I_{p^v_k} \circ ((s^v_k)^{-1}(\vec \Phi_k - y^v_k)),\label{inversion hat Phi k}
\end{equation}
where $p^v_k \cqq (s^v_k)^{-1}(p_k - y^v_k)$. Therefore, we set
\begin{equation}
\hat{\vec{\Psi}}_k^v \cqq  I_{p^v_k} \circ \vec \Psi^v_k,\quad v \in V,\label{definition hat Psi v k}
\end{equation}
which will be used to describe limits associated to the inverted immersion. Two cases can happen: If $\limk p^v_k = \infty$, then after passing to a subsequence, $\frac{p^v_k}{|p^v_k|} \to \nu \in \mb S^2$. An easy calculation shows that
\begin{equation}
|p^v_k|^2 I_{p^v_k} (\cdot) + p^v_k \to J_\nu \quad\text{in $C_{\loc}^{\infty}(\R^3;\R^3)$ as $k\to \infty$}.\label{inversion converges to reflection}
\end{equation}
\eqref{inversion hat Phi k}, \eqref{inversion converges to reflection}, \eqref{limit immersion on nodal part}, and \eqref{bubble convergence in neck regions} yield
\begin{equation}
|p^v_k|^2 \hat{\vec{\Psi}}^v_k + p^v_k \wto \hat{\vec{\Psi}}^v \cqq J_\nu \circ \vec \Psi^v \begin{array}{l}
\quad\text{in $W^{2,2}_{\loc}(v \setminus (Q^v \cup R^v);\R^3)$ if $v\in V_{\mathrm{thick}}$,}\\
 \quad\text{in $W^{2,2}_{\loc}(\C \setminus (Q^v \cup R^v);\R^3)$ if $v \in V_{\mathrm{thin}} \cup V_{\text{conc}}$.}
\end{array} \label{inversion on bubbles 1}
\end{equation}
In particular, $\hat{s}^v_k = (|p^v_k|^2 s^v_k)^{-1}$ and $\hat{y}^v_k = - \hat{s}^v_k p^v_k$.
 
If $\limk p^v_k \qqc p^v \in \R^3$, then \eqref{limit immersion on nodal part} and \eqref{bubble convergence in neck regions} yield
\begin{equation}
\hat{\vec{\Psi}}^v_k \wto \hat{\vec{\Psi}}^v \cqq I_{p^v} \circ \vec \Psi^v \begin{array}{l}
\quad\text{in $W^{2,2}_{\loc}(v \setminus (Q^v \cup \hat{R}^v);\R^3)$ if $v\in V_{\mathrm{thick}}$,}\\
 \quad\text{in $W^{2,2}_{\loc}(\C \setminus (Q^v \cup \hat{R}^v);\R^3)$ if $v \in V_{\mathrm{thin}} \cup V_{\text{conc}}$,}
\end{array}\label{inversion on bubbles 2}
\end{equation}
where $\hat{R}^v \subset R^v \cup (\vec \Psi^v)^{-1}(p^v)$ is the set of concentration points for the inversions. In particular, $\hat{s}^v_k = (s^v_k)^{-1}$ and $\hat{y}^v_k = 0$.

 Notice however that not all of the bubbles corresponding to the immersions $\vec \Phi_k$ translate to bubbles of the immersions $\hat{\vec \Phi}_k$ via \eqref{inversion on bubbles 1}, \eqref{inversion on bubbles 2}. Indeed, bubbles from $v\in V_{\mathrm{thin}} \cup V_{\text{conc}}$ arose because of energy concentration. If the inversions $\hat{\vec{\Psi}}^v_k$ do not concentrate any energy, we do not see them in the inversions, i.e., there is no matching bubble in $\hat{V}$, the set of bubbles for the inversions $\hat{\vec{\Phi}}_k$. Likewise, there might be different scales on which energy becomes concentrated with respect to the inversions $\hat{\vec{\Phi}}_k$ that we did not capture for $\vec{\Phi}_k$.

\subsection{Bubble descents and bubble ascents}\label{subsec:Bubble descent and ascent}
We will introduce bubble descents (respectively bubble ascents), which allow us to traverse along the bubbles in the graph in such a way that we decrease (respectively increase) the scale $s^v_k$ of the bubbles in each step.

\subsubsection*{Bubble descent in \texorpdfstring{$V$}{V}} 
Let $v_1\in V$ and suppose that there is $q \in Q^{v_1}\cup R^{v_1}$ such that $\vec \Psi^{v_1}$ has a branch point around $q$. We choose a maximal path $v_i \in V$, $i\in \left \{1,\ldots, i_0\right \}$, called \emph{a bubble descent} such that for all $i$, it holds
\begin{enumerate}[wide=0pt,leftmargin=*, labelsep=0.5em,label = 1\alph*)]
\item $v_i$ and $v_{i+1}$ are connected by an edge $e_i \in E$ (we change the orientation of the edge in such a way that it starts at $v_i$ and ends at $v_{i+1}$). Furthermore, $\vec \Psi^{v_i}$ has a branch point at $q_1(e_i)$; \label{item 1a}
\item the bubbles $v_i, \, i\in \left \{1,\ldots, i_0\right \}$, are pairwise distinct.\label{item 1b}
\end{enumerate}
To make this precise, assume that we have chosen $v_1,\ldots, v_i$ satisfying \ref{item 1a} and \ref{item 1b} with $q_1(e_1) = q$. Note that for $i>1$, $\vec \Psi^{v_i}$ has an end at $q_2(e_{i-1})$ which follows from \zcref{lem: ends and branch points} and the fact that $\vec \Psi^{v_{i-1}}$ has a branch point by \ref{item 1a}. If possible, we choose $q_i \in Q^{v_i}\cup R^{v_i}\setminus \left \{q_2(e_{i-1})\right \}$ such that $\vec \Psi^{v_i}$ has a branch point at $q_i$. Then there is an edge $e$ starting at $v_i$ (after a potential change of orientation) such that $q_i = q_1(e)$ and we set $e_{i} \cqq e$. We set $v_{i+1}$ such that $e_i = (v_{i},v_{i+1})$. The process stops if either there is no $q_i \in Q^{v_i} \cup R^{v_i} \setminus \left \{q_2(e_i)\right \}$ such that $\vec \Psi^{v_i}$ has a branch point at $q$, or $v_{i+1} = v_j$ for some $j\in \left \{1,\ldots, i\right \}$. Also notice that the process has to stop as there are only finitely many bubbles. However, $v_{i+1} = v_j$ can be excluded as \eqref{quotient of scales for adjacent bubbles} implies 
\begin{equation}
\limk\frac{s^{v_i}_k}{s^{v_{j}}_k} = \infty \quad \text{for $i<j$}. \label{scale of bubbles along the descent}
\end{equation}
It follows that $\vec \Psi^{v_{i_0}}$ has no branch points in $Q^{v_{i_0}}\cup R^{v_{i_0}}$. Furthermore, it follows from \eqref{yk alignment 1} for $j\in \left \{1,\ldots, i_0-1\right \}$
\begin{equation}
\vec \Psi^{v_j}(q_1(e_j)) = \limk (s^{v_j}_k)^{-1}(y^{v_{j+1}}_k - y^{v_j}_k).\label{yk scale for adjacent bubble descent}
\end{equation}
In particular, for $j\in \left \{2,\ldots, i_0-1\right \}$, using \eqref{scale of bubbles along the descent}, we deduce
\begin{equation}
0 = \limk (s^{v_1}_k)^{-1}(y^{v_{j+1}}_k - y^{v_j}_k).\label{smaller scale for yk}
\end{equation}
Adding \eqref{yk scale for adjacent bubble descent} for $j=1$ and \eqref{smaller scale for yk} for $j\in \left \{2, \ldots, i_0-1\right \}$, we obtain
\begin{equation}
\vec \Psi^{v_1}(q_1(e_1)) = \limk (s^{v_1}_k)^{-1}(y^{v_{i_0}}_k - y^{v_1}_k).\label{yk scale for bubble descent}
\end{equation}

\subsubsection*{Bubble ascent in \texorpdfstring{$V$}{V}}
A similar procedure can be done by traversing along the ends instead of branch points. Pick $v_1 \in V$ and $q \in Q^{v_1} \cup R^{v_1}$ such that $\vec \Psi^{v_1}$ has an end around $q$. Similar to the bubble descent, we define a maximal path $v_i \in V$, $i \in \left \{1,\ldots, i_0\right \}$, called \emph{a bubble ascent} satisfying
\begin{enumerate}[wide=0pt,leftmargin=*, labelsep=0.5em,label = 2\alph*)]
\item for $i\in \left \{1, \ldots, i_0 - 1\right \}$, $v_i$ and $v_{i+1}$ are connected by an edge $e_i \in E$ in such a way that it starts at $v_i$ and ends at $v_{i+1}$ (after a change of orientation). Furthermore, $\vec \Psi^{v_i}$ has an end at $q_1(e_i)$; \label{item 2a}
\item the bubbles $v_i, \, i \in \left \{1,\ldots, i_0\right \}$, are pairwise different.\label{item 2b}
\end{enumerate}
A maximal path is well-defined as we only have finitely many bubbles. This procedure terminates if and only if either $\vec \Psi^{v_{i_0}}$ has no ends around any point in $Q^{v_{i_0}}\cup R^{v_{i_0}}$, or, if we denote by $v_{i_0+1}$ the next bubble satisfying \ref{item 2a}, satisfies $v_{i_0+1} = v_{j}$ for some $j\in \left \{1,\ldots, i_{0}\right \}$. \eqref{quotient of scales for adjacent bubbles 2} implies that the second alternative cannot occur. It follows that $\vec \Psi^{v_{i_0}}$ has no ends. As for the bubble descent, using again \zcref{lem: ends and branch points}, we arrive at
\begin{equation}
\vec \Psi^{v_{i_0}}(q_2(e_{i_0-1})) = \limk (s^{v_{i_0}}_k)^{-1}(y^{v_{1}}_k - y^{v_{i_0}}_k).\label{yk scale for bubble ascent}
\end{equation}
\subsubsection*{Bubble descents and bubble ascents in \texorpdfstring{$V'$}{V'}}
Finally, bubble descents and bubble ascents in $V'$ are defined analogously, with the only difference that $v_1 \in V'$ and in each step, we require $q_i \in Q^{v_i} \setminus \{q_2(e_{i-1})\}$. This ensures that $v_i \in V'$ for all $i\in \{1,\ldots, i_0\}$. 
If $v_1,\ldots, v_{i_0}$ is a bubble descent in $V'$, then $\vec \Psi^{v_{i_0}}$ has no branch points around any of the points in $Q^{v_{i_0}}$. Likewise, if $v_1,\ldots, v_{i_0}$ is a bubble ascent in $V'$, then $\vec \Psi^{v_{i_0}}$ has no ends around any of the points in $Q^{v_{i_0}}$.

\section{Proof of Theorem \ref{thm:Asymptotic convergence}}\label{sec:Proof main theorem}
From now on, we will use the $8\pi$ threshold \eqref{energy threshold}.
\subsection{Excluding true branch points and finding one catenoid} \label{subsec:one catenoid}
\begin{lemma}\label{prop: no density 2}
It holds $m(e) = \pm 1$ for all $e\in E$. In particular, no branch points of order $2$ or ends of order $-2$ can occur for the maps $\vec \Psi^v$, $v\in V$.
\end{lemma}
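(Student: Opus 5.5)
Suppose, for contradiction, that some edge $e=(v_1,v_2)\in E$ has $|m(e)|\ge 2$. Reversing the orientation if necessary, we may assume $m\ge 2$. By \zcref{lem: ends and branch points}, $\vec\Psi^{v_1}$ then has a branch point of order $m\ge2$ at $q_1(e)$, and $\vec\Psi^{v_2}$ has an end of order $-m$ at $q_2(e)$. The plan is to manufacture, from this configuration, a single limiting object whose Li--Yau density at some point is at least $2$ and whose Willmore energy is at most $8\pi$. \zcref{thm:equality-in-Li-Yau} then forces the inverted object to be a stationary integral varifold with a high-multiplicity end. An area or energy count should show this is incompatible with the remaining pieces of the bubble graph.

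\textbf{Step 1: the branch point has high density.} Start a bubble descent from $v_1$ at $q=q_1(e)$, ending at some $v_{i_0}$. By \eqref{yk scale for bubble descent}, all bubbles below $v_1$ along the descent collapse, at scale $s^{v_1}_k$, onto the point $x_0\cqq\vec\Psi^{v_1}(q)$. Since $q$ is a branch point of order $m$, the varifold $\mu$ induced by $\vec\Psi^{v_1}$ has density $\Theta^2(\mu,x_0)\ge m\ge 2$.

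\textbf{Step 2: reduce to the equality case of Li--Yau.} If $\vec\Psi^{v_1}$ is compact, the Li--Yau inequality \eqref{eq:intro:LY} together with \eqref{lower semicontinuity bubble graph energy} gives
\[
8\pi \ge \mc W(\vec\Psi^{v_1}) \ge 4\pi\,\Theta^2(\mu,x_0) \ge 8\pi .
\]
This has three consequences: $m=2$, all other bubbles carry zero Willmore energy (so they are planes or minimal pieces), and equality holds in \eqref{eq:intro:LY}. If $\vec\Psi^{v_1}$ has ends, first compose the whole sequence with an inversion $I_{p_k}$ centred at a suitable point, as in \zcref{subsec:Inversions}, so that $v_1$ becomes compact; this uses \eqref{inversion on bubbles 2}. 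The general version of the estimate is Li--Yau for varifolds with $\Theta^2(\mu,\infty)=0$: we get $\frac1{4\pi}\int|\vec H|^2\,d\mu\ge\Theta^2(\mu,x_0)$. The same conclusion holds for $\vec\Psi^{v_1}$ viewed on $\hat\C$ once its remaining ends are inverted.

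\textbf{Step 3: apply \zcref{thm:equality-in-Li-Yau}.} The equality case says that $\nu={I_{x_0}}_\#\mu$ is a stationary integral varifold with $\Theta^2(\nu,\infty)=\Theta^2(\mu,x_0)=2$. This $\nu$ is a complete minimal surface of finite total curvature. It has a single end of multiplicity $2$, coming from the branch point, together with whatever ends $\vec\Psi^{v_1}$ possessed, which become branch points at $0$.

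\textbf{Step 4: contradiction.} By the Jorge--Meeks/monotonicity formula, the total density at infinity equals the sum of the multiplicities of the ends. The density at infinity is $2$, so either there is one embedded-type end of order $2$ or there are two planar ends. A genus-zero immersed minimal sphere with one end of multiplicity $2$ has total curvature $-4\pi(\text{ends}-1+\dots)$, which forces $\nu$ to be a doubly covered plane. Pulling back, $\vec\Psi^{v_1}$ would then be a doubly covered round sphere with two branch points, of total branch order $2$. We must rule this out. Here I would return to the descent from Step 1: below $v_1$ the branch order $2$ must be transported down to $v_{i_0}$ through edges with $m=2$, and $v_{i_0}$ carries no branch points. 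Each $\vec\Psi^v$ on the descent has nonzero Dirichlet energy unless it is planar, and a planar bubble cannot have an end of order $-2$ with no other branching. Combining this with the genus of $\Sigma$ and the Gauss--Bonnet count $\mc E=4\mc W-8\pi(1-p)$ restricted to the pieces, the total energy would exceed $8\pi$.

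The main obstacle is this last step: turning "density $2$ with Willmore energy exactly $8\pi$" into an actual contradiction. A doubly covered sphere is a genuine limit configuration, so the contradiction has to come from the bubble graph. The argument must show that a double cover at $v_1$ leaves no room for the degenerating neck structure and the genus $p\ge1$. That means verifying that some further bubble in $V'$ carries Willmore energy at least $4\pi$, most likely the component that closes up a thin-part cycle of the graph.
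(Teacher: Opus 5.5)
Steps 1--3 are essentially sound, but the proof is not closed. Step 4 is left open, and the route you sketch for closing it (a Gauss--Bonnet or energy count producing another bubble with $\mc W\ge 4\pi$ in a thin-part cycle) cannot work. The bad configuration costs exactly $8\pi$, and the bubbles further down your descent can be doubly covered planes, which carry neither Willmore nor Dirichlet energy. So no energy count rules it out.

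\textbf{The missing idea.} Your Li--Yau rigidity must be applied to every bubble that carries an \emph{end} of order $-2$, not only to $v_1$ at its branch point. The claim to prove is: if $\vec\Psi^v$ has an end of order $-2$, then it has a branch point of order $2$ somewhere in $Q^v\cup R^v$. The argument runs as follows.
\begin{itemize}
\item Invert the sequence at a point off the image of $\vec\Psi^v$. The end becomes a point of density $2$ on a compact surface.
\item Li--Yau and the threshold then force $\mc W=8\pi$ and equality, so Theorem~\ref{thm:equality-in-Li-Yau} shows that $\vec\Psi^v$ itself is stationary with density $2$ at infinity.
\item An end of order $-2$ makes $\vec\Psi^v$ non-injective, so some point has density $2=\Theta^2(\cdot,\infty)$. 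By Allard's monotonicity the varifold is a cone, and since it has finite Dirichlet energy it is a multiplicity-two plane.
\item Hence $\vec\Psi^v$ is a degree-$2$ meromorphic map. Riemann--Hurwitz gives $2\genus(v)+1\ge 1$ branch points of order $2$ in $Q^v\cup R^v$, in addition to the end.
\end{itemize}

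\textbf{How the contradiction follows.} With this claim the contradiction is combinatorial, not energetic. Run your descent in $V$ starting at $q_1(e)$. By Lemma~\ref{lem: ends and branch points}, each $v_{i+1}$ has an end of order $-2$ at $q_2(e_i)$. By the claim it then has another branch point of order $2$, so the descent can always be continued with $m(e_i)=2$. This is impossible: $V$ is finite and scales strictly decrease along a descent, so it must stop at a bubble with no branch points. Neither the genus of $\Sigma$ nor thin-part cycles play any role.

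\textbf{Smaller points.}
\begin{itemize}
\item In Step 4 you treat $\nu$ as a genus-zero sphere and quote a total-curvature formula; both are unjustified. Thick bubbles may have positive genus, and Jorge--Meeks needs corrections for branch points. The cone argument above works for any genus.
\item Once your first inversion makes $\vec\Psi^{v_1}$ compact, there are no ends left to ``become branch points at $0$''. That sentence mixes up the two inversions.
\end{itemize}
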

\begin{proof}
Notice that the Li--Yau inequality implies that no branch points or ends of order $m$ with $|m|\geq 3$ can occur, as this needs at least $12\pi$ Willmore energy.

We begin by proving the following claim: Whenever $v\in V$ is such that $\vec \Psi^v$ has an end of order $-2$ at some point $r_1\in Q^v\cup R^v$, then there is $r_2 \in Q^v \cup R^v$ such that $\vec \Psi^v$ has a branch point of order $2$ at $r_2$.

We may work with inversions as given by \zcref{subsec:Inversions} such that $p^v \not \in \im \vec \Psi^v$. Since $\vec \Psi^v$ has an end of order $-2$, it follows that $\mc W(I_{p^v} \circ \vec \Psi^{v})\geq 8\pi$ by the Li--Yau inequality \eqref{Li Yau inequality}. Because of the energy threshold \eqref{energy threshold}, equality has to hold, which is the case if and only if $\vec \Psi^v$ is a minimal, branched immersion with density 2 at infinity. We give a proof of this fact in the more general setting of integral 2-varifolds in Appendix \hyperref[sec:Appendix Equality in the Li--Yau inequality]{C}.
 
Because of the end of order $-2$ around $r_1$, $\vec \Psi^v$ is not injective. \cite[5.1(2), 5.2(2)(b)]{Allard} implies that the varifold induced by $\vec \Phi^v$ is a minimal cone of density 2 at infinity, and since the Dirichlet energy is finite, the induced varifold is a density 2 plane. After identifying the image with $\C$ and possibly changing the orientation, the conformality of $\vec \Psi^v$ and the behavior around the branch and end points implies that $\vec \Psi^v$ is a meromorphic map, i.e., a holomorphic map to the Riemann sphere $\hat{\C}$. Since the density is $2$, $\vec \Psi^v$ has degree 2. The Riemann--Hurwitz formula, see \cite{Jost}, implies that $\vec \Psi^v$ has $2\genus(v)+1$ branch points of order $2$ and one end of order $-2$, which proves the claim.

Suppose now that there is $v = v_1\in V$ such that $\vec \Psi^v$ has either a branch point of order $2$ or an end of order $-2$. By the claim, we can find in both cases $r \in Q^v\cup R^v$ such that $\vec \Psi^v$ has a branch point of order $2$ at $r$. We start a bubble descent at $r$ in $v_1$ as in \zcref{subsec:Bubble descent and ascent} which yields the bubbles $v_1,\ldots, v_{i_0} \in V$ connected by edges $e_1,\ldots, e_{i_0-1} \in E$. The claim shows that we can choose this bubble descent in such a way that $m(e_i) = 2$ for all $i$. Indeed, if $\vec \Psi^{v_i}$ has a branch point of order $2$ at $q_1(e_i)$, $\vec \Psi^{v_{i+1}}$ has an end of order $-2$ at $q_2(e_i)$ and the claim implies that $\vec \Psi^{v_{i+1}}$ has another branch point of order $2$ at some $r_{i+1} \in Q^{v_{i+1}} \cup R^{v_{i+1}}$.

Since the claim now shows that $\vec \Psi^{v_{i_0}}$ has another branch point of order $2$ somewhere, this contradicts the maximality of the bubble descent, finishing the proof.
\end{proof}

The following lemma is a variation of \cite[Theorem 5.3]{KuwertLi}. We will use slightly different arguments and additionally use the fact that $\omega^3_p >8\pi$, see \eqref{eq:intro:MND}.

\begin{lemma} \label{lem:one catenoid}\leavevmode
\begin{enumerate}[label = \alph*)]
    \item There is at least one bubble $v\in V$ such that $\vec \Psi^v$ immerses a catenoid.
    \item \label{rem:all types of bubbles} All bubbles are of the following types:
\begin{itemize}
\item[\namedlabel{P}{$(P)$}] planes of density 1; $\mc W(P)=0$;
\item[\namedlabel{S}{$(S)$}] round spheres of density 1; $\mc W(S)=4\pi$;
\item[\namedlabel{C}{$(C)$}] catenoids; $\mc W(C)=0$;
\item[\namedlabel{IC1}{$(IC_1)$}] inversions of catenoids with inversion center in the image of the catenoid; $\mc W(IC_1)=4\pi$;
\item[\namedlabel{IC2}{$(IC_2)$}] inversions of catenoids with inversion center not in the image of the catenoid; $\mc W(IC_2)=8\pi$.
\end{itemize}
It follows $\genus(v) = 0$ for all $v\in V_{\mathrm{thick}}$. The only possibility for $\vec \Psi^v$ to be non-injective is the case \ref{IC1} or \ref{IC2}.
\end{enumerate}
\end{lemma}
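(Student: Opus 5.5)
I plan to prove part b) first and then deduce a) from it. Fix $v\in V$. By \zcref{prop: no density 2} every end and branch point of $\vec\Psi^v$ has order $\pm1$. Invert at a point $p^v\notin\im\vec\Psi^v$ as in \zcref{subsec:Inversions}; this gives a compact branched surface $\hat\Psi^v=I_{p^v}\circ\vec\Psi^v$ with $\mc W(\hat\Psi^v)\le 8\pi$, using \eqref{lower semicontinuity bubble graph energy} and \eqref{energy threshold}.

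I split into cases by the Willmore energy of $\hat\Psi^v$ (equivalently, by its genus and ends).

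\emph{Genus of thick parts.} Suppose $\genus(v)\ge1$ for some $v\in V_{\mathrm{thick}}$. Then $\hat\Psi^v$ is a closed surface of positive genus, so $\mc W(\hat\Psi^v)\ge\beta^3_{\genus(v)}\ge 2\pi^2>6\pi$ by \cite{MarquesNeves}. The same then holds for the whole nodal configuration. Here I would argue through \eqref{eq:intro:MND}: together with the other bubbles, which must contain at least one more ($4\pi$-costing) compact piece or an inversion partner, the total energy exceeds $8\pi$. I expect this to be the main obstacle. The bookkeeping needs care: one must show that another bubble contributes at least $4\pi-(2\pi^2-4\pi)$ beyond $\hat\Psi^v$, and otherwise reach a contradiction with degenerating conformal classes. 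I would try first to use that the nodal surface has a point of $Q$ on $v$, whose end/branch pairing via \zcref{lem: ends and branch points} and a bubble ascent/descent forces another nonplanar bubble.

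\emph{Classification of each bubble.} By the Li--Yau inequality \eqref{eq:intro:LY}, $\mc W(\hat\Psi^v)\le 8\pi$ means $\hat\Psi^v$ has multiplicity at most $2$. There are three cases.
\begin{enumerate}[label=(\roman*)]
\item If $\vec\Psi^v$ has no end, it is compact with $\mc W\le 8\pi$. Either it is a round sphere (energy $4\pi$), or, if $\mc W=8\pi$ at a double point, \zcref{thm:equality-in-Li-Yau} makes its inversion at that point a stationary density-2 integral varifold. That varifold is a pair of planes or a two-ended minimal surface.
\item If $\vec\Psi^v$ has exactly one end, then \zcref{thm:equality-in-Li-Yau} with equality at the inverted point shows that $\vec\Psi^v$ is minimal, has density $1$ at infinity, and has finite total curvature. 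It is therefore a plane (type \ref{P}), or a sphere inverted, depending on the normalization.
\item If $\vec\Psi^v$ has two ends, then $\mc W(\hat\Psi^v)\ge 8\pi$ by Li--Yau at the inversion center. Hence equality holds, and $\vec\Psi^v$ is a complete embedded minimal surface with two ends (embedded by \zcref{prop: no density 2}). By \cite{SchoenUniqueness} it is a catenoid, since two parallel planes cannot come from one connected genus-$0$ bubble.
\end{enumerate}
Translating back through the inversions produces the list \ref{P}, \ref{S}, \ref{C}, \ref{IC1}, \ref{IC2}. Their energies follow from conformal invariance: the inversion center contributes $4\pi$ per preimage point. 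Non-injectivity occurs only for inverted catenoids, because of the shared point at infinity.

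\emph{Part a).} Suppose no catenoid occurs. The double-tree/nodal structure provides at least one degenerating neck, so some edge $e$ with $m(e)=\pm1$ joins $v_1$ to $v_2$. Run a bubble descent and a bubble ascent along $V'$ from a point of $Q$. The descent terminates at a bubble without branch points in $Q$, and the ascent at one without ends. A plane has one end, so planes cannot be terminal points of an ascent. Counting along the cycle created by the degenerate geodesic (a loop in the nodal graph, since $p\ge1$), some vertex must carry two ends of order $-1$. A vertex with two ends is a catenoid by (iii). This contradiction gives a).
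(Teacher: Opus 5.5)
There are two genuine gaps, and they come from reversing the order: the paper proves a) first and derives b) from it.

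\emph{Part b).} Your bubble-by-bubble classification only obtains rigidity where the $8\pi$ budget forces equality in Li--Yau. That happens in your case (iii), where the two ends of order $-1$ land on one point of density $2$ after inversion. In case (i), nothing excludes an embedded closed bubble with $4\pi<\mc W<8\pi$. In case (ii), inversion only gives $\mc W(\hat\Psi^v)\ge 4\pi$, not equality. So the claims ``either a round sphere or $\mc W=8\pi$'' and ``equality at the inverted point'' are unjustified. The missing idea is to use the catenoid from a):
\begin{itemize}
\item Invert the whole sequence at $y^v_k+s^v_kp_k'$ with $p_k'\to p_0\notin\im\vec\Psi^v$. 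The inverted catenoid is \ref{IC2} and carries $8\pi$.
\item By \eqref{lower semicontinuity bubble graph energy} and \eqref{energy threshold}, every other bubble then has zero Willmore energy in that frame. Hence each is an unbranched minimal surface with at most two ends, so it is a plane (density $1$ at infinity and monotonicity) or a catenoid (\cite{SchoenUniqueness}).
\item Undoing the inversion yields the five types. $\genus(v)=0$ for $v\in V_{\mathrm{thick}}$ then comes for free, so you should not try to prove the genus statement directly.
\end{itemize}

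\emph{Part a).} You assert the nodal graph has a loop ``since $p\ge1$''. This is false for $p\ge2$. If only separating geodesics degenerate, thick parts and necks form a tree, $\mc M=\mc N+1$, and by \eqref{sum of genera} all the genus sits on thick parts. Invoking the double-tree structure is circular, since that is the conclusion of the main theorem.

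Excluding the tree case is exactly where Marques--Neves enters. Your single-bubble estimate ($\beta^3_{\genus(v)}\ge2\pi^2$ in an inverted frame, then seeking $8\pi-2\pi^2$ elsewhere) does not close. The inversion changes the other bubbles' energies, and one $\beta$ is not enough. The working argument stays in the original frame:
\begin{itemize}
\item A positive-genus thick part cannot have two ends. Otherwise it is minimal with density $2$ at infinity. If non-injective, the Allard/Riemann--Hurwitz argument of \zcref{prop: no density 2} produces order-$2$ branch points, which are excluded. If injective, it is a catenoid, hence genus $0$.
\item So such a part has at most one end, and $\mc W(\vec\Psi^v)\ge\beta^3_{\genus(v)}-4\pi$.
\item A bubble ascent supplies a closed bubble with energy at least $4\pi$, or at least $\beta^3$ if it is one of these thick parts.
\item Each leaf of the tree has $\#(Q\cap v)=1$, hence positive genus by \eqref{number of Q in v}. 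So at least two thick parts share the genus, and summing gives $\ge\omega^3_p>8\pi$ by \eqref{eq:intro:MND}, a contradiction.
\end{itemize}

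Once a cycle exists (automatically when $p=1$), the rest of your plan works. The scale ratios \eqref{quotient of scales for adjacent bubbles}, \eqref{quotient of scales for adjacent bubbles 2} cannot be monotone around the cycle. So some vertex has smaller scale than both neighbours and, by \zcref{lem: ends and branch points}, has ends at both edges. Your case (iii) then makes it a catenoid.
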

\begin{proof}
 \emph{a)} Suppose first that $p = \genus(\Sigma) \geq 2$. Let us first show that $\mc M < \mc N+1$, where $\mc M$ and $\mc N$ were defined in \zcref{subsec:Mumford-Deligne compactness}. We already know that $\mc M\leq \mc N+1$, so assume that $\mc M=\mc N+1$. In this case, it follows from \eqref{sum of genera} that
 \begin{equation}
 \sum_{v\in V_{\mathrm{thick}}} \genus(v) = p.\label{sum of genera is p}
 \end{equation}
Fix any $v\in V_{\mathrm{thick}}$. If $\vec \Psi^v$ has two ends of order $-1$ and is in addition non-injective, the argument from \zcref{prop: no density 2} shows that $\vec \Psi^v$ can be viewed as a holomorphic map from $v$ to $\hat \C$. The Riemann-Hurwitz formula implies that $\vec \Psi^v$ has $2$ ends of order $-1$ and $2\genus(v)+2\geq 1$ branch points of order $2$, which we excluded in \zcref{prop: no density 2}. So in particular, if $\vec \Psi^v$ has density 2 at infinity, then there are two ends of order $-1$, $\vec \Psi^v$ is embedded and unbranched. By \cite{SchoenUniqueness}, $\vec \Psi^v$ immerses a catenoid. Thus, $v$ is homeomorphic to a sphere and $\genus(v)= 0$. 

So for all $v\in V_{\mathrm{thick}}$ with $\genus(v)\geq 1$, $\vec \Psi^v$ has at most one end of order $-1$ and it follows that $\mc W(\vec \Psi^v) \geq \beta^3_{\genus(v)} - 4\pi$. Furthermore, doing a bubble ascent in $V$ starting at any $v_1\in V$, we find at least one bubble $v_{i_0}$ such that $\vec \Psi ^{v_{i_0}}$ has no ends. If $v_{i_0} \in V_{\mathrm{thick}}$, then $\mc W(\vec \Psi^{v_{i_0}}) \geq \beta^3_{\genus(v_{i_0})}$, otherwise $\mc W(\vec \Psi^{v_{i_0}}) \geq 4\pi$. Using \eqref{sum of genera is p}, this implies
\begin{equation}
    \liminf_{k\to \infty} \mc W(\vec \Phi_k)\geq \sum_{v\in V_{\mathrm{thick}}\cup \{v_{i_0}\}} \mc W(\vec \Psi^v) \geq 4\pi + \sum_{\substack{v \in V_{\mathrm{thick}}\\ \genus(v)\geq 1}} (\beta_{\genus(v)}^3 -4\pi)\geq \omega^3_p>8\pi,\label{Marques Neves consequence}
\end{equation}
a contradiction to \eqref{energy threshold}. We conclude $\mc M< \mc N+1$.

This also means that the graph $G=(V,E)$ from \zcref{def:Graph structure} has a closed cycle (after possibly changing the orientation). This is in particular true if $p=1$, see \zcref{def:Graph structure for p equals 1}, hence we may drop the assumption $p\geq 2$ from now on. Denote such a cycle by $v_1,\ldots, \, v_n, \, v_{n+1} = v_1$ connected by edges $e_1,\ldots, e_n$. Using that this is a cycle and applying \eqref{quotient of scales for adjacent bubbles} and \eqref{quotient of scales for adjacent bubbles 2} shows that there must be $v_i, \, i \in \left \{1,\ldots ,n\right \}$, such that 
\begin{equation}
\limk \frac{s^{v_{i-1}}_k}{s^{v_i}_k} = \infty \quad \text{and}\quad \limk \frac{s^{v_{i+1}}_k}{s^{v_i}_k} = \infty.\label{finding smallest bubble in cycle}
\end{equation}
Here, $n+1$ is identified with $1$. Indeed, if \eqref{finding smallest bubble in cycle} was false for all $i$, $\limk \frac{s^{v_{i-1}}_k}{s^{v_i}_k}$ would be independent of $i$, showing that
\[1=\limk \frac{s^{v_1}_k}{s^{v_1}_k} = \limk \prod_{i=1}^n \frac{s^{v_i}_k}{s^{v_{i+1}}_k} \in \{0,\infty\}.\]
\eqref{finding smallest bubble in cycle} and using \zcref{lem: ends and branch points} prove that $v_i$ has an end at $q_2(e_{i-1})$ and at $q_1(e_{i})$. As above, $\vec \Psi^{v_i}$ immerses a catenoid.

\emph{b)} Using a), we can invert the catenoid bubble such that $\mc W(I_{p^{v_i}} \circ \vec \Psi^{v_i}) = 8\pi$ and using \zcref{subsec:Inversions}, we see that all other bubbles $\vec \Psi^{v}$ must either be unbranched (in the sense that no \emph{true} branch points occur, i.e., of order $2$ or higher) immersions of minimal surfaces with at most two ends or inversions of them. Another application of \cite{SchoenUniqueness} yields the result.
\end{proof}

\subsection{No catenoids are immersed on thick parts} \label{subsec:no catenoids}

\begin{lemma}\label{lemma:no catenoids}\leavevmode
\begin{enumerate}[label = \alph*)]
    \item For $v\in V_{\mathrm{thick}}$, $\vec \Psi^{v}$ does not immerse a catenoid \ref{C} or its inversions \ref{IC1} and \ref{IC2}.
    \item If $v \in V_{\mathrm{thin}}$ and $\vec \Psi^v$ immerses \ref{C}, the two ends are located at the two points in $Q^v$.
\end{enumerate}
\end{lemma}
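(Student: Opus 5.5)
For part a), I will argue by contradiction with an energy count built on bubble descents and ascents. Suppose $v\in V_{\mathrm{thick}}$ and $\vec\Psi^v$ is of type \ref{C}, \ref{IC1} or \ref{IC2}. By \zcref{lem:one catenoid} b), $\genus(v)=0$. By \eqref{number of Q in v}, $\#(Q\cap v)\ge 3$, so $Q^v$ contains at least three punctures. A catenoid (or its inversion) has exactly two singular points in the sense of \zcref{lem: Muller Sverak variant}: two ends of order $-1$ for \ref{C}, one end and one branch point for \ref{IC1}, and two branch points of order $1$ for \ref{IC2}. Hence at least one $q\in Q^v$ is a regular point of $\vec\Psi^v$.

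The key is that every point of $Q^v$ is attached via an edge $e$ with $m(e)=\pm1$ (\zcref{prop: no density 2}), and \zcref{lem: ends and branch points} says $\vec\Psi^v$ has a branch point of order $1$ or an end of order $-1$ there. I need to verify that a branch point of order $1$ in the sense of \zcref{lem: Muller Sverak variant} is a genuinely regular (unbranched) point. So all $\ge 3$ points of $Q^v$ carry edges. Points where $\vec\Psi^v$ is regular are branch points of order $1$, so there $m(e)=1$. By \zcref{lem: ends and branch points}, the neighbour across that edge then has an end of order $-1$ at a strictly smaller scale.

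From each such point I start a bubble descent. It terminates at a bubble $v_{i_0}$ with no branch points, i.e.\ only ends, and by the classification of \zcref{lem:one catenoid} b) this is a plane or a catenoid. Similarly, from an end of the catenoid I run a bubble ascent, which terminates at a bubble with no ends. Such a bubble is of type \ref{S} or \ref{IC2} and carries at least $4\pi$ Willmore energy.

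The contradiction will come from the energy bookkeeping. After a suitable inversion centred away from the images (\zcref{subsec:Inversions}), I want to count contributions. For \ref{IC2}, $v$ itself already has $8\pi$, and a bubble without ends is needed elsewhere in the graph, which the ascent from a branch point into a different branch supplies. That gives total energy at least $12\pi$ by \eqref{lower semicontinuity bubble graph energy}, a contradiction. For \ref{C} and \ref{IC1}, I would use that the extra regular puncture leads to a separate branch of the graph whose descent/ascent structure produces an additional end-free bubble beyond the one forced by the catenoid's ends. This again pushes the energy beyond $8\pi$.

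I expect this counting to be the main obstacle: ensuring that the end-free bubbles found along distinct ascents are distinct. I would use \eqref{scale of bubbles along the descent} and disjointness of images of the charts, and, if necessary, the Li--Yau inequality applied to the inverted limit. For the latter, the third puncture's image point coincides with that of the neighbouring bubble by \eqref{yk alignment 1}, which creates a point of multiplicity forcing $\ge 12\pi$. I would try this multiplicity argument first, using \eqref{yk scale for bubble descent} to locate coinciding images.

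For part b), let $v\in V_{\mathrm{thin}}$ be a catenoid with $Q^v=\{0,\infty\}$. If some end lies at $r\in R^v$, then the concentration tree $T^{v,r}$ attached there has its root edge with $m=-1$. An ascent in that tree ends at an end-free bubble ($\ge4\pi$). Meanwhile, at a point of $Q^v$ which is then regular, a descent runs through $V'$. Following the cycle in $G'$ as in the proof of \zcref{lem:one catenoid} a), which passes through $v$ via both points of $Q^v$, forces another minimal-scale bubble with two ends, i.e.\ another catenoid, and a corresponding additional end-free bubble. The same energy count as in a) then exceeds $8\pi$.
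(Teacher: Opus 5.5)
Your proposal has a genuine gap. The contradiction in part a) rests on energy bookkeeping that you leave unresolved, and that bookkeeping cannot succeed in the un-inverted graph. Take a thick catenoid with a small catenoid hanging below a regular puncture, plus two spheres attached via ascents. This configuration carries Willmore energy $0+0+4\pi+4\pi=8\pi$, so no count of the existing bubbles exceeds $8\pi$. Ascents from different ends may also terminate at the same end-free bubble.

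For \ref{IC2}, your claim that "a bubble without ends is needed elsewhere" is unfounded. Ascents start at ends, and $v$ is itself end-free, so it can be the terminus of all of them.

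The "multiplicity" sketch fails as stated. By \eqref{yk alignment 1}, the smaller bubble only shrinks to the point $\vec\Psi^v(q)$ on scale $s^v$. Coinciding positions at different scales create no density-$2$ point of any limit. Finally, your descent runs in $V$ and may end at a plane, which leaves nothing to work with.

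The missing idea is to exploit Möbius invariance of $\mc W(\vec\Phi_k)$ through one specific inversion.
\begin{itemize}
\item Run the descent in $V'$ from a regular puncture $q\in Q^{v_1}$. The final bubble $v_{i_0}$ then has $\#Q^{v_{i_0}}\ge 2$ points, all ends, so it is a catenoid.
\item Invert at $p_k=y^{v_{i_0}}_k+s^{v_{i_0}}_k p_0$, with $p_0\notin\im\vec\Psi^{v_{i_0}}$ and $p_k\notin\vec\Phi_k(\Sigma)$.
\item On scale $s^{v_{i_0}}$, the small catenoid becomes \ref{IC2} and carries $8\pi$.
\item By \eqref{scale of bubbles along the descent} and \eqref{yk scale for bubble descent}, the rescaled centres on scale $s^{v_1}$ converge to $\vec\Psi^{v_1}(q)\in\im\vec\Psi^{v_1}$. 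So by \eqref{inversion on bubbles 2} the thick catenoid becomes \ref{IC1} and carries $4\pi$.
\item Together with \eqref{lower semicontinuity bubble graph energy}, this gives at least $12\pi$, a contradiction.
\end{itemize}

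The cases \ref{IC1} and \ref{IC2} on thick parts are not handled by a separate count. Instead, first invert at the double point of $\vec\Psi^v$. By \eqref{inversion on bubbles 2} and the invariance of thick parts under inversions, the thick bubble becomes \ref{C}, and the previous argument applies.

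Part b) reduces to the same mechanism. If an end of the thin catenoid lies in $R^v$, then one point of $Q^v$ is a branch point, and the same descent in $V'$ followed by inversion gives $12\pi$. Your appeal to the cycle in $G'$ and to "the same energy count as in a)" inherits the gap above.
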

\begin{proof}
\emph{a)} Suppose there is $v=v_1 \in V_{\mathrm{thick}} $ and the bubble $\vec \Psi^{v_1}$ immerses a catenoid \ref{C}. From \eqref{number of Q in v} and $\genus(v_1) = 0$ by \zcref{lem:one catenoid}\ref{rem:all types of bubbles}, we know that $\# (Q \cap v_1) \geq 3$. As $\vec \Psi^{v_1}$ has only two ends, there is $q \in Q^{v_1}$ such that $\vec \Psi^{v_1}$ has a branch point around $q$. We do a bubble descent in $V'$ starting at $q$. As stated in \zcref{subsec:Bubble descent and ascent}, the final bubble $v_{i_0}$ has no branch points around the points in $Q^{v_{i_0}}$. As $\# Q^{v_{i_0}} \geq 2$, $\vec \Psi ^{v_{i_0}}$ immerses \ref{C} and \eqref{yk scale for bubble descent} holds. 

Choose $p_0 \in \R^3 \setminus \im \vec \Psi ^{v_{i_0}}$ in such a way that $p^{\inv}_k \cqq y^{v_{i_0}}_k+ s^{v_{i_0}}_k p_0 \not \in \vec \Phi_k(\Sigma)$ for all $k$. The definition of $p^{\inv}_k$, \eqref{scale of bubbles along the descent}, and \eqref{yk scale for bubble descent} imply that
\begin{equation}
\vec \Psi^{v_1}(q_1(e_1)) = \limk (s^{v_1}_k)^{-1}(p^{\inv}_k - y^{v_1}_k).\label{inversion center location}
\end{equation}
Working with the inversions $\hat{\vec{\Phi}}_k = I_{p^{\text{inv}}_k} \circ \vec \Phi_k$, we deduce from the choice of $p_0$ and \zcref{subsec:Inversions} that $\mc W(\hat{\vec{\Psi}}^{v_{i_0}})= \mc W(  I_{p_0} \circ \vec \Psi^{v_{i_0}})= 8\pi$ as it is \ref{IC2} by construction. Furthermore, \eqref{inversion center location} together with \eqref{inversion on bubbles 2} yields that $\hat{\vec{\Psi}}^{v_1}$ immerses \ref{IC1}. Together, we arrive at at least $12\pi$ Willmore energy, exceeding \eqref{energy threshold}. Furthermore, \eqref{inversion on bubbles 1} and \eqref{inversion on bubbles 2} show that the immersions \ref{IC1} and \ref{IC2} cannot occur in thick parts either.

\emph{b)} If this was wrong, one of the points in $Q^v$ would have a branch point and we could run the same bubble descent in $V'$ as before. 
\end{proof}
 
 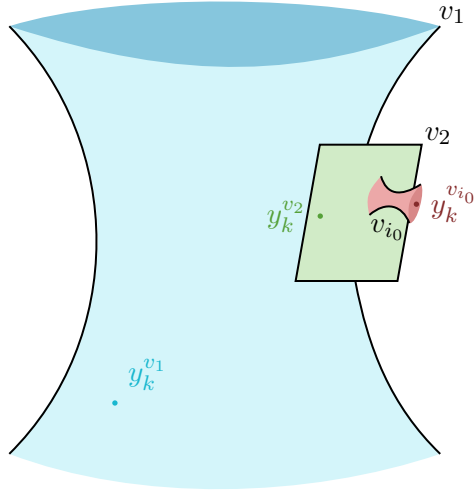
\begin{figure}[H] 
\centering 
\tikzset{every picture/.style={line width=0.75pt}} 

\begin{tikzpicture}[x=0.75pt,y=0.75pt,yscale=-1,xscale=1]

\draw  [draw opacity=0][fill={rgb, 255:red, 134; green, 198; blue, 221 }  ,fill opacity=1 ] (225.5,41.39) .. controls (280.5,24.07) and (385.5,25.57) .. (440.61,41.39) .. controls (374,90.07) and (337.73,97) .. (357.73,127) .. controls (377.73,157) and (265.5,64.57) .. (225.5,41.39) -- cycle ;

\draw  [draw opacity=0][fill={rgb, 255:red, 212; green, 245; blue, 250 }  ,fill opacity=1 ] (225.5,41.39) .. controls (315.13,70.64) and (373.13,66.64) .. (440.61,41.39) .. controls (383.13,98.64) and (382.13,195.14) .. (440.61,256.5) .. controls (374.13,279.14) and (289.63,281.14) .. (225.5,256.5) .. controls (285.63,191.64) and (283.13,102.14) .. (225.5,41.39) -- cycle ;
\draw    (225.5,41.39) .. controls (283.87,101.22) and (283.21,198.99) .. (225.5,256.5) ;
\draw  [fill={rgb, 255:red, 209; green, 235; blue, 199 }  ,fill opacity=1 ] (380.52,100.97) -- (431.36,100.97) -- (419.27,169.54) -- (368.43,169.54) -- cycle ;
\draw    (404.24,101.04) .. controls (410.52,81.66) and (420.07,62.54) .. (440.61,41.39) ;
\draw    (398.24,169.61) .. controls (404.57,203.28) and (415.91,232.45) .. (440.61,256.5) ;
\draw  [draw opacity=0][fill={rgb, 255:red, 216; green, 136; blue, 136 }  ,fill opacity=1 ] (423,129.7) .. controls (421.63,134.57) and (422,132.32) .. (425.14,140.58) .. controls (429.42,140.61) and (433.29,123.86) .. (430.98,120.89) .. controls (416.54,131.86) and (424.38,124.82) .. (423,129.7) -- cycle ;
\draw  [draw opacity=0][fill={rgb, 255:red, 236; green, 168; blue, 168 }  ,fill opacity=1 ] (411.07,116.84) .. controls (414.41,123.24) and (417.07,129.48) .. (430.98,120.89) .. controls (424.42,128.61) and (424.29,131.48) .. (425.14,140.58) .. controls (419.73,128.63) and (411.61,131.77) .. (405.24,136.53) .. controls (403.5,127.98) and (403.79,124.74) .. (411.07,116.84) -- cycle ;
\draw    (411.07,116.84) .. controls (413.77,126.24) and (420.78,127.43) .. (430.98,120.89) ;
\draw    (405.28,136.32) .. controls (413.58,129.06) and (421.64,131.36) .. (425.18,140.37) ;

\definecolor{myblue}{HTML}{18B7CF}
\definecolor{mygreen}{HTML}{56A039}
\definecolor{myred}{HTML}{852B2B}

\draw (438.48,30.04) node [anchor=north west][inner sep=0.75pt]    {$v_{1}$};
\draw (431.36,90.79) node [anchor=north west][inner sep=0.75pt]    {$v_{2}$};
\draw (404.36,137.29) node [anchor=north west][inner sep=0.75pt]    {$v_{i_0}$};
 \node[circle,fill=myblue,inner sep=0pt,minimum size=2pt,label=north east:{$\tc{myblue}{y^{v_1}_k} $}] (a) at (278.23,230.9) {}; 
 \node[circle,fill=mygreen,inner sep=0pt,minimum size=2pt,label=left:{$\tc{mygreen}{y^{v_2}_k} $}] (a) at (380.73,136.9) {};
 \node[circle,fill=myred,inner sep=0pt,minimum size=2pt,label=east:{$\tc{myred}{y^{v_{i_0}}_k}$}] (a) at (428.73,130.9) {};

\end{tikzpicture}
\caption{We suppose that $v_1$ is a bubble from the thick part such that $\vec \Psi^{v_1}$ immerses a catenoid. Doing a bubble descent, we find bubbles $v_2,\ldots, v_i$ such that the final bubble $\vec \Psi^{v_i}$ is another catenoid and is immersed close to $\vec \Psi^{v_1}$ in the sense of \eqref{yk scale for bubble descent}. A suitable inversion shows that this configuration has at least $12\pi$ Willmore energy, a contradiction to \eqref{energy threshold}.}
\end{figure}
\subsection{Finding two spheres} \label{subsec:The two spheres}
The following proposition allows us to exhaust all the Willmore energy in two round spheres after a suitable inversion. 
\begin{prop} \label{prop:Two spheres}
There are suitable inversions $I_{p_k}$, $p_k \in \R^3\setminus \vec \Phi_k(\Sigma)$ such that the graph $\hat{G} = (\hat V, \hat E)$ associated to $\hat{\vec{\Phi}}_k \cqq I_{p_k} \circ \vec \Phi_k$ contains two round spheres, i.e., there are $S_1, S_2 \in \hat{V}$ such that $\hat{\vec{\Psi}}^{S_1}$ and $\hat{\vec{\Psi}}^{S_2}$ are both \ref{S}. Furthermore, we can ensure that $S_1 \in V_{\mathrm{conc}}$.
\end{prop}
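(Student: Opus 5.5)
Starting from \zcref{lem:one catenoid}, fix a bubble $v_0\in V$ with $\vec\Psi^{v_0}$ a catenoid \ref{C}. By \zcref{lemma:no catenoids}, $v_0\in V_{\mathrm{thin}}$ (or $v_0\in V'$ for $p=1$), and its two ends lie at the two points of $Q^{v_0}$. I would pick the inversion center at the scale of this catenoid, away from its image: choose $p_0\in\R^3\setminus\im\vec\Psi^{v_0}$ with $p_0$ not on the axis of symmetry, and set $p_k\cqq y^{v_0}_k+s^{v_0}_k p_0$. Since there are only countably many constraints and $\vec\Phi_k(\Sigma)$ has measure zero, a small perturbation of $p_0$ gives $p_k\notin\vec\Phi_k(\Sigma)$ for all $k$. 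By \eqref{inversion on bubbles 2}, the inverted bubble $I_{p_0}\circ\vec\Psi^{v_0}$ is of type \ref{IC2}, with energy $8\pi$ and a density-two point at $0=I_{p_0}(\infty)$.

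\textbf{Step 1: locate the spheres near $0$.} For $\hat{\vec\Phi}_k$ the two ends of the catenoid are mapped to the same point, and near that point two sheets meet. Along the two edges of $G'$ incident to $v_0$, I would run bubble ascents in $V$, starting from $v_0$ at each of its two ends. Each ascent $v_0=v_1,\dots,v_{i_0}$ ends at a bubble with no ends, of strictly larger scale by \eqref{scale of bubbles along the descent}. For the inverted sequence every bubble of scale $s^v_k\gg s^{v_0}_k$ has $p^v_k=(s^v_k)^{-1}(p_k-y^v_k)$. By \eqref{yk scale for bubble ascent} and the scale separation, $p^v_k$ converges to the point $\vec\Psi^{v_{i_0}}(q_2(e_{i_0-1}))$, which lies in the image. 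So \eqref{inversion on bubbles 2} applies with inversion center on the image: the top bubble of each ascent becomes a bubble of type \ref{S} or \ref{IC1} after inversion, carrying at least $4\pi$.

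Counting energy, we have $8\pi$ from \ref{IC2} at $v_0$ and at least $4\pi$ from each ascent top, while the total is $8\pi$ by \eqref{lower semicontinuity bubble graph energy}. This is contradictory unless these bubbles are not genuinely separate bubbles of $\hat G$. The real content, and \textbf{the main obstacle}, is that the bubble structure of $\hat{\vec\Phi}_k$ differs from that of $\vec\Phi_k$ (see the caveat at the end of \zcref{subsec:Inversions}). The correct bookkeeping is therefore in the inverted graph $\hat G$ itself. There the catenoid at $v_0$ does not become \ref{IC2}. Instead, the energy splits into concentration bubbles at the scale $\hat s=(s^{v_0}_k)^{-1}$. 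Near the point $0$, the two ends of $\vec\Psi^{v_0}$, inverted, each produce a sphere-type concentration.

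\textbf{Step 2: identify the two spheres in $\hat G$.} I would rescale differently: choose $p_k$ so that the inverted catenoid itself is blown up. Concretely, set $\hat{\vec\Psi}^{v}_k$ for the two ascent-top bubbles $w_1,w_2$, and show that each of $I_{p^{w_j}}\circ\vec\Psi^{w_j}$ is a bubble with no ends and no true branch points (using \zcref{prop: no density 2}). Each therefore immerses a compact branched sphere, which has Willmore energy $\ge4\pi$. With total $8\pi$, each is exactly $4\pi$ and hence round by Willmore's equality case, i.e.\ type \ref{S}. To obtain $S_1\in V_{\mathrm{conc}}$, I would choose $p_0$ inside a bounded concentration region. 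Alternatively, if $w_1\in V'$, I would note that after inversion, energy concentrating at the image point $p^{w_1}$ appears in $\hat R^{w_1}\supset(\vec\Psi^{w_1})^{-1}(p^{w_1})$, so a new concentration tree is created whose root carries the sphere. I expect verifying that these two spheres are distinct vertices of $\hat G$ to be the delicate part, together with the energy accounting that rules out extra energy elsewhere. For this I would compare the two ascents via the scale relations \eqref{quotient of scales for adjacent bubbles}–\eqref{quotient of scales for adjacent bubbles 2}, and use that the two ends of the catenoid lie on different sides of the neck.
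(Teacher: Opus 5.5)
Your choice of inversion center cannot produce the statement. With $p_k=y^{v_0}_k+s^{v_0}_kp_0$ and $p_0\notin\im\vec\Psi^{v_0}$, the catenoid bubble $v_0$ survives in $\hat G$, since it still carries Dirichlet energy on its scale. By \eqref{inversion on bubbles 2}, $\hat{\vec\Psi}^{v_0}=I_{p_0}\circ\vec\Psi^{v_0}$ is of type $(IC_2)$ with $\mc W(\hat{\vec\Psi}^{v_0})=8\pi$. The inversion preserves $\mc W(\vec\Phi_k)$ because $p_k\notin\vec\Phi_k(\Sigma)$. Lower semicontinuity over the bubbles together with \eqref{energy threshold} then forces every other bubble of $\hat G$ to have zero Willmore energy, so $\hat G$ contains no round sphere at all. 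This is exactly the Type~1 configuration of Proposition~\ref{prop:Choice of inversions}, where $\hat V_{\mathrm{conc}}=\emptyset$.

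The energy ``contradiction'' in your Step~1 comes from misreading \eqref{inversion on bubbles 2}. You correctly find that $p^w_k\to\vec\Psi^w(q_2(e_{i_0-1}))\in\im\vec\Psi^w$ for the top bubble $w$ of each ascent. But inverting a closed bubble at a point of its image creates an end there and removes $4\pi$: a sphere $(S)$ becomes a plane $(P)$, and an $(IC_2)$ becomes $(C)$ or $(IC_1)$. It does not produce a closed bubble carrying $4\pi$. So there is nothing to reconcile, and your reinterpretation that $v_0$ ``does not become $(IC_2)$'' is false. Step~2 then contradicts Step~1: because $p^{w_j}\in\im\vec\Psi^{w_j}$, the maps $I_{p^{w_j}}\circ\vec\Psi^{w_j}$ necessarily have an end and are not compact branched spheres. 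Your closing alternative fails for the same reason. The center converges to the image of the attachment point $q_2(e_{i_0-1})$, so the new end forms at that node, where the rest of the graph is attached, rather than at a fresh interior point. No new concentration tree carrying a sphere is created.

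The missing idea is to invert at a \emph{generic} point of a \emph{thick} bubble, at that bubble's own scale. The paper fixes $(\omega,1)=\sigma^1\in V_{\mathrm{thick}}$, which is $(P)$ or $(S)$ by Lemma~\ref{lemma:no catenoids}. It takes $p_k=y^{(\omega,1)}_k+s^{(\omega,1)}_k\tilde p_k$ with $\tilde p_k\to 0=\vec\Psi^{(\omega,1)}(z^{(\omega,1)})$ and $z^{(\omega,1)}\notin Q^{(\omega,1)}\cup R^{(\omega,1)}$. After inversion, $\hat{\vec\Psi}^{(\omega,1)}$ is a plane whose only end sits at the interior point $z^{(\omega,1)}$, which is now a point of $\hat R^{(\omega,1)}$. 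Hence every node in $Q^{(\omega,1)}$ is a branch point.

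A bubble ascent from $z^{(\omega,1)}$ stays in $V_{\mathrm{conc}}^{(\omega,1),z^{(\omega,1)}}$ and ends at a closed bubble $S_1\in V_{\mathrm{conc}}$. For the second sphere, one descends in $V'$ from a node $p_1\in Q^{(\omega,1)}$ to a catenoid in $V_{\mathrm{thin}}$. One then ascends in $V'$ from the catenoid's other end to a bubble $(\omega,2)$ with no ends at its nodes, above which sits a closed bubble $S_2$.

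The distinctness issue you flag is settled by injectivity and energy. A repetition between the descent and the ascent produces a non-injective bubble of type $(IC_1)$ or $(IC_2)$, carrying at least $4\pi$. Now run the same construction from a second node $p_2$. Since $S_1$ already carries $4\pi$, any repetition there must occur at the same bubble. That forces $\vec\Psi^{(\omega,1)}(p_1)=\vec\Psi^{(\omega,1)}(p_2)$, contradicting injectivity of the plane. If the $p_2$-path has no repetition, it yields a further closed bubble, and the total exceeds $8\pi$. Finally, two closed bubbles with total energy at most $8\pi$ must both be $(S)$. For $p=1$ there is no thick part; the plane $(\omega,1)$ is first produced at an intermediate neck scale, see Section~\ref{subsec:torus case}.
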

\begin{proof}
    
Recall that $\hat{V}_{\mathrm{thick}}$. Let us fix 
\begin{equation}
(\omega,1) \cqq \sigma^1 \in V_{\mathrm{thick}}.\label{omega 1 definition}
\end{equation}
Because we excluded \ref{C}, \ref{IC1}, and \ref{IC2} in \zcref{lemma:no catenoids}, $\vec \Psi^{(\omega,1)}$ immerses either \ref{P} or \ref{S}. Furthermore, it holds $\vec \Psi^{(\omega,1)}(z^{(\omega,1)}) = 0$ by definition of $\vec \Psi^{(\omega,1)}_k$. We choose a sequence $\tilde p_k \in \R^3$ such that $\limk \tilde p_k = 0$ and $p_k \cqq s^{(\omega,1)}_k \tilde p_k + y^{(\omega,1)}_k \not \in \vec \Phi_k(\Sigma)$. We define $\hat{\vec{\Phi}}_k \cqq I_{p_k} \circ \vec \Phi_k$. By an abuse of notation, we drop all hats in the notation, such that $\vec{\Phi}_k$ now denotes the inverted immersion. As in \zcref{def:Graph structure}, this sequence of immersions yields a graph structure of bubbles.

\eqref{inversion on bubbles 2} ensures that $\vec \Psi^{(\omega,1)}$ immerses \ref{P}. Notice also that the end of $\vec \Psi^{(\omega,1)}$ is located around $z^{(\omega,1)}\in R^{(\omega,1)} \subset (\omega,1)$. This also implies that all points in $Q^{(\omega,1)}$ are branch points, which is the main reason why we inverted the immersions.  
Doing a bubble ascent starting at $z^{(\omega,1)}$ in $(\omega,1)$ gives us a path $v_2, \ldots, v_{i_0} \in V^{(\omega,1), z^{(\omega,1)}} _{\text{conc}} \subset V$ and we set
\begin{equation}
S_1 \cqq v_{i_0} \in V^{(\omega,1), z^{(\omega,1)}} _{\text{conc}}.\label{definition S1}
\end{equation}
\zcref{subsec:Bubble descent and ascent} shows that $\vec \Psi^{S_1}$ immerses a closed surface and is thus of type \ref{S} or \ref{IC2}.

We will now do a bubble descent in $V'$ as in \zcref{subsec:Bubble descent and ascent} to find a second sphere. As $\# Q^{(\omega,1)}\geq 3$, we pick $p_1 \in Q^{(\omega,1)}$ around which $\vec \Psi^{(\omega,1)}$ has a branch point. $p_1$ exists by the discussion above. We find $a_1,\ldots, a_{i_0} \in V'$ connected by edges $e_1,\ldots, e_{i_0-1}$ such that $q_1(e_1) = p_1$. Then we deduce that $\vec \Psi^{a_{i_0}}$ immerses a catenoid and $a_{i_0} \in V_{\mathrm{thin}}$ by the discussion in \zcref{subsec:Bubble descent and ascent} and \zcref{lemma:no catenoids}. We now start a bubble ascent in $V'$ at the unique point in $Q^{a_{i_0}} \setminus \left \{q_2(e_{i_0-1})\right \}$ (as $Q^{a_{i_0}} = \left \{0,\infty\right \}$ consists of two elements). We find bubbles $a_{i_0}, \, a_{i_0+1}, \ldots, a_{i_1} \in V'$ connected by edges $e_{i_0}, \ldots, e_{i_1-1}$. $a_{i_1}$ is such that no ends occur around the points in $Q^{a_{i_1}}$. 
\begin{enumerate}[wide=0pt,leftmargin=*, labelsep=0.5em,label = Case \arabic*:]
\item \label{Case 1 two spheres part} We suppose that the bubbles $a_1,\ldots, a_{i_1}$ are pairwise different. In this case, $a_1 \neq a_{i_1}$. We define 
\begin{equation}
(\omega,2) \cqq a_{i_1} \in V'.\label{omega 2 definition}
\end{equation}
If $\vec \Psi^{(\omega,2)}$ has an end around $r\in R^{(\omega,2)}$, then a bubble ascent in $V$ starting at $r$ yields some $S_2 \in V^{(\omega,2), r}_{\text{conc}}$ such that $\vec \Psi^{S_2}$ is closed, i.e., without ends, and thus either \ref{S} or \ref{IC2} holds. From \eqref{energy threshold}, it follows that both $\vec \Psi^{S_1}$ and $\vec \Psi^{S_2}$ are of type \ref{S}. It also follows that $\vec \Psi^{(\omega,2)}$ is \ref{P}, which is a consequence of \zcref{lemma:no catenoids}. In the case that there is no $r\in R^{(\omega,2)}$ such that $\vec \Psi^{(\omega,2)}$ has an end around $r$, we deduce that $\vec \Psi^{(\omega,2)}$ is closed and we set $S_2 \cqq (\omega,2)$. It follows once more that $\vec \Psi^{S_2}$ is of type \ref{S}.
\item \label{Case 2 in sphere prop} This case will lead to a contradiction. We suppose that some bubble repeats, i.e., there is some $i_2 \in \left \{i_0+1, \ldots, i_1\right \}$ for which there is $i_{3} \in \left \{1,\ldots, i_0-1\right \}$ such that $a_{i_2} = a_{i_3}$ (the bubbles in $\left \{a_1,\ldots,a_{i_0}\right \}$ and in $\left \{a_{i_0},\ldots, a_{i_1}\right \}$ are pairwise disjoint by the properties of bubble descents and bubble ascents). We also may choose $i_2$ to be minimal with this property. In this case, $q_1(e_{i_3}) \neq q_2(e_{i_2-1})$, as otherwise $e_{i_2-1} = e_{i_3}$ (up to a change of orientation) and $q_2(e_{i_3}) = q_1(e_{i_2-1})$, which was excluded by minimality and the assumption that $q_1(e_{i_0})\neq q_2(e_{i_0-1})$. Furthermore, $\vec \Psi^{a_{i_3}} = \vec \Psi^{a_{i_2}}$ has branch points at $q_1(e_{i_3})$ and $q_2(e_{i_2-1})$ by construction. Finally, \eqref{yk scale for bubble descent} and \eqref{yk scale for bubble ascent} yield
\[\vec \Psi^{a_{i_3}}(q_1(e_{i_3})) = \limk (s^{a_{i_3}}_k)^{-1}(y^{a_{i_0}}_k - y^{a_{i_3}}_{k}) =\limk (s^{a_{i_2}}_k)^{-1}(y^{a_{i_0}}_k - y^{a_{i_2}}_{k})= \vec \Psi^{a_{i_3}}(q_2(e_{i_2-1})).\]
This implies that $\vec \Psi^{a_{i_3}}$ is non-injective and hence of type \ref{IC1} or \ref{IC2}. It follows
\begin{equation}
\mc W (\vec \Psi^{a_{i_3}}) \geq 4\pi.\label{half inverted catenoid has willmore energy}
\end{equation}
We now repeat the same bubble descent and bubble ascent in $V'$ around a point $p_2 \in Q^{(\omega,1)}\setminus \left \{p_1\right \}$. $\vec \Psi^{(\omega,1)}$ has a branch point of order 1 here. The corresponding bubbles are denoted by $b_j$, $j\in \left \{1,\ldots, j_0, j_0+1,\ldots, j_1\right \}$. If a bubble repeats among the $b_j$, following the same argumentation shows that there is some $j_3 \in \left \{1,\ldots, j_0-1\right \}$ such that
\[\mc W(\vec \Psi^{b_{j_3}}) \geq 4\pi.\]
But because $\vec \Psi^{S_1}$ contains at least $4\pi$ Willmore energy, it must hold $b_{j_3} = a_{i_3}$. However, we deduce from \eqref{yk scale for bubble descent} that
\begin{align}
\vec \Psi^{(\omega,1)}(p_1) &= \vec \Psi^{(\omega,1)}(q_1(e_1)) = \limk (s^{(\omega,1)}_k)^{-1}(y^{a_{i_2}}_k - y^{(\omega,1)}_k) =\limk (s^{(\omega,1)}_k)^{-1}(y^{b_{j_2}}_k - y^{(\omega,1)}_k)\notag\\
& = \vec \Psi^{(\omega,1)}(p_2).\label{omega 1 immersion has multiplicity 2 point}
\end{align}
However, as $\vec \Psi^{(\omega,1)}$ is injective (it immerses \ref{P}), this is a contradiction. This means that the bubbles $b_j$ are pairwise different and we can apply Case 1 to the sequence $b_j$ to find another sphere. Together with \eqref{half inverted catenoid has willmore energy}, we exceed \eqref{energy threshold}. In total, what we have shown is that Case 2 cannot occur and the conclusion from Case 1 holds.
\end{enumerate}

\end{proof}
\begin{remark}\label{rem:no ends in Rv apart from omega i}
From now on up to \zcref{rem:Forcing two sphere bubbles}, we will work with the inverted immersions $\hat{\vec{\Phi}}_k$ and denote these as $\vec \Phi_k$ by an abuse of notation.

We see that all the Willmore energy is concentrated in the bubbles $S_1, S_2$. In particular, \ref{IC1} and \ref{IC2} do not occur as bubbles thanks to our specific choice of inversion. Furthermore, if $v \in V' \setminus \left \{(\omega,1),(\omega,2)\right \}$, $\vec \Psi^v$ cannot have ends around points in $R^v$ because a bubble ascent around such an end would end in a closed surface with at least $4\pi$ Willmore energy. In other words, if $v\in V' \setminus \left \{(\omega,1),(\omega,2)\right \}$ and $\vec \Psi^v$ has an end, it is located around a point in $Q^v$.
\end{remark}
\subsection{Number of catenoids}\label{subsec:Number of catenoids}
\begin{defi}
We denote by $C\subset V$ the set of bubbles $v$ such that $\vec \Psi^v$ immerses \ref{C}.
\end{defi}
\begin{prop}\label{lemma:number of catenoids}
There are $p+1$ catenoids, i.e., $\# C = p+1$, where $p$ denotes the genus of $\Sigma$. Furthermore,
\begin{equation}
V = \{S_1, S_2\} \cup \{(\omega,1),(\omega,2)\} \cup V_{\mathrm{thick}} \cup C, \label{all bubbles}
\end{equation}
where $S_1, \, S_2, \, (\omega,1),$ and $(\omega,2)$ were defined in \zcref{subsec:The two spheres}. It holds $V_{\text{conc}}\subset \{S_1,S_2\}$ and $C \subset V_{\mathrm{thin}}$.
\end{prop}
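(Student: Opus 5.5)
The plan has three steps: identify every bubble type after the inversion of \zcref{prop:Two spheres}, then count catenoids by matching ends against branch points along the edges of $G'$, using \zcref{prop: no density 2} and \eqref{sum of genera}. Throughout I use \zcref{rem:no ends in Rv apart from omega i}. After the inversion, the whole $8\pi$ is carried by $S_1,S_2$. Every other bubble is therefore of type \ref{P} or \ref{C}, with $m(e)=\pm1$ on every edge by \zcref{prop: no density 2}.

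\emph{Step 1: $V_{\mathrm{conc}}\subset\{S_1,S_2\}$.} Let $u\in V^{v,r}_{\mathrm{conc}}$ be distinct from $S_1,S_2$. Since $Q^u=\emptyset$, $\vec\Psi^u$ has its ends only at $R^u\cup\{\infty\}$. By \zcref{rem:no ends in Rv apart from omega i}, it has no ends in $R^u$, so it is a plane with its single end at $\infty$; in particular it is not a catenoid. Its Willmore and Dirichlet energy therefore vanish. On the other hand, the bubble-neck construction of \cite{BernardRiviere,ScharrerWestEnergyQuantization} forces each concentration bubble to carry at least $\eps$ of $\int|\nabla\vec n_k|^2$ on its bubble region. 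Hence this energy must concentrate further at points of $R^u$. A bubble descent in $V$ starting at such a point ends at a bubble without branch points. Being a leaf of a concentration tree, that bubble has all its ends at $R$-points or at its attaching point, so it is closed and carries $4\pi$. It must therefore be $S_1$ or $S_2$.

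The trees containing $S_1$ (respectively $S_2$) hang off $(\omega,1)$ (respectively $(\omega,2)$) through an ascent. So $u$ would sit on an ascent chain of planes with a single child each. The ascent/descent alignments \eqref{yk scale for bubble descent}, \eqref{yk scale for bubble ascent}, together with the $\eps$-energy on each bubble region, show that such an intermediate planar bubble would carry no energy on its own scale away from its unique concentration point. This contradicts the choice of scales in the decomposition (such a ``bubble'' would merge with the neck). Hence $V_{\mathrm{conc}}\subset\{S_1,S_2\}$.

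The same argument applies to $v\in V_{\mathrm{thin}}\setminus\{(\omega,2)\}$. If $\vec\Psi^v$ were a plane, the $\eps$-energy of \eqref{at least eps energy in each bubble} would have to concentrate at $R^v$ and produce a further sphere, which is impossible. So $V_{\mathrm{thin}}\setminus\{(\omega,2)\}\subset C$. Combined with \zcref{lemma:no catenoids}, this gives $C\subset V_{\mathrm{thin}}$ and \eqref{all bubbles}. By \zcref{lemma:no catenoids} b), every catenoid has its two ends at $Q^v=\{0,\infty\}$.

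\emph{Step 2: counting.} For every $e\in E'$, exactly one endpoint carries an end (of order $-1$) at the attaching point and the other carries a (non-true) branch point, by \zcref{lem: ends and branch points} and \zcref{prop: no density 2}. Thus $\#E'$ equals the number of end-slots in $Q$-points. There are two cases for $(\omega,2)$.

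If $(\omega,2)\in V_{\mathrm{thick}}$, the end-slots are as follows. Each catenoid contributes $2$. Each thick plane other than $(\omega,1),(\omega,2)$ contributes $1$, since its single end lies in $Q$ by \zcref{rem:no ends in Rv apart from omega i}. The planes $(\omega,1),(\omega,2)$ contribute $0$, because their ends point toward $S_1,S_2$. So $\#E'=2c+\mc M-2$, where $c\cqq\#C$. Counting edges via the thin parts, every one of the $\mc N$ thin parts with $N_i$ bubbles has $N_i+1$ edges, so $\#E'=\mc N+c$. Equating the two counts gives $c=\mc N-\mc M+2$. Since all thick parts have genus $0$ (\zcref{lem:one catenoid}), \eqref{sum of genera} gives $\mc N-\mc M+1=p$, hence $c=p+1$.

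If instead $(\omega,2)$ is a thin plane, then $\#E'=\mc N+c+1$ and the end-slot count is $2c+\mc M-1$. This again yields $c=p+1$. For $p=1$, the analogous count on the cycle of \zcref{def:Graph structure for p equals 1} gives $\#E'=\#V'$ and the same conclusion.

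\emph{Main obstacle.} The delicate point is Step 1: excluding planar bubbles in $V_{\mathrm{conc}}$ and in $V_{\mathrm{thin}}$ other than $(\omega,1),(\omega,2)$. This needs the nondegeneracy built into the bubble-neck decomposition, namely that each bubble region carries $\eps$ energy that cannot all escape into a single child without the bubble collapsing into a neck. I would first try to extract this directly from \cite[(A.5),(A.8),(A.17)]{ScharrerWestEnergyQuantization} and \eqref{at least eps energy in each bubble}.
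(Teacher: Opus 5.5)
Your Step 2 is essentially the paper's count: sum the end-slots at $Q$-points over $V'$ and match them with $\#E'$. But it does not need Step 1. A planar bubble in $V_{\mathrm{thin}}$ other than $(\omega,2)$ has its unique end in $Q^v$ by \zcref{rem:no ends in Rv apart from omega i}, so it contributes exactly one end-slot, like a thick plane. Hence, with $C'=C\cap V'$, the identity $\#E'=\#V'+\#C'-2$ holds whether or not such bubbles exist. This gives $\#C'=\#E'-\#V'+2=\mc N-\mc M+2=p+1$ unconditionally. The paper proceeds in this order, and the order matters for what follows.

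The genuine gap is Step 1, which is meant to deliver \eqref{all bubbles}, $V_{\mathrm{conc}}\subset\{S_1,S_2\}$ and $C\subset V_{\mathrm{thin}}$. There are three problems.

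(i) A bubble descent started at a branch point in $R^u$ runs down the concentration tree. It stops at a bubble with an \emph{end} at its attaching point $\infty$ and no branch points. Such a bubble is not closed and need not carry $4\pi$; it is bubble \emph{ascents} that terminate in closed bubbles. So no sphere, and no contradiction, is produced. The same error affects your treatment of planar thin bubbles, where you claim a ``further sphere''.

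(ii) The inference that the $\eps$ of $\int|\nabla\vec n_k|^2$ on a planar bubble's region must concentrate at $R^u$ is unjustified. The convergences are only weak in $W^{2,2}_{\loc}$, and, as the paper stresses, there is no $\eps$-regularity for general weak immersions. That energy can simply be lost in the limit unless a global budget forbids it.

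(iii) \zcref{rem:no ends in Rv apart from omega i} concerns only $V'$. Concentration bubbles on the chains from $(\omega,i)$ to $S_i$ do have ends in $R$, and your claim that they would ``merge with the neck'' is left unproven.

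The missing idea is the global Dirichlet energy budget. By Gauss--Bonnet, $\mc E(\vec\Phi_k)=4\mc W(\vec\Phi_k)-8\pi(1-p)\to 8\pi(p+3)$. The bubbles $S_1$, $S_2$ and the $p+1$ catenoids of $C'$ each carry exactly $8\pi$, so they exhaust the Dirichlet energy. That leaves no room for the $\eps$ carried by any further bubble of $V_{\mathrm{thin}}\cup V_{\mathrm{conc}}$, by \eqref{at least eps energy in each bubble} and \cite[(A.6)]{ScharrerWestEnergyQuantization}. This gives \eqref{all bubbles}, $V_{\mathrm{conc}}\subset\{S_1,S_2\}$ and $C=C'\subset V_{\mathrm{thin}}$ in one stroke. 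It works only because the catenoid count is obtained first, independently of any exclusion of extra bubbles.
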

\begin{proof} Denote by $C' = C \cap V'$. We consider the sum
\[S \cqq \sum _{v\in V'} \#\left \{q \in Q^v, \, \text{$\vec \Psi^v$ has an end around $q$}\right \}.\]
Each $v \in \left \{(\omega,1),(\omega,2)\right \}\subset V'$ contributes 0 to the sum by construction. If $v\in V' \setminus (\left \{(\omega,1),(\omega,2)\right \} \cup C')$, then $\vec \Psi^v$ immerses \ref{P} and $v$ contributes 1 to $S$. Finally, each $v\in C'$ contributes 2 to $S$. In total, we get
\[S = 0 \cdot 2 + 1\cdot (\# V' - \# C' - 2) + 2 \cdot \# C' = \# V' + \# C' -2.\]
Since for each edge $e\in E'$ connecting $v_1$ to $v_2$, either $\vec \Psi^{v_1}$ has an end around $q_1(e)$ and $\vec \Psi^{v_2}$ has a branch point around $q_2(e)$ or the other way around by \zcref{lem: ends and branch points}, it holds $S = \# E'$. We deduce
\[\# C' =\# E' - \# V' + 2 = p + 1,\]
where we used $\# E' - \# V' + 1 = \mc N - \mc M +1 =  p$ by \eqref{sum of genera}.

It holds $C' \subset V_{\mathrm{thin}}$ by \zcref{lemma:no catenoids}. Since $\limk \mc E(\vec \Phi_k) = (p+3)8 \pi$ and each round sphere $\vec \Psi^{S_1},\,\vec \Psi^{S_1}$ and each catenoid $\vec \Psi^{c},\, c\in C'$ adds $8\pi$ Dirichlet energy, the entire Dirichlet energy is exhausted by the bubbles $S_1,\, S_2$, and $C'$. Since any bubble in $V_{\mathrm{thin}}$ and $V_{\mathrm{conc}}$ adds $\eps$ Dirichlet energy in the sense of \eqref{at least eps energy in each bubble} and \cite[(A.6)]{ScharrerWestEnergyQuantization}, this implies \eqref{all bubbles} and $V_{\mathrm{conc}} \subset \{S_1,S_2\}$. It follows that $C = C'$.
\end{proof}
\begin{remark}\label{rem:all the bubbles}
The union \eqref{all bubbles} is not disjoint. We already know that $(\omega,1) \in V_{\mathrm{thick}}$, and it will follow from \zcref{thm:Double tree structure} that $(\omega,2) \in V_{\mathrm{thick}}$ so that $C = V_{\mathrm{thin}}$. Furthermore, in \zcref{rem:Forcing two sphere bubbles}, we will choose a suitable inversion to ensure that $S_2 \in V_{\mathrm{conc}}$. This means that the bubbles from $V_{\mathrm{thick}}$ are planes \ref{P}, the ones from $V_{\mathrm{thin}}$ are catenoids \ref{C}, and the ones from $V_{\mathrm{conc}}$ are spheres \ref{S}.
\end{remark}

\subsection{Double tree structure}\label{subsec:Double tree structure}
\begin{defi}\label{def:Two subsets V Ti}
Let $V_{T_i}\subset V'$, $i\in \left \{1,2\right \}$, be the set of bubbles in $V'$ that descend from $(\omega,i)$ in the sense that $v \in V_{T_i}$ if and only if there is a bubble descent in $V'$ of the form $v_1,\ldots, v_{i_0}$ starting at $v_1 = (\omega,i)$ such that $v = v_j$ for some $j \in \left \{1,\ldots, i_0\right \}$. 
\end{defi}
\begin{prop}
It holds $C = V_{T_1}\cap V_{T_2}$ and $ V_{T_1}\cup V_{T_2}= V'$. Furthermore, considering the induced subgraph $T_i \cqq G'[V_{T_i}]$ in $G'$, $T_i$ has the structure of a rooted tree with root $(\omega,i)$ and whose set of leaves is $C$.
\end{prop}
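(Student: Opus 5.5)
We first fix the local picture along every edge of $G'$, since everything else is a consequence of it. By \zcref{prop: no density 2}, $m(e)=\pm1$ for every $e\in E'$. By \zcref{lem: ends and branch points}, each edge therefore has an orientation in which it starts at a branch point of order $1$ and ends at an end of order $-1$. Along that orientation the scale strictly decreases in the sense of \eqref{quotient of scales for adjacent bubbles}, and we reorient every edge of $G'$ this way.

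By \zcref{lemma:number of catenoids} and \zcref{rem:no ends in Rv apart from omega i}, every $v\in V'$ falls into one of three types:
\begin{enumerate}[label=(\roman*)]
\item $(\omega,1)$ and $(\omega,2)$, where every point of $Q^v$ is a branch point, so all incident edges point outwards;
\item a plane \ref{P} in $V'\setminus\{(\omega,1),(\omega,2)\}\setminus C$, with exactly one end located in $Q^v$, so exactly one incoming edge and all others outgoing;
\item a catenoid $c\in C\subset V_{\mathrm{thin}}$, whose two ends sit at $Q^c=\{0,\infty\}$ by \zcref{lemma:no catenoids}, so exactly two incoming edges and no outgoing edges.
\end{enumerate}

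With this orientation, a bubble descent in $V'$ is exactly a directed path that follows outgoing edges. Such a path cannot cycle, by \eqref{scale of bubbles along the descent}. It stops precisely at a vertex without outgoing edges, and the only such vertices are catenoids, since every plane in $V'\setminus C$ has $\#Q^v\ge 2$. In particular the sink of every maximal directed path lies in $C$. Conversely, walking backwards from any vertex via its unique incoming edge (for planes) is a bubble ascent in $V'$. This ascent is injective and must terminate at a vertex with no incoming edges, which can only be $(\omega,1)$ or $(\omega,2)$. Hence every $v\in V'$ is reachable from some $(\omega,i)$ by a descent, which proves $V_{T_1}\cup V_{T_2}=V'$.

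Next we show that each $T_i$ is a rooted tree. Every vertex of $V_{T_i}\setminus(C\cup\{(\omega,i)\})$ has in-degree one, and every ancestor of a vertex reachable from $(\omega,i)$ is again reachable from $(\omega,i)$. So the backward path from such a vertex is unique, stays in $V_{T_i}$, and ends at $(\omega,i)$; it cannot end at $(\omega,j)$ for $j\neq i$, because $(\omega,j)$ has no incoming edges and hence never lies on a descent from $(\omega,i)$. It follows that $G'[V_{T_i}]$ minus the catenoids is an arborescence. For a catenoid $c\in V_{T_i}$ we then show that exactly one of its two incoming edges comes from $V_{T_i}$. Once this holds, $c$ is a leaf of $T_i$, every vertex other than the root has in-degree one inside $T_i$, and together with acyclicity (scales strictly decrease) $T_i$ is a rooted tree whose leaves are the sinks, namely $C\cap V_{T_i}$.

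The main obstacle is the claim that the two incoming edges of each catenoid $c$ come from different roots. Tracing back along the two ends of $c$ yields two ascents, ending at roots $r_1$ and $r_2$. If $r_1=r_2=(\omega,i)$, the two ascending paths merge at a first common vertex $a$. Then $a$ has two distinct branch points $q,q'$ through which descents reach $c$. Applying \eqref{yk scale for bubble descent} along both branches gives
\[\vec\Psi^{a}(q)=\lim_{k\to\infty}(s^a_k)^{-1}(y^c_k-y^a_k)=\vec\Psi^a(q'),\]
exactly as in \eqref{omega 1 immersion has multiplicity 2 point}. This contradicts the injectivity of $\vec\Psi^a$, which is a plane \ref{P}; indeed \ref{IC1} and \ref{IC2} are excluded by \zcref{rem:no ends in Rv apart from omega i}. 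Hence $r_1\ne r_2$, so every catenoid lies in $V_{T_1}\cap V_{T_2}$ and has exactly one parent in each $T_i$, which is the leaf property needed above.

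Finally, a non-catenoid $v$ cannot lie in both $V_{T_i}$. Its unique backward path leads to a single root, and any descent from $(\omega,j)$ through $v$ would have to pass through that same backward path. Therefore $V_{T_1}\cap V_{T_2}=C$, which completes the proof.
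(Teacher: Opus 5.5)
Your proof is correct and follows the paper's argument. Ascents end at $(\omega,1)$ or $(\omega,2)$, and non-catenoids have a unique ascent. The two ascents from a catenoid cannot meet, since at the merging vertex $a$ one would get $\vec\Psi^a(q)=\vec\Psi^a(q')$; this is the paper's Case~2 argument, which you rerun directly via injectivity. Your in-degree/arborescence count plays the role of the paper's observation that a cycle in $T_i$ would have to pass through a catenoid with only one neighbor in $V_{T_i}$.

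One small imprecision: the merging vertex need not be a plane, since it may be $(\omega,2)$, which could be a round sphere. A round sphere is also injective, so the argument is unaffected.
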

\begin{proof}
Any bubble ascent in $V'$ has to end in either $(\omega,1)$ or $(\omega,2)$ by \zcref{rem:no ends in Rv apart from omega i}. As each bubble ascent in $V'$ ending on $(\omega,i)$ corresponds to a (subpath of a) bubble descent starting in $(\omega,i)$, we see that $V_{T_1}\cup V_{T_2} = V'$. If $c \in C$, we can do two bubble ascents starting at $0 \in Q^c$ or $\infty \in Q^c$. They cannot both end at the same bubble, since in the proof of \zcref{prop:Two spheres}, it was shown that Case \hyperref[Case 2 in sphere prop]{2} cannot occur, implying that $C \subset V_{T_1} \cap V_{T_2}$. If $v \in V' \setminus C$, $v$ has at most one end, implying that the bubble ascent starting at $v$ is unique. This shows that $C = V_{T_1} \cap V_{T_2}$. If $T_i$ does not have a tree structure, there is a cycle in $T_i$. Any such cycle has to contain a catenoid, hence some $c\in C$, see \zcref{subsec:one catenoid}. As above, any $c\in C$ connects to one bubble in $V_{T_1}$ and one in $V_{T_2}$, so the cycle cannot be contained in just one of the $V_{T_i}$. Finally, as catenoids occur precisely as the last bubbles in a bubble descent, it is clear that $C\subset V_{T_i}$ is the set of leaves.
\end{proof}
We may choose the orientation of $T_i$ such that the edges point away from the root $(\omega,i)$. With this notation, a bubble descent is a directed path in $T_i$ to a leaf and a bubble ascent is a directed path to the root. We will show that $T_1$ and $T_2$ are \emph{isomorphic} as rooted trees with fixed leaves.

\begin{lemma}\label{thm:Double tree structure}
$T_1$ and $T_2$ are isomorphic as rooted trees with fixed leaves. More precisely, there is a graph isomorphism $\gamma\colon V_{T_1} \to V_{T_2}$ satisfying $\gamma(c)=c$ for all $c\in C$ and $\gamma((\omega,1)) =(\omega,2)$. Furthermore, for all $v\in T_1$, the bubbles on $v$ and $\gamma(v)$ live on the same scale and position, i.e.,
\begin{equation}
\limk \frac{s^{v}_k}{s^{\gamma(v)}_k}\in (0,\infty)\quad\text{and}\quad \limk (s^{\gamma(v)}_k)^{-1}|y^{\gamma(v)}_k - y^v_k| < \infty.\label{same scale for corresponding bubbles in tree}
\end{equation}
\end{lemma}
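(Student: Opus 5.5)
The plan is to reconstruct both trees from one intrinsic datum, the mutual positions of the catenoid bubbles $c\in C$, and to match vertices through their sets of leaves. For $v\in V_{T_i}$ let $L_i(v)\subset C$ denote the set of leaves below $v$ in $T_i$. By \zcref{lemma:number of catenoids} and \zcref{rem:all the bubbles}, every non-leaf vertex of $T_i$ lies in $V_{\mathrm{thick}}$ and immerses an injective plane \ref{P}. Such a vertex $v$ has exactly one end (none if $v$ is a root) and branch points at all other points of $Q^v$. By \eqref{number of Q in v} and $\genus(v)=0$ we have $\#Q^v\ge 3$, so every internal vertex of $T_i$ has at least two children. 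A rooted tree in which every internal vertex has at least two children is determined by the family $\{L_i(v)\}$. Equivalently, it is determined by the lowest-common-ancestor map $(c,c')\mapsto \mathrm{lca}_i(c,c')$ on pairs of leaves. So the goal is to show that $\mathrm{lca}_1$ and $\mathrm{lca}_2$ induce the same hierarchy.

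\textbf{Step 1 (metric characterisation of the lowest common ancestor).} Fix $c\neq c'$ in $C$ and let $w=\mathrm{lca}_i(c,c')$. Consider the two bubble descents in $V'$ from $w$ to $c$ and to $c'$; they leave $w$ through distinct branch points $q\neq q'\in Q^w$. By \eqref{yk scale for bubble descent},
\[
(s^w_k)^{-1}(y^c_k-y^w_k)\to\vec\Psi^w(q),\qquad (s^w_k)^{-1}(y^{c'}_k-y^w_k)\to\vec\Psi^w(q').
\]
Since $\vec\Psi^w$ is an injective plane, $\vec\Psi^w(q)\neq\vec\Psi^w(q')$. Hence $|y^c_k-y^{c'}_k|/s^w_k$ converges to a number in $(0,\infty)$. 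The same holds in $T_{3-i}$, which gives
\[
\lim_{k\to\infty} \frac{s^{\mathrm{lca}_1(c,c')}_k}{s^{\mathrm{lca}_2(c,c')}_k}\in(0,\infty)\quad\text{for all }c\neq c'.
\]
The same descent argument gives a second fact. If $c,c'\in L_i(u)$ for a child $u$ of $w$, then $|y^c_k-y^{c'}_k|=o(s^w_k)$, because \eqref{scale of bubbles along the descent} gives $s^u_k=o(s^w_k)$ and by induction $|y^c_k-y^{c'}_k|\lesssim s^u_k$.

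\textbf{Step 2 (defining $\gamma$).} For $v\in V_{T_1}$, set $\gamma(v)\cqq\mathrm{lca}_2(L_1(v))$, the lowest common ancestor in $T_2$ of the leaf set of $v$.
\begin{itemize}
\item For a leaf $c\in C$, this gives $\gamma(c)=c$.
\item For the root, this gives $\gamma((\omega,1))=(\omega,2)$, since $L_1((\omega,1))=C$ and $L_2((\omega,2))=C$ by the preceding proposition; the root is the common ancestor of all leaves, and $C$ has at least two elements.
\end{itemize}
The key claim is $L_2(\gamma(v))=L_1(v)$. One inclusion holds by construction.

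For the other inclusion, suppose $c''\in L_2(\gamma(v))\setminus L_1(v)$. Pick $c,c'\in L_1(v)$ with $\mathrm{lca}_1(c,c')=v$; this is possible since $v$ has two children. Then $\mathrm{lca}_1(c,c'')$ is a strict ancestor of $v$, so by Step 1 and \eqref{scale equation in main theorem}, $|y^c_k-y^{c''}_k|\gg s^v_k$. On the other hand, $\mathrm{lca}_2(c,c'')$ is $\gamma(v)$ or one of its descendants. Moreover $\gamma(v)=\mathrm{lca}_2(a,b)$ for some pair $a,b\in L_1(v)$, and by Step 1 its scale is comparable to $s^{\mathrm{lca}_1(a,b)}_k\lesssim s^v_k$. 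Hence $|y^c_k-y^{c''}_k|\lesssim s^v_k$, a contradiction.

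Applying the symmetric construction from $T_2$ to $T_1$ shows that $\gamma$ is a bijection preserving leaf sets. Preserving leaf sets means preserving inclusion of leaf sets, so $\gamma$ is a rooted-tree isomorphism fixing $C$.

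\textbf{Step 3 (scales and positions).} Choose $a,b\in L_1(v)$ with $\mathrm{lca}_1(a,b)=v$. Then $\mathrm{lca}_2(a,b)=\gamma(v)$, since leaf sets are preserved. Step 1 then gives the first half of \eqref{same scale for corresponding bubbles in tree}. For positions, \eqref{yk scale for bubble descent} shows that both $(s^v_k)^{-1}|y^a_k-y^v_k|$ and $(s^{\gamma(v)}_k)^{-1}|y^a_k-y^{\gamma(v)}_k|$ stay bounded. The triangle inequality together with the comparability of the scales gives the second half. The case $p=1$ is treated identically with \zcref{def:Graph structure for p equals 1}.

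\textbf{Main obstacle.} I expect the delicate point to be Step 1: rigorously chaining \eqref{yk scale for bubble descent} along descents whose first edges start at possibly different branch points. One must make sure the limits are taken in the correct chart of $w$ (via \zcref{rem:Behavior of Psi v around Q v} for thick vertices). One must also check that the root contributes distinct branch-point images. The bound $|y^c_k-y^{c'}_k|=o(s^w_k)$ within a subtree needs a short induction on depth.
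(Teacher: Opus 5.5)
Your proposal is correct and follows essentially the paper's route. You define $\gamma(v)$ as the lowest common ancestor in $T_2$ of the leaf set of $v$, and compare scales via \eqref{yk scale for bubble descent} and injectivity. You then prove equality of leaf sets by contradiction: your $|y^c_k-y^{c''}_k|\gg s^v_k$ versus $\lesssim s^v_k$ is the same computation as the paper's, which leads to $\vec\Psi^{v'}(r_1)=\vec\Psi^{v'}(r_2)$. Finally you reconstruct both trees from their leaf sets. Your pairwise formulation $|y^c_k-y^{c'}_k|\sim s^{\mathrm{lca}_i(c,c')}_k$ neatly sidesteps the need to pick one pair separated simultaneously at $v$ in $T_1$ and at $\gamma(v)$ in $T_2$.

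Two citations should be adjusted. First, \zcref{rem:all the bubbles} only announces that $(\omega,2)\in V_{\mathrm{thick}}$ will follow from this very lemma, so you cannot use it for the root of $T_2$. What you actually need there holds anyway: $\vec\Psi^{(\omega,2)}$ is of type \ref{P} or \ref{S}, has branch points at every point of $Q^{(\omega,2)}$, and $\#Q^{(\omega,2)}\ge 2$. Second, take the decay of scales along edges from \eqref{scale of bubbles along the descent} rather than from \eqref{scale equation in main theorem}, since the latter is part of what is being proved.
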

\begin{proof}
If $c\in C\subset V_{T_1}$, we define $\gamma(c) \cqq c\in V_{T_2}$. Fix now $v \in V_{T_1} \setminus C$. Denote by $D_v \subset C$ the leaf set of $v$ in $T_1$, i.e., the leaves which are descendants of $v$ in $T_1$. Notice that $\# D_v \geq 2$ as $v$ has at least two children in $T_1$ by \eqref{number of Q in v} and the fact that $v\in V_{\mathrm{thick}}$ with $\genus(v) = 0$. We define $\gamma(v) \in V_{T_2}$ to be the first common ascendant of the leaves $D_v$ inside $T_2$. 

There are at least two points $p_1, \, p_2 \in Q^v$ (respectively $q_1, \, q_2 \in Q^{\gamma(v)})$ around which $\vec \Psi^v$ (respectively $\vec \Psi^{\gamma(v)}$) have branch points. By the choice of $\gamma(v)$, we may choose $p_1,\, p_2, \, q_1, \, q_2$ in such a way that there are $c_1, \, c_2 \in D_v$, $c_1\neq c_2$, such that $c_i$ results from a bubble descent starting at $p_i$ in $T_1$ and at $q_i$ in $T_2$, $i=1,2$, see also \zcref{fig:graph structure in contradiction}. \zcref{lem: ends and branch points} and \eqref{yk scale for bubble descent} now implies
\begin{align}
\vec \Psi^{v}(p_1)- \vec \Psi^{v}(p_2) &= \limk (s^{v}_k)^{-1}(y^{c_1}_k - y^{v}_k)-(s^{v}_k)^{-1}(y^{c_2}_k - y^{v}_k) = \limk (s^{v}_k)^{-1}(y^{c_1}_k - y^{c_2}_k),\\
\vec \Psi^{\gamma(v)}(q_1)-\vec \Psi^{\gamma(v)}(q_2) &= \limk (s^{\gamma(v)}_k)^{-1}(y^{c_1}_k - y^{\gamma(v)}_k)- (s^{\gamma(v)}_k)^{-1}(y^{c_2}_k - y^{\gamma(v)}_k)=\limk (s^{\gamma(v)}_k)^{-1}(y^{c_1}_k - y^{c_2}_k).
\end{align}
The two terms on the left-hand side are nonzero by the injectivity of $\vec \Psi^v$ and $\vec \Psi^{\gamma(v)}$, see  \zcref{lem:one catenoid}\ref{rem:all types of bubbles} and \zcref{rem:no ends in Rv apart from omega i}, and do not depend on $k$. Hence
\begin{equation}
\limk \frac{s^v_k}{s^{\gamma(v)}_k} = \frac{|\vec \Psi^{\gamma(v)}(p_1) - \vec \Psi^{\gamma(v)}(p_2)|}{|\vec \Psi^v(p_1) - \vec \Psi^v(p_2)|} \in (0,\infty).\label{comparison between scales}
\end{equation}
Furthermore,
\begin{equation}
\left (\limk \frac{s^{v}_k}{s^{\gamma(v)}_k}\right )\vec \Psi^{v}(p_1) - \vec \Psi^{\gamma(v)}(q_1) = \limk (s^{\gamma(v)}_k)^{-1}(y^{\gamma(v)}_k - y^v_k). \label{comparison between the yk}
\end{equation}
\eqref{comparison between scales} and \eqref{comparison between the yk} are \eqref{same scale for corresponding bubbles in tree}.

Fix again $c_1 \in D_v$ with $p_1$ and $ q_1$ as before. Suppose now that $D_{\gamma(v)}\neq D_{v}$. Since $D_{v}\subset D_{\gamma(v)}$, there is $c_3 \in D_{\gamma(v)}\setminus D_v$. Denote by $v' \in T_1$ the first common ascendant of $v$ and $c_3$ in $T_1$ as in \zcref{fig:graph structure in contradiction}. The bubble $v$ results from a bubble descent starting at $r_1 \in Q^{v'}$ and $c_3$ comes from a bubble descent starting at $r_2 \in Q^{v'} \setminus \left \{r_1\right \}$. $r_2 \neq r_1$ follows from the minimality of $v'$. Finally, assume that $c_3$ comes from a bubble descent in $T_2$ starting at $q_3 \in Q^{\gamma(v)}$. 
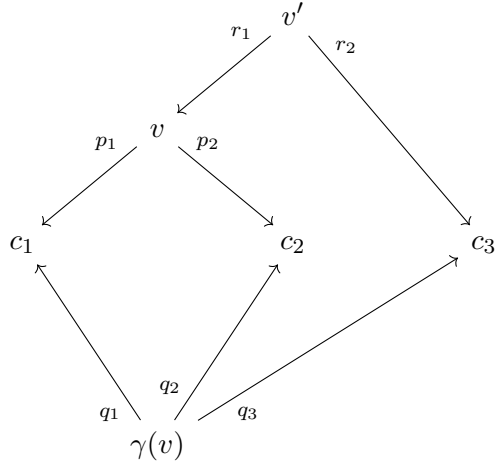
\begin{figure}[H] 
\centering 
\begin{tikzcd}[row sep=2.7em]
    &                                                                                                   & v' \arrow[ld, "r_1"' very near start] \arrow[rrdd, "r_2" very near start] &  &     \\
    & v \arrow[rd, "p_2" very near start] \arrow[ld, "p_1"' very near start]                                           &                                                            &  &     \\
c_1 &                                                                                                   & c_2                                                        &  & c_3 \\
    &                                                                                                   &                                                            &  &     \\
    & \gamma(v) \arrow[luu, "q_1" very near start] \arrow[ruu, "q_2" very near start] \arrow[rrruu, "q_3"' very near start] &                                                            &  &    
\end{tikzcd}
\caption{The structure used in the proof.}\label{fig:graph structure in contradiction}
\end{figure}
Another application of \zcref{lem: ends and branch points} yields
\begin{align}
\vec \Psi^{v'}(r_1) &= \limk (s^{v'}_k)^{-1}(y^{c_1}_k - y^{v'}_k),\label{graph chain 1}\\
\vec \Psi^{\gamma(v)}(q_1) &= \limk (s^{\gamma(v)}_k)^{-1}(y^{c_1}_k - y^{\gamma(v)}_k),\label{graph chain 2}\\
\vec \Psi^{\gamma(v)}(q_3) &= \limk (s^{\gamma(v)}_k)^{-1}(y^{c_3}_k - y^{\gamma(v)}_k),\label{graph chain 3}\\
\vec \Psi^{v'}(r_2) &= \limk (s^{v'}_k)^{-1}(y^{c_3}_k - y^{v'}_k).\label{graph chain 4}
\end{align}
In particular, applying $\limk\frac{s^{v'}_k}{s^v_k} = \infty$ from \zcref{lem: ends and branch points}, \eqref{same scale for corresponding bubbles in tree}, \eqref{graph chain 2}, and \eqref{graph chain 3} yields
\begin{align}
0 &= \limk (s^{v'}_k)^{-1}(y^{c_1}_k - y^{\gamma(v)}_k),\label{graph chain 5} \\
0 &= \limk (s^{v'}_k)^{-1}(y^{c_3}_k - y^{\gamma(v)}_k).\label{graph chain 6}
\end{align}
Combining \eqref{graph chain 1}, \eqref{graph chain 5}, \eqref{graph chain 6}, and \eqref{graph chain 4} shows
\[\vec \Psi^{v'}(r_1) = \vec \Psi^{v'}(r_2).\]
This contradicts the injectivity of $\vec \Psi^{v'}$ as $r_1\neq r_2$. It follows that $D_{\gamma(v)} = D_{v}$. We have shown that there exists $\gamma\colon T_1\to T_2$ with
\begin{equation}
    D_{\gamma(v)} = D_{v}\quad\text{for all $v\in T_1$}.\label{eq:equal descendants of v and gammav}
\end{equation}
Repeating the argument for $v\in T_2$ (the fact that $(\omega,1) \in V_{\mathrm{thick}}$ was nowhere used), we find that
\[H \cqq \left \{D_v, \, v\in T_1\right \} = \left \{D_w,\, w\in T_2\right \}.\]
$H$ has the structure of a rooted tree given by its Hasse diagram with respect to set inclusion. Recall that each vertex $v\in T_i\setminus C$ has at least two children. It follows from \cite[Theorem 3.5.2]{SempleSteel} that there is a rooted tree isomorphism between $T_i$ and $H$, sending $v$ to $D_v$. 
In particular, using \eqref{eq:equal descendants of v and gammav}, we obtain that $\gamma$ is an isomorphism.
\end{proof}

\begin{remark} \label{rem:Forcing two sphere bubbles}
Owing to \eqref{same scale for corresponding bubbles in tree}, the convergences \eqref{limit immersion on nodal part} and \eqref{bubble convergence in neck regions} still hold if we assume that $s^{v}_k = s^{\gamma(v)}_k$ and $y^{v}_k = y^{\gamma(v)}_k$. This proves \eqref{same scale and position for two bubbles}. Notice that due to the isomorphism $\gamma$ from \zcref{thm:Double tree structure}, $\#Q^{(\omega,2)} = \#Q^{(\omega,1)} \geq 3$ and it follows $(\omega,2) \in V_{\mathrm{thick}}$. 

\end{remark}

We chose the inversion in \zcref{prop:Two spheres} in such a way that $S_1 \in V_{\mathrm{conc}}$. However, we left open the question whether $S_2 = (\omega,2)$ or $S_2 \in V_{\mathrm{conc}}$. We will do suitable inversions to make sure that $S_2 \in V_{\mathrm{conc}}$ and that \eqref{same scale and position for two spheres} holds. 

\begin{prop} \label{lem:Forching two sphere bubbles}
There exist Möbius transformations $\Xi_k$ such that the graph $\hat{G}$ associated to $\hat{\vec{\Phi}}_k \cqq \Xi_k \circ \vec \Phi_k$ via \zcref{def:Graph structure} (where $\vec \Phi_k$ is as chosen in \zcref{rem:no ends in Rv apart from omega i}) satisfies $\{S_1, S_2\} = V_{\mathrm{conc}}$ and \eqref{same scale and position for two spheres} holds. Additionally, $\vec \Psi^{S_1}(\infty) = \vec \Psi^{S_2}(\infty)$.
\end{prop}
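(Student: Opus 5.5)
We start from the configuration reached after \zcref{thm:Double tree structure} and \zcref{rem:Forcing two sphere bubbles}. The bubble $(\omega,1)\in V_{\mathrm{thick}}$ immerses a plane \ref{P} whose end sits at $z^{(\omega,1)}$, and a bubble ascent from this end terminates in the sphere $S_1\in V_{\mathrm{conc}}$. The bubble $(\omega,2)\in V_{\mathrm{thick}}$ is either a plane with a concentration sphere $S_2$ attached at an end $r\in R^{(\omega,2)}$, or it is itself the sphere, so $S_2=(\omega,2)$. Moreover, $(\omega,1)$ and $(\omega,2)$ share scale and position by \eqref{same scale for corresponding bubbles in tree}.

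The plan is to invert once more at a carefully chosen point, so that both thick roots become planes whose ends carry spheres, and then rescale so that both spheres sit at scale $1$ and position $0$.

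\textbf{Case $S_2=(\omega,2)$.} Here $\vec\Psi^{(\omega,2)}$ is a round sphere and $\vec\Psi^{(\omega,1)}$ is a plane, living on the same scale $s_k\cqq s^{(\omega,1)}_k$ and the same position $y_k$. I would choose the inversion center $p_k=y_k+s_k\tilde p_k$ with $\tilde p_k\to \vec\Psi^{(\omega,2)}(z)$ for a generic point $z\in(\omega,2)\setminus(Q\cup R^{(\omega,2)})$ not on the plane.

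By \eqref{inversion on bubbles 2}, $(\omega,2)$ then becomes a plane with an end at $z$. Bubble ascent from that end produces a new concentration bubble $\hat S_2$ in $V^{(\omega,2),z}_{\mathrm{conc}}$, which must be a sphere by the energy budget. Meanwhile $(\omega,1)$ becomes an inverted plane, i.e.\ a sphere through the center, so it would now carry $4\pi$. I need to check that this does not break the structure. I then repeat the argument of \zcref{prop:Two spheres}, inverting again near the point of $(\omega,1)$, or rather choosing the center symmetrically.

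A cleaner route is to pick $\tilde p_k$ converging to a common point of both limits, namely the intersection of the plane and the sphere. This point is generic away from $Q$, and then \eqref{inversion on bubbles 2} turns both into planes, one with an end at $z^{(\omega,1)}$ and the other with an end at $z'\in(\omega,2)$. Around these ends energy concentrates ($\hat R^v\subset R^v\cup(\vec\Psi^v)^{-1}(p^v)$), and bubble ascents inside $V^{v,r}_{\mathrm{conc}}$ yield two spheres $S_1,S_2\in V_{\mathrm{conc}}$.

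\textbf{Case $S_2\in V_{\mathrm{conc}}$.} The same inversion at a point of the common limit keeps both roots planar and keeps both spheres in $V_{\mathrm{conc}}$. In both cases \zcref{lemma:number of catenoids}, applied to the new sequence, shows that these are the only concentration bubbles, so $V_{\mathrm{conc}}=\{S_1,S_2\}$.

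For \eqref{same scale and position for two spheres}, I would show that $S_1$ and $S_2$ live on comparable scales and positions. Both arise from ascents starting at the ends of the two planar roots. By \eqref{yk scale for bubble ascent} and \eqref{same scale for corresponding bubbles in tree}, the roots share scale and position. I would then argue, via the diameter and area estimates \eqref{diameter estimate 2} and the fact that the inverted picture is a sphere pair, that the ascent chains have equal length and that $\lim s^{S_1}_k/s^{S_2}_k\in(0,\infty)$ with bounded normalized distance between $y^{S_1}_k$ and $y^{S_2}_k$. Composing with a final dilation and translation $\Xi_k$ normalizes $(s^{S_1}_k,y^{S_1}_k)=(1,0)$, and then replacing $s^{S_2}_k,y^{S_2}_k$ by these values keeps convergence, as in \zcref{rem:Forcing two sphere bubbles}.

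For the last claim, the two spheres each connect to their planar root through an end, so $\infty\in\hat{\mathbb C}$ is the attaching point. Identity \eqref{yk scale for bubble ascent}, applied to both chains and combined with the equal positions of the roots, gives $\vec\Psi^{S_1}(\infty)=\vec\Psi^{S_2}(\infty)$.

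\textbf{Main obstacle.} The hardest step is proving that $S_1$ and $S_2$ live on the same scale. I would attempt it by applying an auxiliary inversion centered near $\vec\Psi^{S_1}$ and invoking the Li--Yau equality case, \zcref{thm:equality-in-Li-Yau}: if the scales differed, one sphere would invert to a plane while the other shrinks to a point, and the resulting branch or end bookkeeping would force energy above $8\pi$.
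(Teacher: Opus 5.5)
Your first inversion is exactly the paper's first step. You centre it at $P_k=s^{(\omega,1)}_kP_k'+y^{(\omega,1)}_k$ with $P_k'\to P_0\in\im\vec\Psi^{(\omega,1)}\cap\im\vec\Psi^{(\omega,2)}$, chosen away from the images of $Q^{(\omega,1)}$. It does make both roots planes with ends in $\tilde R^{(\omega,i)}$, and it gives $\tilde V_{\mathrm{conc}}=\{\tilde S_1,\tilde S_2\}$.

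\textbf{The gap is the scale statement \eqref{same scale and position for two spheres}.} You claim $\lim s^{S_1}_k/s^{S_2}_k\in(0,\infty)$ follows from \eqref{diameter estimate 2} and from ``the inverted picture being a sphere pair''. Nothing forces this. Roughly, the sheet converging to $\vec\Psi^{(\omega,i)}$ passes at some distance $d^i_k=o(s^{(\omega,1)}_k)$ from $P_k$, and the inversion turns it into a sphere of size about $1/d^i_k$. The ratio $d^1_k/d^2_k$ depends on how $P_k'$ approaches $P_0$ and is not controlled by the graph structure. So after this inversion you must genuinely allow $s^{\tilde S_1}_k/s^{\tilde S_2}_k\to 0$.

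Your fallback contradiction via Li--Yau also fails. Suppose the scales differ and you invert at unit scale inside the smaller sphere $\tilde S_1$. The huge sphere $\tilde S_2$ is then inverted at (the limit of) one of its own points and becomes a plane. Meanwhile a \emph{new} round sphere emerges from the neck between $(\tilde\omega,2)$ and $\tilde S_2$. The energy stays exactly $8\pi$, so no contradiction arises. Since the proposition is existential, comparability has to be manufactured by a further Möbius transformation; it cannot be derived.

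The paper does this as follows. Normalize $(s^{\tilde S_1}_k,y^{\tilde S_1}_k)=(1,0)$. Pick a ball $B_{r_0}(Q_0)$, with $r_0$ independent of $k$, disjoint from every $\tilde{\vec\Phi}_k(\Sigma)$, and compose with $I_{Q_0}$. All images then lie in $B_{r_0^{-1}}$, so areas are bounded and scale $1$ is the largest scale up to a bounded factor.

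The second sphere is then located by hand inside the neck $\Omega^{\alpha_0,\alpha_0}_k(e)$ for $e=((\tilde\omega,2),\tilde S_2)$. The scale $re^{\underline\lambda(r)}$ is $\lesssim s^{(\tilde\omega,1)}_k\to0$ at the outer boundary and $\gtrsim s^{\tilde S_2}_k\to\infty$ at the inner one. Hence there are radii $r_k$ with $r_ke^{\underline\lambda(r_k)}=1$. The almost-logarithmic estimate \eqref{weakened version of pointwise gradient estimate} with $m=-1$ gives $\lambda(tx_k)\le\lambda(x_k)-\tfrac53\log t+C$. This bounds $|\tilde{\vec\Theta}_k(e)(x_k)|$ for $|x_k|=r_k$, so $|\tilde{\vec\Theta}_k(e)(x_k)-Q_0|\in(r_0,C)$. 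Consequently the conformal factor of $x\mapsto I_{Q_0}\circ\tilde{\vec\Theta}_k(e)(r_kx)$ is bounded above and below, which produces $\hat S_2$ at scale $1$ and position $0$.

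None of this appears in your proposal, and it is the heart of the statement. Your final identity $\vec\Psi^{S_1}(\infty)=\vec\Psi^{S_2}(\infty)$ via \eqref{yk scale for bubble ascent} is fine, but only after this normalization. Note also that $\hat S_2$ is not the image of the old $\tilde S_2$, which becomes a plane. So the ``equal length of the ascent chains'' you invoke plays no role.
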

\begin{proof}
We will first invert $\vec \Phi_k$ in such a way that we can ensure $S_2 \in V_{\mathrm{conc}}$ and then invert once more to guarantee \eqref{same scale and position for two spheres}. Quantities from the first inversion are labeled with $\tilde{}$, quantities from the second inversion are labeled with $\hat{}$.

From \zcref{lem: ends and branch points}, \eqref{same scale and position for two bubbles}, and \zcref{thm:Double tree structure}, we deduce
\begin{equation}
\left \{\vec \Psi^{(\omega,1)}(q), \, q\in Q^{(\omega,1)}\right \} = \left \{\vec \Psi^{(\omega,2)}(q), \, q\in Q^{(\omega,2)}\right \},\label{macroscopic bubbles intersect}
\end{equation}
and both of these finite sets have more than one element as $\vec \Psi^{(\omega,1)}$ and $\vec \Psi^{(\omega,2)}$ are injective. Since they either immerse \ref{P} and \ref{S} or they both immerse \ref{P}, this implies that the images of $\vec \Psi^{(\omega,1)}$ and $\vec \Psi^{(\omega,2)}$ intersect infinitely often. We pick $P_0 \in \im \vec \Psi^{(\omega,1)} \cap \im \vec \Psi^{(\omega,2)} \setminus \left \{\vec \Psi^{(\omega,1)}(q), \, q\in Q^{(\omega,1)}\right \}$. We choose $P_k'\in \R^3$ satisfying $\limk P_k' = P_0$ and $P_k \cqq s^{(\omega,1)}_k P_k' + y^{(\omega,1)}_k \not \in \vec \Phi_k(\Sigma)$. 

In this way, \eqref{inversion on bubbles 2} ensures that the bubbles $\tilvec{\Psi}^{(\omega,1)}$ and $\tilvec{\Psi}^{(\omega,2)}$ associated to $\tilvec{\Phi}_k \cqq I_{P_k} \circ \vec \Phi_k$ are \ref{P} with their end located around a point in $\tilde{R}^{(\omega,1)}$ and $\tilde{R}^{(\omega,2)}$. On the scale $s^{(\omega,1)}_k$, the remaining bubbles in $V' \setminus \{(\omega,1), (\omega,2)\}$ concentrate around $\left \{\vec \Psi^{(\omega,1)}(q), \, q\in Q^{(\omega,1)}\right \}$, i.e., far away from the inversion point, and thus we have $(s^v_k)^{-1}(P_k - y^v_k) \to \infty$ as $k\to \infty$ for all $v \in V' \setminus \{(\omega,1), (\omega,2)\}$. In particular, \eqref{inversion on bubbles 1} shows that $\tilvec{\Psi}^v$ is obtained from $\vec{\Psi}^v$ by composition with isometries for all $v \in V' \setminus \{(\omega,1), (\omega,2)\}$. In total, this first inversion guarantees that $\{\tilde{S}_1,\tilde{S}_2\} = \tilde{V}_{\text{conc}}$.

Composing $\tilvec{\Phi}_k$ with one more inversion, we can additionally make sure that the scales of $\tilde S_1$ and $\tilde S_2$ become comparable in the sense of \eqref{same scale and position for two spheres}: Suppose without loss of generality that after passing to a subsequence
 \begin{equation}
 \limk \frac{s^{\tilde S_1}_k}{s^{\tilde S_2}_k} = 0.\label{choosing the smaller spherical bubble}
 \end{equation}
After a potential rescaling and translation of $\tilvec{\Phi}_k$, we may assume that $(s^{\tilde S_1}_k, y^{\tilde S_1}_k)=(1,0)$. By \cite[Lemma 5.13]{RiviereLectureNotes}, we can pick $Q_0 \in \R^3$ and $0<r_0<1-|Q_0|$ such that 
\begin{equation}
\tilvec{\Phi}(\Sigma)  \cap B_{r_0}(Q_0) = \emptyset \quad\text{for all $k \in \N$.}\label{p0 away from surface}
\end{equation}
We define $ \Xi_k \cqq  I_{Q_0} \circ I_{P_k}$ and let $\hat{\vec{\Phi}}_k \cqq  \Xi_k \circ \vec \Phi_k$.

Once again, on the scale $s^{\tilde S_1}_k$, the bubbles in $\tilde V' \cup \{\tilde S_1\}$ live away from the inversion center $Q_0$ in the sense of \eqref{inversion on bubbles 1} and \eqref{p0 away from surface} and are not altered by the inversion (it holds $\hat{\vec{\Psi}}^{\tilde{S}_1} = I_{Q_0} \circ \tilde{\vec{\Psi}}^{\hat{S}_1}$, but both still immerse \ref{S}). In particular, there is still a bubble $\hat{S}_2 \in \hat{V}_{\text{conc}}^{(\hat{\omega},2)}$ as $\hat{\vec{\Psi}}^{(\hat{\omega},2)}$ agrees with $\tilde{\vec{\Psi}}^{(\hat{\omega},2)}$ up to an isometry, implying that the end is located around a point in $\hat{R}^{(\hat \omega, 2)}$. However, the domain corresponding to $\tilde{S}_2$ and $\hat{S}_2$ may be different. By \eqref{p0 away from surface}, it holds
\begin{equation}
\hat{\vec{\Phi}}_k(\Sigma) =   I_{Q_0} \circ \tilde{\vec{\Phi}}_k(\Sigma)   \subset B_{r_0^{-1} }(0),\label{image bounded after inversion}
\end{equation}
meaning that $\mc A(\hat{\vec{\Phi}}_k)$ is uniformly bounded from above, see \cite[Lemma 1.1]{Simon}. It follows that $ s^{\hat S_1}_k =   1$ is the largest scale in $\hat{G}$ up to multiplication with a bounded factor.

We will now explicitly find the domain on which the second spherical bubble $\hat{S}_2$ is immersed. Let $e= ((\tilde{\omega},2), \tilde{S}_2)$ be the edge connecting $(\tilde{\omega},2)$ to $ \tilde{S}_2$ in $\tilde{G}$ and let $\Omega^{\alpha_0,\alpha_0}_k(e) = B_{\alpha_0 \rho_k^{-1}}(0) \setminus B_{\alpha_0^{-1}}(0)$ and $\tilde{\vec{\Theta}}_k(e)$ be the corresponding neck region and reparametrization of $\tilvec{\Phi}_k$ as defined in \eqref{neck region Omega k (e)} and \eqref{Theta k parametrization in neck region}, \eqref{Theta k (e) parametrization}. Here, $\rho_k = \rho^1_k$ is the radius from \eqref{Theta k parametrization in neck region}. Denote by $\lambda_{\tilvec{\Theta}_k(e)}$ the conformal factor of $\tilvec{\Theta}_k(e)$. As in the proof of \zcref{lem: ends and branch points}, we use the notation $\underline \lambda(r) = \dashint_{\partial B_r}\lambda(x) \dif l (x)$ to denote the average on circles. From the definition of $\tilvec{\Theta}_k(e)$, \eqref{bounded conformal factors in nodal region 2}, the fact that we work in converging, conformal charts, and \eqref{same scale and position for two bubbles}, we deduce for some $C$ independent of $k$ that
\begin{equation}
    \alpha_0 \rho_k^{-1}e^{\underline{\lambda}_{\tilvec{\Theta}_k(e)}(\alpha_0 \rho_k^{-1})} \leq  C s^{(\tilde{\omega},2)}_k = Cs^{(\tilde{\omega},1)}_k.\label{scale on one end small}
\end{equation}
Similarly, it holds $ \alpha_0 ^{-1} e^{\underline{\lambda}_{\tilvec{\Theta}_k(e)}(\alpha_0^{-1}  )} \geq  C^{-1} s^{\tilde{S}_2}_k$. From \zcref{lem: ends and branch points} and \eqref{choosing the smaller spherical bubble}, it follows
\begin{equation}
    \limk   s^{(\tilde{\omega},1)}_k =\limk \frac{s^{(\tilde{\omega},1)}_k}{s^{\tilde{S}_1}_k}  = 0 = \limk \frac{s^{\tilde{S}_1}_k}{s^{\tilde{S}_2}_k}=\limk\frac{1}{s^{\tilde{S}_2}_k},\label{scale quotients for S1 and S2}
\end{equation}
so by continuity, there are radii $r_k$ for $k$ sufficiently large such that 
\begin{equation}
r_k e^{\underline{\lambda}_{\tilvec{\Theta}_k(e)}(r_k )} = 1.\label{choosing intermediate scale}
\end{equation}
The local Harnack estimates \cite[Lemma V.2]{BernardRiviere}, \eqref{scale on one end small}, \eqref{scale quotients for S1 and S2}, and \eqref{choosing intermediate scale} imply $\limk \frac{r_k}{\alpha_0\rho_k^{-1}} = \limk \frac{\alpha_0^{-1}}{r_k} = 0$. As a consequence of \zcref{lem: ends and branch points} and $s^{(\tilde{\omega},1)}_k = s^{(\tilde{\omega},2)}_k$, on the scale $1=s ^{\tilde{S}_1}_k$, all bubbles in $\tilde{V}'$, including $(\tilde\omega, 2)$, concentrate around $\tilvec{\Psi}^{\tilde{S_1}}(\infty) \in \R^3$. Since ${\partial B_{\alpha_0\rho_k^{-1}}}$ lies in the domain corresponding to the bubble of $(\tilde{\omega},2)$, we see that
\begin{equation}
\limk \| \tilvec{\Theta}_k(e) - \tilvec{\Psi}^{\tilde{S}_1}(\infty)\|_{L^{\infty}(\partial B_{\alpha_0\rho_k^{-1}})} =0.\label{Theta k remains bounded on one end}
\end{equation}

 For $x_k \in \Omega^{\alpha_0, \alpha_0}_k(e)$ such that $|x_k| = r_k$, we deduce
\begin{align}
|\tilvec{\Theta}_k(e)(x_k)| &\leq  \left |\tilvec{\Theta}_k(e)(x_k) - \tilvec{\Theta}_k(e)\left (\alpha_0\rho_k^{-1} \frac{x_k}{r_k}\right )\right | + \underbrace{\left |  \tilvec{\Theta}_k(e)\left (\alpha_0\rho_k^{-1} \frac{x_k}{r_k}\right )- \tilvec{\Psi}^{\tilde{S}_1}(\infty)\right |}_{\to 0 \text{ as $k\to \infty$ by \eqref{Theta k remains bounded on one end}}}+\underbrace{| \tilvec{\Psi}^{\tilde{S}_1}(\infty)|}_{< \infty} \\
&\leq C + C r_k\int ^{\alpha_0 \rho_k^{-1} r_k^{-1}} _{1} e^{\lambda_{\tilvec{\Theta}_k(e)}(tx_k)} \dif t.
\end{align} 
From \zcref{lem:almost pointwise control conformal factor} and the fact that $m=-1$ in this neck region\footnote{We know that $m=\pm 1$ from \zcref{prop: no density 2} and \zcref{lem: ends and branch points} yields the correct sign.}, we see
\begin{equation}
\lambda_{\tilvec{\Theta}_k(e)}(tx_k) \leq |\lambda_{\tilvec{\Theta}_k(e)}(tx_k)-\lambda_{\tilvec{\Theta}_k(e)}(x_k) + 2 \log (t)|+\lambda_{\tilvec{\Theta}_k(e)}(x_k) - 2 \log (t) \leq  \lambda_{\tilvec{\Theta}_k(e)}(x_k) -\frac{5}{3} \log(t) + C.\label{Lemma 2 1 application}
\end{equation}
With \eqref{Lemma 2 1 application} and \eqref{choosing intermediate scale},
\[C r_k\int ^{\alpha_0 \rho_k^{-1} r_k^{-1}} _{1} e^{\lambda_{\tilvec{\Theta}_k(e)}(tx_k)} \dif t \leq C r_k\int_1^\infty e^{\lambda_{\tilvec{\Theta}_k(e)}(x_k) } t^{-\frac{5}{3}} \dif t \leq C.\]
Hence, $|\tilvec{\Theta}_k(e)(x_k) - Q_0| \in (r_0, C)$. Setting $\hat{\vec{\Theta}}_k(e) = I_{Q_0} \circ\tilde{\vec{\Theta}}_k(e)$, we see from \eqref{choosing intermediate scale} that $r_k e^{\underline{\lambda}_{\hat{\vec{\Theta}}_k(e)}(r_k)}$ is uniformly bounded from below and above. In particular, the conformal factor of the map $x\mapsto \hat{\vec{\Theta}}_k(e)(r_k x)$ is bounded from below on compact subsets of $\C \setminus \{0\}$. As we had seen earlier, $\limk s^{\hat{S}_2}_k < \infty$. This excludes the possibility that $\limk s^{\hat{S}_2}_k = 0$, as otherwise the conformal factor of $x\mapsto \hat{\vec{\Theta}}_k(e)(r_k x)$ would diverge to $-\infty$ locally uniformly on $\C \setminus \{0\}$. In particular, we may choose $\hat{\vec{\Psi}}^{\hat{S}_2}_k(x) = \hat{\vec{\Theta}}_k(e)(r_k x)$, so that $s^{\hat{S}_2}_k =s^{\hat{S}_1}_k= 1$ and $y^{\hat{S}_1}_k =  y^{\hat{S}_2}_k = 0$, which proves \eqref{same scale and position for two spheres}. Using the properties of bubble descents, this also implies $\vec \Psi^{S_1}(\infty) = \vec \Psi^{S_2}(\infty)$. 
\end{proof}

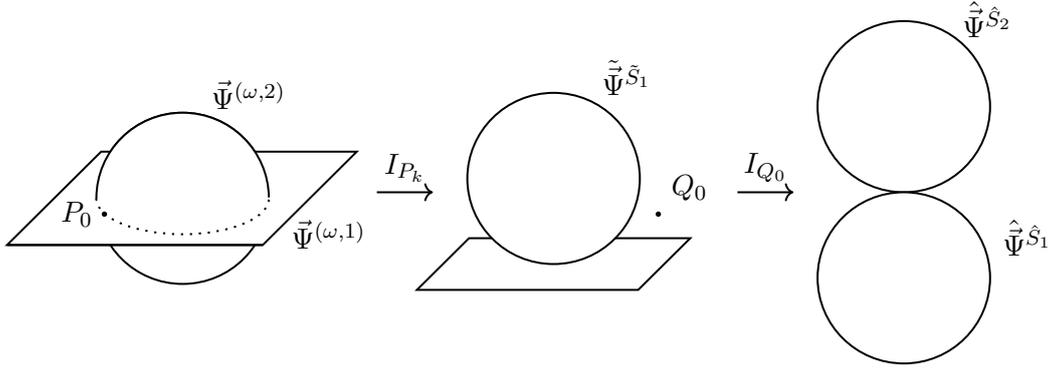
\begin{figure}[H] 
\centering 
\tikzset{every picture/.style={line width=0.75pt}} 

\begin{tikzpicture}[x=0.75pt,y=0.75pt,yscale=-1,xscale=1]

\draw  [fill={rgb, 255:red, 255; green, 255; blue, 255 }  ,fill opacity=1 ] (130,163.06) .. controls (130,139.28) and (149.28,120) .. (173.06,120) .. controls (196.83,120) and (216.11,139.28) .. (216.11,163.06) .. controls (216.11,186.83) and (196.83,206.11) .. (173.06,206.11) .. controls (149.28,206.11) and (130,186.83) .. (130,163.06) -- cycle ;
\draw  [fill={rgb, 255:red, 255; green, 255; blue, 255 }  ,fill opacity=1 ] (132.39,139.58) -- (260,139.58) -- (213.06,186.53) -- (85.44,186.53) -- cycle ;

\draw[->]    (270,160) --node[above]{$I_{P_k}$} (298,160) ;

\draw   (316,183.11) -- (426.45,183.11) -- (400.45,209.11) -- (290,209.11) -- cycle ;
\draw  [fill={rgb, 255:red, 255; green, 255; blue, 255 }  ,fill opacity=1 ] (315.17,153.06) .. controls (315.17,129.28) and (334.45,110) .. (358.23,110) .. controls (382.01,110) and (401.28,129.28) .. (401.28,153.06) .. controls (401.28,176.83) and (382.01,196.11) .. (358.23,196.11) .. controls (334.45,196.11) and (315.17,176.83) .. (315.17,153.06) -- cycle ;


 \node[circle,fill=black,inner sep=0pt,minimum size=2pt,label=north east:{$Q_0$}] (a) at (410.5, 171) {}; 

\draw[->]    (450,160) -- node[above]{$I_{Q_0}$} (478,160) ;
\draw  [fill={rgb, 255:red, 255; green, 255; blue, 255 }  ,fill opacity=1 ] (490,116.94) .. controls (490,93.17) and (509.28,73.89) .. (533.06,73.89) .. controls (556.83,73.89) and (576.11,93.17) .. (576.11,116.94) .. controls (576.11,140.72) and (556.83,160) .. (533.06,160) .. controls (509.28,160) and (490,140.72) .. (490,116.94) -- cycle ;
\draw  [fill={rgb, 255:red, 255; green, 255; blue, 255 }  ,fill opacity=1 ] (490,203.06) .. controls (490,179.28) and (509.28,160) .. (533.06,160) .. controls (556.83,160) and (576.11,179.28) .. (576.11,203.06) .. controls (576.11,226.83) and (556.83,246.11) .. (533.06,246.11) .. controls (509.28,246.11) and (490,226.83) .. (490,203.06) -- cycle ;
\draw  [draw opacity=0][dash pattern={on 0.84pt off 2.51pt}] (216.08,163.68) .. controls (215.3,173.36) and (196.33,181.11) .. (173.06,181.11) .. controls (149.28,181.11) and (130.01,173.03) .. (130.01,163.06) .. controls (130.01,163.05) and (130.01,163.04) .. (130.01,163.03) -- (173.06,163.06) -- cycle ; \draw  [dash pattern={on 0.84pt off 2.51pt}] (216.08,163.68) .. controls (215.3,173.36) and (196.33,181.11) .. (173.06,181.11) .. controls (149.28,181.11) and (130.01,173.03) .. (130.01,163.06) .. controls (130.01,163.05) and (130.01,163.04) .. (130.01,163.03) ;  
\draw [fill={rgb, 255:red, 255; green, 255; blue, 255 }  ,fill opacity=1 ]   (85.44,186.53) -- (213.06,186.53) ;
\draw  [draw opacity=0][fill={rgb, 255:red, 255; green, 255; blue, 255 }  ,fill opacity=1 ] (130,163.6) .. controls (130,163.42) and (130,163.23) .. (130,163.05) .. controls (130,139.27) and (149.27,120) .. (173.05,120) .. controls (196.68,120) and (215.86,139.04) .. (216.1,162.61) -- (173.05,163.05) -- cycle ; \draw   (130,163.6) .. controls (130,163.42) and (130,163.23) .. (130,163.05) .. controls (130,139.27) and (149.27,120) .. (173.05,120) .. controls (196.68,120) and (215.86,139.04) .. (216.1,162.61) ;  

\node[
  circle,  fill=black,  inner sep=0pt,  minimum size=2pt,  label={[xshift=1pt ] west:{$P_0$}}] (a) at (134,171) {};

\draw (186,102.4) node [anchor=north west][inner sep=0.75pt]    {$\vec \Psi^{(\omega, 2)}$};
\draw (226,172.4) node [anchor=north west][inner sep=0.75pt]    {$\vec \Psi^{(\omega, 1)}$};
\draw (381,92.4) node [anchor=north west][inner sep=0.75pt]    {$ \tilvec{\Psi}^{\tilde{S}_1}$};
\draw (581,172.4) node [anchor=north west][inner sep=0.75pt]    {$\hat{\vec{\Psi}}^{\hat{S}_1}$};
\draw (561,62.4) node [anchor=north west][inner sep=0.75pt]    {$\hat{\vec{\Psi}}^{\hat{S}_2}$};

\end{tikzpicture}
\caption{The first inversion ensures that both spherical bubbles are in $V_{\mathrm{conc}}$. The second inversion ensures that they are on the same scale.}
\end{figure}

\subsection{Strong \texorpdfstring{$W^{2,2}_{\loc}$}{W^(2,2)}-convergence}\label{subsec:strong convergence}
Let $v\in V$ and let $z\in v \setminus (Q^v \cup R^v)$ if $v \in V_{\mathrm{thick}}$ or $z \in \C \setminus (Q^v \cup R^v)$ if $v \in V_{\mathrm{thin}} \cup V_{\mathrm{conc}}$ (or $p=1$). There is a neighborhood $U$ around $z$ and conformal, converging coordinates $\omega, \omega_k\colon\mb D \to U$ such that $\vec \Psi^v_k \circ \omega_k \wto \vec \Psi^v \circ \omega$ in $W^{2,2}(\mb D; \R^3)$. Denote by $\lambda_k$, $\lambda$ the conformal factors of $ {\vec{\Psi}}^v_k \circ \omega_k$, $ {\vec{\Psi}}^v \circ \omega$ in $\mb D$. Because of the lower semicontinuity of the Willmore energy, we deduce
\begin{equation}
\limk \frac{1}{4}\|e^{-\lambda_k} \Delta (\vec \Psi^v_k \circ \omega_k)\|^2_{L^2(\mb D; \R^3)} = \limk \mc W( {\vec{\Psi}}^v_k \circ \omega_k \vert _{\mb D}) = \mc W( {\vec{\Psi}}^v \circ \omega \vert _{\mb D}) = \frac{1}{4} \|e^{-\lambda} \Delta (\vec \Psi^v \circ \omega)\|^2_{L^2(\mb D; \R^3)}.\label{no Willmore energy in V after inversion}
\end{equation}
Indeed, otherwise $\liminf_{k\to \infty} \mc W(\vec \Phi_k) > 8\pi$. The uniform Harnack estimate 
\begin{equation}
\|\lambda\|_{L^\infty(\mb D)} + \sup_{k\in \N}\|\lambda_k\|_{L^\infty(\mb D)} <\infty\label{uniform harnack estimate}
\end{equation}
 holds. The weak $W^{2,2}$-convergence $\vec \Psi^v_k \circ \omega_k \wto \vec \Psi^v \circ \omega$ implies $e^{\lambda_k} \to e^{\lambda}$ pointwise a.e.\ in $\mb D$ after passing to a subsequence. \eqref{uniform harnack estimate} also shows $e^{-\lambda_k} \to e^{-\lambda}$ pointwise a.e.\ and so $e^{-\lambda_k}\Delta (\vec \Psi^v_k \circ \omega_k) \wto e^{-\lambda} \Delta (\vec \Psi^v \circ \omega) $ in $L^2(\mb D; \R^3)$. Together with \eqref{no Willmore energy in V after inversion}, we see
\begin{equation}
e^{-\lambda_k}\Delta (\vec \Psi^v_k \circ \omega_k) \to e^{-\lambda} \Delta (\vec \Psi^v \circ \omega) \quad \text{in $L^2(\mb D; \R^3)$.}\label{strong convergence H}
\end{equation}
Using again \eqref{uniform harnack estimate}, we also deduce $\Delta (\vec \Psi^v_k \circ \omega_k) \to  \Delta (\vec \Psi^v \circ \omega) $ in $L^2(\mb D; \R^3)$. Applying standard elliptic regularity estimates, e.g. \cite[Section 6.3.1]{Evans}, yields that
\[\vec \Psi^v_k \circ \omega_k \to \vec \Psi^v \circ \omega \quad\text{in $W^{2,2}(B_{1/2}; \R^3)$}.\]
This implies the local \emph{strong} $W^{2,2}$-convergence in \eqref{limit immersion on nodal part}, \eqref{bubble convergence in neck regions}, and \eqref{concentration point convergence}. The varifold convergence will follow from \eqref{varifold convergence result}. This completes the proof of \zcref{thm:Asymptotic convergence} if the genus $p$ is at least two.

\subsection{The torus case}\label{subsec:torus case}
Let us now prove \zcref{thm:Asymptotic convergence} in the case $p=1$.

As in \zcref{subsec:one catenoid}, the cyclicity of $G$ ensures that there is some bubble $v = i \in V'$ such that $\vec \Psi^v$ immerses a catenoid. $\vec \Psi^v$ has two ends of order $-1$ around $0$ and $\infty$, which implies by \zcref{lem: ends and branch points} that $m(e)=1$, where $e=(i-1,i)$ (or $e=(N-1,0)$ in the case that $i=0$). It is not difficult to verify, cf. \cite[Section 3.2]{LaurainRiviereEnergyQuantization}, that we can choose $z_k \in  B_{\alpha_0 a^{i}_k} \setminus B_{\alpha_0^{-1} b^{i}_k}$ such that
\[\limk \frac{z_k}{b^i_k} = \limk \frac{a^i_k}{z_k} = \infty\]
and
\begin{equation}
\lim _{\alpha\to 0} \limk \int _{B_{\alpha^{-1} |z_k|} \setminus B_{\alpha |z_k|}} |\nabla \vec n_{\vec \Phi_k\circ \chi_k}|^2 \dif x = 0.\label{zk with no energy around it}
\end{equation}
In particular, using \eqref{L2 weak estimate for conformal factor in torus case}, we have local Harnack estimates for the conformal factor as in \cite[Theorem~5.5]{RiviereLectureNotes} and after passing to a subsequence, we deduce
\[e^{-\lambda_k(z_k)} |z_k|^{-1} (\vec \Phi_k\circ \chi_k (|z_k| \cdot ) - \vec \Phi_k \circ \chi_k (z_k)) \wto \vec \Psi_{\infty} \quad\text{in $W^{2,2}_{\loc}(\C \setminus \left \{0\right \};\R^3)$}\]
and \eqref{zk with no energy around it} ensures that $\vec \Psi_{\infty}$ immerses a flat plane of some density. \eqref{weakened version of pointwise gradient estimate} yields that $\vec \Psi_{\infty}$ is a density 1 plane. We choose $\tilde p_k \in \R^3$ such that $\limk \tilde p_k = 0$ and $p_k \cqq e^{\lambda_k(z_k)} |z_k| \tilde p_k +  \vec \Phi_k\circ \chi_k(z_k) \not \in \vec \Phi_k(\mb T^2)$. We work with the inverted immersions $I_{p_k } \circ \vec \Phi_k$. While this changes the location (and possibly the number) of the bubbles obtained in \zcref{prop:Thin part bubble neck decomposition}, what we have gained is that \eqref{inversion on bubbles 2} guarantees now that there is some bubble $(\omega,1) \in V'$ for which $\vec \Psi ^{(\omega,1)}$ immerses \ref{P} \emph{and} the end is located around a point in $R^{(\omega,1)}$. From here, we can essentially repeat the arguments in \zcref{subsec:The two spheres}, \zcref{subsec:Number of catenoids}, \zcref{thm:Double tree structure} and \zcref{rem:Forcing two sphere bubbles}. In total, $G$ has the form
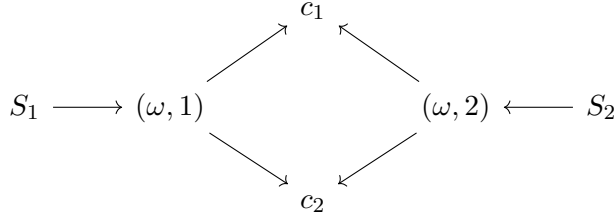
\begin{figure}[H] 
\centering 
\begin{tikzcd}
              &                                & c_1 &                                &               \\
S_1 \arrow[r] & (\omega,1) \arrow[ru] \arrow[rd] &     & (\omega,2) \arrow[lu] \arrow[ld] & S_2 \arrow[l] \\
              &                                & c_2 &                                &              
\end{tikzcd}
\caption{The graph structure $G$. $S_1$ and $S_2$ are the two round spheres in $V_{\mathrm{conc}}$. $c_1$ and $c_2$ are the two bubbles of type \ref{C}. $(\omega,1)$ and $(\omega,2)$ are of type \ref{P}. No further bubbles can appear as all the Dirichlet energy is already exhausted.}\label{fig:G for p 1}
\end{figure}
This finishes the proof of \zcref{thm:Asymptotic convergence}.

\section{The choice of inversions}\label{sec:Inversions}
Suppose that $\vec \Phi_k' \in \mc E_{\Sigma}$ satisfy the assumptions of \zcref{thm:Asymptotic convergence}. Let $\Xi_k$ be the associated Möbius transformations from \zcref{thm:Asymptotic convergence} and set $\vec \Phi_k \cqq \Xi_k \circ \vec \Phi_k'$. Let $T=(V_T,E_T)$ with root $\omega$ be the associated rooted tree such that the graph $G' = (V',E')$ from Definitions \ref{def:Graph structure} and \ref{def:Graph structure for p equals 1} can be identified with $(T + T)/\sim$, where $\sim$ identifies the leaves $C $. Let $p^{\text{inv}}_k \in \R^3 \setminus \im \vec \Phi_k$ and consider $\hat{\vec{\Phi}}_k \cqq I_{p^{\text{inv}}_k} \circ \vec \Phi_k$. In order to distinguish between the objects corresponding to the sequences $\vec \Phi_k$ and $\hat{\vec{\Phi}}_k$, all quantities coming from Definitions \ref{def:Graph structure} and \ref{def:Graph structure for p equals 1} applied to $\hat{\vec{\Phi}}_k$ are denoted by a hat. Suppose for now that $p\geq 2$. Notice that we may identify $V'$ with a subset of $\hat{V'}$. Indeed, either $v\in V_{\mathrm{thick}}$ which is invariant under inversions, or $v\in C= V_{\mathrm{thin}}$ and $\vec \Psi^v$ immerses a catenoid \ref{C}. Then $\hat{\vec{\Psi}}^v$ is either \ref{C}, \ref{IC1}, or \ref{IC2}, implying that some Dirichlet energy is concentrated on this scale and $v \in \hat{V}_{\text{thin}}$. Also notice that $\hat{Q}^v = Q^v$ for all $v\in V'$.

\begin{prop}\label{prop:Choice of inversions}
Suppose $p\geq 2$. After passing to a subsequence, the graph structure $\hat{G}$ corresponding to $\hat{\vec{\Phi}}_k$ is of one of the following four forms:
\begin{enumerate}[wide=0pt,leftmargin=*, labelsep=0.5em,label = Type \arabic*:]
\item 
\begin{equation}
\hat V_{\mathrm{thick}} = V_{\mathrm{thick}}, \quad \hat{V}_{\text{thin}} = V_{\mathrm{thin}},\quad \hat V_{\mathrm{conc}} = \emptyset.\label{set of bubbles 1}
\end{equation}
There is $\hat{\omega} \in \hat{V}_{\text{thin}}$ such that $\hat{\vec{\Psi}}^{\hat{\omega}}$ is of type \ref{IC2}, while for $v \in \hat{V}_{\text{thin}}\setminus \{\hat{\omega}\}$, $\hat{\vec{\Psi}}^v$ is of type \ref{C}. $\hat{G}' = \hat{G}$ is isomorphic to $ (\hat{T} + \hat{T})/\sim$, where $\hat{T}=T$ as trees but with the root at $\hat{\omega}$ and the induced orientation of the edges pointing away from $\hat{\omega}$. $\sim$ identifies the leaves and the root of $\hat{T}$.
\item \begin{equation}
\hat V_{\mathrm{thick}} = V_{\mathrm{thick}}, \quad \hat{V}_{\text{thin}} = V_{\mathrm{thin}},\quad \hat V_{\mathrm{conc}} = \{\hat{S}\}.\label{set of bubbles 2}
\end{equation}
Analogous to Type 1, with the only change that $\hat{\vec{\Psi}}^{\hat{\omega}}$ is of type \ref{IC1} and there exists $ \hat{S}\in \hat V_{\mathrm{conc}}^{\hat{\omega}}$ of type \ref{S}.
\item \begin{equation}
\hat V_{\mathrm{thick}} = V_{\mathrm{thick}}, \quad \hat{V}_{\text{thin}} = V_{\mathrm{thin}},\quad \hat V_{\mathrm{conc}} \subset \{\hat{S}_1,\hat{S}_2\}.\label{set of bubbles 3}
\end{equation}
For $v\in \hat{V}_{\text{thin}}$, $\hat{\vec{\Psi}}^v$ is of type \ref{C}. There is $\hat{\omega} \in V_{T} \setminus C$ such that $\hat{G}'$ is isomorphic to $(\hat{T} + \hat{T})/\sim$, where $\hat{T}=T$ as trees but with the root at $\hat{\omega}$ and the induced orientation of the edges. For $i\in \{1,2\}$, $\hat{\vec{\Psi}}^{(\hat{\omega},i)}$ is either of type \ref{S} or of type \ref{P} and there exist $S_i \in V_{\mathrm{conc}}^{(\hat{\omega},i)}$ of type \ref{S}, $i\in \{1,2\}$.
\item
\begin{equation}
\hat V_{\mathrm{thick}} = V_{\mathrm{thick}}, \quad \hat{V}_{\text{thin}} = V_{\mathrm{thin}}\cup \{(\hat{\omega},1), (\hat{\omega},2)\},\quad \hat V_{\mathrm{conc}} \subset \{\hat{S}_1,\hat{S}_2\}.\label{set of bubbles 4}
\end{equation}
$\hat{G}'$ is isomorphic to $(\hat{T} + \hat{T})/\sim$, where $\hat{T}$ is obtained from $T$ by replacing an edge $e = (v_1, v_2) \in E_{T}$ by the two edges $(v_1,\hat{\omega})$ and $(\hat{\omega},v_2)$, treating $\hat{\omega}$ as the root of $\hat{T}$ and changing the orientation of the edges such that they point away from the root $\hat{\omega}$. For $v\in \hat{V}_{\text{thin}} \setminus \{(\hat{\omega},1), (\hat{\omega},2)\}$, $\hat{\vec{\Psi}}^v$ is of type \ref{C}. For $i\in \{1,2\}$, either $\hat{\vec{\Psi}}^{(\hat{\omega},i)}$ is of type \ref{S} or of type \ref{P} and there exists $S_i \in V_{\mathrm{conc}}^{(\hat{\omega},i)}$ of type \ref{S}.
\end{enumerate}
In all cases, \eqref{same scale and position for two bubbles} and \eqref{scale equation in main theorem} still hold.
\end{prop}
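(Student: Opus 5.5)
Throughout, write $P_k$ for $p^{\mathrm{inv}}_k$. The plan is to locate $P_k$ relative to the bubble tree of $\vec\Phi_k$ by a scale comparison, and then read off $\hat G$ from \eqref{inversion on bubbles 1} and \eqref{inversion on bubbles 2}. Since $\mc W(\hat{\vec\Phi}_k)\to 8\pi$, the whole analysis of \zcref{sec:Proof main theorem} applies to $\hat{\vec\Phi}_k$. In particular, \zcref{lem:one catenoid}, \zcref{prop: no density 2} and the Dirichlet-energy count from \zcref{lemma:number of catenoids} hold for $\hat G$, and $\lim_{k\to\infty}\mc E(\hat{\vec\Phi}_k)=8\pi(p+3)$. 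Each catenoid bubble $c\in C$ stays in $\hat V_{\mathrm{thin}}$, where it is of type \ref{C}, \ref{IC1} or \ref{IC2}. Hence $\hat V_{\mathrm{thick}}=V_{\mathrm{thick}}$, $V_{\mathrm{thin}}\subset\hat V_{\mathrm{thin}}$, and $\hat Q^v=Q^v$. The remaining energy, $16\pi$ of Dirichlet energy or $8\pi$ of Willmore energy, has to be carried either by one inverted catenoid or by two spheres. The case split is therefore the place where all $8\pi$ of Willmore energy sits after inversion.

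\emph{Step 1 (locating the inversion center).} For every $v\in V$, pass to a subsequence so that $p^v_k=(s^v_k)^{-1}(P_k-y^v_k)$ either converges to some $p^v\in\R^3$ or diverges. Order the vertices of $T$ by scale using \eqref{scale equation in main theorem}. Choose $\hat\omega$ to be a vertex of smallest scale among those with $p^v_k$ bounded; such a vertex exists since $s^{S_1}_k=1$ and the image is bounded after the inversions of \zcref{lem:Forching two sphere bubbles}. If no such minimal bubble exists in $V'$, then $P_k$ sits in a neck region $\Omega_k(e)$ for some edge $e=(v_1,v_2)\in E_T$: it lies at scale $s^{v_1}_k$ near $\vec\Psi^{v_1}(q_1(e))$, and is not detected at scale $s^{v_2}_k$. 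By \eqref{yk alignment 1} and \eqref{diameter estimate 1}, a blow-up at the intermediate scale, chosen as in \eqref{choosing intermediate scale}, gives a new plane-like vertex $\hat\omega$. This is Type 4.

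\emph{Step 2 (case analysis for $\hat\omega$).} There are three possibilities. (i) $\hat\omega=c\in C$ and $p^c\notin\operatorname{im}\vec\Psi^c$: then $\hat{\vec\Psi}^c$ is of type \ref{IC2}, all $8\pi$ of Willmore energy is used up, and $\hat V_{\mathrm{conc}}=\emptyset$. This is Type 1. (ii) $\hat\omega=c$ and $p^c\in\operatorname{im}\vec\Psi^c$: then $\hat{\vec\Psi}^c$ is of type \ref{IC1} with a new concentration point at $(\vec\Psi^c)^{-1}(p^c)$. A bubble ascent from this point produces a closed bubble $\hat S$ of type \ref{S}. This is Type 2. (iii) $\hat\omega\in V_T\setminus C$ (with both copies $(\hat\omega,1),(\hat\omega,2)$, which share scale and position by \eqref{same scale and position for two bubbles}): then the images of these two planes or spheres become planes or spheres again. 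As in \zcref{prop:Two spheres}, a bubble ascent from each end that is not in $Q$ produces spheres $\hat S_1,\hat S_2$. This is Type 3. The bubbles at larger scales fall under \eqref{inversion on bubbles 1}: they are reflected, so catenoids stay of type \ref{C}, and the old spheres $S_i$ and the old root lose their energy, so they disappear from $\hat V$. The new spheres come from inverting the old root region.

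\emph{Step 3 (orientation and re-rooting of the tree).} By \eqref{inversion on bubbles 1} and \eqref{inversion on bubbles 2}, the inversion changes the scale ratios only along the path from $\omega$ to $\hat\omega$. On that path the order of the scales is reversed, with $\hat s^v_k=(|p^v_k|^2s^v_k)^{-1}$. Off the path the ratios are unchanged. Accordingly, at each edge on the path, branch points and ends are exchanged, which is consistent with \zcref{lem: ends and branch points}. This gives $\hat T=T$ re-rooted at $\hat\omega$ (with a subdivided edge in Type 4), and \eqref{scale equation in main theorem} holds with the new orientation. Relation \eqref{same scale and position for two bubbles} is preserved because the two copies $(v,1),(v,2)$ have comparable $p^v_k$.

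I expect the main obstacle to be the rigorous treatment of Step 1 when $P_k$ lies in a neck. There one must show that the conformal factor estimate \eqref{weakened version of pointwise gradient estimate} with $m=\pm1$ produces exactly one new pair of bubbles $(\hat\omega,1),(\hat\omega,2)$, and no further vertices. I would handle this by the Dirichlet-energy count from \zcref{lemma:number of catenoids}, combined with the intermediate-scale construction used in the proof of \zcref{lem:Forching two sphere bubbles}.
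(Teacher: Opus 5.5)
Your case split is the paper's in substance: the inversion center at the scale of a catenoid, off or on its image, gives Types 1 and 2; at the scale of some $\hat\omega\in V_T\setminus C$ away from $\vec\Psi^{\hat\omega}(Q^{\hat\omega})$ gives Type 3; inside a neck gives Type 4. However, Step 1 does not implement this split correctly.

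\textbf{The existence of $\hat\omega$ fails.} A vertex with bounded $p^v_k$ need not exist, since nothing prevents $|P_k|\to\infty$. Your justification via $s^{S_1}_k=1$ would only produce $S_1\notin V_T$ in any case. More importantly, $p^v_k\to\infty$ for every $v\in V'$ whenever $(s^{(\omega,1)}_k)^{-1}|P_k-y^{(\omega,1)}_k|\to\infty$, because all of $V'$ lives at scale $O(s^{(\omega,1)}_k)$ around $y^{(\omega,1)}_k$. Your fallback is that $P_k$ then sits in a neck $\Omega_k(e)$ with $e\in E_T$, giving Type 4. This is wrong: every $v\in V'$ is merely reflected by \eqref{inversion on bubbles 1}, no new thin bubble appears, and the outcome is of Type 3 with $\hat\omega=\omega$. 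This is how the paper treats the case.

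\textbf{The Type 3/Type 4 boundary is misplaced.} The actual Type 4 trigger is that the smallest-scale $\hat\omega\in V'$ with bounded $p^{\hat\omega}_k$ satisfies $\lim_{k\to\infty}p^{\hat\omega}_k=\vec\Psi^{\hat\omega}(q)$ for a branch point $q\in Q^{\hat\omega}$. Your Step 2(iii) sends every $\hat\omega\in V_T\setminus C$ to Type 3, including this case. There, $\hat{\vec\Psi}^{\hat\omega}$ acquires an end at $q$ facing the reflected end of the child.

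\textbf{Step 2 contradicts Step 3.} The bubbles at larger scale than $\hat\omega$ are its ancestors. They have bounded $p^v_k$ and fall under \eqref{inversion on bubbles 2}, not \eqref{inversion on bubbles 1}: each is inverted through the image of a branch point, which thereby becomes an end. This is the actual mechanism behind your re-rooting in Step 3. Also, the old roots $(\omega,i)\in V_{\mathrm{thick}}=\hat V_{\mathrm{thick}}$ do not disappear; only $S_1,S_2$ do.

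\textbf{Type 4 is left open.} You defer the key point (a new pair of bubbles, and only one) to an intermediate-scale blow-up. The paper closes it without further analysis:
\begin{enumerate}
\item Take the maximal chain $v_1,\dots,v_{i_0}\in V'$ along which the branch point of $\vec\Psi^{v_i}$ at $q_1(e_i)$ becomes an end of $\hat{\vec\Psi}^{v_i}$.
\item Maximality forces $\hat{\vec\Psi}^{v_{i_0}}$ to have an end at $q_2(e_{i_0-1})$ as well. By \zcref{lem: ends and branch points}, $v_{i_0-1}$ and $v_{i_0}$ then cannot be adjacent in $\hat G$, so new thin bubbles must appear in that neck.
\item One of these has branch points on both sides, so it is either closed or carries a concentration point that yields a sphere.
\item The Dirichlet count $8\pi(p+3)$ leaves room for exactly one such bubble.
\end{enumerate}

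\textbf{Step 3 is a valid alternative if justified.} Reading off the new orientation from $\hat s^v_k$ (reversed along the path from $\omega$ to $\hat\omega$, unchanged elsewhere) can replace re-running the ascent/descent arguments of \zcref{subsec:The two spheres} and \zcref{subsec:Double tree structure}. You need two facts. For adjacent off-path $v,w$, you need $|P_k-y^v_k|/|P_k-y^w_k|\to1$. At a junction vertex $u$ on the path with off-path child $w$ attached at $q_w$, you need $\lim_{k\to\infty}p^u_k\neq\vec\Psi^u(q_w)$, which follows from injectivity of $\vec\Psi^u$.
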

\begin{proof}
 Let $p^v_k \cqq (s^v_k)^{-1}(p^{\text{inv}}_k - y^v_k)$ for $v\in V$. We will do a case analysis corresponding to the different types.
\begin{enumerate}[wide=0pt,leftmargin=*, labelsep=0.5em,label = Type \arabic*:]
\item If there is $\hat{\omega}\in C$ such that $p^{\hat{\omega}}_k \to p^{\hat{\omega}} \in \R^3$ and $p^{\hat{\omega}} \not \in \im \vec \Psi^{\hat{\omega}}$, then $\hat{\vec{\Psi}}^{\hat{\omega}}$ is of type \ref{IC2} by \eqref{inversion on bubbles 2}. 

In particular, if $v\in V_{\mathrm{thin}} \setminus \{\hat{\omega}\}$, $\hat{\vec{\Psi}}^v$ is still \ref{C} and if $v\in V' \setminus C$, $\hat{\vec{\Psi}}^v$ is \ref{P} because the Willmore energy is exhausted. The inverted catenoid \ref{IC2} and the $p$ catenoids \ref{C} already exhaust the Dirichlet energy, implying that no additional bubbles develop so that \eqref{set of bubbles 1} holds.

As undirected graphs, $\hat{G} = G = (T+T)/\sim$. As $\hat{\vec{\Psi}}^{\hat{\omega}}$ has no ends, we may apply the same arguments as in \zcref{subsec:The two spheres} and \zcref{subsec:Double tree structure} to see that $\hat{G}' = \hat{G} = (\hat{T} + \hat{T})/\sim$, where $\hat{T}$ is the tree $T$ with the root $\hat{\omega}$ and the induced orientation of the edges pointing away from $\hat{\omega}$. With this orientation, \eqref{same scale and position for two bubbles} and \eqref{scale equation in main theorem} still hold.

\item If there is $\hat{\omega}\in C$ such that $p^{\hat{\omega}}_k \to p^{\hat{\omega}}  \in \R^3$ and $p^{\hat{\omega}}  \in \im \vec \Psi^{\hat{\omega}}$, then $\hat{\vec{\Psi}}^{\hat{\omega}}$ is of type \ref{IC1} by \eqref{inversion on bubbles 2}. Furthermore, $\hat{\vec{\Psi}}^{\hat{\omega}}$ has an end around $(\vec \Psi^{\hat{\omega}})^{-1}(p^{\hat{\omega}})$. Thus, there is $\hat S \in V_{\mathrm{conc}} ^{\hat{\omega}, (\vec \Psi^{\hat{\omega}})^{-1}(p^{\hat{\omega}})}$ such that $\hat{\vec{\Psi}}^{\hat{\omega}}$ is \ref{S}. $\hat{G}'$ is as in Case 1, and $\hat{G}$ is obtained by gluing $\hat S$ to $\hat{\omega}$.

\item If there is $\hat{\omega}\in V_{\mathrm{thick}}$ such that $p^{\hat{\omega}}_k \to p \in \R^3$ and $p \not \in \vec \Psi^{\hat{\omega}}(Q^{\hat{\omega}})$, we can view $\hat{\omega}$ as $(\omega,1)$ as chosen in \zcref{subsec:The two spheres}. In particular, apart from changing the root and the conclusion in \zcref{rem:Forcing two sphere bubbles}, the argumentation is as before.
\item The remaining case is that for all $v\in V'$, it holds either that $p^v_k \to \infty$ or $p^v_k \to p$ and $p \in \vec \Psi^v(Q^v)$, where we set $\vec \Psi^{v}(q) = \infty$ if $\vec \Psi^{v}$ has an end around $q$. In particular, for all $v\in V'$, $\hat{\vec{\Psi}}^v$ immerses \ref{P} if $v\in V' \setminus C$ and \ref{C} if $v\in C$ by \zcref{subsec:Inversions}. If $p^v_k \to \infty$ for all $v\in V'$, then neither the limiting immersions $\vec \Psi^v$ nor the structure of $G$ change apart from a reflection by \zcref{subsec:Inversions}, which is covered by Type 3. \eqref{inversion on bubbles 2} shows that if $p^v_k \to  \vec{\Psi}^v(q)$ for some $q\in Q^v$, then the end of $\hat{\vec{\Psi}}^v$ forms around $q$.

Assume now that at least for one bubble $v\in V'$, $p^v_k$ remains bounded. Inductively choose a maximal sequence $v_1, \ldots, v_{i_0} \in V'$ such that 
\begin{enumerate}[label = \roman*)]
\item For $i\in \{1,\ldots, i_0-1\}$, $v_i$ and $v_{i+1}$ are connected by an edge $e_i = (v_i, v_{i+1}) \in E$,
\item For $i \in \{1,\ldots, i_0-1\}$, $\vec \Psi^{v_i}$ has a branch point at $q_1(e_i)$, whereas $\hat{\vec{\Psi}}^{v_i}$ has an end at $q_1(e_i)$\footnote{Notice that $q_1(e_1) \in Q^{v_i}$ still corresponds to a point in $\hat{Q}^{v_i}$, where we view $V'$ again as a subset of $\hat{V}'$.}. 
\end{enumerate}
Notice that the edges $e_i \in E$ need not correspond to edges $e_i \in \hat{E}$. The bubbles $v_i$ are in descending order with respect to the ordering in $G$, so a maximal sequence is well-defined. $\vec \Psi^{v_{i_0}}$ has an end at $q_2(e_{i_0-1})$. If $\hat{\vec{\Psi}}^{v_{i_0}}$ has a branch point at $q_2(e_{i_0-1})$, then $p^{v_{i_0}}_k$ remains bounded and by assumption, there is some $q' \in Q^{v_{i_0}}\setminus \{q_2(e_{i_0}-1)\}$ such that $\vec \Psi^{v_{i_0}}$ has a branch point at $q'$ while $\hat{\vec{\Psi}}^{v_{i_0}}$ has an end at $q'$. This however contradicts the maximality of the sequence $v_{1},\ldots, v_{i_0}$. So $\hat{\vec{\Psi}}^{v_{i_0}}$ has an end around $q_2(e_{i_0-1})$. \zcref{lem: ends and branch points} shows that $v_{i_0-1}$ and $v_{i_0}$ cannot be connected directly by an edge $\hat{e}$ in $\hat{G}$ with endpoints $q_1(e_{i_0-1})$ and $q_2(e_{i_0-1})$. So there must be $\hat{w}_1,\ldots, \hat{w}_{i_1} \in \hat{V}_{\text{thin}}$ such that for $i\in \{0,\ldots, i_1\}$, $\hat{w}_i$ and $\hat{w}_{i+1}$ are connected by an edge $\hat{e}_i \in \hat{E}$, where $\hat{w}_0 \cqq v_{i_0-1}$ and $\hat{w}_{i_1+1} \cqq v_{i_0}$.

In particular, there must be $\hat{w}_{i_2}$, $i_2\in \{1,\ldots, i_1\}$, such that $\hat{\vec{\Psi}}^{\hat{w}_{i_2}}$ has a branch point at $q_{2}(\hat{e}_{i_2-1})$ and at $q_{1}(\hat{e}_{i_2})$. Hence, either $\hat{\vec{\Psi}}^{\hat{w}_{i_2}}$ is closed or there is $r \in \hat{R}^{\hat{w}_{i_2}}$ such that $\hat{\vec{\Psi}}^{\hat{w}_{i_2}}$ has an end around $r$. In the first case, we set $\hat{S}_1 = \hat{w}_{i_2}$. In the second case, there is some $\hat S_1 \in \hat V_{\mathrm{conc}}^{\hat{w}_{i_2}, r}$ such that $\hat{\vec{\Psi}}^{\hat{w}_{i_2}}$ is closed. We treat $\hat{S}_1$ and $\hat{w}_{i_2}$ as the new $S_1$ and $(\omega,1)$ and we repeat the same arguments as in \zcref{subsec:The two spheres} and \zcref{subsec:Double tree structure}. In particular, as in \zcref{rem:all the bubbles}, the entire Dirichlet energy is exhausted by two spheres and the catenoids, implying that $i_1=1$.
\end{enumerate} 
 
\end{proof}

\begin{cor}
Suppose $p=1$. $\hat{G}$ is of one of the following forms:
\begin{enumerate}[wide=0pt,leftmargin=*, labelsep=0.5em,label = Type \arabic*:]
\item $\hat{V} = \hat{V}' = \{\hat{c}_1, \hat{c}_2\}$, $\hat{c}_1$ is \ref{IC2} and $\hat{c}_2$ is \ref{C}.
\item $\hat{V} =  \{\hat{c}_1, \hat{c}_2, \hat{S}\}$, $\hat{c}_1$ is \ref{IC1}, $\hat{c}_2$ is \ref{C} and $\hat{S} \in V_{\mathrm{conc}}^{\hat{c}_1}$ is \ref{S}.
\item \begin{equation}
\hat V' = \{(\hat{\omega},1),(\hat{\omega},2), \hat{c}_1, \hat{c}_2\},\quad \hat V_{\mathrm{conc}} \subset \{\hat{S}_1,\hat{S}_2\}.\label{set of bubbles 4 for p=1}
\end{equation}
$\hat{G}$ is as in \zcref{fig:G for p 1} with the exception that for $i\in \{1,2\}$, either $\hat{\vec{\Psi}}^{(\hat{\omega},i)}$ is of type \ref{S} or of type \ref{P} and there exists $S_i \in V_{\mathrm{conc}}^{(\hat{\omega},i)}$ of type \ref{S}.
\end{enumerate}
\end{cor}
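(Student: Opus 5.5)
The plan is to transcribe the case analysis of \zcref{prop:Choice of inversions} to the torus graph of \zcref{fig:G for p 1}. Here $V'=\{(\omega,1),(\omega,2),c_1,c_2\}$, where $(\omega,i)$ are planes \ref{P} and $c_1,c_2$ are catenoids \ref{C}, and $V_{\mathrm{conc}}=\{S_1,S_2\}$.

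Set $p^v_k\cqq (s^v_k)^{-1}(p^{\mathrm{inv}}_k-y^v_k)$ for $v\in V'$ and pass to a subsequence so that each $p^v_k$ either diverges or converges. The cases are then distinguished by where the inversion centre sits relative to the bubbles.

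\textbf{Inversion centre at a catenoid.} Suppose that for some catenoid, say $c_1$, we have $p^{c_1}_k\to p^{c_1}\in\R^3$.

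If $p^{c_1}\notin\im\vec\Psi^{c_1}$, then \eqref{inversion on bubbles 2} shows that $\hat{\vec\Psi}^{c_1}$ is \ref{IC2}, which carries $8\pi$ Willmore energy. By the energy threshold and lower semicontinuity \eqref{lower semicontinuity bubble graph energy}, every other bubble must then be Willmore-free. The other bubble $c_2$ on the second neck stays \ref{C}: it still concentrates Dirichlet energy, so it survives as a bubble.

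The Dirichlet energy is now exhausted. The total is $\limk\mc E=(p+3)8\pi=32\pi$ with $p=1$, and \ref{IC2} contributes $\mc E=4\cdot 8\pi=32\pi-16\pi=16\pi$ while \ref{C} contributes $8\pi$. Together these give $24\pi$, so the remaining $8\pi$ should go to planes. I have not verified this energy accounting and will recheck it with the correct formula for branched bubbles. Since planes on the torus only arise in the thin part by energy concentration, and a density-one plane with zero energy does not concentrate $\eps$ energy, such planes are not detected as bubbles. Hence $\hat V=\{\hat c_1,\hat c_2\}$, which is Type 1.

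If instead $p^{c_1}\in\im\vec\Psi^{c_1}$, then $\hat{\vec\Psi}^{c_1}$ is \ref{IC1}. It has an end at $(\vec\Psi^{c_1})^{-1}(p^{c_1})$, and a bubble ascent from that end produces a closed bubble $\hat S\in \hat V^{\hat c_1}_{\mathrm{conc}}$. That bubble must be \ref{S} by the energy bound. This is Type 2.

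\textbf{Inversion centre not at a catenoid.} Otherwise, for both catenoids, either $p^{c_i}_k\to\infty$ or $p^{c_i}_k$ converges to an end, so both stay \ref{C} after inversion. The argument then follows Types 3 and 4 of \zcref{prop:Choice of inversions}.

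If $p^{(\omega,i)}_k$ converges to a point off $\vec\Psi^{(\omega,i)}(Q)$ for some $i$, I treat that bubble as the new $(\omega,1)$ and repeat \zcref{subsec:The two spheres} through \zcref{rem:Forcing two sphere bubbles}.

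Otherwise I run the maximal chain argument from Type 4 to locate a new bubble $\hat w$ with branch points at both neck points. That bubble is either closed or carries a concentration sphere, and the bubbles $(\hat\omega,i)$ are produced in the neck. By Dirichlet-energy exhaustion (two spheres plus two catenoids account for $32\pi$), exactly the configuration of \zcref{fig:G for p 1} arises. In it, $(\hat\omega,i)$ is either \ref{S}, in which case it is itself the sphere, or \ref{P} with a sphere $S_i$ attached. This gives \eqref{set of bubbles 4 for p=1}.

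\textbf{Main obstacle.} I expect the delicate step to be the last case: showing that when the inversion centre sits in a neck, the newly created bubbles on the cylinder are precisely two thin bubbles $(\hat\omega,1),(\hat\omega,2)$. On the torus there are no thick parts to anchor the root, so this has to come from the bubble ascent/descent argument and the Dirichlet-energy count, adapted to the cyclic graph $G'$ of \zcref{def:Graph structure for p equals 1}.
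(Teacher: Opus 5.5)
Your overall route is the paper's: the corollary is proved there only as ``analogous to \zcref{prop:Choice of inversions}'', i.e.\ by the same case analysis on which $p^v_k$ stay bounded, and your Type 2 and Type 3 branches follow that analysis. The step that fails as written is the Dirichlet-energy count in Type 1. An inverted catenoid \ref{IC2} is a closed, unbranched immersed sphere with $\mathcal W=8\pi$. So $\mathcal E=4\mathcal W-4\pi\chi(\mathbb S^2)=24\pi$, not $16\pi$. Equivalently, $\int|\vec{\mathbb I}^\circ|^2\,d\mu=8\pi$ is Möbius invariant and $\mathcal E=\int|\vec{\mathbb I}^\circ|^2\,d\mu+2\mathcal W$. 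Your inference that ``the remaining $8\pi$ goes to planes, which are not detected'' therefore rests on a wrong number. It is also incoherent on its own terms. Zero-energy planes cannot carry $8\pi$ of $|\nabla\vec n|^2$. Moreover, ``a plane with zero energy is not detected as a bubble'' is false here: the planes $(\omega,i)$ of the original graph belong to $V'$ precisely because their bubble regions carry at least $\eps$ energy, cf.\ \eqref{at least eps energy in each bubble}, through the concentration point where $S_i$ sits.

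The correct reason is exhaustion. For \ref{IC2} and \ref{C} one gets $24\pi+8\pi=32\pi=\lim_{k}\mathcal E(\vec\Phi_k)$. Any further member of $\hat V'$ (resp.\ $\hat V_{\mathrm{conc}}$) would contribute at least $\eps$ more, on a region disjoint from those carrying the energy of $\hat c_1,\hat c_2$, by \eqref{at least eps energy in each bubble} (resp.\ \cite[(A.6)]{ScharrerWestEnergyQuantization}). This is the argument of \zcref{lemma:number of catenoids}, and it is what eliminates the planes $(\omega,i)$ and the spheres $S_i$, giving $\hat V=\{\hat c_1,\hat c_2\}$. You need the same count in Type 2, which you omitted: \ref{IC1} has $\mathcal E=8\pi+2\cdot4\pi=16\pi$, so \ref{IC1}, \ref{S} and \ref{C} again exhaust $32\pi$ and $\hat V=\{\hat c_1,\hat c_2,\hat S\}$. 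Finally, the Type 4 chain argument you invoke in the last branch needs some $p^v_k$, $v\in V'$, to stay bounded. The subcase $p^v_k\to\infty$ for all $v\in V'$ must be treated separately, as the paper does: by \eqref{inversion on bubbles 1} the bubbles of $V'$ change only by reflections, so this subcase falls under Type 3.
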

\begin{proof}
The proof is analogous to \zcref{prop:Choice of inversions}.
\end{proof}
\begin{figure}[H]
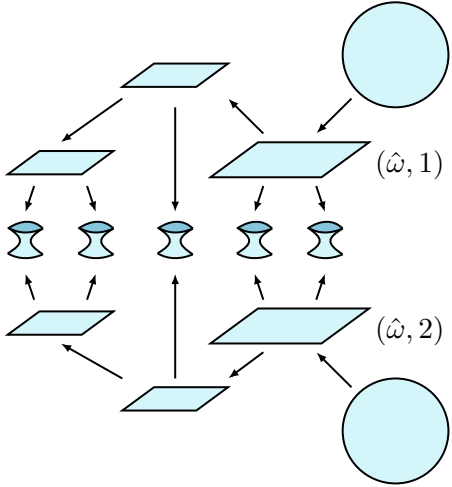
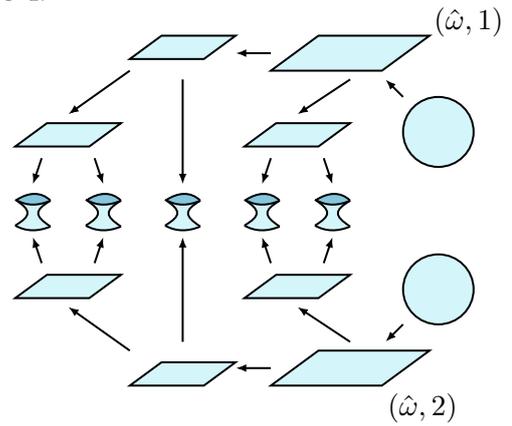

\begin{minipage}[c]{.48\textwidth}
\begin{flushleft}
Type 1: \vspace*{-0.5cm}
\end{flushleft}
  \centering   
  \tikzset{every picture/.style={line width=0.75pt}} 

\begin{tikzpicture}[x=0.55pt,y=0.55pt,yscale=-1,xscale=1]

\draw  [fill={rgb, 255:red, 212; green, 245; blue, 250 }
  ,fill opacity=1 ] (240,221.86) .. controls (240,210.74) and (254.47,201.72) .. (272.32,201.72) .. controls (290.16,201.72) and (304.63,210.74) .. (304.63,221.86) .. controls (304.63,232.98) and (290.16,242) .. (272.32,242) .. controls (254.47,242) and (240,232.98) .. (240,221.86) -- cycle ;
\draw    (257.95,216.08) .. controls (265.25,223.52) and (279.52,223.43) .. (286.68,216.08) ;
\draw  [dash pattern={on 0.84pt off 2.51pt}]  (286.68,226.49) .. controls (279.43,220.3) and (265.06,220.21) .. (257.95,226.49) ;

\draw [-{Latex[length=4pt]}]   (108,128) -- (78,152) ;
\draw  [fill={rgb, 255:red, 212; green, 245; blue, 250 }
  ,fill opacity=1 ] (255.6,176) -- (180,176) -- (212.4,152) -- (288,152) -- cycle ;
\draw [{Latex[length=4pt]}-]   (192,128) -- (234,146) ;
\draw [-{Latex[length=4pt]}]   (156,128) -- (156,200) ;
\draw  [fill={rgb, 255:red, 212; green, 245; blue, 250 }
  ,fill opacity=1 ] (92.4,174) -- (42,174) -- (63.6,158) -- (114,158) -- cycle ;
\draw  [fill={rgb, 255:red, 212; green, 245; blue, 250 }
  ,fill opacity=1 ] (170.4,122) -- (120,122) -- (141.6,106) -- (192,106) -- cycle ;
\draw [-{Latex[length=4pt]}]   (96,182) -- (102,200) ;
\draw [-{Latex[length=4pt]}]   (60,182) -- (54,200) ;
\draw [{Latex[length=4pt]}-]   (252,182) -- (270,194) ;
\draw [-{Latex[length=4pt]}]   (216,182) -- (210,200) ;
\draw  [draw opacity=0][fill={rgb, 255:red, 134; green, 198; blue, 221 }  ,fill opacity=1 ] (42.78,210.73) .. controls (53.45,216.09) and (59.15,214.7) .. (66,211.15) .. controls (59.15,203.59) and (49.13,205.14) .. (42.78,210.73) -- cycle ;
\draw  [draw opacity=0][fill={rgb, 255:red, 212; green, 245; blue, 250 }
  ,fill opacity=1 ] (42.78,210.73) .. controls (53.45,216.09) and (53.14,222.25) .. (42,228.2) .. controls (48.05,232.12) and (60.25,232.12) .. (66,228.88) .. controls (53.76,222.25) and (56.22,215.78) .. (66,211.15) .. controls (58.84,212.54) and (55.61,216.55) .. (42.78,210.73) -- cycle ;
\draw    (42,210.47) .. controls (53.15,217.88) and (53.15,221.43) .. (42,228.2) ;
\draw    (66,228.88) .. controls (54.85,221.47) and (54.85,217.92) .. (66,211.15) ;
\draw    (42,228.2) .. controls (49.32,232.53) and (57.89,232.38) .. (66,228.88) ;
\draw    (42.78,210.73) .. controls (50.8,205.48) and (59.52,204.6) .. (66,211.15) ;
\draw    (42.78,210.73) .. controls (50.1,215.06) and (57.89,214.65) .. (66,211.15) ;

\draw  [draw opacity=0][fill={rgb, 255:red, 134; green, 198; blue, 221 }  ,fill opacity=1 ] (90.78,210.73) .. controls (101.45,216.09) and (107.15,214.7) .. (114,211.15) .. controls (107.15,203.59) and (97.13,205.14) .. (90.78,210.73) -- cycle ;
\draw  [draw opacity=0][fill={rgb, 255:red, 212; green, 245; blue, 250 }
  ,fill opacity=1 ] (90.78,210.73) .. controls (101.45,216.09) and (101.14,222.25) .. (90,228.2) .. controls (96.05,232.12) and (108.25,232.12) .. (114,228.88) .. controls (101.76,222.25) and (104.22,215.78) .. (114,211.15) .. controls (106.84,212.54) and (103.61,216.55) .. (90.78,210.73) -- cycle ;
\draw    (90,210.47) .. controls (101.15,217.88) and (101.15,221.43) .. (90,228.2) ;
\draw    (114,228.88) .. controls (102.85,221.47) and (102.85,217.92) .. (114,211.15) ;
\draw    (90,228.2) .. controls (97.32,232.53) and (105.89,232.38) .. (114,228.88) ;
\draw    (90.78,210.73) .. controls (98.8,205.48) and (107.52,204.6) .. (114,211.15) ;
\draw    (90.78,210.73) .. controls (98.1,215.06) and (105.89,214.65) .. (114,211.15) ;

\draw  [draw opacity=0][fill={rgb, 255:red, 134; green, 198; blue, 221 }  ,fill opacity=1 ] (144.78,210.73) .. controls (155.45,216.09) and (161.15,214.7) .. (168,211.15) .. controls (161.15,203.59) and (151.13,205.14) .. (144.78,210.73) -- cycle ;
\draw  [draw opacity=0][fill={rgb, 255:red, 212; green, 245; blue, 250 }
  ,fill opacity=1 ] (144.78,210.73) .. controls (155.45,216.09) and (155.14,222.25) .. (144,228.2) .. controls (150.05,232.12) and (162.25,232.12) .. (168,228.88) .. controls (155.76,222.25) and (158.22,215.78) .. (168,211.15) .. controls (160.84,212.54) and (157.61,216.55) .. (144.78,210.73) -- cycle ;
\draw    (144,210.47) .. controls (155.15,217.88) and (155.15,221.43) .. (144,228.2) ;
\draw    (168,228.88) .. controls (156.85,221.47) and (156.85,217.92) .. (168,211.15) ;
\draw    (144,228.2) .. controls (151.32,232.53) and (159.89,232.38) .. (168,228.88) ;
\draw    (144.78,210.73) .. controls (152.8,205.48) and (161.52,204.6) .. (168,211.15) ;
\draw    (144.78,210.73) .. controls (152.1,215.06) and (159.89,214.65) .. (168,211.15) ;

\draw  [draw opacity=0][fill={rgb, 255:red, 134; green, 198; blue, 221 }  ,fill opacity=1 ] (198.78,210.73) .. controls (209.45,216.09) and (215.15,214.7) .. (222,211.15) .. controls (215.15,203.59) and (205.13,205.14) .. (198.78,210.73) -- cycle ;
\draw  [draw opacity=0][fill={rgb, 255:red, 212; green, 245; blue, 250 }
  ,fill opacity=1 ] (198.78,210.73) .. controls (209.45,216.09) and (209.14,222.25) .. (198,228.2) .. controls (204.05,232.12) and (216.25,232.12) .. (222,228.88) .. controls (209.76,222.25) and (212.22,215.78) .. (222,211.15) .. controls (214.84,212.54) and (211.61,216.55) .. (198.78,210.73) -- cycle ;
\draw    (198,210.47) .. controls (209.15,217.88) and (209.15,221.43) .. (198,228.2) ;
\draw    (222,228.88) .. controls (210.85,221.47) and (210.85,217.92) .. (222,211.15) ;
\draw    (198,228.2) .. controls (205.32,232.53) and (213.89,232.38) .. (222,228.88) ;
\draw    (198.78,210.73) .. controls (206.8,205.48) and (215.52,204.6) .. (222,211.15) ;
\draw    (198.78,210.73) .. controls (206.1,215.06) and (213.89,214.65) .. (222,211.15) ;

\draw [-{Latex[length=4pt]}]   (120,314) -- (78,290) ;
\draw  [fill={rgb, 255:red, 212; green, 245; blue, 250 }
  ,fill opacity=1 ] (288,266) -- (212.4,266) -- (180,290) -- (255.6,290) -- cycle ;
\draw [{Latex[length=4pt]}-]   (192,314) -- (234,296) ;
\draw [-{Latex[length=4pt]}]   (156,314) -- (156,242) ;
\draw  [fill={rgb, 255:red, 212; green, 245; blue, 250 }
  ,fill opacity=1 ] (114,268) -- (63.6,268) -- (42,284) -- (92.4,284) -- cycle ;
\draw  [fill={rgb, 255:red, 212; green, 245; blue, 250 }
  ,fill opacity=1 ] (192,320) -- (141.6,320) -- (120,336) -- (170.4,336) -- cycle ;
\draw [-{Latex[length=4pt]}]   (96,260) -- (102,242) ;
\draw [-{Latex[length=4pt]}]   (60,260) -- (54,242) ;
\draw [{Latex[length=4pt]}-]   (252,260) -- (270,248) ;
\draw [-{Latex[length=4pt]}]   (216,260) -- (210,242) ;

\draw (295,236.4) node [anchor=north west][inner sep=0.75pt]    {$\hat{\omega}$};

\end{tikzpicture}  
\end{minipage}%
\hfill 
\begin{minipage}[c]{0.48\textwidth}  
\begin{flushleft}
Type 2: \vspace*{-0.5cm}
\end{flushleft}
  \centering  
  \input{Semi_inverted_catenoid_graph.tex}
\end{minipage}
\par
\begin{minipage}[c]{.48\textwidth}  
\begin{flushleft}
Type 3: \vspace*{-0.5cm}
\end{flushleft}
  \centering  
    \input{root_changed_graph.tex}
\end{minipage}%
\hfill 
\begin{minipage}[c]{0.48\textwidth}  
\begin{flushleft}
Type 4: \vspace*{-0.5cm}
\end{flushleft}
  \centering  
  \input{vertex_added_graph.tex}
\end{minipage}  
\caption{Depicted are the graphs $\hat{G}$ associated to the inversions $\hat{\vec{\Phi}}_k$ for the different types if $G$ has the form from \zcref{Caption 2}. In case 3 and 4, there is also the possibility that instead of $\hat{\vec{\Psi}}^{(\hat{\omega},i)}$ being \ref{P}, it could also be \ref{S} and no bubble from $\hat{V}_{\text{conc}}$ is attached to it, where $i\in \{1,2\}$.}
\label{fig:types}
\end{figure}

\section{Applications to minimizers of several problems}\label{sec:Applications to minimizers of several problems}
Let $\Sigma$ be a genus-$p$ surface, $p\geq 1$. In this section, $\vec \Phi_k \in \mc E_{\Sigma}$ will always be such that \eqref{bounded Willmore aaaaand area} holds.
After passing to a subsequence, we let $G = (V,E)$ be the graph from \zcref{def:Graph structure} for $p\geq 2$ and from \zcref{def:Graph structure for p equals 1} for $p=1$. The area bound \eqref{bounded Willmore aaaaand area} implies that $\sup_{k\in \N}s^v_k < \infty$ for all $v\in V$. Recall that we denote by $V_{\mathrm{macro}} \subset V$ the set of macroscopic bubbles, i.e., those that satisfy $\limk s^v_k > 0$.

By \cite[Lemma 1.1]{Simon}, we can always shift $\vec \Phi_k$ such that 
\begin{equation}
\sup_{k\in \N}\|\vec \Phi_k\|_{L^{\infty}(\Sigma)} \leq \sup_{k\in \N} C \sqrt{\mc W(\vec \Phi_k) \mc A(\vec \Phi_k)} < \infty\label{Simon diameter estimate}
\end{equation}
and in the following, we assume \eqref{Simon diameter estimate} to hold. Then for any $v\in V_{\mathrm{macro}}$, we do not need to shift and dilate the immersion, so we will assume that $(s^v_k, y^v_k) = (1,0)$ for all $k$ and $v\in V_{\mathrm{macro}}$. We will prove the continuity result \zcref{thm:bubble convergence for extrinsic quantities} for $\mc A, \, \mc V$, and $\mc M$ defined in \eqref{are, volume and total mean curvature}.

\begin{proof}[Proof of \zcref{thm:bubble convergence for extrinsic quantities}]

The convergence on the bubble domains is handled as in \cite[Lemma 3.1]{MondinoScharrer}. We need to estimate the contribution from the neck regions $\Omega_{k}^{\alpha,\beta}(e)$ for $e\in E$, i.e.,
\begin{equation}
\lim_{\alpha,\beta \to 0} \limk \mc F\left (\vec \Theta_k(e)\vert _{\Omega_k^{\alpha,\beta}(e)}\right ) = 0,\label{vanishing functional on neck region}
\end{equation}
where $\vec \Theta_k(e)$ and $\Omega_k^{\alpha,\beta}(e)$ were defined in Definitions \ref{def:Graph structure} and \ref{def:Graph structure for p equals 1}. From \zcref{lem: ends and branch points} and the fact that $\sup_{k\in \N}s^v_k < \infty$ for all $v\in V$, we know that the area in the neck regions is vanishing, i.e., \eqref{vanishing functional on neck region} for $\mc F = \mc A$. Because of \eqref{Simon diameter estimate}, $\mc V$ also vanishes in the neck region. The Cauchy--Schwarz inequality and \eqref{bounded Willmore aaaaand area} yields that $\mc M$ vanishes as well.
\end{proof}

As in \cite[Example 2.4]{RuppScharrer}, $\vec \Phi \in \mc E_\Sigma$ induces an oriented integral varifold $V^{\mathrm{o}}_{\vec \Phi}$ in $\R^3$ in the sense of \cite{Hutchinson} given by
\begin{equation}
V^{\mathrm{o}}_{\vec \Phi}(\phi) \cqq \int_ \Sigma \phi(\vec \Phi, \vec n_{\vec \Phi}) \dif \mu_{\vec \Phi}\quad \text{for all $\phi \in C_c^0(\R^3\times \mb S^2)$,}\label{oriented varifold}
\end{equation}
where we identified $\mb S^2$ with $\mb G^{\mathrm{o}}(2,3)$, the Grassmannian manifold of oriented 2-dimensional subspaces of $\R^3$.

We define for $v\in V$ and $\alpha \in (0,1)$
\begin{equation}
B(v,\alpha) \cqq \begin{cases}
v \setminus \bigcup _{q\in Q^v \cup R^v} B_{\alpha} ^{\overline{h}}(q), & \text{$v \in V_{\mathrm{thick}}$,}\\
B_{1/\alpha}(0) \setminus \bigcup _{q\in Q^v \cup R^v} B_{\alpha}(q),& \text{$v\in V_{\mathrm{thin}}\cup V_{\mathrm{conc}}$}.
\end{cases} \label{bubble domains}
\end{equation}
It holds by \zcref{thm:bubble convergence for extrinsic quantities} that
\[\lim _{\alpha \to 0} \limk \bigg [\mc A(\vec \Phi_k) - \sum_{v\in V_{\mathrm{macro}}} \mc A(\vec \Psi^v_k \vert _{B(v,\alpha)})\bigg ] = 0,\]
which implies
\begin{equation}
\lim _{\alpha \to 0} \limk \bigg |V^{\mathrm{o}}_{\vec \Phi_k}(\phi) - \sum_{v \in V_{\mathrm{macro}}}V^{\mathrm{o}}_{\vec \Psi^v_k \vert _{B(v,\alpha)}}(\phi)\bigg | =0 \quad \text{for all $\phi \in C_c^0(\R^3\times \mb S^2)$} .\label{measure vanishing outside macroscopic bubbles}
\end{equation}
Standard Sobolev embeddings and the fact that the metric $({\vec \Psi^v_k})^* g_{\R^3}$ is uniformly controlled on $B(v,\alpha)$ (see \eqref{bounded conformal factors in nodal region 2}, \eqref{conformal factor estimate thin part}) imply $(\vec \Psi^v_k,\vec n_{\vec \Psi^v_k}) \to (\vec \Psi^v,\vec n_{\vec \Psi^v})$ pointwise a.e., so that it follows
\[\limk V^{\mathrm{o}}_{\vec \Psi^v_k \vert _{B(v,\alpha)}}(\phi) = V^{\mathrm{o}}_{\vec \Psi^v \vert _{B(v,\alpha)}}(\phi) = \int _{B(v,\alpha)} \phi(\vec \Psi^v, \vec n_{\vec \Psi^v}) \dif \mu_{\vec \Psi^v }.\]
Letting $\alpha \to 0$, we deduce from \eqref{measure vanishing outside macroscopic bubbles} that
\begin{equation}
\limk V^{\mathrm{o}} _{\vec \Phi_k}(\phi) = \sum_{v \in V_{\mathrm{macro}}} V^{\mathrm{o}}_{\vec \Psi^v}(\phi) \quad\text{for all $\phi \in C_c^0(\R^3\times \mb S^2)$} . \label{varifold convergence result}
\end{equation}

Suppose $\Omega \subset \R^3$ is a set of finite perimeter in $\R^3$, i.e., it holds
\[ \int _{\Omega} \Div f \dif x = - \int _{\partial^* \Omega} \langle \nu_{\Omega}, f \rangle \dif \mc H^{2} \quad \text{for all $f \in C^1_c(\R^3; \R^3)$},\]
where $\nu_{\Omega} = \frac{D\chi_{\Omega}}{|D\chi _{\Omega}|}$ $|D\chi _{\Omega}|$-a.e.\ and $\partial^* \Omega$ is the reduced boundary. The 3-current $c(\Omega)$ induced by $\Omega$ is given by
\[c(\Omega)(\omega) = \int _{\Omega} \omega \quad \text{for all $\omega \in C_c^\infty(\R^3; \extp^3 \R^3)$}. \]
Its boundary is thus
\[(\partial c(\Omega))(\omega) = c(\Omega)(d\omega) = - \int _{\partial^* \Omega} \langle \hodge \nu_{\Omega},  \omega \rangle \dif \mc H^{2}\quad \text{for all $\omega \in C_c^\infty(\R^3; \extp^2 \R^3)$}, \]
where $\hodge$ is the Hodge star operator in $\R^3$.
\subsection{Isoperimetrically constrained Willmore minimizers}\label{subsec:iso problem}
\begin{lemma}\label{lem:diverging conformal classes isoperimetric ratio}
The conformal classes associated to the sequence $\vec \Phi^{\mc I}_{p, \sigma}$ defined in \zcref{subsec:Applications to minimizers} diverge to the boundary of the moduli space as $\sigma \to \infty$.
\end{lemma}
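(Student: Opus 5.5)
We argue by contradiction. Suppose there is a sequence $\sigma_k\to\infty$ such that, after passing to a subsequence, the conformal classes of $\vec\Phi_k\cqq\vec\Phi^{\mc I}_{p,\sigma_k}$ stay in a compact subset of $\mc M_p$. The known existence theory gives $\mc W(\vec\Phi_k)<8\pi$ (the minimizers are embedded), and we normalize $\mc A(\vec\Phi_k)=1$. Since $\mc I(\vec\Phi_k)=\sigma_k\to\infty$, this forces $\mc V(\vec\Phi_k)\to 0$. The plan is to show that a bounded-conformal-class limit cannot have vanishing volume together with limiting energy at most $8\pi$. The conclusion should contradict either genus preservation or the assumed bound on the energy.

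\textbf{Step 1 (compactness).} With bounded conformal classes and $\sup_k\mc E(\vec\Phi_k)<\infty$, we apply the compactness theory for weak immersions in a fixed conformal class (Rivière, Kuwert--Li, Laurain--Rivière) together with the concentration analysis of \zcref{subsec:Analysis concentration points}. This yields a bubble tree consisting of
\begin{itemize}
\item a genus-$p$ limit $\vec\Psi\in\mc F_\Sigma$, possibly branched, on the macroscopic scale, and
\item finitely many genus-zero bubbles attached at the concentration points.
\end{itemize}
The area bound and \eqref{Simon diameter estimate} ensure that the macroscopic scale stays bounded. Two cases arise.
\begin{itemize}
\item If the macroscopic body degenerates to a point (scale tending to $0$), then after rescaling we obtain a genus-$p$ branched limit of energy at least $\beta^3_p$. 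Adding $4\pi$ for the loss of an end, or for the closed sphere bubble found by a bubble ascent as in \zcref{subsec:Bubble descent and ascent}, gives a total of at least $\beta^3_p+4\pi>8\pi$ by Marques--Neves. This contradicts the bound $8\pi$.
\item Otherwise the continuity statement applies. Its proof, via \eqref{vanishing functional on neck region}, uses only local neck estimates, so \zcref{thm:bubble convergence for extrinsic quantities} holds equally in bounded conformal classes. It gives
\[
1=\sum_{v\in V_{\mathrm{macro}}}\mc A(\vec\Psi^v),\qquad 0=\sum_{v\in V_{\mathrm{macro}}}\mc V(\vec\Psi^v).
\]
\end{itemize}

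\textbf{Step 2 (the macroscopic configuration).} The macroscopic bubbles carry total energy at most $8\pi$. The genus-$p$ component alone has $\mc W\ge\beta^3_p$, and if it is branched its Li--Yau density count gives energy $\ge 8\pi$. Since $\beta^3_p+4\pi>8\pi$, no macroscopic sphere bubble can coexist with the genus-$p$ component. So $V_{\mathrm{macro}}$ consists of the single genus-$p$ surface $\vec\Psi$, with energy at most $8\pi$ and positive area.

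\textbf{Step 3 (contradiction).} By \eqref{eq:intro:LY}, an energy strictly below $8\pi$ makes $\vec\Psi$ an embedding, so $\vec\Psi$ bounds a region of volume $|\mc V(\vec\Psi)|>0$ (it has positive area). This contradicts $\mc V(\vec\Psi)=0$.

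The main obstacle is the borderline case $\mc W(\vec\Psi)=8\pi$. Here $\vec\Psi$ may have a double point, and we must still exclude zero algebraic volume. I would first use \zcref{thm:equality-in-Li-Yau}: inverting at the density-two point produces a stationary integral varifold of genus $p$ with two planar ends. Schoen's uniqueness \cite{SchoenUniqueness} then forces a catenoid or two planes, which is incompatible with genus $p\ge1$. If this route fails, the fallback is to compare with the minimizing energy: $\sigma\mapsto\min\mc W$ is known to be strictly less than $8\pi$ for each finite $\sigma$, and I would try to show that its limit as $\sigma\to\infty$ is $8\pi$ only through degeneration.
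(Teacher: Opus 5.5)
There is a genuine gap in your first case, where the thick part $\sigma_1$ shrinks, and this is where the paper's proof does its real work. (Your Steps 2--3, for a macroscopic genus-$p$ part, agree with the paper's volume argument.)

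\textbf{The energy count in the first case is wrong.} When area escapes from $\sigma_1$ through ends, a genus-$p$ limit whose ends have total multiplicity $d$ only satisfies $\mc W(\vec\Psi^{\sigma_1})\ge\beta^3_p-4\pi d$. The reason is that it is the inverted closed surface, not $\vec\Psi^{\sigma_1}$ itself, that has energy at least $\beta^3_p$.
\begin{itemize}
\item With one end, adding the sphere from the ascent gives only $\beta^3_p<8\pi$. For instance, a round sphere with a tiny inverted genus-$p$ minimizer glued in has bounded conformal class and energy close to $\beta^3_p$.
\item With two ends, the bound on $\vec\Psi^{\sigma_1}$ is vacuous.
\end{itemize}
In particular, take two macroscopic round spheres with the same image and opposite orientations, joined through a genus-$p$ piece shrinking to a point. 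This configuration is compatible with area $1$, algebraic volume $0$ and limiting energy $8\pi$. Neither your energy count nor your volume argument excludes it.

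\textbf{The missing idea is rigidity.} In that configuration the energy bound forces $\mc W(\vec\Psi^{\sigma_1})=0$. So $\vec\Psi^{\sigma_1}$ is a branched minimal surface of genus $p\ge1$ with two ends, which is impossible:
\begin{itemize}
\item if it is embedded and unbranched, \cite{SchoenUniqueness} rules it out;
\item otherwise, the density-two cone argument and Riemann--Hurwitz produce branch points of order $2$, which are excluded as in \zcref{prop: no density 2}.
\end{itemize}
Similarly, a single macroscopic genus-zero bubble must be excluded by the volume/Li--Yau-equality argument, not by energy.

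The paper avoids your case split altogether. It distinguishes between one macroscopic bubble and at least two.
\begin{itemize}
\item One bubble: it has zero volume, hence is not embedded, hence attains equality in Li--Yau, hence is \ref{IC2} or a double cover, and both are excluded.
\item At least two: both are round spheres in $V_{\mathrm{conc}}$, and the rigidity argument above applies to $\sigma_1$.
\end{itemize}

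\textbf{Your borderline case $\mc W(\vec\Psi)=8\pi$ is also incomplete.} Schoen's theorem requires an embedded, unbranched minimal surface, and the ends need not be planar. The double point of $\vec\Psi$ may be a branch point of order $2$, giving a single end of order $-2$. Moreover, any point of density $2$ of the inverted stationary varifold forces, by monotonicity and Allard, a density-two plane, so that $\vec\Psi$ is a doubly covered round sphere. This case needs a separate exclusion. You could use Riemann--Hurwitz together with \zcref{prop: no density 2}, as the paper does, or observe that such a double cover has algebraic volume twice that of a ball. Your fallback, via the behaviour of the minimal energy as $\sigma\to\infty$, essentially restates the lemma.
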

\begin{proof}
We assume that the conformal classes remain bounded for some $\sigma_k \to \infty$. Then the set of vertices $V$ from the graph $G=(V,E)$ from \zcref{def:Graph structure} associated to the sequence $\vec \Phi^{\mc I}_{p, \sigma_k}$ consists of one thick part $V_{\mathrm{thick}} = \{\sigma_1\}$, no thin parts $V_{\mathrm{thin}} = \emptyset$ and the concentration bubbles $V_{\mathrm{conc}}$. Suppose that there is only one $v_0\in V_{\mathrm{macro}}$. \zcref{thm:bubble convergence for extrinsic quantities} implies that $\mc V(\vec \Psi^{v_0}) = 0$. $\vec \Psi^{v_0}$ has no ends because the area $\mc A(\vec \Phi^{\mc I}_{p,\sigma})$ is uniformly bounded. This implies that $\vec \Psi^{v_0}$ cannot be embedded as otherwise, $\mc V(\vec \Psi^{v_0})$ would coincide (up to a potential sign change) with the enclosed volume, see \cite[Lemma 6.6]{RuppScharrer}. We deduce by the Li--Yau inequality
\begin{equation}
\mc W(\vec \Psi^{v_0}) \geq 8\pi.\label{at least 8pi willmore for zero volume surface}
\end{equation}
Since $\mc W(\vec \Phi^{\mc I}_{p,\sigma_k}) < 8\pi$ for all $k$, equality has to hold in \eqref{at least 8pi willmore for zero volume surface}. As in \zcref{subsec:one catenoid}, $\vec \Psi^{v_0}$ is either of type \ref{IC2} or a Möbius transformation of a meromorphic map from $\hat{\C}$ to $\C$ if $v \in V_{\mathrm{conc}}$ or from $v$ to $\C$ if $v=\sigma_1$. The first case can be excluded by the assumption $\mc V(\vec \Psi^{v_0})=0$, the second case by the argument in \zcref{prop: no density 2}. Thus, there are at least two bubbles, $v_0,\, v_1 \in V_{\mathrm{macro}}$. The $8\pi$ bound for the Willmore energy implies that $\vec \Psi^{v_i}$ is \ref{S} for $i\in \{0,1\}$. In particular, the genus is 0 and $v_i \in V_{\mathrm{conc}}$. It follows that $\mc W(\vec \Psi^{\sigma_1}) = 0$. Arguing once more as in \zcref{subsec:one catenoid}, using that $\vec\Psi^{\sigma_1}$ has two ends and that the genus of $\sigma_1$ is positive, this yields that $\vec \Psi^{\sigma_1}$ has a branch point of order 2, which contradicts \zcref{prop: no density 2}.
\end{proof}
Now we are in a position to prove \zcref{cor:Bubbles in space iso problem}.
\begin{proof}[Proof of \zcref{cor:Bubbles in space iso problem}]
We let $\vec \Phi_k = \vec \Phi^{\mc I}_{p,\sigma_k}$. By the scaling invariance, we may assume that $\mc A(\vec \Phi_k) = 1$ for all $k$. Since $\mc W(\vec \Phi_k) < 8\pi$ for all $k$, $\vec \Phi_k$ is embedded by the Li--Yau inequality and $\vec \Phi_k(\Sigma)$ encloses some (smooth) domain $\Omega_k \subset \R^3$ by \cite[Lemma 6.6]{RuppScharrer}. We also choose the orientation of $\vec \Phi_k$ in such a way that $\vec n_{\vec \Phi_k}$ coincides with the inward pointing unit vector, i.e., $\vec n_{\vec \Phi_k} \circ (\vec \Phi_k)^{-1} = \nu_{\Omega_k}$.

The 2-current $c(V^{\mathrm{o}}_{\vec \Phi_k})$ induced by the oriented varifold $V^{\mathrm{o}}_{\vec \Phi_k}$ as defined in \cite{Hutchinson} is given by
\[c(V^{\mathrm{o}}_{\vec \Phi_k})(\omega) = \int _{\Sigma} \langle \hodge \vec n_{\vec \Phi_k}, \omega \circ \vec \Phi_k \rangle \dif \mu _{\vec \Phi_k} = - \partial c(\Omega_k)(\omega)=-  c(\Omega_k)(d \omega)\quad\text{for all $\omega \in C_c^1(\R^3; \extp^2\R^3)$}.\] 
By the compactness result in $BV$ and after passing to a subsequence, $\chi_{\Omega_k} \to \chi_{\Omega}$ in $L^1(\R^3)$ for some $\Omega$ of finite perimeter in $\R^3$. Using \eqref{varifold convergence result}, it follows
\begin{equation}
\sum_{v\in V_{\mathrm{macro}}} c(V^{\mathrm{o}}_{\vec \Psi^v}) = - \partial c(\Omega).\label{macroscopic limit still boundary of set of finite perimeter}
\end{equation}
Since $|\Omega_k| = \mc V(\vec \Phi_k)\to 0$ as $k\to \infty$, we deduce that $\sum_{v\in V_{\mathrm{macro}}} c(V^{\mathrm{o}}_{\vec \Psi^v}) = 0$ and $\chi_{\Omega} = 0$. Owing to \zcref{lem:diverging conformal classes isoperimetric ratio}, we may apply \zcref{thm:Asymptotic convergence} and \zcref{prop:Choice of inversions} to see that either $V_{\mathrm{macro}}$ consists of one element (immersing an inverted catenoid \ref{IC2} or a round sphere \ref{S}) or of exactly two elements $V_{\mathrm{macro}} = \{S_1, S_2\}$ immersing round spheres \ref{S}. The first case is excluded by \eqref{macroscopic limit still boundary of set of finite perimeter} and we see that $\vec \Psi^{S_1}$ and $\vec \Psi^{S_2}$ have the same image with different orientation.
\end{proof}

\subsection{Normalized mean curvature constrained Willmore minimizers} \label{subsec:normalized mean curvature constrained problem}
\begin{lemma}\label{lem:diverging conformal classes normalized mean curvature}
The conformal classes associated to the sequence $\vec \Phi^{\mc T}_{p, \tau}$ defined in \zcref{subsec:Applications to minimizers} diverge to the boundary of the moduli space as $\tau \nearrow \sqrt{8\pi}$.
\end{lemma}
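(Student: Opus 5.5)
Assume, for contradiction, that for some sequence $\tau_k\nearrow\sqrt{8\pi}$ the conformal classes of $\vec\Phi_k\cqq\vec\Phi^{\mc T}_{p,\tau_k}$ stay in a compact subset of the moduli space. By scaling we take $\mc A(\vec\Phi_k)=1$, translate so that \eqref{Simon diameter estimate} holds, and pass to a subsequence.

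\textbf{Step 1: limits of the energy and of $\mc M$.} We need $\limsup_k\mc W(\vec\Phi_k)\le 8\pi$ and $\liminf_k\mc W(\vec\Phi_k)\ge 8\pi$. The upper bound should follow from a comparison surface: glue a small genus-$p$ handle (e.g.\ a rescaled Lawson or Kusner surface) into two round spheres joined by a small catenoidal neck, with total mean curvature normalized to $\tau$. This is the construction used in \cite{MasterThesis} to show that the infimum is below $8\pi$. The lower bound comes from Cauchy--Schwarz: $\mc M^2\le\mc A\,\mc W$ gives $\mc W(\vec\Phi_k)\ge\tau_k^2\to 8\pi$. Hence $\mc W(\vec\Phi_k)\to 8\pi$ and $\mc M(\vec\Phi_k)\to\sqrt{8\pi}$.

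\textbf{Step 2: bubble analysis with bounded conformal classes.} We have $V_{\mathrm{thick}}=\{\sigma_1\}$, $V_{\mathrm{thin}}=\emptyset$, and the concentration trees. Apply \zcref{thm:bubble convergence for extrinsic quantities} to $\mc A$ and $\mc M$:
\[\sum_{v\in V_{\mathrm{macro}}}\mc A(\vec\Psi^v)=1,\qquad \sum_{v\in V_{\mathrm{macro}}}\mc M(\vec\Psi^v)=\sqrt{8\pi}.\]
Apply Cauchy--Schwarz on each bubble, then in the sum, and use \eqref{lower semicontinuity bubble graph energy}:
\[8\pi=\Big(\sum\mc M(\vec\Psi^v)\Big)^2\le\Big(\sum\mc A(\vec\Psi^v)\Big)\Big(\sum\mc W(\vec\Psi^v)\Big)\le 8\pi.\]
Equality must hold everywhere. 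So every macroscopic bubble has constant mean curvature $H_v$ with $H_v$ independent of $v$, and the Willmore energy not carried by $V_{\mathrm{macro}}$ vanishes. Macroscopic bubbles have bounded area and no ends. A closed CMC weak branched immersion with $\mc W\le 8\pi$ and $H\neq 0$ should be a round sphere, or two round spheres with $4\pi$ each. The case $H=0$ is impossible for a closed surface, and $H\ne0$ holds because $\mc M\to\sqrt{8\pi}\neq0$. Rigidity of low-energy CMC branched spheres can be argued with Li--Yau together with Alexandrov or Hopf; the equality case in Li--Yau, \zcref{thm:equality-in-Li-Yau}, also helps. Since total Willmore is $8\pi$, $V_{\mathrm{macro}}$ consists of two round spheres of equal radius.

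\textbf{Step 3: contradiction.} Both spheres must lie in $V_{\mathrm{conc}}$, since the thick part has positive genus. Therefore $\mc W(\vec\Psi^{\sigma_1})=0$. The surface $\vec\Psi^{\sigma_1}$ is then a complete branched minimal surface of positive genus, with total Willmore at most $8\pi$ after inversion, so it has at most two ends (one end each toward the two concentration points). Argue as in \zcref{lem:one catenoid}: a minimal $\vec\Psi^{\sigma_1}$ with one end is a plane, impossible for positive genus. With two ends it is either non-injective, which by Riemann--Hurwitz forces a branch point of order $2$ and contradicts \zcref{prop: no density 2}, or embedded, in which case \cite{SchoenUniqueness} forces a catenoid, again genus zero. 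This is the same ending as in \zcref{lem:diverging conformal classes isoperimetric ratio}.

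\textbf{Main obstacle.} The hardest step is Step 1's upper bound $\limsup\mc W\le 8\pi$ along minimizers, i.e.\ a competitor construction valid for all $\tau$ near $\sqrt{8\pi}$. The CMC classification in Step 2 is the second delicate point; if needed I would replace it by a direct argument that equality in Cauchy--Schwarz together with $\mc W\le 8\pi$ forces umbilicity via \zcref{thm:equality-in-Li-Yau}.
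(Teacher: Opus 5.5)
There is a genuine gap in Step 2. Your Step 3 coincides with the paper's ending. Step 1 is not the real obstacle: the paper takes $\mc W(\vec\Phi^{\mc T}_{p,\tau})<8\pi$ from the existence theory of \cite{MasterThesis}, exactly as in the isoperimetric case.

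The problem is your claim that a closed CMC branched weak immersion with $\mc W\le 8\pi$ and $H\neq 0$ is a round sphere. This is false. Take a branched double cover of a round sphere of radius $r$, for instance $z\mapsto z^2$ on $\hat\C$, or a hyperelliptic degree-two map when $v=\sigma_1$, composed with inverse stereographic projection. It is closed and has constant $H\neq 0$, with $\mc W=8\pi$, $\mc A=8\pi r^2$ and $\mc M=8\pi r$, hence $\mc T=\sqrt{8\pi}$ exactly. So it satisfies every equality you extracted from Cauchy--Schwarz, and it is exactly the configuration a \emph{single} macroscopic bubble could a priori take. Hopf's argument only yields a possibly multiply covered round sphere, and only in genus $0$. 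Alexandrov needs embeddedness. Neither excludes this example, so your conclusion that $V_{\mathrm{macro}}$ consists of two round spheres is unjustified.

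The missing step is a separate exclusion of a single macroscopic bubble $v_0$, which is what the paper does.
\begin{itemize}
\item If $\vec\Psi^{v_0}$ is unbranched and embedded, it is a smooth CMC surface. Alexandrov makes it a round sphere, contradicting $\mc T(\vec\Psi^{v_0})=\sqrt{8\pi}$.
\item Otherwise Li--Yau gives $\mc W(\vec\Psi^{v_0})\ge 8\pi$, so equality holds. Then \zcref{thm:equality-in-Li-Yau} makes $\vec\Psi^{v_0}$ the inversion of a stationary surface with density $2$ at infinity. That is, it is either \ref{IC2}, which is not CMC, or an inverted degree-two meromorphic map.
\item The inverted degree-two map has true branch points of order $2$. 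These can only sit in $R^{v_0}$, since the conformal factors are bounded elsewhere. They are ruled out by the bubble-descent argument of \zcref{prop: no density 2}.
\end{itemize}
Only after this do you know $\# V_{\mathrm{macro}}\ge 2$. Each macroscopic bubble is then closed with energy at least $4\pi$, so there are exactly two, each a round sphere, and your Step 3 applies. Your equality-in-Cauchy--Schwarz argument over all macroscopic bubbles is a useful addition, since it also gives equal radii, but it cannot replace this exclusion.
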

\begin{proof}
The proof is similar to \zcref{lem:diverging conformal classes isoperimetric ratio}. In order to arrive at \eqref{at least 8pi willmore for zero volume surface}, we use $\mc T(\vec \Psi^{v_0}) = \sqrt{8\pi}$ by \zcref{thm:bubble convergence for extrinsic quantities} and $\mc W \geq \mc T^2$ by \cite[(1.19)]{MasterThesis}. This implies that equality holds in \cite[(1.19)]{MasterThesis}, which is the case if and only if $H_{\vec \Psi^{v_0}} \equiv \text{const}$. If $\vec \Psi^{v_0}$ is unbranched and embedded, $\vec \Psi^{v_0}$ is a minimizer $\vec \Phi^{\mc T}_{p,\sqrt{8\pi}}$ and in particular smooth. Alexandrov's theorem \cite{Alexandrov} implies that $\vec \Psi ^{v_0}$ is a round sphere, contradicting $\mc T(\vec \Psi ^{v_0}) = \sqrt{8\pi} $. If $\vec \Psi^{v_0}$ is either branched (with true branch points) or not embedded, we finish the proof as before. 
\end{proof}
We can now prove \zcref{cor:bubbles in space T problem}.
\begin{proof}[Proof of \zcref{cor:bubbles in space T problem}]
We repeat the proof of \zcref{cor:Bubbles in space iso problem} up to \eqref{macroscopic limit still boundary of set of finite perimeter}. As before, if $V_{\mathrm{macro}}$ consists of only one element, this has to immerse a round sphere or an inverted catenoid and both of these choices contradict \zcref{thm:bubble convergence for extrinsic quantities}. So $V_{\mathrm{macro}} = \{S_1, S_2\}$ and $\vec \Psi^{S_1}, \ \vec \Psi^{S_2}$ immerse round spheres \ref{S}. If their (signed) radii are $r_1,\ r_2 \neq 0$, then \zcref{thm:bubble convergence for extrinsic quantities} implies that
\begin{equation}
\sqrt{8\pi} = \lim _{\tau {\to \sqrt{8\pi}}} \mc T(\vec \Phi ^{\mc T} _{p, \tau}) = \frac{\mc M(\vec \Psi_{S_1})+\mc M(\vec \Psi_{S_2})}{\sqrt{\mc A(\vec \Psi_{S_1})+\mc A(\vec \Psi_{S_2})}} = \frac{4\pi (r_1 + r_2)}{\sqrt{4\pi(r_1^2 +r_2^2)}}. \label{normalized mean curvature constraint in limit}
\end{equation}
This is true if and only if $r_1 = r_2>0$. So $\vec \Psi_{S_1}$ and $\vec \Psi_{S_2}$ have the same orientation and the same radii. As in \eqref{macroscopic bubbles intersect}, we know that $\vec \Psi^{S_1}(\hat{\C})\cap \vec \Psi^{S_2}(\hat{\C}) \neq \emptyset$. Finally, this intersection must consist of exactly one point as otherwise, their induced currents cannot be the boundary of a set of finite perimeter, i.e., \eqref{macroscopic limit still boundary of set of finite perimeter} cannot hold. 
\end{proof}

\subsection{Conformally constrained minimization problem}\label{subsec:Conformally constrained minimization problem}
Suppose $\Sigma$ is a genus-$p$ surface, $p\geq 1$, and $c$ is a complex structure on $\Sigma$. Let $h$ be its corresponding Poincar\'e metric of unit volume. In \cite{KuwertSchatzleConformallyConstrainedMinimization, RiviereCrelle}, it was shown that the minimization problem
\begin{equation}
\beta^{\text{conf}}_{p}(h) \cqq \inf_{\substack{\vec \Phi \in \mc E_{\Sigma}\\ \vec \Phi\text{ is conformal w.r.t. $h$}}} \mc W(\vec \Phi)\label{conformally constrained minimization problem}
\end{equation}
admits smooth minimizers $\vec \Phi^{\text{conf}}_{p,h}$ as long as $\beta^{\text{conf}}_{p}(h) < 8\pi$.
In the case $p=1$, $h$ may be described by $\omega \in \C$ as in \eqref{fundamental domain}. In \cite[Proposition D.1]{Ndiaye}, it was shown that for $\Re \omega = 0$ and $\Im \omega$ sufficiently large, it holds $\beta^{\text{conf}}_{1}(\omega) < 8\pi$. Hence, the conclusion of \zcref{thm:Asymptotic convergence} in the case $p=1$ applies to the sequence $\vec \Phi^{\text{conf}}_{p,\omega}$ as $\Re \omega=0$ and $\Im \omega \to \infty$.
\appendix

\renewcommand{\thesection}{Appendix \Alph{section}}
\tocless\section{Proof of Lemma \ref{lem:almost pointwise control conformal factor}}
\addcontentsline{toc}{section}{Appendix A. Proof of Lemma \ref{lem:almost pointwise control conformal factor}}
\renewcommand{\thesection}{\Alph{section}}
\label{sec:Proof of Lemma}

 \begin{proof}[Proof of Lemma \ref{lem:almost pointwise control conformal factor}]
Suppose without loss of generality that $\delta <1/3$. We proceed by contradiction. Suppose that for all $k\in \N$ and $\eps(k) = \alpha_0(k) = 1/k$, $Q(k) = k$, there exists $\vec \Phi_k \in \mc E_{B_{R_k}\setminus B_{r_k}} $ with $R_k/r_k \geq k$ and conformal factor $\lambda_k$ such that
 \[\|\nabla \lambda _k\|_{L^{2,\infty}(B_{R_k}\setminus B_{r_k})} \leq \Lambda\]
 and
\begin{equation}
\sup_{r \in (r_k, R_k/2)} \int _{B_{2r}\setminus B_{r}} |\nabla \vec n_k|^2 \dif 	x < \frac{1}{k}\label{Dirichlet energy bounded by 1 over k}
\end{equation}
 such that \eqref{weakened version of pointwise gradient estimate} does not hold for $\vec \Phi_k$. Recall that by \cite[Lemma V.2]{BernardRiviere}, it holds for $\frac{4r_k}{R_k} < \alpha < 1$ and $r\in (4 r_k, \alpha R_k)$ that
 \begin{equation}
 \|\lambda_{k} - \bar \lambda_{k}(r)\|_{L^\infty(B_{\alpha^{-1} r}\setminus B_{r})} \leq C(\alpha,\Lambda),\label{local Harnack estimate}
 \end{equation}
 where $\bar \lambda_{k}(r) \cqq \dashint _{B_{\alpha^{-1} r}\setminus B_{r}} \lambda_k \dif x$ is the average. Let $s_k \in (r_k, R_k)$ be such that
 \begin{equation}
 \frac{s_k}{r_k} \to \infty, \quad \frac{R_k}{s_k} \to \infty.\label{sn stays away from boundary}
 \end{equation}
 Consider $\vec \Psi_k(z) \cqq e^{-\lambda_k(s_k) - \log(s_k)}(\vec \Phi_k(zs_k) - \vec \Phi_k((s_k,0)))$, which by \eqref{local Harnack estimate} and $\lambda_{\vec \Psi_k}(z) = \lambda_k(zs_k) - \lambda_k(s_k)$ satisfies
 \[ \limsup _{k\to \infty} \|\lambda_{\vec \Psi_k}\|_{L^\infty(K)} \leq C(K,\Lambda)< \infty,\]
 where $K\Subset \C \setminus \left \{0\right \}$. It follows that $\sup_{k\in \N}\|\vec \Psi_k\|_{W^{2,2}(K; \R^3)}\leq C(K,\Lambda)$. In particular, we see that after taking subsequences, 
 \[\vec \Psi_k \wto \vec \Psi_{\infty} \quad \text{in $W^{2,2}_{\loc}(\C\setminus \left \{0\right \};\R^3)$}.\]
 As the Dirichlet energy is lower semicontinuous under weak convergence, we deduce from \eqref{Dirichlet energy bounded by 1 over k} that
 \[\nabla \vec n_{\vec \Psi_{\infty}} \equiv 0,\]
 so that the image of $\vec \Psi_{\infty}$ is a flat plane. After a possible rotation, we can identify the image with $\C$ so that $\vec \Psi_{\infty}\colon\C \setminus \left \{0\right \} \to \C$ is a conformal map and hence either holomorphic or antiholomorphic. By the Liouville equation, the conformal factor $\lambda$ of $\vec \Psi_{\infty}$ is harmonic. As in \eqref{mean value formula estimate for nu}, we deduce $|\nabla \lambda(x)| \leq \frac{C}{|x|}$ for $x\in \C\setminus\left \{0\right \}$. The classification of harmonic functions on annuli yields
\[\lambda = (m-1) \log |\cdot | + \Re (g),\]
where $m \in \R$ and $g = g_1 + i g_2$ is holomorphic on $\C \setminus \left \{0\right \}$. In particular $|\nabla g_1| \leq \frac{C}{|x|}$ for $x\in \C \setminus \left \{0\right \}$. The Cauchy-Riemann equations yields that $|\nabla g_2| \leq \frac{C}{|x|}$ as well and so $|g'| \leq \frac{C}{|x|}$. In particular, $|g(z)| \leq C (1+ |\log(|z|)|)$ for all $z\in \C \setminus \left \{0\right \}$, which implies that $g$ is constant. It follows that $|\vec \Psi_\infty'(z)| = C |z|^{m-1}$. In particular, $m \in \Z\setminus \left \{0\right \}$ and after a potential shift, we have
\[\vec \Psi_{\infty}(z) = \frac{A}{m} z^{m} \]
for some fixed $A \in \C$. Notice that $m$ is bounded in terms of $\Lambda$. Fix $r>0$. As $\partial_1 \vec \Psi_k \wto \partial_1 \vec \Psi_{\infty}$ in $W^{1,2}(B_{2r}\setminus B_r; \R^3)$, the compactness of the trace operator $T\colon W^{1,2}(\Omega) \to L^2(\partial \Omega)$ for bounded $C^1$ domains $\Omega$, see \cite[Theorem 3.85]{Demengel}, yields that
\[e^{\lambda_{\vec \Psi_k}(z)} \to |A| r^{m-1} \quad\text{in $L^2(\partial B_r)$}.\]
The uniform Harnack estimate \eqref{local Harnack estimate} for $\lambda_{\vec \Psi_k}$ yields that
\[\lambda_{\vec \Psi_k}(z) \to \ln |A| + (m-1)\log(r) \quad\text{in $L^2(\partial B_r)$}.\]
We use the notation $\underline \lambda_k(r) \cqq \dashint_{\partial B_r} \lambda_k(x) \dif l(x)$ and $\underline \lambda_{\vec \Psi_k} (r) \cqq \dashint _{\partial B_r} \lambda _{\vec \Psi_k} (x) \dif l(x)$. Then
\begin{equation}
\underline \lambda_{\vec \Psi_k}(r)  \to \log |A| + (m-1) \log(r) \quad\text{pointwise for all $r\in (0,\infty)$}. \label{pointwise convergence}
\end{equation}
Thus, there is $\alpha_0\in (0,1)$ and $k_0$ such that for all $k\geq k_0$ and all $r \in [\alpha^{-1}_0 r_k, \alpha _0 R_k/2]$ there exists $m\in \Z\setminus \left \{0\right \}$ (a priori depending on $k$ \emph{and} $r$) such that we have
\begin{equation}
|\underline \lambda_k(2r) - \underline \lambda_k(r) - (m-1) \log(2)|  \leq \delta\log(2).\label{g is pointwise close to integer}
\end{equation}
Indeed, if this was not the case, for any $k\in \N$ and $\alpha=\frac{1}{k}$, there is $l= l(k)  \geq k$ such that we could find $t_k \in (k r_{l(k)}, k^{-1} R_{l(k)}/2)$ such that
\begin{equation}
|\underline \lambda_{l(k)}(2t_{l(k)}) - \underline \lambda_{l(k)}(t_{l(k)}) - (q-1) \log(2)|  > \delta\log(2)\quad\text{for all $q\in \Z\setminus \left \{0\right \}$}.\label{contradiction with q}
\end{equation}
Choosing $l(k)$ as a subsequence indexed by $k$ and setting $s_k =t_k $, we see that \eqref{sn stays away from boundary} holds. Hence, we can run the previous argumentation to arrive at \eqref{pointwise convergence} for some $m\in \Z\setminus \left \{0\right \}$. We use that $\underline \lambda_{\vec \Psi_k}(2) - \underline \lambda_{\vec \Psi_k}(1) -(m-1) \log(2) = \underline \lambda_k(2t_k) - \underline \lambda_k(t_k) - (m-1)\log(2)$ and \eqref{pointwise convergence} to bring \eqref{contradiction with q} to a contradiction (namely \eqref{contradiction with q} would not hold for $q=m$). 

As $\underline \lambda_k$ is continuous, $\underline \lambda_k(2r) - \underline \lambda_k(r)$ is continuous and a simple intermediate value argument (which is possible due to $\delta < \frac{1}{2}$) yields that $m$ in \eqref{g is pointwise close to integer} does not depend on $r$. In particular, 
\[|\underline \lambda_k(2^k r)- \underline \lambda_k(r) - k(m-1) \log(2)  | \leq k\delta \log(2)\]
for all $k\in \N$ and $r \in [\alpha_0^{-1} r_k, \alpha_0 2^{-k-1} R_k]$. \eqref{local Harnack estimate} yields 
\[|\underline \lambda_k(s) - (m-1) \log(s/r) - \underline \lambda_k(r)| \leq   \delta |\log(s/r)| + C(\Lambda),\quad s,r\in [2\alpha_0^{-1} r_k, \alpha_0  R_k/2],\]
which, together with \eqref{local Harnack estimate}, contradicts the assumption that \eqref{weakened version of pointwise gradient estimate} is violated for $k > 2 \alpha_0^{-1}$. \end{proof}

\renewcommand{\thesection}{Appendix \Alph{section}}
\tocless\section{Ends and branch points}
\addcontentsline{toc}{section}{Appendix B. Ends and branch points}
\renewcommand{\thesection}{\Alph{section}}
\label{sec:Appendix Ends and branch points}
Let us recall the behavior of conformal, branched weak immersions around the singular points. Suppose $\vec \Phi\colon\mb D  \to \R^3$ is a conformal, branched weak immersion with a singularity at $0$ and conformal factor $\lambda$ such that $\vec \Phi^* g_{\R^3} = e^{2\lambda} g_{\R^2}$. Suppose that the metric $e^{2\lambda} g_{\R^2}$ is complete at $0$. Then, it follows by the work of Müller and \v Sver\'ak \cite[Theorem 4.2.1]{MüllerSverak} that there is an integer $m\leq -1$ such that
\begin{equation}
\|\lambda - (m-1) \log | \cdot| \|_{L^\infty(B_{1/2})} < \infty.\label{Muller Sverak estimate}
\end{equation}
In this case, we say that $\vec \Phi$ has an end of order $m$ at 0. This result should be compared to \cite[Theorem~7]{NguyenGeometricRigidity}, which proves \eqref{Muller Sverak estimate} under the assumption that $e^{-\lambda} \in L^2(B_{1/2})$ instead of $e^{2\lambda} g_{\R^2}$ being complete.

If instead of the metric being complete, we assume that $e^\lambda \in L^2(\mb D)$, then \cite[Theorem 3.1]{KuwertLi}, see also \cite[Lemma 6.2]{RiviereLectureNotes}, show that \eqref{Muller Sverak estimate} holds for some $m\geq 1$. In this case, we say that $\vec \Phi$ has a branch point of order $m$ at $0$.

Notice that in the case $m=1$, we still say that $\vec \Phi$ has a branch point at $0$ even though the singularity is removable in the sense that $\vec \Phi$ is an immersion throughout the origin. For us, it will be more convenient to combine these different assumptions by assuming that the gradient of the conformal factor is well-behaved, namely that $\|\nabla \lambda\|_{L^{2,\infty}(\mb D)} < \infty$.
\begin{lemma} \label{lem: Muller Sverak variant}
Suppose $\vec \Phi\colon\mb D  \to \R^3$ is a conformal, branched weak immersion such that
\begin{equation}
\|\nabla \lambda\|_{L^{2,\infty}(\mb D)}+\|\nabla \vec n\|_{L^2(\mb D)}< \infty.\label{L2 weak norm for lambda and L2 norm for n}
\end{equation}
Then \eqref{Muller Sverak estimate} holds for some $m\in \Z \setminus \left \{0\right \}$. Furthermore,
\begin{equation}
\lim_{z\to 0}\frac{|\vec \Phi(z)|}{|z|^m} = \frac{e^{\omega}}{-m}\quad\text{if $m\leq -1$},\quad \lim_{z\to 0}\frac{|\vec \Phi(z) - \vec \Phi(0)|}{|z|^m} = \frac{e^{\omega}}{m}\quad\text{if $m\geq 1$}, \label{growth of immersion around end}
\end{equation}
where $\omega \cqq \lim_{z\to 0} \lambda(z) - (m-1) \log |z|$.
\end{lemma}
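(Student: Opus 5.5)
The plan is to reduce to the two classical cases (complete end, resp. branch point with $e^\lambda\in L^2$) by first extracting the logarithmic leading term of $\lambda$ from the $L^{2,\infty}$-bound via a Liouville-type equation. Since $\vec\Phi$ is a conformal weak immersion on $\mb D\setminus\{0\}$ with $\nabla\vec n\in L^2(\mb D)$, the conformal factor satisfies $-\Delta\lambda = K e^{2\lambda}$ on $\mb D\setminus\{0\}$, where $Ke^{2\lambda}$ is the Gauss curvature density, a Jacobian of $\vec n$ and hence in $L^1$ with $\|Ke^{2\lambda}\|_{L^1}\le \frac12\|\nabla\vec n\|_{L^2}^2$. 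Because $\nabla\lambda\in L^{2,\infty}(\mb D)\subset L^1_{\loc}$, $\lambda$ extends as a distribution across $0$ and
\[ -\Delta\lambda = Ke^{2\lambda} + 2\pi\gamma\,\delta_0 \quad\text{in }\mathcal D'(\mb D) \]
for some $\gamma\in\R$ (a distribution supported at a point whose gradient is $L^{2,\infty}$ can only be a multiple of $\delta_0$; derivatives of $\delta_0$ would have gradient outside $L^{2,\infty}$). Thus $\lambda=-\gamma\log|z| + \nu$ with $-\Delta\nu = Ke^{2\lambda}$.

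Next I use Hélein's moving frame to control $\nu$. The Jacobian $Ke^{2\lambda}$ can be written as $\langle\nabla^\perp \vec e_1,\nabla\vec e_2\rangle$ for a Coulomb frame with $\|\nabla\vec e_i\|_{L^2}\lesssim\|\nabla\vec n\|_{L^2}$, after shrinking to a small disk where $\|\nabla\vec n\|_{L^2}$ is small. Wente's estimate then gives $\nu = \nu_0 + h$ with $\nu_0\in L^\infty\cap W^{1,2}(B_{1/2})$ and $h$ harmonic on $B_{1/2}$, so $\nu\in L^\infty(B_{1/2})$. The removable-singularity step for the frame on a punctured disk is fine because $\nabla\vec n\in L^2$ across $0$. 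Hence $\lambda=-\gamma\log|z|+O(1)$.

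It remains to show that $m:=1-\gamma\in\Z\setminus\{0\}$. Write $\vec\Phi$ in a Weierstrass-type local representation. Since $\lambda$ is bounded up to a power of $|z|$, $\partial_z\vec\Phi$ is bounded by $C|z|^{m-1}$, and $\partial_{\bar z}\partial_z\vec\Phi = \frac12 e^{2\lambda}\vec H$ with $\vec H\in L^2(e^{2\lambda}dz)$. Going around the puncture, the normal $\vec n$ is continuous in the $W^{1,2}$ sense on circles of a.e.\ radius and has a limit (in the VMO sense), so the tangent plane is asymptotically fixed. Projecting to that plane, $\partial_z\vec\Phi$ behaves like $c\,z^{m-1}(1+o(1))$ with a single-valued leading coefficient. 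Single-valuedness forces $m-1\in\Z$, and $m\neq0$ because otherwise the projected map would be asymptotically logarithmic, whereas $\vec\Phi$ is single-valued. This integrality step is the main obstacle. If the direct argument is messy, I would instead cite the two classical results: if $m<0$ then $e^{2\lambda}g_{\R^2}$ is complete and Müller--Šverák \cite[Theorem 4.2.1]{MüllerSverak} applies, and if $m>0$ then $e^\lambda\in L^2$ and \cite[Theorem 3.1]{KuwertLi} applies. Both yield \eqref{Muller Sverak estimate} with integer $m$, so only $m=0$ remains to be excluded, via a Nguyen-type argument using $e^{-\lambda}\in L^2$ \cite{NguyenGeometricRigidity}.

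For \eqref{growth of immersion around end}, I first show that $\omega=\lim(\lambda-(m-1)\log|z|)$ exists: $\nu_0$ is continuous at $0$ in the sense of averages by Wente with small energy at small scales, and $h$ is harmonic. Then, with the asymptotic tangent plane identified with $\C$, $|\partial_z\vec\Phi|\sim \frac12 e^{\omega}|z|^{m-1}$. Integrating radially, from $0$ when $m\ge1$ or from $1$ when $m\le-1$ where the integral diverges, gives $|\vec\Phi(z)-\vec\Phi(0)|\sim e^\omega|z|^m/m$, respectively $|\vec\Phi(z)|\sim e^\omega|z|^m/(-m)$.
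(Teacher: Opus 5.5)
Your first two steps are correct, and they reach the same intermediate statement as the paper by a different device. That statement is $\lambda=(m-1)\log|z|+\nu$ with $\nu$ continuous near $0$.

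The paper starts from the classical splitting $\lambda=v+\alpha\log|z|+\Re\phi$, with $\phi$ holomorphic on the punctured disk. It uses the mean value formula and the Lorentz-space Cauchy--Schwarz inequality to get $|\phi'(z)|\le C/|z|$. Hence $\phi$ grows at most logarithmically and its singularity is removable. Everything after that is delegated to M\"uller--\v{S}ver\'ak and Kuwert--Li.

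You instead extend $\lambda$ as a $W^{1,1}$ function across $0$. Then $\Delta\lambda+Ke^{2\lambda}$ is a point-supported distribution, and integrability of $\nabla\lambda$ excludes all derivatives of $\delta_0$. You then apply Wente on the whole disk to the Jacobian $Ke^{2\lambda}=\langle\nabla^\perp\vec e_1,\nabla\vec e_2\rangle$ of H\'elein's frame. This is more structural and never needs the holomorphic function $\phi$.

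Your fallback for integrality is exactly the paper's route. One correction: $m=0$ needs no Nguyen-type argument. There $e^\lambda\sim|z|^{-1}$ is not integrable along rays, so the end is complete and M\"uller--\v{S}ver\'ak already excludes it.

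The gap is in your direct arguments, and above all in the one for \eqref{growth of immersion around end}, where you give no fallback. Both rest on $\vec n$ having a limit at $0$, i.e.\ a fixed asymptotic tangent plane. The hypotheses do not give this, because a $W^{1,2}$ Gauss map need not have a limit at a point. Take the graph of $u(x)=x_1\sin(\log\log(1/|x|))$ near $0$, conformally reparametrized. It has $\nabla u=(\sin\log\log(1/|x|),0)+o(1)$ bounded and $|\nabla^2u|\lesssim(|x|\log(1/|x|))^{-1}\in L^2$. So it is a weak immersion satisfying the lemma with $m=1$, yet its tangent plane oscillates forever.

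Moreover, integrating $|\partial_r\vec\Phi|=e^\lambda$ along rays only gives upper bounds: $|\vec\Phi(z)-\vec\Phi(0)|\le(1+o(1))e^\omega|z|^m/m$, resp.\ $|\vec\Phi(z)|\le(1+o(1))e^\omega|z|^m/|m|$. The matching lower bounds require ruling out cancellation, which is what the fixed plane was supposed to supply.

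The fix is a blow-up as in the proof of Lemma~\ref{lem:almost pointwise control conformal factor}, which needs no limiting plane.
\begin{itemize}
\item Put $\vec\Phi_r(z):=e^{-\omega}r^{-m}(\vec\Phi(rz)-\vec\Phi(r))$. It vanishes at $z=1$, and its conformal factor $(m-1)\log|z|+\nu(rz)-\omega$ converges to $(m-1)\log|z|$ locally uniformly on $\C\setminus\{0\}$.
\item Since also $\int_{B_{Rr}}|\nabla\vec n|^2\to0$, along subsequences $\vec\Phi_r$ converges weakly in $W^{2,2}_{\loc}(\C\setminus\{0\})$ to a conformal immersion into some plane. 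Up to conjugation this is a holomorphic $g$ with $g(1)=0$ and $|g'|=|z|^{m-1}$.
\item Write $g'=z^ke^{\psi}$ with $\psi$ single-valued. Then $\Re\psi=(m-1-k)\log|z|$, which forces $m=k+1\in\Z$ and $g'=az^{m-1}$ with $|a|=1$. Single-valuedness of $g$ excludes $m=0$, so $g=\frac{a}{m}(z^m-1)$.
\item Your radial upper bounds fix the additive constant. One gets $e^{-\omega}r^{-m}(\vec\Phi(rz)-\vec\Phi(0))\to\frac{a}{m}z^m$ for $m\ge1$, and $e^{-\omega}r^{-m}\vec\Phi(rz)\to\frac{a}{m}z^m$ for $m\le-1$, locally uniformly.
\end{itemize}
Since $|\frac{a}{m}z^m|=|z|^m/|m|$ whichever plane and $a$ the subsequence selects, this yields integrality and \eqref{growth of immersion around end} directly. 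Alternatively, cite the growth statements of Kuwert--Li and M\"uller--\v{S}ver\'ak as well, which is what the paper does.
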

\begin{proof} 
The proof mostly follows \cite{MüllerSverak,KuwertLi}. We may assume that $\|\nabla \vec n \|_{L^2(\mb D)}\leq \frac{8\pi}{3}$. We can find $v \in C^0(\mb D) \cap W^{1,2}(\mb D)$, $\alpha \in \R$ and a holomorphic function $\phi$ on $\mb D \setminus \{0\}$ with $\Re \phi = h$ such that
\[\lambda(z) = v(z) + \alpha \log |z| + h(z).\]
The assumption $\nabla \lambda \in L^{2,\infty}(\mb D)$ implies that $\nabla h \in L^{2,\infty}(\mb D)$ as well. The mean-value formula applied to $\nabla h$ together with the Cauchy--Schwarz inequality (for Lorentz spaces) implies for $0<r<|x|<\frac{1}{2}$
\begin{equation}
|\nabla h(x)| = \left |\dashint _{B_{r}(x)} \nabla h (y) \dif y\right | \leq \frac{1}{\pi r^2} \|\nabla h\|_{L^1(B_r(x))} \leq \frac{C}{\pi r^2}  \mc L^2(B_r(x)) ^{1/2} \|\nabla h\|_{L^{2,\infty}(B_r(x))} \leq \frac{C}{r}. \label{mean value formula estimate for nu}
\end{equation}
Choosing $r = \frac{|x|}{2}$ yields $|\nabla h(x)| \leq \frac{C}{|x|}$ for $0<|x|<\frac{1}{2}$. In particular, the Cauchy--Riemann equations yield that $|\phi'(z)| \leq \frac{C}{|z|}$ as well and so $|\phi(z)| \leq C + C|\log(|z|)|$ for all $z\in B_{1/2} \setminus \left \{0\right \}$. This implies that $\phi$ can be extended holomorphically through 0, and is in particular bounded in $B_{1/2}$. The remaining proof can be finished as in \cite{MüllerSverak, KuwertLi}.
\end{proof}

\begin{remark} \label{rem:order one end is embedded}
When $\vec \Phi\colon\hat \C \setminus \mb D\to \R^3$ and it holds
\[ \|\nabla \lambda\|_{L^{2,\infty}(\C \setminus \mb D)}+\|\nabla \vec n\|_{L^2(\C \setminus \mb D)}< \infty, \]
we can argue in a similar manner. In this case, \eqref{Muller Sverak estimate} holds on the domain $\C \setminus B_2$ and the analogous estimates
\[\lim_{z\to \infty}\frac{|\vec \Phi(z) - \vec \Phi(\infty)|}{|z|^m} = \frac{e^{\omega}}{-m}\quad\text{if $m\leq -1$},\quad \lim_{z\to \infty}\frac{|\vec \Phi(z)|}{|z|^m} = \frac{e^{\omega}}{m}\quad\text{if $m\geq 1$}, \]
$\omega = \lim_{z\to \infty} \lambda(z) - (m-1) \log |z|$ hold. In this case, we say that $\vec \Phi$ has a branch point/end of order $-m$ at $\infty$.

\end{remark}

\renewcommand{\thesection}{Appendix \Alph{section}}
\tocless\section{Equality in the Li--Yau inequality}
\addcontentsline{toc}{section}{Appendix C. Equality in the Li--Yau inequality}
\renewcommand{\thesection}{\Alph{section}}
\label{sec:Appendix Equality in the Li--Yau inequality}
Suppose $U\subset \R^n$ is open, $2\leq n$, and $\mu$ is a Radon measure over $U$. An integral $2$-varifold $\mu$ in $U$ is a Radon measure over $U$ expressed in the form $\mu = \theta \mc H^2\mres M$, where $\theta \in L^1_{\loc}(U,\mc H^2;\N_0)$, and $M = \{\theta > 0\}\subset U$ is $\mc H^2$-rectifiable. Suppose that $W\subset \R^n$ is open and ${\vec \Phi}\colon\spt \mu \cap U \to W$ is proper, locally Lipschitz and injective. Then the image varifold ${\vec \Phi}_{\#} \mu$ in $W$ is defined as
\[\vec \Phi_{\#}\mu = \theta \circ \vec \Phi^{-1} \mc H^2 \mres \vec \Phi(M).\]
As a consequence of the area formula, see \cite[15.6]{SimonGMT}, it holds for $K\subset W$ compact
\begin{equation}
\int _{\vec \Phi(M)\cap K} \theta \circ \vec \Phi^{-1} \dif \mc H^2 = \int _{M\cap \vec \Phi^{-1}(K)}J^M_{\vec \Phi} \theta \dif \mc H^2,\label{area formula}
\end{equation}
where $J^M_{{\vec \Phi}}$ denotes the Jacobian of ${\vec \Phi}$ relative to $M$ \cite[15.7]{SimonGMT}.
We denote for $f \in C_c^1(U;\R^n)$
\begin{equation}
(\Div_\mu f)(x) \cqq \sum _{i=1}^2 \langle Df(x)(b_i), b_i\rangle,\label{divergence}
\end{equation}
where $\{b_1,b_2\}$ is an orthonormal basis of the approximate tangent space $T_x\mu$, which exists $\mu$-a.e.\ in $U$. We say that $\mu$ has generalized mean curvature $\vec H$ in $U$, if there is $\vec H\in L^1_{\loc}(U,\mu;\R^n)$ such that
\begin{equation}
\int_U \Div_\mu f \dif \mu = - 2 \int_U \langle f, \vec H\rangle \dif \mu\quad \text{for all $f\in C_c^1(U;\R^n)$.}\label{divergence theorem}
\end{equation}
We define the Willmore energy for such varifolds by
\[\mc W(\mu) \cqq \int_U |\vec H|^2 \dif \mu.\]
We call $\mu$ stationary in $U$, if $\mu$ has vanishing generalized mean curvature $\vec H=0$. 

Suppose $U=\R^n$, $\mc W(\mu)< \infty$, and $\Theta^2(\mu, \infty) \cqq \lim _{r\to \infty} \frac{1}{\pi r^2} \mu(B_r(0)) = 0$. By \cite[(1.2')]{Simon}, see also \cite[(A.3),(A.5)]{Kuwert2004removability}, the well-known monotonicity identity holds:
\begin{equation}  
\pi \Theta^2(\mu, x_0) + \int _{\R^n} \left | \frac{\vec H}{2} + \frac{(x-x_0)^\perp}{|x-x_0|^2} \right |^2 \dif \mu = \frac{1}{4} \mc W(\mu)\quad \text{for all $x_0 \in \R^n$}.\label{monotonicity identity}
\end{equation}
Here, $\Theta^2(\mu, x_0) = \lim_{r\to 0} \frac{1}{\pi r^2} \mu(B_r(x_0))$ and ${}^\perp$ is the projection onto the normal space $(T_x\mu)^\perp$. From this, the seminal Li--Yau inequality
\begin{equation}
\Theta^2(\mu, x_0) \leq \frac{1}{4\pi } \mc W(\mu)\label{Li Yau inequality}
\end{equation}
may be deduced. In particular, we see that equality in \eqref{monotonicity identity} holds if and only if
\begin{equation}
\vec H = -2 \frac{(x-x_0)^\perp}{|x-x_0|^2}\quad \text{$\mu$-a.e.\ in $\R^n$}.\label{mean curvature for equality in Li Yau}
\end{equation}
We define the inversion $I_{x_0}\colon \R^n\setminus \{x_0\}\to \R^n \setminus \{0\}$ given by $I_{x_0}(y) \cqq \frac{y-x_0}{|y-x_0|^2}$. 

\begin{proof}[Proof of \zcref{thm:equality-in-Li-Yau}]
Without loss of generality, we may assume $x_0=0$. We will first show that
\[\int_{\R^n} \Div _\nu f \dif \nu = 0\quad\text{for all $f\in C_c^1(\R^n\setminus \{0\};\R^n)$}\]
if and only if $\vec H = - 2 \frac{x^\perp}{|x|^2}$ $\mu$-a.e.\ in $\R^n$. Define $g(x) \cqq DI_{0}(x)^{-1}(f(I_{0}(x)))$ for $x\in \R^n \setminus \{0\}$. Then, $f(I_{0}(x)) = DI_{0}(x)(g(x))$ and 
\begin{align}
Df(I_{0}(x))(DI_{0}(x)(v)) &= D(f\circ I_{0})(x)(v) = D(DI_{0}(\cdot)(g(\cdot)))(x)(v)\notag \\
&= D^2I_{0}(x)(g(x),v) + DI_{0}(x)(Dg(x)(v)) \label{Calculation for derivatives}.
\end{align}
Suppose that $\{\tau_1,\tau_2\}$ is an orthonormal basis of $T_{x}\mu$. Then $T_{I_{0}(x)}\nu$ exists and has an orthonormal basis given by $\{ |x|^2DI_{0}(x)(\tau_1),|x|^2DI_{0}(x)(\tau_2)\}$. Using \eqref{divergence} and \eqref{Calculation for derivatives}, it follows that the divergence is given by
\begin{align}
(\Div_\nu f)(I_{0}(x)) &= \sum_{i=1}^2 \langle Df(I_{0}(x))(|x|^2 DI_{0}(x)(\tau_i)), |x|^2 DI_{0}(x)(\tau_i)\rangle\\
 &=|x|^4\sum_{i=1}^2 \langle D^2I_{0}(x)(g(x),\tau_i) + DI_{0}(x)(Dg(x)(\tau_i)) ,  DI_{0}(x)(\tau_i)\rangle.
\end{align}
A simple calculation shows that $D^2I_{0}$ is given by
\begin{align}
D^2 I_{0}(x)(v,w) &= -2 \frac{\langle x, w\rangle}{|x|^2} DI_0(x)(v) - 2\frac{\langle x, v\rangle }{|x|^2} DI_0(x)(w) - 2\frac{x}{|x|^4} \langle v, w\rangle .
\end{align}
Hence,
\begin{align}
(\Div_\nu f)(I_{0}(x)) &=|x|^4\sum_{i=1}^2 \Bigg\langle -2 \frac{\langle x, \tau_i\rangle}{|x|^2} DI_0(x)(g(x)) - 2\frac{\langle x, g(x)\rangle }{|x|^2} DI_0(x)(\tau_i) \\
&\quad\,- 2\frac{x}{|x|^4} \langle g(x), \tau_i\rangle  + DI_{0}(x)(Dg(x)(\tau_i)) ,  DI_{0}(x)(\tau_i) \Bigg\rangle
\end{align}
Using that $x = - |x|^2 DI_0(x)(x)$ and $\langle DI_0(x)(v), DI_0(x)(w)\rangle = \frac{1}{|x|^4} \langle v,w\rangle$ for any $v,\, w \in \R^n$, we deduce
\begin{align}
(\Div_\nu f)(I_{0}(x)) &= \sum_{i=1}^2 \Bigg\langle -2 \frac{\langle x, \tau_i\rangle}{|x|^2}  g(x)  - 2\frac{\langle x, g(x)\rangle }{|x|^2} \tau_i  + 2\frac{  x}{|x|^2} \langle g(x), \tau_i\rangle  +  Dg(x)(\tau_i)  , \tau_i \Bigg\rangle\notag\\
&=(\Div_\mu g)(x) -4 \frac{\langle x, g(x)\rangle}{|x|^2}\label{divergence for inversion}.
\end{align}
By \eqref{area formula} and the fact that $I_{0}$ is conformal, we have
\begin{align}
\int _{\R^n} (\Div_{\nu} f)(y) \dif \nu(y) &= \int _{\R^n} \frac{(\Div_\mu g)(x)}{|x|^4}   -4 \frac{\langle x, g(x)\rangle}{|x|^6} \dif \mu(x)\notag\\
&= \int _{\R^n} \Div_\mu \left ( \frac{g(x)}{|x|^4}\right ) +4 \sum_{i=1}^2\frac{\langle x,\tau_i\rangle \langle \tau_i, g(x)\rangle}{|x|^6}  -4 \frac{\langle x, g(x)\rangle}{|x|^6}\dif \mu(x)\notag\\
&= \int _{\R^n} -2 \frac{\langle g(x), \vec H(x)+2\frac{x^\perp}{|x|^2}\rangle}{|x|^4} \dif \mu(x)\label{divergence theorem for inversion}.
\end{align}
We see that $\nu$ is stationary if and only if $\vec H = -2\frac{x^\perp}{|x|^2}$ $\mu$-a.e., which is precisely the condition \eqref{mean curvature for equality in Li Yau}. Furthermore, $\Theta^2(\mu, \infty) = 0$ implies $\Theta^2(\nu,0) = 0$, so that $\nu$ is a stationary integral 2-varifold in $\R^n$, see \cite[(A.2)]{Kuwert2004removability}.

It remains to compare the densities. Define the map $\mu_r$ by $\mu_{r}(x) \cqq rx$. By \cite[3.4(1)(2),4.12(2),6.5]{Allard}, we find a sequence $r_k \to \infty$ such that
\[\mu_{r_k\#} \mu \wto C,\]
i.e.,
\begin{equation}
C(\phi) \cqq \lim_{k\to \infty} r_k^2 \int_{
M} \phi(r_k x) \dif \mc H^2(x)\quad\text{for all $\phi \in C_c(\R^n)$},\label{weak convergence to cone}
\end{equation}
where $C$ is a stationary integral 2-varifold satisfying $\mu_r C = C$ for all $r>0$ and $\Theta^2(C,0) = \Theta^2(\mu,0)$. The condition $\mu_{r\#} C=C$ implies that $\theta_C(rx)=\theta_C(x)$ for all $r>0$ and thus $I_{0\#} C = C$.
Using \eqref{area formula}, the blow-down of $\nu$ is given by
\begin{align}
(\mu _{1/r_k \#} \nu) (\phi) &= \int _{M}\frac{1}{r_k^2 |x|^4} \phi\left (\frac{x}{r_k |x|^2}\right ) \theta(x) \dif \mc H^2(x).
\intertext{Letting $\psi(x) \cqq \frac{1}{|x|^4} \phi\left (\frac{x}{|x|^2}\right )$, we see that this equals}
&=r_k^2 \int _M \psi(r_k x) \theta(x) \dif \mc H^2(x).
\intertext{In the case that $\phi \in C_c(\R^n \setminus \{0\})$, we get that $\psi \in C_c(\R^n\setminus \{0\})$ as well, so that \eqref{weak convergence to cone} yields}
&\to C(\psi) = I_{0\#} C(\psi) = C(\phi)\quad\text{as $k\to \infty$.}
\end{align}
Let $s>1$. Approximating $\chi_{B_s\setminus B_1}$ by continuous functions and using that $C(\partial (B_s\setminus B_1))=0$, we deduce
\[(\mu_{1/r_k \#}\nu)(B_s \setminus B_1) \to C(B_s \setminus B_1) = (s^2-1)\pi \Theta^2(C,0).\]
On the other hand, as $I_{0\#}\mu$ is minimal, the monotonicity formula \cite[(1.2)]{Simon}, implies that the quantity $f(r) \cqq \frac{1}{\pi r^2} \nu(B_r)$ is non-decreasing, and so
\[(\mu _{1/r_k\#} \nu)(B_s\setminus B_1) = s^2 \pi f(s r_k) - \pi f(r_k) \in ((s^2-1)\pi f(sr_k), s^2 \pi f(sr_k)).\]
In particular, $\Theta^2(\nu,\infty) = \lim _{r\to \infty} f(r) $ exists and satisfies 
\[\Theta^2(\nu,\infty) \leq \Theta^2(C,0) \leq \frac{s^2}{s^2-1} \Theta^2(I_{0\#}\mu,\infty).\]
Since this holds for any $s>1$, we conclude $\Theta^2(C,0) = \Theta^2(I_{0\#}\mu,\infty)$.
\end{proof}

\printbibliography
\end{document}